\documentclass[11pt]{article}

\usepackage{mathrsfs, textcomp}
\usepackage{easybmat,etex,tikz,caption}

\DeclareCaptionFormat{reverse}{#3#2#1}
\DeclareCaptionLabelFormat{fullparens}%
{(\bothIfFirst{#1}{˜}#2)}
\DeclareCaptionLabelSeparator{fill}{\hfill}
\usepackage{amsmath,amssymb,enumitem,amsthm,subfigure,mathabx}
\usepackage{pstricks,xy,pst-node,color, graphics, graphicx}
\xyoption{all}
\usepackage{stmaryrd}
\usepackage{color}

\usepackage{tabularx}
\makeatletter
\def\hlinewd#1{%
\noalign{\ifnum0=`}\fi\hrule \@height #1 %
\futurelet\reserved@a\@xhline}
\makeatother

\usetikzlibrary{matrix,arrows,shapes,snakes}
\usepackage{accents}
\setcounter{topnumber}{1}

\addtolength{\textheight}{1.2in}
\addtolength{\topmargin}{-.6in}
\addtolength{\textwidth}{1.5in}
\addtolength{\oddsidemargin}{-.75in}
\addtolength{\evensidemargin}{-.75in}

\numberwithin{equation}{section}
\usepackage{float}
\floatstyle{plaintop}
\newfloat{diagram}{thp}{lop}
\floatname{diagram}{Diagram}
\numberwithin{diagram}{section}
\makeatletter\let\c@equation\c@diagram\makeatother
\newtheorem{thm}{Theorem}[section] 
\newtheorem{prp}[thm]{Proposition}
\newtheorem{lmm}[thm]{Lemma}   
\newtheorem{crl}[thm]{Corollary}

\newtheorem{dfn}[thm]{Definition}

\theoremstyle{definition}
\newtheorem{rmk}[thm]{Remark}
\newtheorem{dfn-lmm}[thm]{Definition-Lemma}
\newtheorem{eg}[thm]{Example}

\newcommand{\isomto}{\overset{\sim}{\rightarrow}}
\def\eset{\emptyset} 

\newcounter{saveenumi}

\def\lan{\langle}
\def\ran{\rangle}
\def\lr#1{\lan#1\ran}
\def\blr#1{\big\lan#1\big\ran}
\def\bblr#1{\bigg\lan#1\bigg\ran}
\def\ov#1{\overline{#1}}
\def\ti#1{\tilde{#1}}
\def\wt#1{\widetilde{#1}}
\def\e_ref#1{(\ref{#1})}
\def\smsize#1{\begin{small}#1\end{small}}

\def\sf#1{\textsf{#1}}

\def\lrbr#1{\llbracket{#1}\rrbracket}
\def\det{\textnormal {det}}
\def\part{\partial}
\def\Lau#1{\llceil{#1}\rrceil}

\def\llbr{\llbracket}
\def\rrbr{\rrbracket}
\def\LR#1{\left\llbr{#1}\right\rrbr}

\def\wh{\widehat}
\def\lra{\longrightarrow}
\def\Lra{\Longrightarrow}
\def\Llra{\Longleftrightarrow}
\def\lra{\longrightarrow}
\def\Lra{\Longrightarrow}
\def\Llra{\Longleftrightarrow}

\def\al{\alpha}
\def\be{\beta}
\def\ga{\gamma}
\def\de{\delta}

\def\la{\lambda}
\def\bla{\boldsymbol{\la}}
\def\om{\omega}
\def\si{\sigma}

\def\ze{\zeta}
\def\wceta{\widecheck{\eta}}

\def\Ga{\Gamma}
\def\La{\Lambda}
\def\Om{\Omega}

\def\De{\Delta}
\def\Si{\Sigma}

\def\i{\infty}
\def\h{\hbar}
\def\b0{\mathbf{0}}

\def\cA{\mathcal A}
\def\nA{\textnormal A}
\def\fa{\mathfrak a}

\def\nB{\textnormal B}

\def\C{\mathbb C}
\def\cC{\mathcal C}
\def\ctC{\wt{\mathcal{C}}}
\def\nc{\mathrm{c}}

\def\fC{\mathfrak C}
\def\nC{\textnormal{C}}
\def\tinC{\wt{\nC}}
\def\ticC{\wt{\cC}}
\def\fd{\mathfrak d}
\def\bd{\mathbf d}

\def\fD{\mathfrak D}

\def\bD{\mathbf D}

\def\dec{\textnormal{dec}}
\def\cE{\mathcal E}
\def\E{\mathbf e}
\def\nE{\textnormal{E}}
\def\Ep{\accentset{\blacktriangledown}E}
\def\Epp{\accentset{\blacktriangledown\blacktriangledown}E}
\def\wbE{\boldsymbol{\wt{\mathbf{e}}}}
\def\F{\mathbb F}
\def\cF{\mathcal F}

\def\bH{\mathbf H}
\def\nH{\textnormal{H}}
\def\I{\mathfrak i}
\def\mi{\mathrm i}

\def\cJ{\mathcal J}
\def\cK{\mathcal K}
\def\cL{\mathcal L}

\def\M{\mathfrak M}

\def\cO{\mathcal O}
\def\P{\mathbb P}
\def\br{\mathbf r}
\def\bs{\mathbf s}

\def\bp{\mathbf p}
\def\Poly{\textnormal{Poly}}
\def\fp{\mathfrak p}

\def\R{\mathbb R}
\def\cR{\mathcal R}

\def\O{\mathcal O}
\def\Q{\mathbb Q}
\def\T{\mathbb T}
\def\cT{\mathcal T}
\def\fX{\mathfrak X}
\def\U{\mathfrak U}
\def\nU{\textnormal{U}}

\def\cY{\mathcal Y}
\def\hY{\widehat{Y}}
\def\hcY{\widehat{\cY}}
\def\Z{\mathbb Z}
\def\cZ{\mathcal Z}

\def\bM{\mathbf{M}}

\def\V{ \mathscr{V}^{\tau}_M}
\def\msV{\mathscr{V}}
\def\cV{\mathcal V}
\def\cVp{\accentset{\blacktriangledown}{\cV}}
\def\cVpp{\accentset{\blacktriangledown\blacktriangledown}{\cV}}
\def\Yp{\accentset{\blacktriangledown}{Y}}
\def\Ypp{\accentset{\blacktriangledown\blacktriangledown}{Y}}
\def\cYp{\accentset{\blacktriangledown}{\cY}}
\def\cYpp{\accentset{\blacktriangledown\blacktriangledown}{\cY}}

\def\Ip{\accentset{\blacktriangledown}{I}}
\def\Ipp{\accentset{\blacktriangledown\blacktriangledown}{I}}
\def\tinCp{\accentset{\blacktriangledown}{\wt{\nC}}}
\def\tinCpp{\accentset{\blacktriangledown\blacktriangledown}{\wt{\nC}}}
\def\ticCp{\accentset{\blacktriangledown}{\wt{\cC}}}
\def\ticCpp{\accentset{\blacktriangledown\blacktriangledown}{\wt{\cC}}}
\def\fCp{\accentset{\blacktriangledown}{\fC}}
\def\fCpp{\accentset{\blacktriangledown\blacktriangledown}{\fC}}
\def\nd{\textnormal{d}}
\def\ne{\textnormal{e}}
\def\std{\textnormal{std}}
\def\tr{\textnormal{tr}}
\def\bga{\boldsymbol\gamma}
\def\bs{\mathbf s}
\def\bx{\mathbf x}
\def\bu{\mathbf u}

\def\nv{\textnormal v}
\def\nw{\textnormal{w}}
\def\triv{\textnormal{triv}}
\def\pr{\textnormal{pr}}
\def\X{X^{\tau}_M}
\def\Id{\textnormal{Id}}
\def\Zp{\accentset{\blacktriangledown}{Z}}
\def\Zpp{\accentset{\blacktriangledown\blacktriangledown}{Z}}
\def\cZp{\accentset\blacktriangledown{\cZ}}
\def\cZpp{\accentset{\blacktriangledown\blacktriangledown}{\cZ}}

\def\Aut{\textnormal{Aut}}
\def\Edg{\textnormal{Edg}}

\def\End{\textnormal{End}}
\def\ev{\textnormal{ev}}

\def\mod{\textnormal{mod~}}
\def\Res{\textnormal{Res}}
\def\supp{\textnormal{supp}}

\def\Span{\textnormal{Span}}
\def\val{\textnormal{val}}
\def\Ver{\textnormal{Ver}}

\def\fgt{\textnormal{fgt}}
\def\PD{\textnormal{PD}}

\begin{document}

\title{Two-Point Gromov-Witten Formulas\\ for Symplectic Toric Manifolds}
\author{Alexandra Popa}
\date{\today}
\maketitle

\begin{abstract}
We show that the standard generating functions for genus~$0$ two-point twisted Gromov-Witten invariants
arising from concavex vector bundles over symplectic toric manifolds are explicit transforms of the corresponding one-point
generating functions. The latter are, in turn, transforms of Givental's $J$-function. We obtain closed formulas for them and, in particular, for two-point Gromov-Witten invariants
of non-negative toric complete intersections. Such two-point formulas should play a key role in the computation of genus~$1$
Gromov-Witten invariants (closed, open, and unoriented) of toric complete intersections as they indeed do in the case of the projective complete intersections.
\end{abstract}

\tableofcontents

\section{Introduction}\label{intro_sec}
Torus actions on moduli spaces of stable maps into a smooth projective variety
facilitate the computation of equivariant Gromov-Witten invariants \cite{Gi_equiv} 
via the Localization Theorem~\cite{ABo},~\cite{GraPa}.
Equivariant formulas lead to other interesting consequences beyond the computation of non-equivariant Gromov-Witten invariants. In the case of the projective spaces,
two-point equivariant Gromov-Witten formulas in \cite{bcov0_ci} lead to 
the confirmation of mirror symmetry predictions concerning open and unoriented genus~1 Gromov-Witten invariants in the same paper and to the computation of closed genus~1 Gromov-Witten invariants in \cite{bcov1_ci}.
In this paper we obtain equivariant formulas expressing the standard two-point closed genus~$0$ generating function
for certain twisted Gromov-Witten invariants of symplectic toric manifolds in terms
of the corresponding one-point generating functions. We also obtain explicit 
formulas for the latter. 
In particular, we show that the standard generating function for these two-point invariants 
is a fairly simple transform of the well-known Givental's $J$-function.
The formulas obtained in this paper compute, in particular, the twisted/un-twisted Gromov-Witten numbers \e_ref{tGW_e}/\e_ref{GW_e} below.

For a smooth projective variety $X$ and a class $A\!\in\!\!H_2(X;\Z)$, $\ov\M_{0,m}(X,A)$ denotes 
the moduli space of stable maps 
from genus $0$ curves with $m$ marked points into $X$ representing $A$. Let
$$\ev_i:\ov\M_{0,m}(X,A)\lra X$$
be the evaluation map at the $i$-th marked point; see~\cite[Chapter~24]{MirSym}.
All cohomology groups in this paper will be with rational coefficients unless
otherwise specified.
For each $i\!=\!1,2,\ldots,m$, let $\psi_i\in H^2(\ov\M_{0,m}(X,A))$
be the first Chern class of the universal cotangent line bundle for
the $i$-th marked point. Let 
$$\pi\!:\U\lra\ov\M_{0,m}(X,A)$$
be the universal curve and $\ev:\U\lra X$
the natural evaluation map; see \cite[Section~24.3]{MirSym}.

A holomorphic vector bundle $E\!\lra\!X$ is called \sf{concavex} if
$$E=E^+\oplus E^-,\qquad\hbox{with}\quad 
H^1\left(\P^1,f^*E^+\right)=0,~~~
 H^0\left(\P^1,f^*E^-\right)=0\quad\forall\,f:\P^1\lra X.$$
Such a vector bundle induces a vector orbi-bundle $\cV_E$ over $\ov\M_{0,m}(X,A)$:
\begin{equation}\label{cVdfn_e}
\cV_E\equiv\cV_{E^+}\oplus\cV_{E^-},\quad\textnormal{where}\quad\cV_{E^+}\equiv\pi_*\ev^*E^+,\quad\cV_{E^-}\equiv R^1\pi_*\ev^*E^-.
\end{equation}
Given a class $A\!\in\!H_2(X;\Z)$ and classes $\eta_1,\eta_2\!\in\!H^*(X)$,
the corresponding genus~0 twisted two-point Gromov-Witten (GW) invariants of $X$ are:
\begin{equation}\label{tGW_e}
\blr{\psi^{p_1}\eta_1,\psi^{p_2}\eta_2}^X_{A,E}
\equiv\int_{[\ov\M_{0,2}(X,A)]^{vir}}
\big(\psi_1^{p_1}\ev_1^*\eta_1\big)\big(\psi_2^{p_2}\ev_2^*\eta_2\big)e(\cV_E)\in\Q.
\end{equation}
In particular, if $E\!=\!E^+$, the twisted Gromov-Witten invariants \e_ref{tGW_e} are the genus~0
two-point Gromov-Witten invariants of a complete intersection $Y\!\equiv\!s^{-1}(0)\!\hookrightarrow\!X$
defined by a generic holomorphic section $s\!:\!X\!\lra\!E^+$:
\begin{equation}\label{GW_e}
\blr{\psi^{p_1}\eta_1,\psi^{p_2}\eta_2}^X_{A,E^+}=\blr{\psi^{p_1}\eta_1,\psi^{p_2}\eta_2}^Y_{A}\equiv
\blr{\psi^{p_1}\eta_1,\psi^{p_2}\eta_2}^Y_{A,0}\qquad\forall\,\eta_1,\eta_2\!\in\!H^*(Y);
\end{equation}
the first equality follows from \cite[Theorem~0.1.1, Remark~0.1.1]{El}.

The numbers \e_ref{tGW_e} have been computed in the $X\!=\!\P^{n-1}$ case 
under various assumptions on $E$ through various approaches.
The case when $E$ is a positive line bundle is solved in~\cite{BK} and~\cite{bcov0}
and extended to the case when $E$ is a sum of positive line bundles
in \cite{bcov0_ci}. The former led to the computation of the genus~1 Gromov-Witten invariants of
Calabi-Yau hypersurfaces in \cite{bcov1}, while the latter to the computation of the genus~1
Gromov-Witten invariants of Calabi-Yau complete intersections in \cite{bcov1_ci}. 
The case when $E$ is a concavex vector bundle has been solved in \cite{Ch}
in the setting of \cite{LLY1}.
More recently, genus~0 formulas with any number of $\psi$ classes have been obtained in \cite{multipt}.
In this paper we extend the approaches of \cite{bcov0} and \cite{bcov0_ci}
to the case when $X$ is an arbitrary compact symplectic toric manifold and $E$ is a sum of non-negative and negative line bundles.

I thank Aleksey Zinger for proposing 
the questions answered in this paper,
for his many suggestions which consistently improved it, for pointing out errors in previous versions, for explaining \cite{bcov0}
and parts of \cite{Gi_mirr} to me, and for his guidance and 
encouragement while I was working on this paper. 
I am also grateful to Melissa Liu for answering my many questions on toric manifolds,
for explaining parts of \cite{Gi_mirr} to me, and for bringing \cite{LLY3} to my attention.

\subsection{Some results}
\label{neqres_sec}

If $n$ is a non-negative integer, we write 
$$[n]\equiv\left\{1,2,\ldots,n\right\}.$$
Let $s\!\ge\!1$, $N_1,\ldots,N_s\!\ge\!2$ and for each $i\!\in\![s]$ let
$$\nH_i\equiv\pr^*_i\nH\in H^2\left(\prod\limits_{j=1}^s\P^{N_j-1}\!\right)\,,$$
where $\pr_i\!:\!\prod\limits_{j=1}^s\P^{N_j-1}\!\!\lra\!\P^{N_i-1}\!$ 
is the projection onto the $i$-th component and $\nH\!\in\!H^2(\P^{N_i-1})$
is the hyperplane class on $\P^{N_i-1}$.

\begin{thm}\label{GWproj_thm}
Let $\bd\!=\!(d_1,\!\ldots\!,d_s)\!\in\!(\Z^{>0})^s$.
The degree $\bd$ genus~$0$ two-point 
GW invariants \e_ref{GW_e} of $\prod\limits_{i=1}^s\P^{N_i-1}$
are given by the following identity in
$\frac{\Q[\nA_1,\ldots,\nA_s,\nB_1,\ldots,\nB_s]}
{\left(\nA_i^{N_i},\nB_i^{N_i}\quad\forall\,i\in[s]\right)}[[\h_1^{-1},\h_2^{-1}]]$:
\begin{equation*}\begin{split}
\sum\limits_
{\begin{subarray}{c}a_1,\ldots,a_s\ge 0\\
b_1,\ldots,b_s\ge 0\end{subarray}}
\nA_1^{a_1}\ldots\nA_s^{a_s}\nB_1^{b_1}\ldots\nB_s^{b_s}
\bblr{\frac{\nH_1^{N_1-1-a_1}\ldots\nH_s^{N_s-1-a_s}}{\h_1\!-\!\psi},
\frac{\nH_1^{N_1-1-b_1}\ldots\nH_s^{N_s-1-b_s}}{\h_2\!-\!\psi}}
^{\prod\limits_{i=1}^s\P^{N_i-1}}_{\bd}\!\\=
\frac{1}{\h_1\!+\!\h_2}
\sum_{\begin{subarray}{c} a_i,b_i,e_i,f_i\ge 0\\
a_i+b_i=N_i-1\\ e_i+f_i=d_i \end{subarray}}
\frac{\left(\nA_1\!+e_1\!\h_1\right)^{a_1}\ldots\left(\nA_s\!+\!e_s\h_1\right)^{a_s}
\left(\nB_1\!+f_1\!\h_2\right)^{b_1}\ldots\left(\nB_s\!+\!f_s\h_2\right)^{b_s}}
{\prod\limits_{i=1}^s\left(\prod\limits_{r=1}^{e_i}\left(\nA_i\!+\!r\h_1\right)^{N_i}\,
\prod\limits_{r=1}^{f_i}\left(\nB_i\!+\!r\h_2\right)^{N_i}\right)}.
\end{split}\end{equation*}
\end{thm}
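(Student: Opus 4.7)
The plan is to combine the paper's general reduction---which expresses the two-point generating function for any symplectic toric manifold $X$ as an explicit transform of the one-point generating function---with the explicit form of the latter for $X=\prod_{i=1}^s\P^{N_i-1}$, and then to match the resulting combinatorics. For each factor $\P^{N_i-1}$ the $J$-function is the classical expression, and by the product structure the $J$-function of $X$ is
$$J(Q,\h)=\sum_{\bd\ge 0}Q^{\bd}\prod_{i=1}^s\prod_{r=1}^{d_i}\frac{1}{(\nH_i+r\h)^{N_i}}.$$
Since every $\mathcal O_{\P^{N_i-1}}(1)$ is positive and $N_i\!\ge\!2$, the transform relating the one-point generating function to the $J$-function is trivial here; this accounts for the denominators on the right-hand side of the theorem, with $\nH_i$ replaced by $\nA_i$ on the $\h_1$-side and by $\nB_i$ on the $\h_2$-side.

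I would proceed as follows. First, work equivariantly with respect to the standard diagonal torus action on $\prod\P^{N_i-1}$ and apply the paper's two-point reduction: its output is a sum over degree splittings $\mathbf{e}+\mathbf{f}=\bd$ of a product of two one-point building blocks, one in $(\nA_i,\h_1)$ and one in $(\nB_i,\h_2)$, weighted by the node-smoothing factor that produces the $(\h_1+\h_2)^{-1}$ prefactor. The formal parameters $\nA_i,\nB_i$ with relations $\nA_i^{N_i}\!=\!\nB_i^{N_i}\!=\!0$ are natural non-equivariant substitutes for the equivariant lifts of $\nH_i$ restricted to the relevant fixed-point loci at each marked point; the vanishing relations encode the cohomology of each $\P^{N_i-1}$. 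Second, substitute the explicit $J$-function above to read off the denominators, and identify the numerators $(\nA_i+e_i\h_1)^{a_i}$ and $(\nB_i+f_i\h_2)^{b_i}$ as arising from pairing the expansions of $1/(\h_j-\psi)$ against the Poincar\'e duals $\nH_i^{N_i-1-a_i}$ and $\nH_i^{N_i-1-b_i}$ on the left-hand side; this pairing simultaneously forces the constraint $a_i+b_i\!=\!N_i-1$.

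The main obstacle is combinatorial rather than conceptual: I have to verify that the general transform, once written out concretely for $X=\prod\P^{N_i-1}$, collapses precisely to the closed form stated, with no residual correction terms and with the exponent constraints exactly as written. The analogous computation in the case $s\!=\!1$ (i.e.\ for $\P^{N-1}$ alone) is carried out in \cite{bcov0_ci}, so I would aim to mimic that argument term by term, the main new ingredient being the multiplicative factorisation over $i\!\in\![s]$. The remaining verification reduces to tracking the Poincar\'e pairing on $H^*(\prod\P^{N_i-1})$---which splits as a tensor product over $i$---and checking that the $\h_1$- and $\h_2$-series expansions of the transformed $J$-function pick out exactly the claimed monomials in the $\nA_i$ and $\nB_i$.
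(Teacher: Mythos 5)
Your plan is essentially the paper's own proof: the paper deduces this theorem by specializing Corollary~\ref{2pt_crl} --- i.e.\ Theorem~\ref{Z2pt_thm} (the two-point function as a diagonal-weighted product of one-point functions) combined with the explicit one-point formulas of Theorem~\ref{Y_thm}/Corollary~\ref{nuE0_crl}, whose mirror transform is trivial here --- to $\prod_i\P^{N_i-1}$, with the Poincar\'e dual of the diagonal factoring over the factors exactly as you describe. The one place where your justification is too thin is the claim that the transform is trivial ``because each $\cO_{\P^{N_i-1}}(1)$ is positive'': the actual criteria are inequalities on $\nu_E$, and for $s\ge2$ the blanket ones fail --- e.g.\ for untwisted $\P^1\times\P^1$ and $\bd=(1,0)$ one has $\nu_E(\bd)=2=N-k$, so the hypothesis of Corollary~\ref{2pt_crl} (and of Corollary~\ref{nuE0_crl} once $|\bp|\ge2$) is not literally satisfied. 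What saves the statement is precisely the ``no residual correction terms'' check you deferred, and it is a degree count rather than positivity: the diagonal decomposition only involves $\bp$ with $p_i\le N_i-1$, and for such $\bp$ every $\bd\ne0$ term of $\left\{\nH+\h\,q\frac{\nd}{\nd q}\right\}^{\bp}\Yp$ has $\h$-degree $\sum_{i\in\supp(\bd)}p_i-\sum_iN_id_i<0$, so $\bD^{\bp}\Yp\cong\nH^{\bp}$ mod $\h^{-1}$ and (\ref{neqtiC_e}) then forces $\wt{\nC}^{(r)}_{\bp,\bs}(\Yp)=0$ for all $r\ge1$; the mirror map is trivial because no $\bd\ne0$ has $\nu_E(\bd)\in\{0,1\}$ (here $\nu_E(\bd)=\sum_iN_id_i\ge2$). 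With that verification supplied, your argument closes and coincides with the paper's route.
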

This follows from Corollary~\ref{2pt_crl} in Section~\ref{stat_sec}.

The results below concern the GW invariants of a compact symplectic toric manifold 
$\X$ defined by \e_ref{toricman_e} from a minimal 
toric pair $(M,\tau)$ as in Definition~\ref{toric_dfn}.
We assume that the vector bundle $E$ splits 
\begin{equation}\label{splitE_e}
E\equiv E^+\oplus E^-\lra\X, \qquad\textnormal{where}\quad 
E^+\equiv\bigoplus_{i=1}^aL_i^+,
\quad E^-\equiv\bigoplus_{i=1}^bL_i^-,
\end{equation}
$L_i^+$ are non-trivial, non-negative line bundles and $L_i^-$ 
are negative line bundles.\footnote{\label{negative_f}Recall that a line bundle $L\!\lra\!\X$ is called positive (respectively negative)
if $c_1(L)\!\in\!H^2(\X;\R)$ (respectively $-c_1(L)$)
can be represented by a K\"{a}hler form on $\X$.
A line bundle $L\!\lra\!\X$ is called non-negative if
$c_1(L)\!\in\!H^2(\X;\R)$ can be represented by a closed real $(1,1)$-form $\om$
satisfying $\om(v,Jv)\!\ge\!0$ for all $v$.
The assumptions that the line bundles $L_i^+$ are 
non-trivial and that $L_i^-$ are negative (that is, $c_1(L_i^-)\!<\!0$ as opposed to just
$c_1(L_i^-)\!\le\!0$) are only used in the theorems that rely on the one-point mirror theorem 
\e_ref{Z'Y'_e} of \cite{LLY3}, that is Theorem~\ref{Y_thm}, Corollary~\ref{nuE0_crl},
Corollary~\ref{2pt_crl}, Theorem~\ref{cY_thm}, Corollary~\ref{enuE0_crl}, and Corollary~\ref{e2pt_crl}.}
Theorem~\ref{Z2pt_thm} and Remark~\ref{untwZ_rmk} below describe two-point twisted GW
invariants in terms of one-point ones.
As is usually done, the twisted GW invariants will be assembled 
into a generating function in the formal variables
$$Q=(Q_1,\ldots,Q_k)$$ with powers indexed by
\begin{equation}\label{La_e}
\La\equiv\big\{\bd\!\in\!H_2\left(X^\tau_M;\Z\right):\blr{\om,\bd}\!\ge\!0\quad\forall\,\om\!\in\!
\ov\cK^{\tau}_M\big\},
\end{equation}
where $\ov\cK^{\tau}_M$ is the closed K\"{a}hler cone 
of~$\X$.\footnote{\label{dual_f}By \cite[Theorem~4.5]{Br}, 
a non-empty closed convex subset of $\R^d$ is the intersection of its supporting half-spaces. 
The supporting half-spaces of a closed convex cone $C$ in $\R^d$ are all sets of the form $\{v\!\in\!\R^d:\blr{v,w}\!\ge\!0\}$
for some $w\!\in\!\R^d$ such that $\blr{v,w}\!\ge\!0$ for all $v\!\in\!C$.
This implies that
$$\om\!\in\!\ov\cK^{\tau}_M\qquad\Llra\qquad\blr{\om,\bd}\!\ge\!0~~\forall\,\bd\!\in\!\La.$$}

A ring $R$ and the monoid $\La$ induce an $R$-algebra denoted $R[[\La]]$:
to each $\bd$ we associate a formal variable denoted $Q^{\bd}$ and
set
\begin{equation*}\begin{split}
R[[\La]]\equiv\left\{\sum_{\bd\in\La}a_{\bd}Q^{\bd}:
a_{\bd}\!\in\!R~~\forall\,\bd\!\in\!\La\right\}.
\end{split}\end{equation*}
Addition in $R[[\La]]$ is defined naturally; multiplication is defined by 
$$Q^{\bd}\!\cdot\!Q^{\bd'}\equiv Q^{\bd+\bd'}\qquad\forall\,\bd,\bd'\!\in\!\La$$
and extending by $R$-linearity.

For each $m\!\ge\!1$ and each $\bd\!\in\!\La\!-\!\{0\}$, let 
$\si_i\!:\!\ov\M_{0,m}(\X,\bd)\!\lra\!\U$ be the section of the universal curve given by the $i$-th marked point, 
\begin{equation}\label{cV'dfn_e}\begin{split}
\cVp_E&\equiv R^0\pi_*\left(\ev^*E^+(-\si_1)\right)\oplus R^1\pi_*\left(\ev^*E^-(-\si_1)\right)\lra\ov\M_{0,m}(\X,\bd),\quad\textnormal{and}\\
\cVpp_E&\equiv R^0\pi_*\left(\ev^*E^+(-\si_2)\right)\oplus R^1\pi_*\left(\ev^*E^-(-\si_2)\right)\lra\ov\M_{0,m}(\X,\bd)\quad\textnormal{whenever}\quad m\!\ge\!2.
\end{split}\end{equation}
If $m\!\ge\!3$ and $\bd\!=\!0$, $\cVp_E$ and $\cVpp_E$ are well-defined as well and they are $0$.
We next define the genus~0 two-point generating function
$\Zp$:
\begin{equation}\label{Z2ptdfn_e}
\Zp\left(\h_1,\h_2,Q\right)\equiv
\frac{\h_1\h_2}{\h_1+\h_2}
\!\sum_{\bd\in\La}\!Q^{\bd}\left(\ev_1\!\times\!\ev_2\right)_*
\left[\frac{e(\cVp_E)}{\left(\h_1\!-\!\psi_1\right)\left(\h_2\!-\!\psi_2\right)}\right],
\end{equation}
where $\ev_1,\ev_2\!:\!\ov\M_{0,3}(\X,\bd)\!\lra\!\X$ are 
the evaluation maps at the first two marked points.
This is used - in the case of the projective spaces -
for the computation of the genus~1 GW invariants of Calabi-Yau complete intersections.

With $\ev_1,\ev_2:\!\ov\M_{0,2}(\X,\bd)\!\lra\!\X$ denoting the evaluation maps at 
the two marked points and for all $\eta\!\in\!H^2(\X)$, let
\begin{equation}\label{Zeta1ptdfn_e}\begin{split}
\Zp_{\eta}\left(\h,Q\right)&\equiv\eta\!+\!
\sum_{\bd\in\La-0}Q^{\bd}\ev_{1*}\left[\frac{e(\cVp_E)\ev_2^*\eta}
{\h\!-\!\psi_1}\right]\!\in\!H^*\!(\X)[\h^{-1}][[\La]],\\
\Zpp_{\eta}\left(\h,Q\right)&\equiv\eta\!+\!\sum_{\bd\in\La-0}
Q^{\bd}\ev_{1*}\left[\frac{e(\cVpp_E)\ev_2^*\eta}
{\h\!-\!\psi_1}\right]\in H^*(\X)[\h^{-1}][[\La]].
\end{split}\end{equation}

\begin{thm}\label{Z2pt_thm}
Let $\pr_i:\X\!\times\!\X\!\lra\!\X$  denote the projection onto the $i$-th component
and let $\eta_j,\wceta_j\!\in\!H^*(\X)$ be such~that
$$\sum_{j=1}^s\pr_1^*\eta_j\pr_2^*\wceta_j\!\in\!H^{2(N-k)}(\X\!\times\!\X)$$ 
is the Poincar\'{e} dual to the diagonal class, where $N\!-\!k$ is the complex dimension of~$\X$.
Then,
$$\Zp\left(\h_1,\h_2,Q\right)=\frac{1}{\h_1\!+\!\h_2}
\sum_{j=1}^s\pr_1^*\Zp_{\eta_j}\left(\h_1,Q\right)\pr_2^*\Zpp_{\wceta_j}\left(\h_2,Q\right).$$
\end{thm}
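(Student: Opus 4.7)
The plan is to prove the factorization identity by a boundary analysis on $\ov\M_{0,3}(\X,\bd)$ combined with the K\"{u}nneth decomposition of the diagonal of $\X$, in the spirit of the analogous argument for $\P^n$ in~\cite{bcov0_ci}.

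After multiplying both sides of the claim by $\h_1+\h_2$, I would compare the coefficients of each $Q^\bd$ separately. The $\bd=0$ term is handled directly: $\ov\M_{0,3}(\X,0)\cong\X$ via the triple diagonal, $\cVp_E$ vanishes there, and $(\ev_1\times\ev_2)_*[1]=[\De_\X]=\sum_j\pi_1^*\eta_j\,\pi_2^*\wceta_j$, matching the leading terms of $\Zp_{\eta_j}$ and $\Zpp_{\wceta_j}$. For $\bd\neq 0$, I would rewrite the integrand using the algebraic identity
$$(\h_1+\h_2)=(\h_1-\psi_1)+(\h_2-\psi_2)+(\psi_1+\psi_2),$$
which splits $\h_1\h_2\,e(\cVp_E)/((\h_1-\psi_1)(\h_2-\psi_2))$ into three pieces. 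The first two cancel one of the cotangent denominators; the third, carrying $\psi_1+\psi_2$, localizes onto the boundary divisor of curves whose first two marked points lie on distinct components, via the standard expression of $\psi_1+\psi_2$ in boundary classes on $\ov\M_{0,m}(\X,\bd)$.

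The key step is then to identify the restriction of the integrand to the boundary stratum $D_{\{1\}|\{2\},\bd_1|\bd_2}$ (summed over degree decompositions $\bd_1+\bd_2=\bd$ and over which component carries the third marked point) with a product of one-point data. Each such stratum is a fiber product over $\X$ at the node; the restriction of $e(\cVp_E)$ splits as $e(\cVp_E)\boxtimes e(\cVpp_E)$ via the short exact sequence relating $\ev^*E(-\si_1)$, $\ev^*E$, and $\ev^*E|_{\si_1}$ on each component, and the normal bundle of the stratum produces the factor that cancels $\h_1+\h_2$. Inserting the diagonal decomposition $[\De_\X]=\sum_j\pi_1^*\eta_j\,\pi_2^*\wceta_j$ at the node via the projection formula assembles the boundary sum into $\sum_j\pi_1^*\Zp_{\eta_j}(\h_1,Q)\,\pi_2^*\Zpp_{\wceta_j}(\h_2,Q)$.

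The main obstacle lies in this boundary bookkeeping: the twist $-\si_1$ defining $\cVp_E$ sits only on one component of the nodal curve, and converting it into a $-\si_2$ twist on the other component (to produce $\cVpp_E$) requires a careful use of the short exact sequence above and of the identification of the cotangent line at the node with $\ev^*\cL$ where $\cL$ is the normal direction. Tracking the behaviour of $\psi_1,\psi_2$ under restriction to the boundary, and summing consistently over the placement of the third marked point on either side of the node, are the remaining sources of technical delicacy.
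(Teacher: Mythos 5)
Your proposal is correct in outline but follows a genuinely different route from the paper. The paper does not prove Theorem~\ref{Z2pt_thm} directly: it deduces it from the equivariant statement, Theorem~\ref{eZ2pt_thm}, whose proof is by torus localization at the fixed points $[I]\times[J]$, combined with $C$-recursivity (Definition~\ref{rec_dfn}, Lemma~\ref{ZCrec_lmm}), the mutual polynomiality condition (Definition~\ref{MPC_dfn}, Lemma~\ref{ZMPC_lmm}), and two applications of the uniqueness statement Proposition~\ref{unique_prp}, which reduce the identity to its part mod $\h^{-1}$, where the diagonal hypothesis enters. Your argument is instead a direct degeneration/splitting argument on moduli of stable maps: the three-term split of $\h_1+\h_2$, the divisor relation for $\psi_1+\psi_2$, the splitting of the virtual class along a node with the diagonal $\sum_j\eta_j\otimes\wceta_j$ inserted, and the normalization sequence splitting $e(\cVp_E)$ into the twist at the first marked point on one side and the twist at the node on the other; relabelling the two marked points of the second factor turns the node twist into $\cVpp_E$, which is exactly why $\Zpp_{\wceta_j}$ appears in the second slot. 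Carried out, this gives a more elementary proof requiring no torus action, and run with equivariant classes it would also yield Theorem~\ref{eZ2pt_thm}; the paper's route costs little extra because the recursivity/polynomiality machinery is needed anyway for Theorem~\ref{cY_thm}.

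Three points in your sketch need repair or completion. First, for $\bd\neq\b0$ you should begin with the string relation (as in Remark~\ref{untwZ_rmk}) to replace $\ov\M_{0,3}(\X,\bd)$ by $\ov\M_{0,2}(\X,\bd)$; this eliminates the bookkeeping of the third marked point and guarantees that both sides of any divisor separating the two marked points have positive degree, and it also removes the stray powers of $\h_i$ that the forgetful pushforward otherwise produces. Second, two of your statements are off: after the split $(\h_1+\h_2)=(\h_1-\psi_1)+(\h_2-\psi_2)+(\psi_1+\psi_2)$ there is nothing left to ``cancel'' at the boundary and no normal-bundle factor enters --- the boundary term is handled by $[D]\cdot\alpha=\iota_*\iota^*\alpha$ together with the compatibility of the virtual classes under gluing; likewise no identification of the cotangent line at the node with an evaluation pullback is needed (or available): converting the node twist into a $-\si_2$ twist is pure relabelling of the second factor's marked points. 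Third, two ingredients you take for granted must be supplied: the relation $\psi_1+\psi_2=\sum_D[D]$, summed over boundary divisors separating the first two marked points (the canonical section of the dual of $L_1\otimes L_2$, vanishing to order one on each such divisor), and the identification of the two terms without $\psi_1+\psi_2$ with the cross terms $\sum_j\pr_1^*\!\left(\Zp_{\eta_j}\right)\!\pr_2^*\wceta_j$ and $\sum_j\pr_1^*\eta_j\,\pr_2^*\!\left(\Zpp_{\wceta_j}\right)$ via the K\"unneth decomposition $(\ev_1\times\ev_2)_*\alpha=\sum_j\pr_1^*\!\left(\ev_{1*}(\alpha\,\ev_2^*\eta_j)\right)\pr_2^*\wceta_j$, which uses that $\{\eta_j\}$ and $\{\wceta_j\}$ are dual bases --- precisely the diagonal hypothesis of the theorem. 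With these points filled in, your argument closes.
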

This follows from Theorem~\ref{eZ2pt_thm} below, which is an equivariant version of Theorem~\ref{Z2pt_thm}.
\begin{rmk}\label{untwZ_rmk}
The genus~0 two-point twisted GW invariants \e_ref{tGW_e} are assembled into 
\begin{equation}\label{untwZ_e}
Z^*\left(\h_1,\h_2,Q\right)\equiv
\!\sum_{\bd\in\La-0}\!Q^{\bd}\left(\ev_1\!\times\!\ev_2\right)_*
\left[\frac{e(\cV_E)}{\left(\h_1\!-\!\psi_1\right)\left(\h_2\!-\!\psi_2\right)}\right]\!\in\!H^*\!\left(\X\!\times\!\X\right)[\h_1^{-1},\h_2^{-1}][[\La]],
\end{equation}
where $\ev_1,\ev_2\!:\!\ov\M_{0,2}(\X,\bd)\!\lra\!\X$.
By the string relation \cite[Section~26.3]{MirSym},
$$Z^*\left(\h_1,\h_2,Q\right)=\frac{\h_1\h_2}{\h_1+\h_2}\sum\limits_{\bd\in\La-0}(\ev_1\!\times\!\ev_2)_*\left[\frac{e(\cV_E)}{(\h_1\!-\!\psi_1)(\h_2\!-\!\psi_2)}\right]\!\in\!H^*\!\left(\X\!\times\!\X\right)[\h_1^{-1},\h_2^{-1}][[\La]],$$
where $\ev_1,\ev_2\!:\!\ov\M_{0,3}(\X,\bd)\!\lra\!\X$. 
By \e_ref{cV'dfn_e} and \e_ref{cVdfn_e},
$$e(\cVp_E)\ev_1^*e(E^+)=e(\cV_E)\ev_1^*e(E^-).$$
The last  two equations imply that
$$\Zp^*(\h_1,\h_2,Q)\pr_1^*e(E^+)=Z^*(\h_1,\h_2,Q)\pr_1^*e(E^-),$$
where $\Zp^*$ is obtained from $\Zp$ by disregarding the $Q^{\b0}$ term and $\pr_1\!:\!\X\!\times\!\X\!\lra\!\X$ is the projection onto the first component.
This together with Theorem~\ref{Z2pt_thm} expresses $Z^*$
in terms of $\Zp_{\eta}$, $\Zpp_{\eta}$ in the $E\!=\!E^+$ case.
In all other cases, $Z^*$ can be expressed in terms of one-point GW generating functions
which can be computed under one additional assumption; see Remark~\ref{untwZ_rmk2}.
\end{rmk}

\begin{rmk}
If $E\!=\!\cO_{\P^2}(-1)\oplus\cO_{\P^2}(-2)$ and $\nH\!\in\!H^2(\P^2)$ is the hyperplane class,
then
$$\int_{\ov\M_{0,2}(\P^2,d)}e(\cV_E)\ev_1^*\nH^2\ev_2^*\nH=
\int_{\ov\M_{0,2}(\P^2,d)}e(\cV_E)\ev_1^*\nH\ev_2^*\nH^2=(-1)^d\frac{(2d)!}{2d(d!)^2}\qquad\forall\,d\!\ge\!1.
$$
If $E\!=\!\cO_{\P^2}(-1)\oplus\cO_{\P^2}(-1)\oplus\cO_{\P^2}(-1)$ and $\nH\!\in\!H^2(\P^2)$
is the hyperplane class,
then
$$\int_{\ov\M_{0,2}(\P^2,d)}e(\cV_E)\ev_1^*\nH^2\ev_2^*\nH^2=\frac{(-1)^{d+1}}{d}\qquad\forall\,d\!\ge\!1.$$
If $E\!=\!\cO_{\P^1}(-1)\oplus\cO_{\P^1}(-1)$ and $\nH\!\in\!H^2(\P^1)$ is the hyperplane class, then
$$\int_{\ov\M_{0,2}(\P^1,d)}e(\cV_E)\ev_1^*\nH\,\ev_2^*\nH=\frac{1}{d}\qquad\forall\,d\!\ge\!1.$$
These follow from \e_ref{localCY_e} in Section~\ref{stat_sec} which relies on Theorem~\ref{eZ2pt_thm},
the equivariant version of Theorem~\ref{Z2pt_thm} above.
The first of these equations implies the first statement in \cite[Proposition~2]{KlPa} by the divisor relation of
\cite[Section~26.3]{MirSym}, the second recovers the first statement in \cite[Lemma~3.1]{PaZ}, and
the third implies the Aspinwall-Morrison formula.
\end{rmk}
\subsection{Outline of the paper}\label{paperoutline_sec}

Section~\ref{overview_sec} presents the facts about symplectic toric manifolds needed for the
Gromov-Witten theory parts of the paper. This section is inspired by the view in \cite{Gi_mirr}
of a symplectic toric manifold as given by a matrix and the choice of a certain regular value
together with the holomorphic charts of \cite{Ba}. It contains proofs of all statements or references to the ones that are omitted. 
The reader interested only in the Gromov-Witten theory part may want to skip 
all proofs in Section~\ref{overview_sec}.

Section~\ref{stat_sec} gives formulas for the one-point GW generating functions $\Zp_{\eta}$, $\Zpp_{\eta}$ of \e_ref{Zeta1ptdfn_e} under an additional assumption
in terms of explicit formal power series constructed in Section~\ref{construction_sec}.
It begins with a short setup. 

The explicit GW formulas of Section~\ref{stat_sec} and Theorem~\ref{Z2pt_thm} above
follow from the equivariant statements of Section~\ref{estat_sec}.
In particular, equivariant versions of $\Zp_{\eta}$ and $\Zpp_{\eta}$ are expressed
in terms of explicit power series constructed in Section~\ref{econstruction_sec}.
Section~\ref{equiv_sec} also begins with a short setup.

An outline of the proofs of the equivariant theorems of Section~\ref{estat_sec} is given in Section~\ref{pfsoutline_sec}.
The remaining subsections of Section~\ref{pfs_sec} provide the details.

\section{Overview of symplectic toric manifolds}
\label{overview_sec}

This section reviews the basics of symplectic toric manifolds and sets up notation
that will be used throughout the rest of the paper. 
It combines the perspectives of \cite[Chapter~VII]{Au}, \cite[Section~11.3]{McDSa}, 
\cite[Section~2]{Ba}, \cite[Section~3.3.4]{CK}, \cite{Gi_mirr}, \cite{Gi_fpt}, and \cite[Sections~5,6]{Sp}.

Sections~\ref{nequiv-intro_sec}-\ref{nequivcoh_sec} give  
the definition and describe the basic properties of a compact symplectic toric manifold.
Section~\ref{equiv-intro_sec} is a preparation for localization computations in a toric setting;
it describes the fixed points and curves and the equivariant cohomology. 

\subsection{Definition, charts, and K\"{ahler} classes}
\label{nequiv-intro_sec}

Throughout this paper, $k$ and $N$ denote fixed positive integers such that $k\!\le\!N$ and
$$[N]\equiv\big\{1,2,\ldots,N\big\}.$$
If $v\!\in\!\R^k$ (or $v\!\in\!\C^N$) and $j\!\in\![k]$ (or $j\!\in\![N]$), let
$v_j\!\in\!\R$ (or $v_j\!\in\!\C$) denote the $j$-th component of~$v$
and define
$$\supp(v)\equiv\big\{j\!:\,v_j\!\neq\!0\big\}.$$
If $J\!\subseteq\![N]$, let
$$\R^J\equiv\big\{v\!\in\!\R^N\!\!:\supp(v)\!\subseteq\!J\big\}\!\cong\!\R^{|J|},\qquad\C^J \equiv\big\{z\!\in\!\C^N\!\!:\supp(z)\!\subseteq\!J\big\}\!\cong\!\C^{|J|}.$$
If $A\!=\!(a_{ij})_{i\in[k],j\in[N]}$ is a $k\!\times\!N$ matrix
and $J\!\subseteq\![N]$, denote by $A_J$ the $k\!\times\!|J|$ submatrix of~$A$ consisting
of the columns indexed by the elements of~$J$.
Let 
\begin{gather*}
\om_{\std}\equiv\frac{\mi}{2}\sum\limits_{j=1}^N\!\nd z_j\!\wedge\!\nd \overline{z}_j
\end{gather*}
be the standard symplectic form on $\C^N$. Let
\begin{gather*}
\mu_{\std}\!:\C^N\lra\R^N,~~~ 
\mu_{\std}(z_1,\ldots,z_N)\equiv\big(|z_1|^2,\ldots,|z_N|^2\big)
\end{gather*}
be the moment map for the restriction of the standard action of $\T^N\!\!\equiv\!(\C^*)^N$ 
on $(\C^N,-2\,\om_{\std})$,
$$(t_1,\ldots,t_N)\!\cdot\!(z_1,\ldots,z_N)=(t_1z_1,\ldots,t_Nz_N),$$
 to $(S^1)^N\!\!\subset\!\T^N$.
 
An integer $k\!\times\!N$ matrix $M\!=\!(m_{ij})_{i\in[k],j\in[N]}$ induces an action of
$\T^k\!\equiv\!(\C^*)^k$ on $(\C^N,-2\om_{\std})$,
\begin{equation}\label{Maction_e}
(t_1,\ldots,t_k)\!\cdot\!(z_1,\ldots,z_N)=
(t_1^{m_{11}}t_2^{m_{21}}\ldots t_k^{m_{k1}}z_1,\ldots,t_1^{m_{1N}}t_2^{m_{2N}}\ldots t_k^{m_{kN}}z_N);
\end{equation}
the moment map of its restriction to $(S^1)^k\!\subset\!\T^k$ is
$$\mu_M\equiv M\circ\mu_{\std}\!:\C^N\lra\R^k.$$
If in addition $\tau\!\in\!\R^k$, let 
\begin{equation}\label{toricman_e}\begin{split}
P^{\tau}_M&\equiv M^{-1}(\tau)\cap(\R^{\ge0})^N,\\
\wt{X}_M^{\tau}\equiv\C^N-\!\!\!\!\bigcup_{\begin{subarray}{c}J\subseteq[N]\\ \C^J\cap\mu_M^{-1}(\tau)=\eset\end{subarray}}\!\!\!\!\!\!\!\!\C^J\,
&=\big\{z\!\in\!\C^N\!:\,\C^{\supp(z)}\!\cap\!\mu_M^{-1}(\tau)\!\neq\!\eset\big\},\qquad
X_M^{\tau}\equiv\wt{X}_M^{\tau}\big/\T^k\,;
\end{split}\end{equation}
see diagram~\e_ref{Mtau_diag}.
By Proposition~\ref{toric_prp} below, $X_M^{\tau}$ is a compact projective manifold 
if the pair $(M,\tau)$ is toric
in the sense of Definition~\ref{toric_dfn}.
In this case,
$\mu_{\std}^{-1}(P^{\tau}_M)/(S^1)^k$ has a unique smooth structure making the projection 
$$\mu_{\std}^{-1}(P^{\tau}_M)\lra\mu_{\std}^{-1}(P^{\tau}_M)/(S^1)^k$$
a submersion. With this smooth structure,
$\mu_{\std}^{-1}(P^{\tau}_M)/(S^1)^k$
is diffeomorphic to
$X_M^{\tau}$
via a diffeomorphism induced by the inclusion $\mu_{\std}^{-1}(P^{\tau}_M)\!\hookrightarrow\!\wt{X}_M^{\tau}$.
We summarize this setup in a diagram:
\begin{equation}\begin{split}\label{Mtau_diag}
\xymatrix{&&&  P^{\tau}_M\!\equiv\! M^{-1}(\tau)\!\cap\!\left(\R^{\ge 0}\right)^N\ar@{^{(}->}[d]\\
\mu_M^{-1}(\tau)\!\equiv\!\mu_{\std}^{-1}(P^{\tau}_M)
\ar@{->>}_{\textnormal{projection}}[d] \ar@{^{(}->}[r] &
\wt{X}_M^{\tau} \ar@{->>}^{\textnormal{projection}}[d] \ar@{^{(}->}[r]
& \C^N\ar[r]^{\mu_{\std}}\ar[drr] _{\mu_M}
& \left(\R^{\ge 0}\right)^N\ar@{^{(}->}[r] & \ar[d]^{M}\R^N\\ 
  \frac{\mu^{-1}_{\std}(P^{\tau}_M)}{(S^1)^k}
          \ar[r]^{\textnormal{\small{diffeo}}} &
       X^{\tau}_M&&& \R^k&\hspace{-.65in}\ni\!\tau   }
\end{split}\end{equation}


Given a pair $(M,\tau)$ consisting of an integer $k\!\times\!N$ matrix $M$ and
a vector $\tau\!\in\!\R^k$, we define
\begin{equation}\begin{split}\label{Vset_e}
\V&\equiv\Big\{J\!\subseteq\![N]:|J|\!=\!k,P^{\tau}_M\!\cap\!\R^J\!\neq\!\emptyset\Big\}\\
&\equiv
\Big\{J\!\subseteq\![N]:|J|\!=\!k,\exists\,  v\!\in\!M^{-1}(\tau)\!\cap\!(\R^{\ge 0})^N
~~\hbox{s.t.}~~\supp(v)\!\subseteq\!J\Big\}.
\end{split}\end{equation}

\begin{dfn}\label{toric_dfn} 
A pair $(M,\tau)$ consisting of an integer $k\!\times\!N$ matrix $M$ 
and a vector $\tau\!\in\!\R^k$ is \sf{toric}~if
\begin{enumerate}[label=(\roman*),leftmargin=*] 
\item\label{O} $\tau$ is a regular value of $\mu_M$ and $P^{\tau}_M\!\neq\!\eset$;
\item\label{S} $\det\,M_J\!\in\!\{\pm1\}$ for all $J\!\in\!\V$;
\item\label{C} $P^0_M\!=\!\{0\}$ ($\Llra P^{\tau}_M$ is bounded).
\setcounter{saveenumi}{\arabic{enumi}}
\end{enumerate}
A toric pair $(M,\tau)$ is \sf{minimal} if
\begin{enumerate}[label=(\roman*),leftmargin=*]
\setcounter{enumi}{\arabic{saveenumi}}
\item\label{M} $P_M^{\tau}\!\cap\!\R^{[N]-\{j\}}\!\neq\!\eset$ for all $j\!\in\![N]$.
\end{enumerate}
\end{dfn}

If a pair $(M,\tau)$ satisfies \ref{S} in Definition~\ref{toric_dfn} above, then
$$z\!\in\!\C^N,\,\supp(z)\!\supseteq\!J\quad\textnormal{for some}\quad J\!\in\!\V\quad\Lra\quad
\exists\,t\!\in\!\T^k\quad\textnormal{such that}\quad (t\!\cdot\!z)_j\!=\!1\qquad\forall\,j\!\in\!J.$$
If $(M,\tau)$ is a toric pair, then a point $z\!\in\!\C^N$ lies in $\wt{X}^{\tau}_M$ 
if and only if $\supp(z)\!\supseteq\!J$ for some $J\!\in\!\V$ and
the $\T^N$-fixed points of $\X$ are indexed by $\V$;
see Lemma~\ref{moment_lmm}\ref{charts_lmm2} and Corollary~\ref{fixed_crl}\ref{fpt_crl}. 

\begin{prp}\label{toric_prp}
If $(M,\tau)$ is a toric pair, then $X^{\tau}_M$ is
a connected compact projective manifold of complex dimension $N\!-\!k$ endowed with
a $\T^N$\!-action induced from the standard action of $\T^N$ on~$\C^N$.
\end{prp}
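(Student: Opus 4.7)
My plan is to verify the six conclusions by passing between the two descriptions of $\X$ appearing in diagram~\e_ref{Mtau_diag}: the holomorphic quotient $\wt{X}^{\tau}_M/\T^k$, which directly yields the complex and projective structure and makes the $\T^N$-action transparent, and the symplectic reduction $\mu_M^{-1}(\tau)/(S^1)^k$, which yields smoothness, the correct dimension, compactness, and connectedness via Marsden--Weinstein. The diffeomorphism at the bottom of diagram~\e_ref{Mtau_diag} reconciles the two pictures.

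First, condition~\ref{O} of Definition~\ref{toric_dfn} implies that $\tau$ is a regular value of $\mu_M$, so $\mu_M^{-1}(\tau)=\mu_{\std}^{-1}(P^{\tau}_M)$ is a smooth real submanifold of $\C^N$ of real codimension~$k$. I would next show that the $(S^1)^k$-action on this level set is free. Every $z\!\in\!\mu_M^{-1}(\tau)$ has $\supp(z)\!\supseteq\!J$ for some $J\!\in\!\V$, which I would verify by a standard convex-polytope argument: any point of $P^{\tau}_M$ is a convex combination of its vertices, every vertex has support in some $J\!\in\!\V$, and $\supp$ is monotone under convex combinations. For such $z$, the $(S^1)^k$-stabilizer is trivial precisely because condition~\ref{S} gives $\det M_J=\pm1$, so the restriction of the weight lattice to the $J$-coordinates is unimodular. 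Marsden--Weinstein reduction then produces a smooth symplectic quotient of real dimension $2(N-k)$.

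Second, to transfer this smooth structure to $\X$ and upgrade it to a complex structure, I would use the standard ``polar decomposition'' of toric GIT: given any $z\!\in\!\wt{X}^{\tau}_M$ with $\supp(z)\!\supseteq\!J$ for some $J\!\in\!\V$, condition~\ref{S} provides a unique $t\!\in\!\T^k$ sending $z$ into the stratum of $\mu_{\std}^{-1}(P^{\tau}_M)$ whose $J$-coordinates are positive reals. The resulting map $\wt{X}^{\tau}_M/\T^k\to\mu_{\std}^{-1}(P^{\tau}_M)/(S^1)^k$ is a bijection, and normalising the $J$-coordinates to~$1$ provides a $\T^k$-slice $\C^{[N]-J}\!\hookrightarrow\!\wt{X}^{\tau}_M$ whose composition with the projection to $\X$ gives an explicit holomorphic chart. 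These charts, one for each $J\!\in\!\V$, simultaneously establish the complex-manifold structure on $\X$ and confirm that $\dim_{\C}\X=N-k$.

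Compactness follows from condition~\ref{C}: the polyhedron $P^{\tau}_M$ has recession cone $P^0_M=\{0\}$ and is therefore bounded, so $\mu_{\std}^{-1}(P^{\tau}_M)$ is closed and bounded in $\C^N$, hence compact, and so is its quotient. Connectedness follows from the connectedness of $\mu_{\std}^{-1}(P^{\tau}_M)$, which projects onto the convex set $P^{\tau}_M$ with connected (torus) fibres. Projectivity is standard GIT: after clearing denominators, $\tau$ determines a character of $\T^k$ whose linearization on $\C^N$ has semistable locus $\wt{X}^{\tau}_M$, exhibiting $\X$ as a projective GIT quotient. Finally, the $\T^N$-action on $\C^N$ preserves coordinate supports and hence $\wt{X}^{\tau}_M$, and it commutes with the subtorus $\T^k\!\subset\!\T^N$ defined by~\e_ref{Maction_e}, so it descends to $\X$. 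The main obstacle is the Delzant-type verification that conditions~\ref{O} and~\ref{S} together force the $(S^1)^k$-action on $\mu_M^{-1}(\tau)$ to be free and that every $z\!\in\!\wt{X}^{\tau}_M$ has $\supp(z)\!\supseteq\!J$ for some $J\!\in\!\V$; this is the technical heart of the argument that underwrites everything else, while the remaining conclusions are essentially formal consequences once this foundation is in place.
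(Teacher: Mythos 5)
Your plan follows essentially the same architecture as the paper's own treatment: the support statement ($\supp(z)\!\supseteq\!J$ for some $J\!\in\!\V$) via vertices of the polytope is Lemma~\ref{moment_lmm}\ref{vertex_lmm},\ref{charts_lmm},\ref{charts_lmm2}; freeness of the torus action from unimodularity is Lemma~\ref{top_lmm}\ref{free_lmm}; the holomorphic charts obtained by normalizing the $J$-coordinates are exactly those of Lemma~\ref{top_lmm}\ref{manifold_lmm}; and compactness via the identification of $\X$ with $\mu_M^{-1}(\tau)/(S^1)^k$ is Lemma~\ref{top_lmm}\ref{compact_lmm}. (For connectedness the paper simply observes that $\wt{X}^{\tau}_M$ is a complement of coordinate subspaces, which is lighter than your closed-map-with-connected-fibres argument, but both work.) The one genuinely different ingredient is projectivity: you invoke a GIT quotient for a character linearization, whereas the paper constructs the line bundles $L_{-\eta}$ for integral $\eta$ in the chamber $K^{\tau}_M$, identifies $c_1(L_{-\eta})$ with $\frac{1}{\pi}[\om_{\eta}]$ (Lemma~\ref{positive_lmm}, using Lemmas~\ref{reg_locus_lmm}\ref{cone_lmm} and~\ref{Kahler_lmm}), and applies Kodaira embedding. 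The paper's route has the advantage that the forms $\om_{\eta}$ and bundles $L_{-\eta}$ are needed later anyway (Propositions~\ref{dimK_prp} and~\ref{Pic_prp}); your GIT route is faster if one imports that machinery, but a complete write-up must (i)~replace the real vector $\tau$ by a rational regular value in the same chamber, which is precisely Lemma~\ref{reg_locus_lmm}\ref{cone_lmm},\ref{indep_lmm} and so is available but not free, (ii)~identify the semistable locus of the linearization with $\wt{X}^{\tau}_M$, and (iii)~use \ref{C} in Definition~\ref{toric_dfn} to see that the ring of invariants is trivial, so the quotient is projective rather than merely projective over an affine quotient.

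One step you should not attribute to \ref{S} in Definition~\ref{toric_dfn} alone: the assertion that every $\T^k$-orbit in $\wt{X}^{\tau}_M$ contains a point of $\mu_M^{-1}(\tau)$, unique up to the $(S^1)^k$-action. Condition~\ref{S} only lets you normalize the $J$-coordinates (their phases or their values); the existence of a representative on the moment level set, and the uniqueness of the positive rescaling achieving it, is the Kempf--Ness-type statement that the paper proves in Lemma~\ref{top_lmm}\ref{sat_lmm},\ref{closed_lmm},\ref{levelgen_lmm}: openness of $\T^k\!\cdot\!\mu_M^{-1}(\tau)$ via the inverse function theorem, closedness via the limiting argument relying on Lemma~\ref{moment_lmm}\ref{IJchan_lmm}, connectedness of $\wt{X}^{\tau}_M$, and a convexity argument for uniqueness. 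Your compactness claim (and the transfer of smoothness from the Marsden--Weinstein quotient) rests on exactly this identification, so it must be proved or cited explicitly rather than folded into ``polar decomposition''; you do flag a ``technical heart'' at the end, but you list only freeness and the support condition, and this orbit-meets-level-set step is the part that genuinely requires work.
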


\begin{proof}[Proof of Proposition~\ref{toric_prp}]
By Lemmas~\ref{top_lmm}\ref{connected_lmm}, \ref{compact_lmm}, and \ref{manifold_lmm} below,
$\X$ is a connected, compact complex manifold.
It admits a positive line bundle by 
Lemmas~\ref{positive_lmm}, \ref{reg_locus_lmm}\ref{cone_lmm}, and \ref{Kahler_lmm} below.
By the Kodaira Embedding Theorem \cite[p181]{GH}, $X^{\tau}_M$ is then projective.
\end{proof}

\begin{rmk}\label{toric_rmk}
If $X$ is a compact symplectic toric manifold in the sense of \cite[Definition~I.1.15]{Ca},
then the image of its moment map is a Delzant polytope $P$ (a polytope with certain properties \cite[Definition~I.2.1]{Ca});
see \cite[Theorem~1]{At} or \cite[Theorem~5.2]{GS1}.
This polytope~$P$ determines a fan~$\Si_P$, which in turn determines a compact complex manifold
$X_{\Si_P}$; see \cite[Section~VII.1.ac]{Au}. 
This complex manifold $X_{\Si_P}$ is endowed with a symplectic form, a torus action, and a moment map
with image $P$
making it into a symplectic toric manifold; 
see \cite[Theorem~VII.2.1]{Au}. 
Moreover, this symplectic form is K\"{a}hler with respect to the complex structure,
as stated in \cite[Section~3]{Gi_mirr} and can be deduced from \cite[Chapter~VII]{Au}. 
Since $X$ and $X_{\Si_P}$ have the same moment polytope (i.e.~image of the moment map),
they are isomorphic as symplectic toric manifolds by Delzant's uniqueness theorem \cite[Theorem~2.1]{De}.
On the other hand, $X_{\Si_P}\!=\!X_M^{\tau}$ for some minimal toric pair $(M,\tau)$
by the proof of \cite[Theorem~VII.2.1]{Au}. 
Thus, a compact symplectic toric manifold $(X^{2n},\om,(S^1)^n,\mu)$ in the sense of \cite[Definition~I.1.15]{Ca}
admits a complex structure~$\cJ$ so that $(X,\om,\cJ)$ is K\"{a}hler and $(X,\cJ)$ is isomorphic to $X_M^{\tau}$ for 
some minimal toric pair~$(M,\tau)$.
\end{rmk}

Lemma~\ref{top_lmm} relies on parts~\ref{charts_lmm2}  and~\ref{IJchan_lmm}
of Lemma~\ref{moment_lmm} below 
which in turn rely on the other parts of Lemma~\ref{moment_lmm}.
Lemma~\ref{Kahler_lmm} is based on Lemma~\ref{levels_lmm} and Lemma~\ref{reg_locus_lmm}\ref{indep_lmm}.
Lemma~\ref{reg_locus_lmm}\ref{cone_lmm} follows from Lemma~\ref{reg_locus_lmm}\ref{reg_lmm}, while the proof of Lemma~\ref{reg_locus_lmm}\ref{indep_lmm} uses Lemma~\ref{reg_locus_lmm}\ref{Vindep_lmm}. 

For $t\!=\!(t_1,t_2,\ldots,t_k)\!\in\!\T^k$ and $\bp\!=\!(p_1,p_2,\ldots,p_k)\!\in\!\Z^k$, let
$$t^{\bp}\equiv t_1^{p_1}t_2^{p_2}\ldots t_k^{p_k}.$$ 

\begin{lmm}\label{moment_lmm}
Let $(M,\tau)$ be a toric pair.
\begin{enumerate}[label=(\emph{\alph*}),leftmargin=*]
\item\label{polytope_lmm}
The subset $P^{\tau}_M\!\subset\!(\R^{\ge0})^N$ is a 
polytope (i.e.~the convex hull of a finite set of points).

\item\label{Mrestr_lmm} Let $\eta\!\in\!\R^k$ be any regular value of $\mu_M$. If
$w\!\in\!P^{\eta}_M$, then $$M\!:\{v\!\in\!\R^N\!:\,\supp(v)\subseteq\supp(w)\}\lra\R^k$$ is onto. In particular, if $w\!\in\!P^{\eta}_M$, then $|\supp(w)|\!\ge\!k$.

\item\label{V_lmm} If $J\!\in\!\V$, then $J\!=\!\supp(y)$ for some $y\!\in\!\mu_M^{-1}(\tau)$.

\item\label{real_lmm}If $J\!\subseteq\![N]$ and $\supp(v)\!\subseteq\!J$ for some $v\!\in\!P^{\tau}_M$,
then $\supp(w)\!=\!J$ for some $w\!\in\!P^{\tau}_M$.

\item\label{Pdim_lmm} The polytope $P^{\tau}_M$ has dimension $N\!-\!k$.

\item\label{vertex_lmm} If $v$ is a vertex of $P^{\tau}_M$, then $\supp(v)\!\in\!\V$.

\item\label{vertcoresp_lmm}
If $\textnormal{Vertices}^{\tau}_M$ is the set of vertices of the polytope $P^{\tau}_M$, the map
$$\supp:\textnormal{Vertices}^{\tau}_M\lra\V,\quad v\lra\supp(v),$$
is a bijection.

\item\label{charts_lmm} If $y\!\in\!\mu_M^{-1}(\tau)$, then $\supp(y)\!\supseteq\!J$
for some $J\!\in\!\V$.

\item\label{charts_lmm2} Let $z\!\in\!\C^N$. Then,
$z\!\in\!\wt{X}^{\tau}_M$ if and only if $\supp(z)\!\supseteq\!J$ for some $J\!\in\!\V$.

\item\label{IJchan_lmm} 
Let $I,J\!\in\!\V$ and $t^{(n)}\!\in\!\T^k$. If $|t^{(n)}|\!\lra\!\i$
and there exists $\de\!>\!0$ such that $|t^{(n)}_i|\!\ge\!\de$ for all $i\!\in\![k]$,
then $|(t^{(n)})^{M_I^{-1}M_j}|$ is unbounded for some $j\!\in\!J$.
\end{enumerate}
\end{lmm}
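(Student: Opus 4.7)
The plan is to exploit three recurring tools: elementary polyhedral convex analysis (including LP duality), the characterization that $\eta$ is a regular value of $\mu_M$ precisely when $\{M_j:j\!\in\!\supp(z)\}$ spans $\R^k$ at every $z\!\in\!\mu_M^{-1}(\eta)$, and the unimodularity of $M_J$ for $J\!\in\!\V$ provided by~\ref{S}. For~\ref{polytope_lmm}, the set $P^{\tau}_M$ is a polyhedron whose recession cone equals $P^0_M$; condition~\ref{C} forces this cone to be $\{0\}$, so $P^{\tau}_M$ is bounded and hence a polytope. For~\ref{Mrestr_lmm}, I would observe that the image of $d\mu_M|_z$ is the linear span of $\{M_j:j\!\in\!\supp(z)\}$, so regularity of $\eta$ is equivalent to $M|_{\R^{\supp(w)}}\!\to\!\R^k$ being surjective for every $w\!\in\!P^{\eta}_M$, which immediately forces $|\supp(w)|\!\ge\!k$.

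For~\ref{V_lmm}, $J\!\in\!\V$ with~\ref{S} makes $M_J$ unimodular, so $v:=M_J^{-1}\tau\!\in\!\R^J$ is the unique element of $P^{\tau}_M\!\cap\!\R^J$; if some $v_j\!=\!0$ for $j\!\in\!J$, then $y:=\sqrt{v}$ has $|\supp(y)|\!<\!k$, violating~\ref{Mrestr_lmm}, so $\supp(v)\!=\!J$. For~\ref{vertex_lmm}, at any vertex $v^*$ the definition of vertex forbids a nontrivial $u\!\in\!\ker M$ with $v^*\!\pm\!\epsilon u\!\in\!P^{\tau}_M$, forcing $\ker M\!\cap\!\R^{\supp(v^*)}\!=\!0$; combined with~\ref{Mrestr_lmm} this yields $|\supp(v^*)|\!=\!k$ with $M_{\supp(v^*)}$ invertible, so $\supp(v^*)\!\in\!\V$. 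Part~\ref{vertcoresp_lmm} then follows: \ref{vertex_lmm} gives surjectivity and the uniqueness of $M_J^{-1}\tau$ from~\ref{V_lmm} gives injectivity of $\supp$ on vertices. For~\ref{Pdim_lmm}, at a vertex $v^*$ one parametrizes $\ker M$ by its coordinates on $[N]\!\setminus\!\supp(v^*)$ to produce an $(N\!-\!k)$-dimensional tangent cone, so $\dim P^{\tau}_M\!=\!N\!-\!k$.

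For~\ref{real_lmm}, the cleanest route is LP duality. Fix $j\!\in\!J$ and consider the LP $\max v_j$ over $v\!\in\!F_J:=P^{\tau}_M\!\cap\!\R^J$. If its optimum were $0$, strong duality produces $\lambda\!\in\!\R^k$ with $\lambda\!\cdot\!\tau\!=\!0$, $\lambda\!\cdot\!M_j\!\ge\!1$, and $\lambda\!\cdot\!M_i\!\ge\!0$ for $i\!\in\!J\!\setminus\!\{j\}$; complementary slackness on any optimal $v^*\!\in\!F_J$ forces $\supp(v^*)\!\subseteq\!\{i\!\in\!J\!\setminus\!\{j\}:\lambda\!\cdot\!M_i\!=\!0\}$, so at $y:=\sqrt{v^*}$ the spanning columns $\{M_i:i\!\in\!\supp(y)\}$ all lie in the hyperplane $\ker\lambda\!\subseteq\!\R^k$ of dimension $k\!-\!1$, contradicting~\ref{Mrestr_lmm}. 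Hence the optimum is strictly positive for each $j\!\in\!J$, producing $v^{(j)}\!\in\!F_J$ with $v^{(j)}_j\!>\!0$, and the convex combination $w:=\tfrac{1}{|J|}\sum_{j\in J}v^{(j)}$ lies in $F_J$ with $\supp(w)\!=\!J$.

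For~\ref{charts_lmm}, given $y\!\in\!\mu_M^{-1}(\tau)$, the point $|y|^2\!\in\!P^{\tau}_M$ lies in some face, whose vertices $v^*$ satisfy $\supp(v^*)\!\subseteq\!\supp(|y|^2)\!=\!\supp(y)$ with $\supp(v^*)\!\in\!\V$ by~\ref{vertex_lmm}. Part~\ref{charts_lmm2} is then formal: $z\!\in\!\wt{X}^{\tau}_M$ iff some $y\!\in\!\mu_M^{-1}(\tau)$ has $\supp(y)\!\subseteq\!\supp(z)$, and the two directions follow from~\ref{V_lmm} and~\ref{charts_lmm}. For~\ref{IJchan_lmm}, condition~\ref{S} applied to $I$ and $J$ makes $M_I^{-1}M_J$ a unimodular integer $k\!\times\!k$ matrix, so $\{M_I^{-1}M_j:j\!\in\!J\}$ is a basis of $\R^k$; setting $x^{(n)}_i:=\log|t^{(n)}_i|$, if every $|(t^{(n)})^{M_I^{-1}M_j}|$ were bounded, then each $(M_I^{-1}M_j)^\top x^{(n)}$ is bounded above, so $x^{(n)}$ itself is bounded above, and together with $x^{(n)}_i\!\ge\!\log\delta$ this bounds $|t^{(n)}|$, contradicting $|t^{(n)}|\!\to\!\infty$. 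The main obstacle is part~\ref{real_lmm}, where a naive kernel-perturbation can fail on sign grounds; the LP duality route above sidesteps this and turns the statement into a clean consequence of~\ref{Mrestr_lmm} and strong duality.
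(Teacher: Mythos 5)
Most of your argument is correct, and in places it takes a genuinely different route than the paper: for part (d) you replace the paper's inductive interpolation (which adjoins one index of $J$ at a time by sliding from a point of $P^{\tau}_M\cap\R^I$ toward an auxiliary solution of $Mw'\!=\!\tau$ with a sign-adjusted parameter) by an LP-duality argument, and you prove (f) directly from the extreme-point definition and then get (e) from a tangent-cone computation at a vertex, whereas the paper deduces (e) from (d) and then (f) from (e); those parts are sound. The genuine gap is in (j). Boundedness of $|(t^{(n)})^{M_I^{-1}M_j}|$ for $j\!\in\!J$ only gives \emph{upper} bounds $\langle M_I^{-1}M_j,x^{(n)}\rangle\!\le\!C$ for $x^{(n)}\!:=\!(\log|t^{(n)}_1|,\ldots,\log|t^{(n)}_k|)$, and the step ``so $x^{(n)}$ itself is bounded above'' does not follow from $\{M_I^{-1}M_j\}_{j\in J}$ being a basis: upper bounds on a basis of functionals, even together with the coordinatewise lower bound $x^{(n)}_i\!\ge\!\log\de$, bound $x^{(n)}$ only if the cone $\{x\!\ge\!0:\langle M_I^{-1}M_j,x\rangle\!\le\!0~\forall j\!\in\!J\}$ is trivial, and invertibility (even unimodularity) alone does not give this: if $M_I^{-1}M_{j_1}\!=\!(1,0)$ and $M_I^{-1}M_{j_2}\!=\!(0,-1)$, then $t^{(n)}\!=\!(1,e^n)$ has both monomials bounded while $|t^{(n)}|\!\lra\!\i$. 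What is missing is exactly the positivity encoded in $I,J\!\in\!\V$: by (c) there are $v,w\!\in\!(\R^{>0})^k$ with $M_Iv\!=\!\tau\!=\!M_Jw$, so $\sum_{j\in J}w_j\,M_I^{-1}M_j\!=\!v$ is strictly positive; weighting your upper bounds by the $w_j$ gives $\langle v,x^{(n)}\rangle\!\le\!C'$, which together with $x^{(n)}_i\!\ge\!\log\de$ bounds every coordinate of $x^{(n)}$ and produces the contradiction with $|t^{(n)}|\!\lra\!\i$. This is precisely the role of the vector $\fa$ with $M_I^{-1}M_J\fa\!\in\!(\Z^{>0})^k$ in the paper's proof; the fix is short, but as written your deduction is a non sequitur.

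A smaller slip occurs in (g): ``(f) gives surjectivity'' is backwards, since (f) only shows that $\supp$ maps vertices \emph{into} $\V$. Surjectivity requires that, for every $J\!\in\!\V$, the unique point $v\!=\!M_J^{-1}\tau$ of $P^{\tau}_M\cap\R^J$ produced in your (c) is actually a vertex of $P^{\tau}_M$; this needs the one-line observation that if $v\!=\!\tfrac12(a+b)$ with $a,b\!\in\!P^{\tau}_M$, then $a_i\!=\!b_i\!=\!0$ for $i\!\notin\!J$ by nonnegativity, so $a,b\!\in\!P^{\tau}_M\cap\R^J\!=\!\{v\}$. Injectivity then follows from the uniqueness of $M_J^{-1}\tau$ as you say. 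With that line added and (j) repaired as above, the remaining parts (a)--(f), (h), (i) are correct, with (d)--(f) constituting a legitimate alternative to the paper's order of deductions.
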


\begin{proof}
\ref{polytope_lmm} By \cite[Theorem~1.1]{Zi}, a subset of $\R^N$ is a polytope 
if and only if it is a bounded intersection of half-spaces.
Thus, the claim follows from~\ref{C} in Definition~\ref{toric_dfn}.\\
\ref{Mrestr_lmm} This is immediate from the surjectivity of $\nd_w\mu_M$.\\
\ref{V_lmm} This follows from the second statement in \ref{Mrestr_lmm}.\\
\ref{real_lmm}  Assume that $\supp(v)\!\subseteq\!I\!\subsetneqq\!J$ 
and that there exists $v'\!\in\!P^{\tau}_M$ with $\supp(v')\!=\!I$.
Let $I_1\!\supset\!I$ with $I_1\!\subseteq\!J$ and $|I_1|\!=\!|I|\!+\!1$.
We show that there exists $w\!\in\!P^{\tau}_M$ with $\supp(w)\!=\!I_1$.
By the first statement in~\ref{Mrestr_lmm},
there exists $w'\!\in\!M^{-1}(\tau)\!\subset\!\R^N$ with $\supp(w')\!=\!I_1$.
Let $w\!=\!(1\!-\!\la)v'\!+\!\la w'$ with $\la\!\in\!\R$ satisfying
$$\la w_j'\!>\!0\quad\hbox{if}\quad j\in I_1\!-\!I\quad\hbox{and}\quad
\la\left(1\!-\!\frac{w_j'}{v_j'}\right)\!<\!1\quad\forall\,j\!\in\!I.$$\\
\ref{Pdim_lmm} By \ref{real_lmm} together with the second condition in~\ref{O} in Definition~\ref{toric_dfn}, $\supp(w)\!=\![N]$ for some $w\!\in\!P^{\tau}_M$
and thus $\dim P^{\tau}_M\!=\!N\!-\!k$, since~$M$ has rank~$k$ by \ref{Mrestr_lmm}.\\ 
\ref{vertex_lmm} By \ref{Pdim_lmm}, $|\supp(v)|\!\le\!k$; 
the opposite inequality follows from the second statement in~\ref{Mrestr_lmm}.\\
\ref{vertcoresp_lmm}
By \ref{vertex_lmm}, $\supp(v)\!\in\!\V$ for every vertex $v$ of $P^{\tau}_M$ .
The map $\supp$ is injective by \ref{S} in Definition~\ref{toric_dfn} and surjective
by \ref{V_lmm} and \ref{S} in Definition~\ref{toric_dfn}.\\
\ref{charts_lmm} By \cite[Proposition~2.2]{Zi}, every polytope is the convex hull of its vertices; 
since $\mu_{\std}(y)\!\in\!P^{\tau}_M$ and $P^{\tau}_M$ is a polytope by \ref{polytope_lmm},
$$\mu_{\std}(y)=\sum_{s=1}^r\la_sv_s$$
for some vertices $v_1,v_2,\ldots,v_r\!\in\!P^{\tau}_M$ and $\la_1,\la_2,\ldots,\la_r\!\in\!\R^{>0}$.
Then, $\supp(y)\!\supseteq\!\supp(v_1)$
and $\supp(v_1)\!\in\!\V$ by \ref{vertex_lmm}.\\
\ref{charts_lmm2} If $z\!\in\!\wt{X}^{\tau}_M$,
there exists $y\!\in\!\C^{\supp(z)}\!\cap\!\mu_M^{-1}(\tau)$.
By~\ref{charts_lmm}, there exists $J\!\in\!\V$ with
$J\!\subseteq\!\supp(y)$. Since $\supp(y)\!\subseteq\!\supp(z)$,
it follows that $J\!\subseteq\!\supp(z)$. The converse follows from~\ref{V_lmm}.\\
\ref{IJchan_lmm} 
By~\ref{V_lmm}, there exist $v,w\!\in\!(\R^{>0})^k$ such that $M_Iv\!=\!\tau\!=\!M_Jw$.
By \ref{S} in Definition~\ref{toric_dfn}, it follows that there exists $\fa\!\in\!(\Z^{>0})^k$ such that $M_I^{-1}M_J\fa\!\in\!(\Z^{>0})^k$.

Assume by contradiction that $|(t^{(n)})^{M_I^{-1}M_j}|$ is a bounded sequence for all $j\!\in\!J$.
By passing to subsequences, we may assume that $|(t^{(n)})^{M_I^{-1}M_j}|$ is convergent for all $j\!\in\!J$.
It follows that
\begin{equation}\label{magic_e}
\prod\limits_{j\in J}\big|(t^{(n)})^{M_I^{-1}M_j}\big|^{\fa_j}=\big|(t^{(n)})^{M_I^{-1}M_J\fa}\big|
\end{equation}
is also convergent. On the other hand, by passing to some subsequences, we may assume that
for each $i\!\in\![k]$, $|t^{(n)}_i|$ has a limit (possibly $\i$). Since at least one of these limits is $\i$
and none is $0$, the right-hand side of \e_ref{magic_e} diverges leading to a contradiction.
\end{proof}

For $z\!\in\!\C^N$ and
$J\!=\!\{j_1\!<\!j_2\!<\ldots\!<j_n\}\!\subseteq\![N]$, let
$$z_J\equiv(z_{j_1},z_{j_2},\ldots,z_{j_n}).$$
For $z\!\in\!\wt{X}^{\tau}_M$, let $[z]\!\in\!\X$ denote the corresponding class.
 
\begin{lmm}\label{top_lmm}
Let $(M,\tau)$ be a toric pair.
\begin{enumerate}[label=(\emph{\alph*}),leftmargin=*]
\item\label{connected_lmm} The space $\wt{X}^{\tau}_M$ is path-connected.
\item\label{free_lmm} The torus $\T^k$ acts freely on $\wt{X}^{\tau}_M$.
\item\label{sat_lmm} The subset $\T^k\!\cdot\!\mu_M^{-1}(\tau)$ of $\C^N$ is open.
\item\label{closed_lmm} The subset $\T^k\!\cdot\!\mu_M^{-1}(\tau)$ of 
$\wt{X}^{\tau}_M$ is closed.

\item\label{levelgen_lmm} There is a unique map
$$\rho_M^{\tau}\!: \ti{X}_M^{\tau}\lra(\R^{>0})^k\subset\T^k
\qquad\textnormal{s.t.}\quad
\rho_M^{\tau}(z)\!\cdot\!z\in\mu_M^{-1}(\tau)~~\forall\,z\!\in\!\wt{X}^{\tau}_M\,.$$
Furthermore, this map is smooth.

\item\label{muMquot_lmm} The quotient $\mu_M^{-1}(\tau)/(S^1)^k$ is a compact 
and Hausdorff.

\item\label{compact_lmm}
The inclusion $\mu_M^{-1}(\tau)\!\hookrightarrow\!\wt{X}^{\tau}_M$
induces a homeomorphism
\begin{equation}\label{re-co_e}
\mu_M^{-1}(\tau)/(S^1)^k\lra X^{\tau}_{M}.
\end{equation}
In particular, $\X$ is compact and Hausdorff.

\item\label{manifold_lmm}
The space $\X$ is a complex manifold of complex dimension $N\!-\!k$.
\end{enumerate}
\end{lmm}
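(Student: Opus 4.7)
The plan is to construct explicit holomorphic charts on $X^{\tau}_M$ indexed by $\V$, mimicking the slice construction of \cite{Ba}. By part~\ref{compact_lmm} of this lemma, $X^{\tau}_M$ is already Hausdorff, so only the smooth/complex atlas remains.

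For each $J\!\in\!\V$, let
$$\wt{U}_J\equiv\big\{z\!\in\!\wt{X}^{\tau}_M:\supp(z)\!\supseteq\!J\big\}=\big\{z\!\in\!\C^N:z_j\!\neq\!0~\forall\,j\!\in\!J\big\}\cap\wt{X}^{\tau}_M,$$
which is open in $\wt{X}^{\tau}_M$, and let $U_J\!\subseteq\!\X$ be its image. Since the quotient map $\wt{X}^{\tau}_M\!\to\!\X$ is open and $\wt{U}_J$ is $\T^k$-invariant, $U_J$ is open; Lemma~\ref{moment_lmm}\ref{charts_lmm2} shows that $\{U_J\}_{J\in\V}$ covers~$\X$. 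For each $z\!\in\!\wt{U}_J$, I will look for $t\!\in\!\T^k$ with $(t\!\cdot\!z)_j\!=\!1$ for all $j\!\in\!J$, i.e.~$t^{M_j}\!=\!z_j^{-1}$ for $j\!\in\!J$. Since $\det M_J\!\in\!\{\pm1\}$ by \ref{S} of Definition~\ref{toric_dfn}, the matrix $M_J$ is invertible over $\Z$; thus this system has a solution $t\!=\!(z_J)^{-M_J^{-1}}$ (multi-index notation) and this solution is a Laurent monomial in $z_J$ with integer exponents. Uniqueness of $t$ follows from the freeness of the $\T^k$-action on $\wt{X}^{\tau}_M$ given by part~\ref{free_lmm}. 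Define
$$\phi_J:U_J\lra\C^{[N]-J},\qquad \phi_J([z])\equiv(t\!\cdot\!z)_{[N]-J},$$
with $t$ as above. The map is well-defined and continuous, and admits a continuous inverse sending $\xi\!\in\!\C^{[N]-J}$ to $[z]$, where $z_j\!=\!1$ for $j\!\in\!J$ and $z_j\!=\!\xi_j$ for $j\!\notin\!J$ (the resulting $z$ lies in $\wt{X}^{\tau}_M$ by Lemma~\ref{moment_lmm}\ref{charts_lmm2}, since $\supp(z)\!\supseteq\!J$). So $\phi_J$ is a homeomorphism onto $\C^{N-k}$.

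It remains to verify that the transition maps are holomorphic. Fix $I,J\!\in\!\V$ with $U_I\!\cap\!U_J\!\neq\!\emptyset$. The open set $\phi_J(U_I\!\cap\!U_J)\!\subseteq\!\C^{[N]-J}$ consists of those $\xi$ for which the point $z$ with $z_J\!=\!\b1$, $z_{[N]-J}\!=\!\xi$ satisfies $\supp(z)\!\supseteq\!I$; equivalently, $\xi_i\!\neq\!0$ for every $i\!\in\!I\!-\!J$. On this open set, the computation of $\phi_I\!\circ\!\phi_J^{-1}(\xi)$ requires solving $t^{M_I}\!=\!z_I^{-1}$, giving $t\!=\!z_I^{-M_I^{-1}}$, and then
$$\big(\phi_I\!\circ\!\phi_J^{-1}(\xi)\big)_\ell=t^{M_\ell}z_\ell=z_I^{-M_I^{-1}M_\ell}z_\ell\qquad\forall\,\ell\!\in\![N]\!-\!I,$$
where $z_j\!=\!\xi_j$ for $j\!\notin\!J$ and $z_j\!=\!1$ for $j\!\in\!J$. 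Because $M_I^{-1}$ has integer entries (again by \ref{S}), each component is a Laurent monomial in the $\xi_i$ with $i\!\in\!I\!-\!J$, and these are nonvanishing on $\phi_J(U_I\!\cap\!U_J)$; hence the transition is holomorphic (in fact algebraic). As each chart has image $\C^{N-k}$, this exhibits $\X$ as a complex manifold of complex dimension $N\!-\!k$.

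The main subtlety is in the transition maps: I must identify the precise open subset of $\C^{[N]-J}$ on which the gluing is defined and verify that the integer matrix $M_I^{-1}M_\ell$ produces Laurent monomials whose poles only involve the coordinates $\xi_i$ with $i\!\in\!I\!-\!J$ that are already required to be nonzero on the overlap. Everything else---existence of the slice representative, continuity of $\phi_J$, coverage by the $U_J$---reduces to the combinatorial properties of $\V$ already established in Lemma~\ref{moment_lmm} and the freeness in part~\ref{free_lmm}.
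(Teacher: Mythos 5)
For the one part you do address, Lemma~\ref{top_lmm}\ref{manifold_lmm}, your construction is correct and is essentially the paper's own: normalizing a representative by $t=(z_J)^{-M_J^{-1}}$ reproduces exactly the charts $h_J$ of \e_ref{charts_e}, the covering of $\X$ by the $U_J$ is Lemma~\ref{moment_lmm}\ref{charts_lmm2}, and the overlap maps are Laurent monomials whose denominators involve only the coordinates indexed by $I\!-\!J$, hence are holomorphic on the overlap, just as in the paper. One small slip: uniqueness of the normalizing $t$ does not follow from freeness of the action by itself (freeness concerns the stabilizer of a single point, while here $t_1\!\cdot\!z$ and $t_2\!\cdot\!z$ could a priori be different points with $J$-coordinates equal to $1$); what you need is injectivity of $t\mapsto(t^{M_j})_{j\in J}$, which is the same $\det M_J\!\in\!\{\pm1\}$ fact used to prove \ref{free_lmm}, so this is a one-line repair.

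The genuine gap is one of scope: the statement under review is all of Lemma~\ref{top_lmm}, parts \ref{connected_lmm}--\ref{manifold_lmm}, and your proposal proves only \ref{manifold_lmm}, explicitly taking \ref{free_lmm} and \ref{compact_lmm} as inputs. The substantive analytic content of the paper's proof lies precisely in the parts you omit: openness of $\T^k\!\cdot\!\mu_M^{-1}(\tau)$ via surjectivity of the differential of $(t,z)\mapsto\mu_M(t\!\cdot\!z)$ and the Inverse Function Theorem (part \ref{sat_lmm}); closedness via a limiting argument that crucially invokes Lemma~\ref{moment_lmm}\ref{IJchan_lmm} to rule out degenerating group elements (part \ref{closed_lmm}); existence, uniqueness, and smoothness of the retraction $\rho^{\tau}_M$ onto $\mu_M^{-1}(\tau)$, proved by a Kirwan-type convexity argument (part \ref{levelgen_lmm}); and compactness and Hausdorffness of $\mu_M^{-1}(\tau)/(S^1)^k$ together with the homeomorphism \e_ref{re-co_e} (parts \ref{muMquot_lmm} and \ref{compact_lmm}), on which your own appeal to Hausdorffness of $\X$ rests. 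Without proofs of \ref{connected_lmm}--\ref{compact_lmm}, the proposal does not establish the lemma as stated, even though the chart construction it does supply coincides with the paper's.
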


\begin{proof}
\ref{connected_lmm}
This holds since $\wt{X}^{\tau}_M$ is the complement of coordinate subspaces in $\C^N$.\\
\ref{free_lmm}
Let $t\!\in\!\T^k$ and $z\!\in\!\wt{X}^{\tau}_M$ be such that $t\!\cdot\!z\!=\!z$.
By Lemma~\ref{moment_lmm}\ref{charts_lmm2}, there exists $J\!\in\!\V$ such~that
$$J\!\equiv\{j_1\!<\!\ldots\!<\!j_k\}\subseteq\!\supp(z).$$
By~\ref{S} in Definition~\ref{toric_dfn}, the group homomorphism 
$$\T^k\lra\T^{k},\quad t\lra(t^{M_{j_1}},\ldots,t^{M_{j_k}}),$$
is injective and so $t\!=\!(1,1,\ldots,1)$.\\
\ref{sat_lmm}
For each $z\!\in\!\C^N$, let 
$$M_z\equiv M\left(\begin{array}{ccc}|z_1|&&0\\ &\ddots&\\ 0&&|z_N|\end{array}\right).$$
If $z\!\in\!\mu_M^{-1}(\tau)$, $\supp(z)\!\supseteq\!J$ for some $J\!\in\!\V$
by Lemma~\ref{moment_lmm}\ref{charts_lmm}.
Since $M_J$ is invertible by~\ref{S} in Definition~\ref{toric_dfn}, so are $(M_z)_J$ and 
$M_z(M_z)^{\tr}$.
Since the differential of the map
$$\left(\R^{>0}\right)^k\lra\R^k, \qquad t\lra\mu_M(t\!\cdot\!z),$$
at $t\!=\!(1,\ldots,1)\!\in\!(\R^{>0})^k\subset\T^k$ is $2M_z(M_z)^{\tr}$, the differential of
the map
$$\T^k\times\mu_M^{-1}(\tau)\lra\R^k, \qquad (t,z)\lra \mu_M(t\!\cdot\!z),$$
is surjective at $(1,z)$ for all $z\!\in\!\mu_M^{-1}(\tau)$.
Since the restriction of this differential to the second component vanishes, 
the differential of the map 
\begin{equation}\label{multiplication_e}
\T^k\times \mu_M^{-1}(\tau)\lra\C^N, \qquad (t,z)\lra t\!\cdot\!z,
\end{equation}
is surjective at $(1,z)$ for all $z\!\in\!\mu_M^{-1}(\tau)$ and so,
by the Inverse Function Theorem, the image of \e_ref{multiplication_e}
contains an open neighborhood of $\mu_M^{-1}(\tau)$ in $\C^N$.\\
\ref{closed_lmm} Let $z^{(n)}\!\in\!\wt{X}^{\tau}_M$  and $t^{(n)}\!\in\!\T^k$
be sequences such~that 
$$\lim_{n\lra\i}z^{(n)}=z\!\in\!\wt{X}^{\tau}_M \qquad\hbox{and}\qquad
y^{(n)}\equiv t^{(n)}\!\cdot\!z^{(n)}\in \mu_M^{-1}(\tau).$$
By~\ref{C} in Definition~\ref{toric_dfn}, we can assume that 
$y^{(n)}\!\lra\!y\!\in\!\mu_M^{-1}(\tau)$.
By Lemma~\ref{moment_lmm}\ref{charts_lmm2}, there exist $J(y),J(z)\!\in\!\V$ such~that
$$J(y)\equiv\{j_1\!<\!\ldots\!<\!j_k\big\}\subseteq\supp(y)
\qquad\hbox{and}\qquad J(z)\subseteq\supp(z);$$ 
we can assume that $J(y),J(z)\!\subseteq\!\supp(y^{(n)})\!=\!\supp(z^{(n)})$ for all~$n$.
By \ref{S} in Definition~\ref{toric_dfn}, $M_{J(y)}$ is invertible and so
$$t^{(n)}_i=\big(\ti{t}^{(n)}\big)^{\left(M_{J(y)}^{-1}\right)_i}\,,
\qquad\hbox{where}\quad
\big(\ti{t}^{(n)}\big)_i=\frac{(y^{(n)})_{j_i}}{(z^{(n)})_{j_i}}
\qquad\forall\,i=1,\ldots,k.$$
Since $(y^{(n)})_j\!\lra\!y_j\!\neq\!0$ for all $j\!\in\!J(y)$ and $(z^{(n)})_j\!\lra\!z_j$, 
$|(\ti{t}^{(n)})_i|\!\ge\!\de$ for some $\de\!\in\!\R^{>0}$ and for all~$n$ and $i$.
If $|(\ti{t}^{(n)})|$ is not bounded above,
after passing to a subsequence we can assume that $|\ti{t}^{(n)}|\!\lra\!\i$.
By Lemma~\ref{moment_lmm}\ref{IJchan_lmm}, there exists $j\!\in\!J(z)$ such that, after passing to a subsequence, 
$$\big|(t^{(n)})^{M_j}\big|=\big|(\ti{t}^{(n)})^{M_{J(y)}^{-1}M_j}\big|\lra\i.$$ 
Since $t^{(n)}\!\cdot\!z^{(n)}\!\lra\!y$, it follows that 
$(z^{(n)})_j\!\lra\!0$ and so $j\!\not\in\!\supp(z)$,
contrary to the assumption.
Thus, $\{\ti{t}^{(n)}\}$ is a compact subset of~$\T^k$.
After passing to a subsequence, we can thus assume that $t^{(n)}\!\lra\!t\!\in\!\T^k$.
It follows that 
$$t\cdot z=\lim_{n\lra\i}t^{(n)}\cdot\lim_{n\lra\i}z^{(n)}
=\lim_{n\lra\i}t^{(n)}\!\cdot\!z^{(n)}=\lim_{n\lra\i}y^{(n)}=y.$$
Thus, $z\!\in\!\T^k\!\cdot\!\mu_M^{-1}(\tau)$.\\
\ref{levelgen_lmm} By the proof of~\ref{sat_lmm}, $\tau$ is a regular value of the smooth map
$$\Phi\!: \big(\R^{>0}\big)^k\times\wt{X}_M^{\tau}\lra\R^k, \qquad
(t,z)\lra\mu_M(t\!\cdot\!z),$$
and the projection map $\pi_2\!:\!\Phi^{-1}(\tau)\!\lra\!\wt{X}_M^{\tau}$ is a submersion.
By~\ref{connected_lmm}, \ref{sat_lmm}, and~\ref{closed_lmm}, this map is surjective.
We show that it is also injective; by~\ref{connected_lmm}, \ref{sat_lmm}, and~\ref{closed_lmm},
this is equivalent to showing that
$$(r_1,\ldots,r_k)\!\in\!\R^k,~z,(\ne^{r_1},\ldots,\ne^{r_k})\!\cdot\!z\in\mu_M^{-1}(\tau)
\qquad\Lra\qquad r_i=0\qquad\forall\,i=1,\ldots,k,$$
where the action of $(\ne^{r_1},\ldots,\ne^{r_k})\!\in\!\T^k$ on $z$ is defined by \e_ref{Maction_e} as above.
We present the argument in the proof of \cite[7.2~Lemma]{Ki}.
Let 
$$f:\R\lra\R,\quad f(u)\equiv\Big\langle\mu_M\left[\left(\ne^{ur_1},\ldots,\ne^{ur_k}\right)\!\cdot\!z\right],(r_1,\ldots,r_k)\Big\rangle\qquad\forall\,u\!\in\!\R.$$
Since $f(0)\!=\!f(1)$, there exists $u_0\!\in\!(0,1)$ such that $f'(u_0)\!=\!0$.
Since
$$f'(u_0)\!=\!2\sum\limits_{j=1}^N\ne^{2u_0\lr{(r_1,\ldots,r_k),M_j}}\Big\langle(r_1,\ldots,r_k),M_j\Big\rangle^2|z_j|^2,$$
$f'(u_0)\!=\!0$ implies that $\lr{(r_1,\ldots,r_k),M_j}z_j\!=\!0$ for all $j\!\in\![N]$.
By Lemma~\ref{moment_lmm}\ref{charts_lmm2}, there exists $J\!\in\!\V$ such that $J\!\subseteq\!\supp(z)$
and so $\lr{(r_1,\ldots,r_k),M_j}\!=\!0$ for all $j\!\in\!J$. By \ref{S} in Definition~\ref{toric_dfn}, this implies that
$r_i\!=\!0$ for all $i\!\in\![k]$. The map $\rho^{\tau}_M$ is $\pi_2^{-1}$ composed with the projection $(\R^{>0})^k\!\times\!\X\!\lra\!(\R^{>0})^k$.\\
\ref{muMquot_lmm}  Since $\mu_M^{-1}(\tau)$ is compact by~\ref{C} in Definition~\ref{toric_dfn},
so is the quotient space $\mu_M^{-1}(\tau)/(S^1)^k$.
If $p$ is the quotient projection map and $A\!\subset\!\mu_M^{-1}(\tau)$ is a closed subset,
$$p^{-1}\big(p(A)\big)=(S^1)^k\cdot A
\equiv\big\{t\!\cdot\!z\!: z\!\in\!A,~t\!\in\!(S^1)^k\big\}$$
is the image of the compact subset $(S^1)^k\!\times\!A$ in $\mu_M^{-1}(\tau)$
under the continuous multiplication map
$$(S^1)^k\times \mu_M^{-1}(\tau)\lra  \mu_M^{-1}(\tau)$$
and thus compact.
Since $\mu_M^{-1}(\tau)$ is Hausdorff, it follows that
$p^{-1}(p(A))$ is a closed subset of $\mu_M^{-1}(\tau)$.
We conclude the quotient map~$p$ is a closed map.
Since $\mu_M^{-1}(\tau)$ is a normal topological space,  
by \cite[Lemma~73.3]{Mu} so is $\mu_M^{-1}(\tau)/(S^1)^k$.\\
\ref{compact_lmm}
The map~\e_ref{re-co_e} is well-defined, since the inclusion 
$\mu_M^{-1}(\tau)\!\hookrightarrow\!\wt{X}^{\tau}_M$ is equivariant 
under the inclusion $(S^1)^k\!\hookrightarrow\!\T^k$, and is continuous 
by the defining property of the quotient topology.
The~map
$$\wt{X}^{\tau}_M\lra \mu_M^{-1}(\tau), \qquad z\lra \rho_M^{\tau}(z)\cdot z,$$
is equivariant with respect to the natural projection $\T^k\!\lra\!(S^1)^k$
by the uniqueness property in~\ref{levelgen_lmm}
and thus induces a continuous map in the opposite direction to~\e_ref{re-co_e}.
Since $\rho^{\tau}_M|_{(\mu_M^{-1}(\tau))}\!=\!(1,\ldots,1)$, the two maps are easily seen to be mutual inverses.\\
\ref{manifold_lmm}
We cover $X_M^{\tau}$ by holomorphic charts as in \cite[Propositions~2.17, 2.18]{Ba}.
For each $J\!\in\!\V$, let
\begin{gather}
[N]\!-\!J\equiv \{i_1\!<\!i_2\!<\!\ldots\!<\!i_{N-k}\},\quad
\wt{U}_J\equiv\left\{z\!\in\!\C^N\!\!:\supp(z)\!\supseteq\!J\right\},\
\quad U_J\equiv\wt{U}_J/\T^k,
\notag\\
\label{charts_e}
h_J:U_J\lra\C^{N-k},\qquad
h_J[z]\equiv
\left(\frac{z_{i_1}}{z_J^{M_J^{-1}M_{i_1}}},\frac{z_{i_2}}{z_J^{M_J^{-1}M_{i_2}}},\ldots,
\frac{z_{i_{N-k}}}{z_J^{M_{J}^{-1}M_{i_{N-k}}}}\right).
\end{gather}
By Lemma~\ref{moment_lmm}\ref{charts_lmm2}, the collections
$\{\wt{U}_J:J\!\in\!\V\}$ and $\{U_J:J\!\in\!\V\}$ cover $\wt{X}^{\tau}_M$ and $\X$, respectively.
The map $h_J$ is well-defined. First, $M_J^{-1}$ exists and is an integer matrix by \ref{S} in Definition~\ref{toric_dfn}.
Second, if $t\!\in\!\T^k$, $z\!\in\!\wt{U}_J$, and
$J\!\equiv\!\{j_1\!<\!j_2\!<\ldots\!<\!j_k\}$, then
\begin{align*}\begin{split}
(t\!\cdot\!z)_J^{-M_J^{-1}M_{i_s}}(t\!\cdot\!z)_{i_s}\!&=\!
\left(\left(t^{M_{j_1}}z_{j_1}\right)^{-\left(M_J^{-1}M_{i_s}\right)_{1}}
\ldots\left(t^{M_{j_k}}z_{j_k}\right)^{-\left(M_J^{-1}M_{i_s}\right)_{k}}\right)t^{M_{i_s}}z_{i_s}\\
&=t^{-M_J\left(M_J^{-1}M_{i_s}\right)+M_{i_s}}z_J^{-M_J^{-1}M_{i_s}}z_{i_s}
\!=\!
z_J^{-M_J^{-1}M_{i_s}}z_{i_s},\qquad\forall\,s\!\in\![N\!-\!k].\qquad\qquad\,\,\,\,\,
\end{split}\end{align*}
The map $h_J^{-1}$ is the composition of the continuous maps
$$\C^{N-k}\xrightarrow{\wt{h_J^{-1}}}\wt{U}_{J}\xrightarrow{\textnormal{projection}}U_J,\qquad
\left(\wt{h_J^{-1}}(z)\right)_i=
\begin{cases}
z_s,&\hbox{ if }i\!=\!i_s,\\
1,&\hbox{ if }i\!\in\!J,
\end{cases}\qquad\forall\,i\!\in\![N].$$
The composition $\C^{N-k}\lra U_J\xrightarrow{h_J}\C^{N-k}$ is obviously the identity.
The other relevant composition is given by
$U_J\!\ni\![z]\lra [y]\!\in\!U_J$, where
$$y_i=\begin{cases}
z_J^{-M_J^{-1}M_{i_s}}\cdot z_{i_s} ,&\hbox{ if }i\!=\!i_s,\\
1,&\hbox{ if }i\!\in\!J,
\end{cases}
\qquad\forall\,j\!\in\![N].$$
Let $t_r\!\equiv\!z_J^{-(M_J^{-1})_r}$ for all $r\!\in\![k]$; it follows that $t\!\cdot\!z\!=\!y$.

If in addition $J'\!\in\!\V$, the domain and image of the overlap map $h_J\!\circ\!h_{J'}^{-1}$
are complements of some the coordinate subspaces in~$\C^{N-k}$,
and every component of this map is a ratio of monomials in the complex coordinates.
In particular, this map is holomorphic.
\end{proof}

\begin{rmk}\label{submersion_rmk}
Let $(M,\tau)$ be a toric pair. 
The projection $\pi\!:\!\wt{X}^{\tau}_M\!\lra\!\X$ is a holomorphic submersion;
this can be seen using the charts \e_ref{charts_e}.
\end{rmk}

Let $K^{\tau}_M$ be the connected component of $\tau$ inside the regular value locus of $\mu_M$.

\begin{lmm}\label{reg_locus_lmm}
Let $(M,\tau)$ be a toric pair.
\begin{enumerate}[label=(\emph{\alph*}),leftmargin=*]
\item\label{reg_lmm}
Let $\eta\!\in\!\R^k$.
Then, $\eta$ is a regular value of $\mu_M$ if and only if 
$\eta\!\not\in\!M_J(\R^{\ge 0})^{|J|}$ for every
$J\!\subset\![N]$ with $|J|\!\le\!k\!-\!1$.
\item\label{cone_lmm}
The subset $K^{\tau}_M$ of $\R^k$ is an open cone (i.e. an open subset of $\R^k$ such that $\la\eta\!\in\!K^{\tau}_M$
whenever $\la\!>\!0$ and $\eta\!\in\!K^{\tau}_M$). 
\item\label{Vindep_lmm}
For every $\eta\!\in\!K^{\tau}_M$,  
$\mathscr{V}^{\eta}_M\!=\!\V$.
\item\label{indep_lmm}
For every $\eta\!\in\!K^{\tau}_M$,
$(M,\eta)$ is a toric pair and $X^{\eta}_M\!=\!X^{\tau}_M$.
\end{enumerate}
\end{lmm}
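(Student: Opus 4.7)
The plan is to handle the four parts in order, since each builds on the previous, and throughout to exploit the observation that at any $z \in \C^N$ the image of $d_z\mu_{\std}$ is exactly $\R^{\supp(z)}$, so $d_z \mu_M = M \circ d_z \mu_{\std}$ is surjective if and only if $M_{\supp(z)}$ has rank $k$. The easy direction of (a) is then immediate: from $\eta = M_J u$ with $u \ge 0$ and $|J| \le k-1$, extending $u$ by zeros produces a critical preimage. For the reverse direction, which is the main technical step, I would use a standard linear-programming-style reduction: starting from $w \in (\R^{\ge 0})^N$ with $Mw = \eta$ and $\text{rank}(M_{\supp(w)}) < k$, choose a nonzero $v \in \R^{\supp(w)}$ in $\ker M$ and slide $w + tv$ in $t$ until a new coordinate vanishes. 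This strictly shrinks the support while keeping $w \ge 0$ and $Mw = \eta$, with $\text{rank}(M_{\supp(w)})$ remaining below $k$. Iterating, one reaches $w'$ with $|\supp(w')| = \text{rank}(M_{\supp(w')}) \le k - 1$, and $J = \supp(w')$ witnesses (a).

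Part (b) follows quickly from the resulting description of the critical value set as the finite union $\bigcup_{|J| \le k-1} M_J(\R^{\ge 0})^{|J|}$ of closed polyhedral cones: the regular value locus is open and invariant under the dilations $\eta \mapsto \la \eta$ for $\la > 0$. The component $K^\tau_M$ is then open by definition, and I would deduce the cone property by noting that for $\eta \in K^\tau_M$ and $\la > 0$ the segment from $\eta$ to $\la\eta$ lies on the ray $\R^{>0}\eta$, which is entirely inside the regular value locus by the dilation invariance; hence $\la\eta$ lies in the same connected component as $\eta$, i.e.\ in $K^\tau_M$.

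For part (c), the sliding argument from (a) also shows that for a regular $\eta$, any $J \in \mathscr{V}^\eta_M$ with $|J|=k$ must have $M_J$ nonsingular: otherwise the associated $u \ge 0$ with $M_J u = \eta$ could be slid to a representative of $\eta$ with support of size at most $k-1$, contradicting regularity of $\eta$. Thus for regular $\eta$,
$$\mathscr{V}^\eta_M = \big\{J \subseteq [N] : |J| = k,\, M_J \text{ invertible},\, M_J^{-1}\eta \in (\R^{\ge 0})^k\big\}.$$
Each indicator ``$M_J^{-1}\eta \in (\R^{\ge 0})^k$'' is locally constant on the regular value locus, because the boundary where some coordinate of $M_J^{-1}\eta$ vanishes is contained in $M_{J\setminus\{j\}}(\R^{\ge 0})^{k-1}$, a subset of the critical value set. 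Hence $\mathscr{V}^\eta_M$ is constant on the connected component $K^\tau_M$, so it equals $\mathscr{V}^\tau_M = \mathscr{V}$.

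Part (d) then reduces to a short verification. For $(M,\eta)$ with $\eta \in K^\tau_M$, axiom \ref{O} holds since $\eta$ is regular and $\mathscr{V}^\eta_M = \mathscr{V} \ne \emptyset$ forces $P^\eta_M \ne \emptyset$; axiom \ref{S} is inherited from $(M,\tau)$ via (c); axiom \ref{C} does not involve $\eta$. The equality $X^\eta_M = X^\tau_M$ then follows by applying Lemma \ref{moment_lmm}\ref{charts_lmm2} to both toric pairs, which describes $\wt{X}^\eta_M$ and $\wt{X}^\tau_M$ purely in terms of $\mathscr{V}^\eta_M$ and $\mathscr{V}^\tau_M$; these coincide by (c), so the quotients by $\T^k$ agree as well. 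The main obstacle is the polyhedral reduction in part (a); once it is in place, (b)--(d) are essentially formal.
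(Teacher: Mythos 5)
Your proposal is correct and takes essentially the same route as the paper: the support-reducing slide along $\ker M$ inside $\supp(w)$ for (a), openness and dilation-invariance of the regular-value locus for (b), local constancy of $\mathscr{V}^{\eta}_M$ on the connected open set $K^{\tau}_M$ for (c), and the formal verification of the toric axioms plus Lemma~\ref{moment_lmm}\ref{charts_lmm2} for (d). The only point you leave implicit is that $\V\!\neq\!\eset$, which follows from $P^{\tau}_M\!\neq\!\eset$ together with Lemma~\ref{moment_lmm}\ref{charts_lmm} and which the paper records explicitly in its proof of (d).
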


\begin{proof}
\ref{reg_lmm}
If $\eta$ is a regular value of $\mu_M$, 
$\eta\!\not\in\!M_J(\R^{\ge 0})^{|J|}$ for every
$J\!\subset\![N]$ with $|J|\!\le\!k\!-\!1$ by
the second statement in Lemma~\ref{moment_lmm}\ref{Mrestr_lmm}.
Suppose $\eta\!\not\in\!M_J(\R^{\ge 0})^{|J|}$ for every
$J\!\subset\![N]$ with $|J|\!\le\!k\!-\!1$.
We prove that for every $v\!\in\!P^{\eta}_M$
there exists $J\!\subseteq\!\supp(v)$ such that $|J|\!=\!k$ and $\det\,M_J\!\neq\!0$.
Suppose not, i.e. $\det\,M_J\!=\!0$ for all $J\!\subseteq\!\supp(v)$ with $|J|\!=\!k$.
We show that there exists $v'\!\in\!P^{\eta}_M$ with $|\supp(v')|\!<\!k$; this contradicts the assumption on~$\eta$.
If $|\supp(v)|\!\ge\!k$, there exists $w\!\in\!M^{-1}(0)\!\subset\!\R^N$ such that
$\supp(w)\!\subseteq\!\supp(v)$
and $w_{j_0}\!>\!0$ for some $j_0\!\in\!\supp(v)$.
Let 
$$\la\equiv\min\left\{\frac{v_j}{w_j}:j\!\in\!\supp(v)\textnormal{ such that }w_j\!>\!0\right\}.$$
It follows that $v\!-\!\la w\!\in\!P^{\eta}_M$ and $\supp(v\!-\!\la w)\!\subsetneqq\!\supp(v)$.
Continuing in this way, we obtain $v'\!\in\!P^{\eta}_M$ with $|\supp(v')|\!<\!k$.\\
\ref{cone_lmm}
This follows immediately from \ref{reg_lmm}.\\
\ref{Vindep_lmm}
We show that the set $\{\eta\!\in\!K^{\tau}_M:\mathscr{V}^{\eta}_M\!=\!\V\}$
is open and closed in $K^{\tau}_M$ and thus equals $K^{\tau}_M$.
It suffices to show that for any $\mathscr{P}\!\subseteq\!\{J\!\subseteq\![N]:|J|\!=\!k\}$ the set
$$\left\{\eta\!\in\!K^{\tau}_M:\mathscr{V}^{\eta}_M\!=\!\mathscr{P}\right\}\!=\!
\bigcap\limits_{J\in\mathscr{P}}\left\{\eta\!\in\!K^{\tau}_M:P^{\eta}_M\!\cap\!\R^J\!\neq\!\eset\right\}
\cap
\bigcap\limits_{\begin{subarray}{c}J\subseteq[N], |J|=k\\J\notin\mathscr{P}\end{subarray}}\left\{\eta\!\in\!K^{\tau}_M:P^{\eta}_M\!\cap\!\R^J\!=\!\eset\right\}$$
is open.
We show that  the set 
$$ \big\{\eta\!\in\!K^{\tau}_M\!:\,P^{\eta}_M\!\cap\!\R^J\!\neq\!\eset\big\}$$
with $J\!\subseteq\![N]$ and $|J|\!=\!k$ is open. 
Let $\eta'$ be any of its elements and let
$w\!\in\!P^{\eta'}_M\!\cap\!\R^J$.
By the surjectivity of $\nd_w\mu_M$, $\supp(w)\!=\!J$ and $\det\,M_J\!\neq\!0$; this shows that
$M_J(\R^{>0})^k$ is open and
$$\eta'\!\in\!M_J\left(\R^{>0}\right)^k\!\cap\!K^{\tau}_M\!\subseteq\!
\{\eta\!\in\!K^{\tau}_M:P^{\eta}_M\!\cap\!\R^J\!\neq\!\eset\}.$$
The set 
$$\big\{\eta\!\in\!K^{\tau}_M\!:\,P^{\eta}_M\!\cap\!\R^J\!=\!\eset\big\}
=K^{\tau}_M\!-\!M_J(\R^{\ge 0})^{|J|}$$ 
with $J\!\subseteq\![N]$  and $|J|\!=\!k$ is open as well.\\ 
\ref{indep_lmm}
Since $P^{\tau}_M\!\neq\!\eset$, $\mu_M^{-1}(\tau)\!\neq\!\eset$ and so $\V\!\neq\!\eset$ by Lemma~\ref{moment_lmm}\ref{charts_lmm}.
Since $\V\!\neq\!\eset$, $\mathscr{V}^{\eta}_M\!\neq\!\eset$ by~\ref{Vindep_lmm}
and so $P^{\eta}_M\!\neq\!\eset$.
Since $(M,\tau)$ satisfies \ref{S} in Definition~\ref{toric_dfn},
by \ref{Vindep_lmm} so does $(M,\eta)$.
Thus, $(M,\eta)$ is toric.
The equality $X^{\eta}_M\!=\!X^{\tau}_M$ follows from \ref{Vindep_lmm} together with 
Lemma~\ref{moment_lmm}\ref{charts_lmm2}.
\end{proof}

\begin{lmm}\label{levels_lmm}
Let $(M,\tau)$ be a toric pair.
\begin{enumerate}[label=(\emph{\alph*}),leftmargin=*]
\item\label{smooth_lmm}
The quotient $\mu_M^{-1}(\tau)/(S^1)^k$ admits a unique smooth structure such that the projection
\begin{equation}\label{re-projection_e}
\pi_{\tau}:\mu_M^{-1}(\tau)\lra\mu_M^{-1}(\tau)/(S^1)^k
\end{equation}
is a submersion.
\item\label{symp_lmm}
There exists a unique symplectic form $\om_{\tau}$
on $\mu_M^{-1}(\tau)/(S^1)^k$ such that 
$$\pi_{\tau}^*\om_{\tau}=\om_{\std}\Big|_{\mu_M^{-1}(\tau)},$$
where $\pi_{\tau}$ is the projection \e_ref{re-projection_e}.
\item\label{diffeo_lmm}
The map \e_ref{re-co_e} is a diffeomorphism.
\end{enumerate}
\end{lmm}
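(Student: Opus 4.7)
The plan is to implement the standard Marsden--Weinstein symplectic reduction argument, exploiting results already established in Lemma~\ref{top_lmm}.

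For part~\ref{smooth_lmm}: since $\tau$ is a regular value of $\mu_M$ by condition~\ref{O} of Definition~\ref{toric_dfn}, the level set $\mu_M^{-1}(\tau)$ is a smooth real submanifold of $\C^N$ of codimension~$k$. The subaction of $(S^1)^k\!\subset\!\T^k$ preserves $\mu_M^{-1}(\tau)$ by $\T^k$-invariance of $\mu_M$, and by Lemma~\ref{top_lmm}\ref{free_lmm} the $\T^k$-action on $\wt X^\tau_M\!\supset\!\mu_M^{-1}(\tau)$ is free, hence so is its restriction to $(S^1)^k$. Since $(S^1)^k$ is compact, the standard quotient theorem for proper free actions of compact Lie groups supplies a unique smooth structure on $\mu_M^{-1}(\tau)/(S^1)^k$ for which $\pi_\tau$ is a submersion (in fact a principal $(S^1)^k$-bundle).

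For part~\ref{symp_lmm}: uniqueness of $\om_\tau$ is immediate from the surjectivity of $\nd\pi_\tau$. For existence, I would verify pointwise at each $y\!\in\!\mu_M^{-1}(\tau)$ that (i) $\om_{\std}|_{\mu_M^{-1}(\tau)}$ is $(S^1)^k$-invariant, and (ii) the radical of $\om_{\std}|_{T_y\mu_M^{-1}(\tau)}$ equals $T_y\!\left((S^1)^k\!\cdot\!y\right)$. Fact~(i) is immediate from the $(S^1)^k$-invariance of $\om_{\std}$. Fact~(ii) follows from the Hamiltonian identity $\iota_{X_\xi}(-2\om_{\std})=\nd\langle\mu_M,\xi\rangle$ for fundamental vector fields $X_\xi$: it identifies the symplectic orthogonal of $T_y\mu_M^{-1}(\tau)=\ker \nd_y\mu_M$ with the image of the infinitesimal action, and freeness from~\ref{smooth_lmm} forces this image to have full dimension~$k$. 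Together (i) and (ii) ensure that $\om_{\std}|_{\mu_M^{-1}(\tau)}$ descends to a well-defined, non-degenerate $2$-form $\om_\tau$ on the quotient; closedness descends as well since $\pi_\tau^*(\nd\om_\tau)=\nd\pi_\tau^*\om_\tau=\nd(\om_{\std}|_{\mu_M^{-1}(\tau)})=0$ and $\pi_\tau^*$ is injective on forms.

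For part~\ref{diffeo_lmm}: Lemma~\ref{top_lmm}\ref{compact_lmm} already supplies a homeomorphism, so it suffices to promote it to a diffeomorphism by checking smoothness in both directions. The forward map is induced by the smooth inclusion $\mu_M^{-1}(\tau)\hookrightarrow\wt X^\tau_M$ followed by the holomorphic (hence smooth) submersion $\wt X^\tau_M\!\to\!\X$ of Remark~\ref{submersion_rmk}; since $\pi_\tau$ is a submersion by~\ref{smooth_lmm}, smoothness descends to the quotient. Its inverse is induced by the map $\wt X^\tau_M\!\to\!\mu_M^{-1}(\tau)$, $z\mapsto\rho_M^\tau(z)\!\cdot\!z$, which is smooth by Lemma~\ref{top_lmm}\ref{levelgen_lmm} and equivariant under the projection $\T^k\!\to\!(S^1)^k$, so it descends to a smooth inverse. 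The main technical step is fact~(ii) of part~\ref{symp_lmm}: this is the crux of symplectic reduction, where freeness of the torus action (ensuring the orbit direction has full dimension~$k$) combines with the regular-value hypothesis (ensuring $\ker \nd_y\mu_M$ has the complementary codimension) so that the symplectic orthogonal of $\ker \nd_y\mu_M$ coincides exactly with the orbit tangent space $T_y((S^1)^k\!\cdot\!y)$, which lies inside $\ker \nd_y\mu_M$ by $\T^k$-invariance of $\mu_M$.
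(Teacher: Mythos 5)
Your proof is correct and follows essentially the same route as the paper: part~\ref{smooth_lmm} via the quotient theorem for free smooth actions of compact Lie groups combined with the regular-value and freeness facts already established, part~\ref{symp_lmm} via Marsden--Weinstein reduction (which the paper simply cites, while you spell the argument out), and part~\ref{diffeo_lmm} by upgrading the homeomorphism of Lemma~\ref{top_lmm}\ref{compact_lmm} using the submersion $\pi_{\tau}$, the submersion of Remark~\ref{submersion_rmk}, and the smooth map $\rho^{\tau}_M$ of Lemma~\ref{top_lmm}\ref{levelgen_lmm}. The only cosmetic difference is in \ref{diffeo_lmm}, where the paper phrases the key step as showing that the restriction of $\pi\!:\!\wt{X}^{\tau}_M\!\lra\!\X$ to $\mu_M^{-1}(\tau)$ is a submersion (as a composition of submersions), whereas you verify smoothness of the two mutually inverse maps directly; the ingredients are the same either way.
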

\begin{proof}
\ref{smooth_lmm} By \cite[Proposition~5.2]{tD}, if $G$ is a compact Lie group
acting freely and smoothly on a manifold $M$, then the quotient
$M/G$ carries a unique differentiable structure such that the projection
$$M\lra M/G$$ is a submersion.
Thus, the claim follows from \ref{O} in Definition~\ref{toric_dfn}
and Lemma~\ref{top_lmm}\ref{free_lmm}.\\
\ref{symp_lmm}
This follows from the Marsden-Weinstein symplectic reduction theorem \cite[Theorem~1]{MW}.\\
\ref{diffeo_lmm} By \ref{smooth_lmm} and Lemma~\ref{top_lmm}\ref{compact_lmm}, 
it is enough to show that the restriction
$$\pi\Big|_{\mu^{-1}_M(\tau)}:\mu_M^{-1}(\tau)\lra X^{\tau}_M$$
of the projection $\pi\!:\!\wt{X}^{\tau}_M\!\lra\!X^{\tau}_M$
is a submersion. 
This follows from the fact that the map
$$\T^k\!\times\!\mu_M^{-1}(\tau)\lra X^{\tau}_M,\quad (t,z)\lra [z],$$
is a submersion 
whose differential at $(t,z)$ vanishes on $T_t\T^k\!\times\!0$.
This map is a submersion because it is the composition of two submersions,
$$ \T^k\!\times\!\mu_M^{-1}(\tau)\lra\wt{X}^{\tau}_M,~~ (t,z)\lra t\!\cdot\!z 
\quad\hbox{and}\quad \pi:\wt{X}^{\tau}_M\lra X^{\tau}_M.$$ 
The former map is a submersion by the proof of Lemma~\ref{top_lmm}\ref{sat_lmm}, 
while  $\pi$ is a submersion by Remark~\ref{submersion_rmk}.
\end{proof}

If $(M,\tau)$ is a toric pair, we abuse notation and denote by $\om_{\tau}$ not only the form on
$\mu_M^{-1}(\tau)/(S^1)^k$ defined by Lemma~\ref{levels_lmm}\ref{symp_lmm},
but also the form it induces on $X^{\tau}_M$ via the diffeomorphism \e_ref{re-co_e}
of Lemma~\ref{levels_lmm}\ref{diffeo_lmm}.
In this case, by Lemma~\ref{reg_locus_lmm}\ref{indep_lmm} and Lemma~\ref{levels_lmm}\ref{symp_lmm}, for every $\eta\!\in\!K^{\tau}_M$,
$\om_{\eta}$ is the unique symplectic form on $X^{\tau}_M$
satisfying
\begin{equation}\label{Kahler_form_e}
\pi^*\om_{\eta}\big|_{\mu_M^{-1}(\eta)}=\om_{\std}\big|_{\mu_M^{-1}(\eta)},\quad\textnormal{where}\quad
\pi:\wt{X}^{\tau}_M\lra\X
\end{equation}
is the projection; see also diagram~\e_ref{Mtau_diag}.

\begin{lmm}\label{Kahler_lmm}
Let $(M,\tau)$ be a toric pair. For every $\eta\!\in\!K^{\tau}_M$,
$\om_{\eta}$ is K\"{a}hler with respect to the complex structure on $X^{\tau}_M$.
\end{lmm}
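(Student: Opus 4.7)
The plan is to realize $(\X,\om_\eta)$ as a K\"ahler quotient of the standard K\"ahler structure on the open set $\wt{X}^\tau_M\!\subset\!\C^N$, exhibiting an explicit $J$-invariant horizontal lift of tangent vectors to $\X$ in order to verify K\"ahler compatibility by direct computation. Fix $\eta\!\in\!K^\tau_M$; by Lemma~\ref{reg_locus_lmm}\ref{indep_lmm} the pair $(M,\eta)$ is itself toric with $X^\eta_M\!=\!\X$, so Lemma~\ref{levels_lmm}\ref{symp_lmm} combined with the diffeomorphism of Lemma~\ref{levels_lmm}\ref{diffeo_lmm} characterizes $\om_\eta$ by $\pi^*\om_\eta|_{\mu_M^{-1}(\eta)}\!=\!\om_\std|_{\mu_M^{-1}(\eta)}$, where $\pi\!:\!\wt{X}^\tau_M\!\lra\!\X$ is the quotient projection. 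The essential input is Remark~\ref{submersion_rmk}, which says that $\pi$ is a \emph{holomorphic} submersion; without it, the complex structure inherited from the charts \e_ref{charts_e} would be decoupled from the symplectic reduction and the argument would collapse.

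For $z\!\in\!\mu_M^{-1}(\eta)$, I would set up the horizontal distribution as follows. Let $X_1(z),\ldots,X_k(z)\!\in\!T_z\C^N$ denote the fundamental vector fields of the $(S^1)^k$-action. Since the $\T^k$-action \e_ref{Maction_e} is the complexification of the $(S^1)^k$-action and $\T^k$ acts freely on $\wt{X}^\tau_M$ by Lemma~\ref{top_lmm}\ref{free_lmm}, $\ker d\pi|_z\!=\!T_z(\T^k\!\cdot\!z)$ equals the real span of $\{X_r(z),JX_r(z)\}_r$, where $J$ is the standard complex structure on $\C^N$. The moment-map property identifies $T_z\mu_M^{-1}(\eta)\!=\!\{v\!:\!\om_\std(X_r(z),v)\!=\!0~\forall\,r\}$, and the Hermitian identity $\om_\std(\cdot,\cdot)\!=\!g_\std(J\cdot,\cdot)$ rewrites this as $T_z\mu_M^{-1}(\eta)\!=\!\{v\!:\!g_\std(JX_r(z),v)\!=\!0~\forall\,r\}$. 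Define
$$H_z\equiv T_z\mu_M^{-1}(\eta)\cap\big\{v:g_\std(X_r(z),v)\!=\!0~\forall\,r\big\},$$
which is the $g_\std$-orthogonal complement of $\ker d\pi|_z$ inside $T_z\C^N$. Because $g_\std$ is $J$-Hermitian, $H_z$ is automatically $J$-invariant; and a dimension count shows it is a linear complement both to $\ker d\pi|_z$ inside $T_z\wt{X}^\tau_M$ and to $T_z((S^1)^k\!\cdot\!z)$ inside $T_z\mu_M^{-1}(\eta)$.

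With this setup the proof finishes by a short computation. Because $\pi$ is a holomorphic submersion, $d\pi|_{H_z}\!:\!H_z\!\lra\!T_{[z]}\X$ is a $\C$-linear isomorphism, which intertwines $J|_{H_z}$ with the complex structure on $\X$ coming from the charts \e_ref{charts_e}. Writing any $u\!\in\!T_{[z]}\X$ as $u\!=\!d\pi(v)$ with $v\!\in\!H_z$, one obtains $Ju\!=\!d\pi(Jv)$ and
$$\om_\eta(u,Ju)=\om_\std(v,Jv)=g_\std(v,v),$$
which is positive for $u\!\neq\!0$; the $J$-invariance of $\om_\std$ yields the analogous identity $\om_\eta(Ju,Jw)\!=\!\om_\eta(u,w)$, so $\om_\eta(\cdot,J\cdot)$ is a positive-definite symmetric bilinear form and $\om_\eta$ is K\"ahler. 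The only point demanding the full toric setup is the $\C$-linearity of $d\pi|_{H_z}$, which rests squarely on Remark~\ref{submersion_rmk}; everything else is a standard Marsden--Weinstein-style verification on $\C^N$.
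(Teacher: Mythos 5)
Your proof is correct and follows essentially the same route as the paper: both characterize $\om_{\eta}$ by $\pi^*\om_{\eta}|_{\mu_M^{-1}(\eta)}\!=\!\om_{\std}|_{\mu_M^{-1}(\eta)}$, invoke Remark~\ref{submersion_rmk} that $\pi$ is a holomorphic submersion (together with the fact that every point of $X^{\tau}_M$ has a representative in $\mu_M^{-1}(\eta)$), and deduce compatibility and positivity from the standard K\"ahler structure on $\C^N$. Your explicit $J$-invariant horizontal distribution $H_z$ merely spells out the lift that the paper's one-line argument leaves implicit.
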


\begin{proof}
The form $\om_{\eta}$ is positive with respect to the complex structure on $X^{\tau}_M$ by \e_ref{Kahler_form_e}
together with
the equality $\T^k\!\cdot\mu^{-1}_M(\eta)\!=\!\wt{X}^{\tau}_M$
(justified by Lemmas~\ref{top_lmm}\ref{compact_lmm} and~\ref{reg_locus_lmm}\ref{indep_lmm}),  Remark~\ref{submersion_rmk}, and the positivity of $\om_{\std}$.
\end{proof}

\begin{rmk}\label{submfld_rmk}
If $(M,\tau)$ is a toric pair
and $J\!\subseteq\![N]$,
the pair $(M_J,\tau)$ is toric if and only if 
$P^{\tau}_{M_J}\!\neq\!\emptyset$.
In this case, $X^{\tau}_{M_J}$ is a connected compact projective manifold of complex dimension
$|J|\!-\!k$ by Proposition~\ref{toric_prp}.
It is biholomorphic to
$$X^{\tau}_M(J)\!\equiv\!\left\{[z]\!\in\!X^{\tau}_M:\supp(z)\!\subseteq\!J\right\}$$
via the map
\begin{equation}\label{subman_e}
X^{\tau}_{M_J}\ni[z]\lra[\iota_{J}(z)]\!\in\!X^{\tau}_M(J),
\qquad \textnormal{where} \quad
\big(\iota_{J}(z)\big)_j\equiv
\begin{cases}z_r,&\hbox{ if }j\!=\!j_r,\\
0,&\hbox{if}~j\!\notin\!J,
\end{cases} 
\end{equation}
if $J\!=\!\{j_1\!<\!j_2\!<\!\ldots\!<\!j_r\}$.
In particular, if $(M,\tau)$ is a minimal toric pair and $M_{\widehat{j}}$
is the matrix obtained from $M$ by deleting the $j$-th column, then
$X^{\tau}_{M_{\widehat{j}}}$ is a connected compact projective manifold of complex dimension $N\!-\!1$.
The map \e_ref{subman_e} identifies $X^{\tau}_{M_{\widehat{j}}}$ with
the hypersurface
\begin{equation}\label{torichyp_e}
X^{\tau}_M([N]\!-\!\{j\})\equiv D_j\equiv \left\{[z]\in X^{\tau}_M:z_j\!=\!0\right\}.
\end{equation}
If $J\!\in\!\V$ with $\V$ defined by \e_ref{Vset_e}, then $X^{\tau}_M(J)$ is the point 
\begin{equation}\label{fpt_e}
[J]\equiv[z_1,\ldots,z_N],\qquad\textnormal{where}\qquad
z_j\equiv\begin{cases}1,&\hbox{if}\quad j\!\in\!J;\\
0,&\hbox{otherwise}.\end{cases}\end{equation}
This follows from Lemma~\ref{moment_lmm}\ref{charts_lmm2} and \ref{S} in Definition~\ref{toric_dfn}.

If $J\!\subseteq\![N]$ is such that $P^{\tau}_M\!\cap\!\R^J\!\neq\!\eset$ and $|J|\!=\!k\!+\!1$, then
$X^{\tau}_M(J)$ is a one-dimensional complex manifold and there exist exactly $2$
multi-indices $I\!\in\!\V$ with $I\!\subset\!J$. The latter follows since 
multi-indices $I\!\in\!\V$ with $I\!\subset\!J$ correspond bijectively via $\iota_J$
to elements of $\mathscr{V}^{\tau}_{M_J}$,
which in turn correspond  to the vertices of
$P^{\tau}_{M_J}$ by Lemma~\ref{moment_lmm}\ref{vertcoresp_lmm}; $P^{\tau}_{M_J}$ has dimension~$1$ by Lemma~\ref{moment_lmm}\ref{Pdim_lmm}.
\end{rmk}

\begin{rmk}\label{matinv_rmk}
If $(M,\tau)$ is a toric pair with $M$ a $k\!\times\!N$ matrix, then $(VM,V\tau)$ is a toric pair
whenever $V\!\in\!\textnormal{GL}_k(\Z)$. In this case, $\msV^{\tau}_M\!=\!\msV^{V\tau}_{VM}$ and $X^{\tau}_M$
is biholomorphic to $X^{V\tau}_{VM}$.
The pair $(VM,V\tau)$ satisfies the first condition of \ref{O} in Definition~\ref{toric_dfn},
since $V$ is an isomorphism.
Since $P^{\tau}_M\!=\!P^{V\tau}_{VM}$, $\msV^{\tau}_M\!=\!\msV^{V\tau}_{VM}$
and so $(VM,V\tau)$ satisfies the second condition of \ref{O}, \ref{S}, and \ref{C} in Definition~\ref{toric_dfn}  as well.
\end{rmk}

\begin{rmk}\label{1dim_rmk}
If $(M,\tau)$ is a toric pair with $M$ a $k\!\times\!(k\!+\!1)$ matrix,
then $X^{\tau}_M$ is biholomorphic to~$\P^1$.
In order to see this, note first that $|\V|\!=\!2$ by Lemma~\ref{moment_lmm}\ref{vertcoresp_lmm} and Lemma~\ref{moment_lmm}\ref{Pdim_lmm}.
By Remark~\ref{matinv_rmk}, we can assume that $M_J\!=\!\Id_k$ for some $J\!\in\!\V$.
The claim now follows from \e_ref{charts_e}: 
$X^{\tau}_M$ is a compact manifold covered by two charts 
$$h_J:U_J\isomto\C,\qquad h_I:U_I\isomto\C$$
satisfying $h_J(U_J\!\cap\!U_I)\!=\!h_I(U_J\!\cap\!U_I)\!=\!\C^*$ since $I\!\cup\!J\!=\![k\!+\!1]$ and $h_I\!\circ\!h_J^{-1}(z)\!=\!z^{\pm 1}$
by \ref{S} in Definition~\ref{toric_dfn}.
\end{rmk}

\subsection{Cohomology, K\"{a}hler cone, and Picard group}
\label{nequivcoh_sec}

Throughout the remaining part of this paper, $(M,\tau)$ is a toric pair.
In order to complete the proof in Section~\ref{nequiv-intro_sec} 
that $X^{\tau}_M$ is projective,
we describe some holomorphic line bundles over it.
For each $\bp\!\in\!\Z^k$, let
\begin{equation}\label{Lp_e}
L_{\bp}\equiv\wt{X}^{\tau}_M\times_{\bp}\C
\equiv \wt{X}^{\tau}_M\!\times\!\C\big/\sim,
\qquad\textnormal{where}\quad
(z,c)\sim(t^{-1}\!\cdot\!z ,t^{\bp} c),~\forall\,t\in\T^k.
\end{equation}
Since $\pi\!:\!\wt{X}^{\tau}_M\!\lra\!\X$ with $\pi(z)\!\equiv\![z]$ is a $\T^k$-principal bundle by Lemma~\ref{top_lmm}\ref{free_lmm} and Remark~\ref{submersion_rmk},
$$L_{\bp}\lra\X,\qquad[z,c]\lra[z],$$
is a holomorphic line bundle. 
Furthermore,
$$L_0=\O_{X^{\tau}_M},\qquad L_{\bp}^*= L_{-\bp},\qquad L_{\bp}\otimes L_{\br}=L_{\bp+\br}.$$
The line bundle $L_{-M_j}$ admits a holomorphic section
\begin{equation}\label{sjdfn_e}
s_j:X^{\tau}_M\lra L_{-M_j},\qquad [z]\lra [z,z_j].
\end{equation}
Since $s_j$ is transverse to the zero set by \e_ref{charts_e}
and $s_j^{-1}(0)\!=\!D_j$ by \e_ref{torichyp_e},
$c_1(L_{-M_j})\!=\!\PD(D_j)$. 

For all $j\!\in\![N]$ and $i\!\in\![k]$, let
\begin{equation}\label{bdle_e}\begin{split}
\nU_j\equiv c_1(L_{-M_j}),\qquad
\ga_i\equiv L_{e_i},\qquad \nH_i\equiv c_1(\ga_i^*),
\end{split}\end{equation}
where $\{e_i:i\!\in\![k]\}\!\subset\!\Z^k$ is the standard basis.
Thus,
\begin{equation}\label{U_e}
L_{-M_j}=
\ga_1^{*\otimes m_{1j}}\otimes\ga_2^{*\otimes m_{2j}}
\otimes\ldots\otimes\ga_k^{*\otimes m_{kj}}\quad
 \Lra\quad
 \nU_j=\sum_{i=1}^k m_{ij}\nH_i\qquad\forall\,j\!\in\![N].
\end{equation}
Lemma~\ref{positive_lmm} below is used in the proof of Proposition~\ref{toric_prp} in Section~\ref{nequiv-intro_sec}
and to describe the K\"{a}hler cone of $\X$ in Proposition~\ref{dimK_prp} below.

\begin{lmm}\label{positive_lmm}
For every $\eta\!\in\!\Z^k\!\cap\!K^{\tau}_M$,
$$c_1(L_{-\eta})\!=\frac{1}{\pi}[\om_{\eta}],$$
where $\om_{\eta}$ is the K\"{a}hler form defined by \e_ref{Kahler_form_e}.
\end{lmm}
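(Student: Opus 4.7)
The plan is to compute $c_1(L_{-\eta})$ via Chern-Weil theory after producing an explicit connection on $L_{-\eta}$. By Lemma~\ref{top_lmm}\ref{compact_lmm}, the restriction of the principal $\T^k$-bundle $\wt{X}^{\tau}_M\!\to\!\X$ to $\mu_M^{-1}(\eta)$ is a principal $(S^1)^k$-bundle $p:\mu_M^{-1}(\eta)\!\to\!\X$. Unraveling the equivalence relation in \e_ref{Lp_e} identifies $L_{-\eta}$ with the line bundle associated to $p$ via the character $\chi:(S^1)^k\!\to\!U(1)$, $(\ne^{\mi\theta_1},\ldots,\ne^{\mi\theta_k})\!\mapsto\!\ne^{-\mi\lr{\eta,\theta}}$, whose differential at the identity satisfies $\nd\chi(\mi\theta)=-\mi\lr{\eta,\theta}$.

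The key construction is the 1-form
$$\al\equiv\frac{1}{2\mi}\sum_{j=1}^N\bigl(\ov{z}_j\,\nd z_j-z_j\,\nd\ov{z}_j\bigr)\in\Omega^1(\C^N),$$
which is $(S^1)^N$-invariant and satisfies $\nd\al=2\om_{\std}$ together with $\al(X_\xi)=\lr{\mu_{\std},\xi}$ on the fundamental vector field $X_\xi$ of each $\xi\!\in\!\R^N$. Since $\theta\!\in\!\R^k$ generates the fundamental vector field $X_{M^{\tr}\theta}$ under the $(S^1)^k$-action \e_ref{Maction_e}, on $\mu_M^{-1}(\eta)$ we have $\al(X_{M^{\tr}\theta})=\lr{M\mu_{\std},\theta}=\lr{\eta,\theta}$. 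Multiplying by $-\mi$, the $\mi\R$-valued 1-form $A\equiv-\mi\,\al|_{\mu_M^{-1}(\eta)}$ then matches $\nd\chi$ on vertical directions and is $(S^1)^k$-invariant, so it is the pullback to $\mu_M^{-1}(\eta)$ of a connection 1-form on the $U(1)$-bundle of $L_{-\eta}$.

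Its curvature satisfies $\nd A=-2\mi\,\om_{\std}|_{\mu_M^{-1}(\eta)}=-2\mi\,p^*\om_\eta$ by \e_ref{Kahler_form_e}, so the descended curvature on $\X$ is $-2\mi\,\om_\eta$, and Chern-Weil then yields $c_1(L_{-\eta})=\frac{\mi}{2\pi}[-2\mi\,\om_\eta]=[\om_\eta]/\pi$. The only real obstacle is bookkeeping the three compensating signs and factors -- the sign in~$\chi$ coming from the $t^{-1}\!\cdot\!z$ convention in \e_ref{Lp_e}, the factor~$2$ in $\nd\al=2\om_{\std}$ reflecting the paper's choice of $-2\om_{\std}$ as the form for which $\mu_{\std}$ is the moment map, and the Chern-Weil normalization $c_1=\frac{\mi}{2\pi}[F]$; consistency can be verified on $\P^1$, where $L_{-1}\!\cong\!\O_{\P^1}(1)$ and $\int_{\P^1}\om_1=\pi$.
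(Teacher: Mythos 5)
Your proof is correct and follows essentially the same route as the paper's: both build an explicit connection on the circle bundle of $L_{-\eta}$ out of the tautological primitive of (a multiple of) $\om_{\std}$ restricted to $\mu_M^{-1}(\eta)$, use the moment-map level condition to check the vertical normalization, identify the curvature with $-2\om_{\eta}$ via \e_ref{Kahler_form_e}, and conclude $c_1(L_{-\eta})=\frac{1}{\pi}[\om_{\eta}]$. The paper merely packages the same connection as a form $(\al\oplus\si)_{S^1}$ descended from $\mu_M^{-1}(\eta)\!\times\!S^1$ following \cite[Proposition~VII.3.1]{Au}, while you work directly with the invariant $\mi\R$-valued form on the principal $(S^1)^k$-bundle; the computations agree.
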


\begin{proof}
We follow closely the proof of \cite[Proposition~VII.3.1]{Au}.
Let 
$$L_{-\eta}^{\R}\lra \mu_M^{-1}(\eta)\big/(S^1)^k$$ 
be the pull-back of $L_{-\eta}$ via the diffeomorphism
\e_ref{re-co_e} of Lemma~\ref{levels_lmm}\ref{diffeo_lmm}
and
$$L_{-\eta}^{S^1}\equiv
\mu_M^{-1}(\eta)\times_{-\eta}S^1\xrightarrow{p}\frac{\mu_M^{-1}(\eta)}{(S^1)^k}$$
be its sphere bundle.
Let
$$\xymatrix@R=3pc@C=3pc{\mu_M^{-1}(\eta)\!\times\!S^1\ar[d]_{q}\ar[r]^{\wt{p}}
&\mu_M^{-1}(\eta)\ar[d]^{\pi_{\eta}}\\
L_{-\eta}^{S^1}\ar[r]^{p}&\frac{\mu_M^{-1}(\eta)}{(S^1)^k}}$$
be the natural projections.

Let $e_i^{\#}$ be the fundamental vector field on $\mu_M^{-1}(\eta)\!\times\!S^1$
corresponding to $e_i\!\in\!\R^k$ for the $\T^k$-action given by~\e_ref{Lp_e}
with $\bp\!=\!-\eta$.
Thus, 
\begin{equation*}\begin{split}
e_i^{\#}
&\equiv\left.\frac{\nd}{\nd t}\right|_{t=0}
\left(\exp(\mi t e_i)\cdot(x_1\!+\!\mi y_1,\!\ldots\!,
x_N\!+\!\mi y_N,x\!+\!\mi y)\right)\\
&=
\sum_{j=1}^Nm_{ij}\left(y_j\frac{\partial}{\partial x_j}\!-\!x_j\frac{\partial}{\partial y_j}\right)
+\eta_i\left(y\frac{\partial}{\partial x}\!-\!x\frac{\partial}{\partial y}\right),
\end{split}\end{equation*}
where $x_j,y_j,x,y$ are the standard coordinates on 
$\C^N\!\equiv\!(\R^2)^N$ and $\C\!\equiv\!\R^2$, respectively.
Let 
$$\al\equiv\sum_{j=1}^N(-x_j\nd y_j+y_j\nd x_j)\in\Om^1\left(\mu_M^{-1}(\eta)\right)
\qquad\hbox{and}\qquad
\si\equiv x\nd y\!-\!y\nd x\in\Om^1(S^1).$$
Since $\iota_{e_i^{\#}}(\al\oplus\si)\!=\!0$ 
on $\mu_M^{-1}(\eta)\!\times\!S^1$ for all $i\!\in\![k]$, $\al\oplus\si$
descends to a 1-form $(\al\oplus\si)_{S^1}$ on $L_{-\eta}^{S^1}$.
This form is a connection 1-form for the principal $S^1$-bundle $L^{S^1}_{-\eta}$
because it satisfies
$$\cL_{X^{\#}}(\al\oplus\si)_{S^1}\!=\!0,\qquad\iota_{X^{\#}}(\al\oplus\si)_{S^1}\!=\!1,$$
where 
$$X^{\#}=-y\frac{\partial}{\partial x}+x\frac{\partial}{\partial y}$$
is the fundamental vector field for the $S^1$-action on 
$L^{S^1}_{-\eta}$ as a principal $S^1$-bundle; see \cite[Exercises~V.4,V.5]{Au}.
Let $\be$ denote the curvature form associated to $(\al\oplus\si)_{S^1}$.
By \cite[Section~V.4.c]{Au}, it is uniquely determined by 
$$p^*\be=\nd(\al\oplus\si)_{S^1}.$$
Since $q^*\nd((\al\oplus\si)_{S^1})=-2 \wt{p}^*\om_{\std}$, $\be=-2\om_{\eta}$
by the uniqueness of reduced symplectic form $\om_{\eta}$
of Lemma~\ref{levels_lmm}\ref{symp_lmm}.
Thus, by
\cite[Proposition~VI.1.18]{Au} and \cite[Section~VI.5.b]{Au},
$$c_1(L^{\R}_{-\eta})=\frac{-1}{2\pi}[\be]=\frac{1}{\pi}[\om_{\eta}]
\!\in\!H^2_{\textnormal{deR}}(\mu_M^{-1}(\eta)/(S^1)^k),$$
as claimed.
\end{proof}

We define
\begin{equation}\label{cE_e}
\cE^{\tau}_M\equiv\left\{J\!\subseteq\![N]:\bigcap\limits_{j\in J}
D_j\!=\!\emptyset\right\}\!=\!
\left\{J\!\subseteq\![N]:M_{J^c}^{-1}(\tau)\!\cap\!(\R^{>0})^{|J^c|}\!=\!\emptyset\right\};
\end{equation}
the second equality follows from 
Lemma~\ref{moment_lmm}\ref{charts_lmm2}\ref{V_lmm}\ref{real_lmm} and \e_ref{torichyp_e}.

\begin{prp}\label{nequivcoh_prp}
If $(M,\tau)$is a toric pair, 
$$H^*\left(X_M^{\tau}\right) \cong
\frac{\Q\left[\nH_1,\nH_2,\ldots,\nH_k,\nU_1,\nU_2,\ldots,\nU_N\right]}
{\left(\nU_j-\sum\limits_{i=1}^km_{ij}\nH_i,~1\!\le\!j\!\le\![N]\right)
 +\left(\prod\limits_{j\in J}\nU_{j}\!: J\!\in\!\cE^{\tau}_M\right)}.$$
If, in addition $(M,\tau)$ is minimal,
$H^2\left(X_M^{\tau};\Z\right)$ is free with basis $\{\nH_1,\nH_2,\ldots,\nH_k\}$.
\end{prp}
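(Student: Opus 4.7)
The plan is to identify the stated quotient ring with $H^*(\X;\Q)$ via the natural map sending each symbolic generator to the cohomology class of the same name, then to verify surjectivity and match $\Q$-dimensions. The integral $H^2$ claim will then follow from minimality together with torsion-freeness.

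\textbf{Well-definedness.} The linear relations $\nU_j=\sum_{i=1}^k m_{ij}\nH_i$ hold in $H^2(\X)$ by~\e_ref{U_e}. For the monomial relations, fix $J\!\in\!\cE^{\tau}_M$. In each chart $U_I$ from~\e_ref{charts_e} with $I\!\in\!\V$, the divisors $D_j$ for $j\!\notin\!I$ pull back to coordinate hyperplanes of $\C^{N-k}$, while each $D_j$ with $j\!\in\!I$ is disjoint from $U_I$; hence the family $\{D_j\}_{j\in[N]}$ intersects transversally in $\X$. Since $\bigcap_{j\in J}D_j=\eset$ by definition of $\cE^\tau_M$,
$$\prod_{j\in J}\nU_j=\PD\Big(\bigcap_{j\in J}D_j\Big)=0.$$
This gives a well-defined $\Q$-algebra homomorphism from the quotient ring to $H^*(\X)$.

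\textbf{Cell decomposition and surjectivity.} Choose a generic one-parameter subgroup $\la:\C^*\!\to\!\T^N$; its $\X$-fixed set is $\{[J]:J\!\in\!\V\}$ by Corollary~\ref{fixed_crl}\ref{fpt_crl}. The Bia\l ynicki--Birula decomposition then partitions $\X$ into $|\V|$ complex affine cells, one attached to each fixed point, yielding a CW structure with only even cells. Consequently $H^*(\X;\Z)$ is torsion-free and $\dim_\Q H^*(\X;\Q)=|\V|$. The closure of each cell is a torus-invariant subvariety, hence a transverse intersection of some of the $D_j$'s, so its Poincar\'e dual is a monomial in the $\nU_j$'s; these monomials span $H^*(\X;\Q)$, which gives the surjectivity.

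\textbf{Dimension match and the integral $H^2$.} The material of Section~\ref{nequiv-intro_sec} (in particular Lemma~\ref{moment_lmm}\ref{vertcoresp_lmm} combined with condition~\ref{S} of Definition~\ref{toric_dfn}) shows that $P_M^\tau$ is a simplicial polytope, so the stated quotient is the Stanley--Reisner face ring of $P_M^\tau$ reduced modulo the linear system of parameters $\{\nH_1,\ldots,\nH_k\}$. The Cohen--Macaulay property for face rings of simplicial polytopes gives total $\Q$-dimension $|\V|$, so the surjection of the previous step becomes an isomorphism. For the minimal case, $D_j\!\neq\!\eset$ for every $j\!\in\![N]$, so no singleton lies in $\cE^\tau_M$: no degree-$2$ monomial relation is imposed beyond the linear ones. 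After eliminating each $\nU_j$ via $\nU_j=\sum_i m_{ij}\nH_i$, the degree-$2$ piece of the quotient is the free $\Q$-span of $\nH_1,\ldots,\nH_k$, so $\dim_\Q H^2(\X;\Q)=k$. Combined with torsion-freeness of $H^*(\X;\Z)$ and the integrality of each $\nH_i$ as a Chern class, this exhibits $\{\nH_1,\ldots,\nH_k\}$ as a $\Z$-basis of $H^2(\X;\Z)$. The main obstacle is reconciling the BB cell count $|\V|$ with the algebraic dimension of the quotient ring; this rests on the simplicity of $P_M^\tau$ and on $\nH_1,\ldots,\nH_k$ being a regular sequence for the face ring, both classical for polytopal spheres but needing careful bookkeeping.
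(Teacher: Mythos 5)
Your route is genuinely different from the paper's: the paper disposes of this proposition by quoting the standard presentation of the cohomology of a symplectic toric manifold from \cite[Section~11.3]{McDSa}, transported to $\X$ via the diffeomorphism of Lemma~\ref{levels_lmm}\ref{diffeo_lmm}, whereas you reprove the Danilov--Jurkiewicz statement from scratch via a Bia\l ynicki--Birula decomposition plus Stanley--Reisner theory. The outline (well-definedness, cell decomposition giving torsion-freeness and $\dim_{\Q}H^*=|\V|$, surjectivity from invariant cycle classes, dimension match by Cohen--Macaulayness) can be made to work, but two steps are off as written. First, the bookkeeping in the dimension count is wrong: after eliminating the $N$ linear relations the presented ring is $\Q[\nU_1,\ldots,\nU_N]/\big(I_{SR}+(\theta_1,\ldots,\theta_{N-k})\big)$, where $I_{SR}$ is the face ideal and $\theta_1,\ldots,\theta_{N-k}$ are linear forms in the $\nU_j$ spanning $\ker M$. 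The face ring has Krull dimension $N-k$ (maximal faces are the complements $[N]-I$, $I\in\V$), so the linear system of parameters must consist of these $N-k$ forms; the classes $\nH_1,\ldots,\nH_k$ are not elements of the face ring, and ``$\nH_1,\ldots,\nH_k$ being a regular sequence for the face ring'' does not typecheck, nor would $k$ parameters cut an $(N-k)$-dimensional ring down to finite length. The l.s.o.p.\ property is exactly where condition \ref{S} of Definition~\ref{toric_dfn} enters: for each facet $[N]-I$ the invertibility of $M_I$ forces the restrictions of the $\theta$'s to have full rank, and then Cohen--Macaulayness of the boundary complex of the polytope polar dual to the simple polytope $P^{\tau}_M$ gives total dimension equal to the number of facets, i.e.\ $|\V|$, matching your cell count.

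Second, and more seriously, the integral statement does not follow from ``torsion-freeness plus integrality of the $\nH_i$'': integral classes spanning $H^2(\X;\Q)$ may generate only a finite-index sublattice of $H^2(\X;\Z)$ (twice a generator of $\Z$ is integral and spans over $\Q$). You must show the $\nH_i$ generate over $\Z$. A repair within your own framework: the closures of the complex-codimension-one cells are torus-invariant prime divisors, hence among the $D_j$, so by the even-cell CW structure and Poincar\'e duality the classes $\nU_j$ generate $H^2(\X;\Z)$; since each $\nU_j=\sum_{i=1}^k m_{ij}\nH_i$ with integer coefficients and, conversely, each $\nH_i$ is an integer combination of $(\nU_j)_{j\in I}$ because $M_I^{-1}$ is an integer matrix for $I\in\V$, the $\Z$-span of $\{\nH_i\}$ equals that of $\{\nU_j\}$, i.e.\ all of $H^2(\X;\Z)$; together with rank $k$ and $\Q$-independence this yields the $\Z$-basis claim. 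With these two repairs your argument is a valid, self-contained alternative to the paper's citation, at the cost of importing the Bia\l ynicki--Birula and Reisner/shellability machinery.
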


\begin{proof}
This follows from \cite[Section~11.3]{McDSa}
together with Lemma~\ref{levels_lmm}\ref{diffeo_lmm}.
\end{proof}

\begin{rmk}\label{gencoh_rmk}
By Proposition~\ref{nequivcoh_prp},  $H^*(\X)$ is generated as a $\Q$-algebra 
by $\{\nH_1,\ldots,\nH_k\}$.
Along with~\ref{S} in Definition~\ref{toric_dfn}, this implies that 
$H^*(\X)$ is generated as a $\Q$-algebra by $\{\nU_1,\ldots,\nU_N\}$. 
\end{rmk}

\begin{prp}\label{dimK_prp}
If $(M,\tau)$ is a minimal toric pair, there is a basis 
$\{c_1(L_{-\eta_i})\!:i\!\in\![k]\}$ for  $H^2(\X)$
formed by the first Chern classes of ample line bundles,
with $L_{-\eta_i}$ as in \e_ref{Lp_e}. 
In particular, the K\"{a}hler cone $\cK^{\tau}_M$ of $X^{\tau}_M$ has dimension~$k$.
\end{prp}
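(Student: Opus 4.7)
The plan is to combine the Chern class computation of Lemma~\ref{positive_lmm}, the cone structure from Lemma~\ref{reg_locus_lmm}\ref{cone_lmm}, and the cohomology description from Proposition~\ref{nequivcoh_prp}. Since $(M,\tau)$ is minimal, Proposition~\ref{nequivcoh_prp} gives $H^2(\X;\Z)\cong\Z^k$ with basis $\{\nH_1,\ldots,\nH_k\}$. Using the identities $L_0\!=\!\cO_{\X}$, $L_\bp\!\otimes\!L_\br\!=\!L_{\bp+\br}$ from \e_ref{Lp_e} together with $L_{-e_i}\!=\!\ga_i^*$ and $c_1(\ga_i^*)\!=\!\nH_i$ from \e_ref{bdle_e}, one checks that for each $\eta\!=\!(\eta_1,\ldots,\eta_k)\!\in\!\Z^k$,
$$c_1(L_{-\eta})=\sum_{i=1}^k\eta_i\nH_i\,\in\,H^2(\X;\Z).$$
Hence $k$ vectors $\eta_1,\ldots,\eta_k\!\in\!\Z^k$ give a $\Q$-basis of $H^2(\X)$ via $c_1(L_{-\eta_i})$ if and only if they are linearly independent in~$\R^k$.

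Next, I would produce such $\eta_i\!\in\!\Z^k\!\cap\!K^{\tau}_M$. By Lemma~\ref{reg_locus_lmm}\ref{cone_lmm}, $K^{\tau}_M\!\subset\!\R^k$ is a non-empty open cone. Fix any $\eta_0\!\in\!K^{\tau}_M$ and choose $\epsilon\!>\!0$ such that the ball $B(\eta_0,\epsilon)$ lies in $K^{\tau}_M$. Since $\Q^k$ is dense in $\R^k$, we may pick rational vectors $w_1,\ldots,w_k\!\in\!B(\eta_0,\epsilon)$ close enough to $\eta_0\!+\!\frac{\epsilon}{2}e_i$ (where $e_i$ is the standard basis of $\R^k$) so that $w_1,\ldots,w_k$ remain $\R$-linearly independent. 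Clearing denominators by a common positive integer and using that $K^{\tau}_M$ is a cone, I obtain $\eta_1,\ldots,\eta_k\!\in\!\Z^k\!\cap\!K^{\tau}_M$ which are $\R$-linearly independent, hence give a $\Q$-basis of $H^2(\X)$.

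For each such $\eta_i$, Lemma~\ref{positive_lmm} gives $c_1(L_{-\eta_i})\!=\!\frac{1}{\pi}[\om_{\eta_i}]$, and Lemma~\ref{Kahler_lmm} guarantees that $\om_{\eta_i}$ is K\"{a}hler with respect to the complex structure on $\X$. By the Kodaira Embedding Theorem \cite[p181]{GH}, a holomorphic line bundle whose first Chern class is represented by a positive $(1,1)$-form is ample; therefore each $L_{-\eta_i}$ is an ample line bundle, completing the first assertion.

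For the final claim about the K\"{a}hler cone, note that $\cK^{\tau}_M\!\subset\!H^2(\X;\R)$ is an open convex cone in a real vector space of dimension~$k$ (by Proposition~\ref{nequivcoh_prp}). The classes $[\om_{\eta_i}]\!=\!\pi\,c_1(L_{-\eta_i})\!\in\!\cK^{\tau}_M$ produced above are $\R$-linearly independent, so $\cK^{\tau}_M$ spans $H^2(\X;\R)$ and therefore has dimension~$k$. The only mildly delicate step is the density/approximation argument used to pass from an open cone in $\R^k$ to an integer basis inside it; everything else is direct bookkeeping with the tools already established.
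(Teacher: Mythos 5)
Your proposal is correct and follows essentially the same route as the paper: the paper's proof likewise combines Lemmas~\ref{positive_lmm}, \ref{Kahler_lmm}, and \ref{reg_locus_lmm}\ref{cone_lmm}\ref{indep_lmm} to produce $k$ integer vectors in $K^{\tau}_M$, linearly independent over $\Q$, whose line bundles $L_{-\eta_i}$ are positive (hence ample by Kodaira), and then concludes via the last statement of Proposition~\ref{nequivcoh_prp}; you merely make explicit the rational-approximation step and the identity $c_1(L_{-\eta})=\sum_{i=1}^k\eta_i\nH_i$ that the paper leaves implicit. One small caution: the targets $\eta_0+\tfrac{\epsilon}{2}e_i$ need not themselves be linearly independent for special $\eta_0$ (e.g.\ when the coordinate sum of $\eta_0$ equals $-\tfrac{\epsilon}{2}$), so it is cleaner to observe that the non-empty open cone $K^{\tau}_M$ cannot be contained in a hyperplane, pick any $k$ linearly independent vectors in it, and then approximate by rational ones, using that linear independence is an open condition.
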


\begin{proof}
By Lemmas~\ref{positive_lmm}, \ref{Kahler_lmm}, and
~\ref{reg_locus_lmm}\ref{cone_lmm}\ref{indep_lmm}, 
there exists a subset $\{\eta_1,\ldots,\eta_k\}\!\subseteq\!\Z^k$,
linearly independent over~$\Q$,
such that the line bundles $L_{-\eta_j}$ are positive.
The first Chern classes of these line bundles  form a $\Q$-basis of $H^2(X^{\tau}_M)$
by the last statement in Proposition~\ref{nequivcoh_prp}.
\end{proof}

\begin{prp}\label{Pic_prp}
If $(M,\tau)$ is a minimal toric pair,
the Picard group of $X^{\tau}_M$ is free of rank $k$ and has a $\Z$-basis given by
$\ga_1,\ldots,\ga_k$ defined by \e_ref{bdle_e}.
\end{prp}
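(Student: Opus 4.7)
The plan is to identify $\textnormal{Pic}(X^{\tau}_M)$ with $H^2(X^{\tau}_M,\Z)$ via the first Chern class map and then appeal to the last statement of Proposition~\ref{nequivcoh_prp}.

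First, since $X\!\equiv\!X^{\tau}_M$ is a compact complex manifold by Proposition~\ref{toric_prp}, the exponential sheaf sequence $0\!\to\!\Z\!\to\!\cO_X\!\to\!\cO_X^*\!\to\!0$ induces
$$H^1(X,\cO_X)\lra\textnormal{Pic}(X)\xrightarrow{\,c_1\,}H^2(X,\Z)\lra H^2(X,\cO_X).$$
To conclude that $c_1$ is an isomorphism, it suffices to prove that $H^1(X,\cO_X)$ and $H^2(X,\cO_X)$ vanish. I would deduce this from the standard fact that a smooth compact projective toric manifold admits an affine paving (for instance the Bia\l{}ynicki-Birula decomposition associated to a generic $\C^*$-subaction of the residual $\T^{N-k}$-action on $X$ coming from the inclusion $\T^k\!\hookrightarrow\!\T^N$ in \e_ref{Maction_e}). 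An affine paving forces the integer cohomology of $X$ to be concentrated in even degrees and torsion-free, and, combined with the $\partial\bar\partial$-lemma on the K\"ahler manifold $(X,\om_{\tau})$ of Lemma~\ref{Kahler_lmm}, it forces the Hodge structure on each $H^{2i}(X)$ to be pure of type $(i,i)$. Hence $H^i(X,\cO_X)=H^{0,i}(X)=0$ for all $i\!>\!0$, in particular for $i\!=\!1,2$.

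Given the isomorphism $c_1\!:\!\textnormal{Pic}(X)\!\xrightarrow{\sim}\!H^2(X,\Z)$, the minimality hypothesis enters through the last sentence of Proposition~\ref{nequivcoh_prp}: $H^2(X,\Z)$ is free of rank~$k$ with $\Z$-basis $\{\nH_1,\ldots,\nH_k\}$. By the definitions in \e_ref{bdle_e}, $c_1(\gamma_i)=-\nH_i$, so $\{c_1(\gamma_1),\ldots,c_1(\gamma_k)\}$ is also a $\Z$-basis of $H^2(X,\Z)$. Pulling back through $c_1^{-1}$ then shows that $\{\gamma_1,\ldots,\gamma_k\}$ is a $\Z$-basis of $\textnormal{Pic}(X^{\tau}_M)$, which in particular is free of rank~$k$.

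The main obstacle is the vanishing $H^i(X,\cO_X)\!=\!0$ for $i\!=\!1,2$; this is the only non-formal input and relies on a global geometric property of smooth projective toric varieties (affine paving, or equivalently, that the mixed Hodge structure on $H^*(X)$ is pure Tate). A wholly self-contained alternative would be to invoke the Cox-Little-Schenck exact sequence $0\!\to\!\Z^{N-k}\!\to\!\Z^N\!\to\!\textnormal{Pic}(X)\!\to\!0$ expressing Cartier divisors modulo principal ones in terms of the rays of the fan, identifying the generators with the $\nU_j$ of \e_ref{bdle_e} via Remark~\ref{submfld_rmk} and \e_ref{sjdfn_e}, and then using \e_ref{U_e} together with the minimality condition~\ref{M} in Definition~\ref{toric_dfn} to eliminate the $\nU_j$ in favor of the $\nH_i$; the bookkeeping there is elementary but notationally heavier than the cohomological route above.
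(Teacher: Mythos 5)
Your proposal follows the same route as the paper's proof: identify $\textnormal{Pic}(\X)$ with $H^2(\X;\Z)$ via $c_1$ using the exponential sequence and the vanishing $h^{0,1}(\X)=h^{0,2}(\X)=0$, then invoke the last statement of Proposition~\ref{nequivcoh_prp} together with $c_1(\ga_i^*)=\nH_i$ from \e_ref{bdle_e}. The only divergence is where the Hodge-number vanishing comes from. The paper extracts it from Proposition~\ref{nequivcoh_prp} itself: the presentation gives $H^1(\X;\Q)=0$ and shows that $H^2(\X;\Q)$ is spanned by the classes $\nH_i$, which are first Chern classes of holomorphic line bundles and hence of type $(1,1)$; the Hodge decomposition on the K\"ahler manifold $(\X,\om_{\tau})$ of Lemma~\ref{Kahler_lmm} then yields $h^{0,1}=h^{0,2}=0$ with no further geometric input. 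You instead import the Bia\l{}ynicki-Birula/affine-paving fact, which also works, but one clause needs tightening: evenness and torsion-freeness of $H^*(\X;\Z)$ together with the $\partial\bar\partial$-lemma do not by themselves force purity of type $(i,i)$ (a K3 surface has cohomology concentrated in even degrees, torsion-free, and $h^{0,2}=1$); what you actually need from the paving is that the closures of the cells are algebraic cycles whose classes span $H^{2i}(\X;\Q)$ and therefore lie in $H^{i,i}$. With that repair your argument is correct, just heavier than necessary given what Proposition~\ref{nequivcoh_prp} already provides; your alternative sketch via the divisor sequence $0\to\Z^{N-k}\to\Z^N\to\textnormal{Pic}(\X)\to0$ would be a genuinely different (and more combinatorial) route, but as written it is only an outline.
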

\begin{proof}
The first Chern class homomorphism is an isomorphism because $h^{0,1}(X^{\tau}_M)\!=\!h^{0,2}(X^{\tau}_M)\!=\!0$
which in turn follows from Proposition~\ref{nequivcoh_prp}.
\end{proof}

\begin{rmk}\label{ctan_rmk}
If $(M,\tau)$ is a toric pair, there is a short exact sequence
\begin{equation}\label{TXses_e}
0\lra \cO_{\X}^{\oplus k} \stackrel{F}{\lra} \bigoplus_{j=1}^N L_{-M_j}
\stackrel{G}{\lra} T\X \lra0.
\end{equation}
Specifically, we can take
\begin{alignat*}{2}
F([z],e_i)&\equiv\big[z,m_{i1}z_1,m_{i2}z_2,\ldots,m_{iN}z_N\big]
&\qquad&\forall\,i\!\in\![k],\,[z]\!\in\!\X,\\
G(z,y_1,\ldots,y_N)&\equiv \sum_{j=1}^Ny_j\nd_z\pi
 \left(\frac{\partial}{\partial z_j}\Big|_z\right),
&\qquad&\forall\,z\in\wt{X}^{\tau}_M,\,y_1,\ldots,y_N\!\in\!\C,
\end{alignat*}
where $\{e_i:i\!\in\![k]\}$ is the 
standard basis for $\C^k$ and $\pi\!:\!\wt{X}^{\tau}_M\!\lra\!\X$ is the projection.
Thus,
\begin{equation}\label{ctan_e}
c_1\left(T\X\right)=\sum_{j=1}^N \nU_j.
\end{equation}
\end{rmk}

\subsection{Torus action and equivariant notation}
\label{equiv-intro_sec}

The \sf{equivariant cohomology} of a topological space $X$ endowed 
with a continuous $\T^N$\!-action is
$$H^*_{\T^N}(X)\equiv H^*\left(E\T^N\!\times_{\T^N}\!X\right),$$
where $E\T^N\!\equiv\!(\C^{\i}-0)^N$ is the classifying space for $\T^N$\!.
In particular, the equivariant cohomology of a point is 
\begin{equation*}
H^*_{\T^N}(point)\cong H^*_{\T^N}\equiv H^*(\left(\P^{\i})^N\right)
=\Q[\al_1,\ldots,\al_N]\equiv\Q[\al],
\end{equation*}
where $\al_j\!\equiv\!c_1(\pi_j^*\cO_{\P^{\i}}(1))$,
$\pi_j\!:\!(\P^{\i})^N\!\lra\!\P^{\i}$
is the projection onto the $j$-th component and
$\cO_{\P^{\i}}(1)$ is dual to the tautological line bundle  over~$\P^{\i}$.
The \sf{equivariant Euler class} of an oriented vector bundle $V\!\lra\!X$
endowed with a lift of the $\T^N$-action on $X$ is
$$\E(V)\equiv e(E\T^N\!\times_{\T^N}\!V)\in H^*_{\T^N}(X).$$
A $\T^N$-equivariant map $f\!:\!X\!\lra\!Y$ between compact oriented manifolds
induces a push-forward map 
$$f_*\!:\!H^{s}_{\T^N}(X) \lra H^{s+\dim Y-\dim X}_{\T^N}(Y)$$ 
characterized~by
\begin{equation}\label{push-fwd_e}
\int_{X}(f^*\eta)\eta'\!=\!\int_Y\eta
(f_*\eta')\qquad\forall\,\eta\!\in\!H^*_{\T^N}(Y),\eta'\!\in\!H^*_{\T^N}(X).
\end{equation}
If $Y$ is a point, $f_*$ is the integration along the fiber homomorphism
$\int_{X}\!:\!H^s_{\T^N}(X)\!\lra\!H^{s-\dim X}_{\T^N}$.
The push-forward map $f_*$ extends to a homomorphism between the modules of fractions
with denominators in $\Q[\al]$; in particular,
the integration along the fiber homomorphism extends to
$$\int_X:H^*_{\T^N}(X)\otimes_{\Q[\al]}\Q(\al)\lra\Q(\al),\qquad\textnormal{where}
\qquad\Q(\al)\equiv\Q(\al_1,\ldots,\al_N)$$
is the field of fractions of $\Q[\al]$.
If $X$ is a compact oriented manifold on which $\T^N$ acts smoothly, then, 
by the classical Localization Theorem~\cite{ABo}
\begin{equation}\label{ABo_e}
\Q[\al]\ni\int_{X}\eta=\sum_{F\subset X^{\T^N}}\int_{F}\frac{\eta}{\E(N_{F/X})}\in\Q(\al),
\qquad\forall\,\eta\!\in\!H^*_{\T^N}(X),
\end{equation}
where the sum runs over the components of the $\T^N$ pointwise fixed locus $X^{\T^N}$ of $X$.

\begin{lmm}\label{orbit_lmm}
If $(M,\tau)$ is a toric pair, 
$(\T^N\!\cdot\!z/\T^k)$ is diffeomorphic to $\T^{|\supp(z)|-k}$
for every $z\!\in\!\wt{X}^{\tau}_M$.
\end{lmm}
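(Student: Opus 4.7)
Fix $z\!\in\!\wt{X}^{\tau}_M$ and set $J\!\equiv\!\supp(z)$. The plan is to identify $\T^N\!\cdot\!z$ with the complex torus $(\C^*)^{|J|}$, observe that the $\T^k$-action on this orbit corresponds to a split embedding of complex tori, and read off the quotient.

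First I would compute the $\T^N$-stabilizer of $z$. Since $z_j\!=\!0$ for $j\!\notin\!J$ and $z_j\!\neq\!0$ for $j\!\in\!J$, the stabilizer is exactly
$$\textnormal{Stab}_{\T^N}(z)=\big\{t\!\in\!\T^N:t_j\!=\!1~\forall\,j\!\in\!J\big\}\cong(\C^*)^{N-|J|}.$$
Hence the orbit map induces a diffeomorphism $\T^N\!\cdot\!z\!\cong\!\T^N/\textnormal{Stab}_{\T^N}(z)\!\cong\!(\C^*)^{|J|}$; concretely, the inverse sends $(s_j)_{j\in J}$ to the point whose $j$-th coordinate is $s_jz_j$ for $j\!\in\!J$ and $0$ otherwise.

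Next I would trace through the $\T^k$-action under this identification. By \e_ref{Maction_e}, $\T^k$ acts on $\T^N$ through the homomorphism $t\!\mapsto\!(t^{M_1},\!\ldots\!,t^{M_N})$; restricting to $\T^N\!\cdot\!z$ and using the identification above, the induced $\T^k$-action is translation by the image of the homomorphism
$$\rho:\T^k\lra(\C^*)^{|J|},\qquad t\longmapsto\big(t^{M_j}\big)_{j\in J}.$$
By Lemma~\ref{moment_lmm}\ref{charts_lmm2}, there exists $I\!\in\!\V$ with $I\!\subseteq\!J$, and by \ref{S} in Definition~\ref{toric_dfn} the submatrix $M_I$ lies in $\textnormal{GL}_k(\Z)$. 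Composing $\rho$ with the projection $(\C^*)^{|J|}\!\lra\!(\C^*)^{I}$ onto the coordinates indexed by $I$ yields $t\!\mapsto\!t^{M_I}$, which is an isomorphism of complex tori.

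Finally I would extract the quotient from the split exact sequence of cocharacter lattices. The map $\rho$ corresponds to the $\Z$-linear map $M_J^{\tr}:\Z^k\!\lra\!\Z^{|J|}$, which has the left inverse $M_I^{-\tr}\!\circ\!\textnormal{proj}_I$; consequently
$$0\lra\Z^k\xrightarrow{M_J^{\tr}}\Z^{|J|}\lra\Z^{|J|-k}\lra 0$$
is a split short exact sequence of free abelian groups. Applying $(-)\!\otimes_{\Z}\!\C^*$ gives a split short exact sequence of complex tori
$$1\lra\T^k\xrightarrow{\rho}(\C^*)^{|J|}\lra(\C^*)^{|J|-k}\lra 1,$$
so $\T^N\!\cdot\!z/\T^k\!\cong\!(\C^*)^{|J|}/\rho(\T^k)\!\cong\!\T^{|J|-k}\!=\!\T^{|\supp(z)|-k}$. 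The only nontrivial ingredient is the use of \ref{S} in Definition~\ref{toric_dfn} to produce a unimodular submatrix, which makes the embedding split over $\Z$; this is what distinguishes the quotient from a general torus quotient (which could pick up finite factors).
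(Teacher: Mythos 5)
Your argument is correct, but it proceeds differently from the paper. The paper's proof is coordinate-explicit: after invoking Lemma~\ref{moment_lmm}\ref{charts_lmm2} and \ref{S} in Definition~\ref{toric_dfn} to produce $I\!\in\!\V$ with $I\!\subseteq\!\supp(z)$ and $\det M_I\!\in\!\{\pm1\}$, it simply writes down the map $[y]\!\mapsto\!\big(y_I^{-M_I^{-1}M_s}y_s\big)_{s\in\supp(z)-I}$ (the relevant components of the chart $h_I$ from the proof of Lemma~\ref{top_lmm}\ref{manifold_lmm}) together with an explicit inverse, so the diffeomorphism is realized directly inside the holomorphic charts of $\X$. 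You instead argue structurally: you compute the stabilizer to identify $\T^N\!\cdot\!z$ with $(\C^*)^{|\supp(z)|}$, observe that $\T^k$ acts by translation through $\rho(t)=(t^{M_j})_{j\in\supp(z)}$, and use the unimodular block $M_I$ to split the cocharacter sequence $0\to\Z^k\xrightarrow{M_J^{\tr}}\Z^{|\supp(z)|}\to\Z^{|\supp(z)|-k}\to0$ over $\Z$, whence the quotient of tori is again a torus of the expected dimension. The key inputs are identical (Lemma~\ref{moment_lmm}\ref{charts_lmm2} plus condition \ref{S}); what differs is the packaging. Your route makes transparent exactly why unimodularity matters (it rules out finite quotient factors and makes $\rho(\T^k)$ a closed, split subtorus), while the paper's route has the practical advantage that the diffeomorphism is expressed in the same chart coordinates used elsewhere (e.g.\ in Remark~\ref{submfld_rmk} and Corollary~\ref{fixed_crl}), so the identification of fixed points and fixed curves inside $\X$ can be read off immediately. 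If you wanted to match the paper's statement pedantically, you could add one sentence noting that your group-theoretic quotient agrees, as a smooth space, with the image of the orbit in $\X$ — your split projection is precisely the paper's chart formula, so this is immediate — but as the lemma is stated your proof is complete.
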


\begin{proof}
By Lemma~\ref{moment_lmm}\ref{charts_lmm2} and \ref{S} in Definition~\ref{toric_dfn},
there exists $J\!\subseteq\!\supp(z)$ with $|J|\!=\!k$ and $\det\,M_J\!\in\!\{\pm 1\}$.
The map 
$$\frac{\T^N\!\cdot\!z}{\T^k}\ni[y_1,\ldots,y_N]\lra
\left(y_J^{-M_J^{-1}M_s}y_s\right)_{s\,\in\,\supp(z)-J}\in\T^{|\supp(z)|-k}$$
is a diffeomorphism with inverse
$$\T^{|\supp(z)|-k}\ni\la\lra[t_1z_1,\ldots,t_Nz_N]\in\frac{\T^N\!\cdot\!z}{\T^k},\qquad
\textnormal{where}\qquad
t_j\equiv\begin{cases}
1,&\hbox{ if }j\!\notin\!\supp(z),\\
\frac{\la_s}{z_{j_s}},&\hbox{ if }j=j_s,\\
\frac{1}{z_j},&\hbox{ if }j\!\in\!J,
\end{cases}$$ and $\supp(z)\!-\!J\!\equiv\!\{j_1\!<\!\ldots\!<\!j_{|\supp(z)|-k}\}$; see the proof of Lemma~\ref{top_lmm}\ref{manifold_lmm}
in Section~\ref{nequiv-intro_sec}.
\end{proof}

\begin{crl}\label{fixed_crl}
\begin{enumerate}[label=(\emph{\alph*}),leftmargin=*]
\item\label{fpt_crl} The $\T^N$\!-fixed points in $X^{\tau}_M$ are the points $[J]$ of \e_ref{fpt_e}.
\item\label{fc_crl} The closed $\T^N$\!-fixed curves in $X^{\tau}_M$ are the submanifolds
$X^{\tau}_M(J)$ of Remark~\ref{submfld_rmk} with $|J|\!=\!k\!+\!1$;
all such tuples~$J$ are of the form $J\!=\!I_1\!\cup\!I_2$ with
with $I_1,I_2\!\in\!\V$ and $|I_1\!\cap\!I_2|\!=\!k\!-\!1$.
These curves are biholomorphic to~$\P^1$.
\end{enumerate}\end{crl}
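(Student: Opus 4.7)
The plan is to deduce both parts directly from Lemma~\ref{orbit_lmm}, together with Remarks~\ref{submfld_rmk} and~\ref{1dim_rmk}, by translating ``$[z]$ has a $\T^N$\!-orbit of complex dimension $d$'' into a condition on $|\supp(z)|$.

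For part~\ref{fpt_crl}, observe that $[z]\!\in\!\X$ is $\T^N$\!-fixed if and only if $\T^N\!\cdot\!z\!\subseteq\!\T^k\!\cdot\!z$, i.e.\ $\T^N\!\cdot\!z/\T^k$ is a single point. By Lemma~\ref{orbit_lmm} this is equivalent to $|\supp(z)|\!=\!k$. Combined with Lemma~\ref{moment_lmm}\ref{charts_lmm2}, we get that $\supp(z)\!=\!J$ for some $J\!\in\!\V$. Since $\det M_J\!\in\!\{\pm1\}$ by \ref{S} in Definition~\ref{toric_dfn}, the homomorphism $t\!\mapsto\!(t^{M_{j_1}},\ldots,t^{M_{j_k}})$ is surjective onto $\T^k$, so there exists $t\!\in\!\T^k$ with $(t\!\cdot\!z)_j\!=\!1$ for all $j\!\in\!J$; this gives $[z]\!=\![J]$ as defined in \e_ref{fpt_e}. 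Conversely, each $[J]$ is manifestly $\T^N$\!-fixed by the same argument applied to any $t'\!\in\!\T^N$.

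For part~\ref{fc_crl}, a closed $\T^N$\!-invariant irreducible curve $C\!\subset\!\X$ is the closure of a $1$-dimensional $\T^N$\!-orbit $\T^N\!\cdot\![z]$, and by Lemma~\ref{orbit_lmm} this dimension equals $|\supp(z)|\!-\!k$. Thus $|\supp(z)|\!=\!k\!+\!1$. Setting $J\!\equiv\!\supp(z)$, we then have $C\!\subseteq\!X^{\tau}_M(J)$. Since $z\!\in\!\wt{X}^{\tau}_M$ implies $\C^{\supp(z)}\!\cap\!\mu_M^{-1}(\tau)\!\neq\!\emptyset$, we have $P^{\tau}_M\!\cap\!\R^J\!\neq\!\emptyset$, so by Remark~\ref{submfld_rmk} $X^{\tau}_M(J)$ is a one-dimensional complex manifold; as $C$ is a closed $1$-dimensional submanifold of $X^{\tau}_M(J)$, the equality $C\!=\!X^{\tau}_M(J)$ follows. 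Conversely, every such $X^{\tau}_M(J)$ with $|J|\!=\!k\!+\!1$ and $P^{\tau}_M\!\cap\!\R^J\!\neq\!\emptyset$ is a $\T^N$\!-invariant irreducible closed curve since it is the closure of the orbit of any $[z]\!\in\!\X$ with $\supp(z)\!=\!J$ (such $z$ exists by Lemma~\ref{moment_lmm}\ref{real_lmm}).

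Finally, the statement that $J\!=\!I_1\!\cup\!I_2$ with $I_1,I_2\!\in\!\V$ and $|I_1\!\cap\!I_2|\!=\!k\!-\!1$ is recorded in Remark~\ref{submfld_rmk}: the $I\!\in\!\V$ contained in $J$ correspond bijectively via $\iota_J$ to the vertices of $P^{\tau}_{M_J}$, which is a $1$-dimensional polytope by Lemma~\ref{moment_lmm}\ref{Pdim_lmm}, hence has exactly two vertices; since each has support of size $k$ and they together span $J$ of size $k\!+\!1$, they share exactly $k\!-\!1$ elements. The biholomorphism $X^{\tau}_M(J)\!\cong\!X^{\tau}_{M_J}\!\cong\!\P^1$ follows from Remark~\ref{submfld_rmk} combined with Remark~\ref{1dim_rmk} applied to the $k\!\times\!(k\!+\!1)$ submatrix $M_J$. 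The only mildly subtle step is the reduction ``fixed curve $=$ closure of a $1$-dimensional orbit,'' but this is automatic since any $\T^N$\!-invariant irreducible variety contains a dense orbit by Chevalley's theorem on constructible sets.
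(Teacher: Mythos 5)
Your argument follows the paper's own route: the corollary is deduced from Lemma~\ref{orbit_lmm} (the orbit of $[z]$ has complex dimension $|\supp(z)|-k$) together with Remarks~\ref{submfld_rmk} and~\ref{1dim_rmk}, which is exactly what the paper's two-line proof cites; you have simply filled in the details, and those details are correct, including the identification of a fixed point with $[J]$ via condition (ii) of Definition~\ref{toric_dfn} and the count $|I_1\cap I_2|=2k-|I_1\cup I_2|=k-1$.

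The one flawed point is the closing justification of the reduction ``closed $\T^N$\!-invariant irreducible curve $=$ closure of a one-dimensional orbit.'' The claim that ``any $\T^N$\!-invariant irreducible variety contains a dense orbit'' is false in general -- a pointwise fixed subvariety is invariant and contains no dense orbit -- and Chevalley's theorem on constructible sets does not give it; what Chevalley does give is that orbits are locally closed. The correct argument in the present situation uses part~\ref{fpt_crl}: the $\T^N$\!-fixed points of $\X$ are the finitely many points $[J]$ with $J\!\in\!\V$, so an irreducible closed invariant curve $C$ must contain a non-fixed point $[z]$; its orbit is connected and is not a single point, hence has complex dimension $1$, hence (being locally closed) is open and dense in $C$. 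With this one-line repair, and noting that the orbit closure fills out $X^{\tau}_M(J)\cong\P^1$ because the complement of the open orbit there consists of the two fixed points, your proof is complete and matches the paper's intended argument.
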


\begin{proof}
The first two statements follow from Lemma~\ref{orbit_lmm}. The third follows from
the last part of Remark~\ref{submfld_rmk}.
The last follows from Remarks~\ref{submfld_rmk} and \ref{1dim_rmk}. 
\end{proof}

We next consider lifts of the standard action of $\T^N$ on $\X$ to the line bundles
$L_{\bp}$ of \e_ref{Lp_e} which will be used in describing the equivariant cohomology of $\X$.
One such lift is the canonical one
\begin{equation}\label{trivact_e}
(t_1,\ldots,t_N)\!\cdot\![z_1,\ldots,z_N,c]\equiv [t_1z_1,\ldots,t_Nz_N,c]
\end{equation}
for all $(t_1,\ldots,t_N)\!\in\!\T^N$, $(z_1,\ldots,z_N)\!\in\!\wt{X}_M^{\tau}$, and $c\!\in\!\C$. 
We denote by $$E\T^N\!\times_{\triv}\!L_{\bp}\!\lra\!E\T^N\!\times_{\T^N}\!X^{\tau}_M$$ 
the induced line bundle.
Another lift is given~by
\begin{equation}\label{jact_e}
(t_1,\ldots,t_N)\!\cdot\![z_1,\ldots,z_N,c]\equiv[t_1z_1,\ldots,t_Nz_N,t_jc]
\end{equation}
for all $(t_1,\ldots,t_N)\!\in\!\T^N$,
$(z_1,\ldots,z_N)\!\in\!\wt{X}^{\tau}_M$, and $c\!\in\!\C$. We denote  by
$$E\T^N\!\times_j\!L_{\bp}\!\lra\!E\T^N\!\times_{\T^N}\!X^{\tau}_M$$
the induced line bundle.
These line bundles are related by isomorphisms
\begin{alignat}{1}
\label{j_e} (E\T^N\!\times_{\triv}\!L_{\bp})\!\otimes\!(E\T^N\!\times_j\!L_0)&\cong E\T^N\!\times_j\!L_{\bp},\\
\label{jL0_e} E\T^N\!\times_j\!L_0&\cong \pr_1^*\pi_j^*\cO_{\P^{\i}}(-1),
\end{alignat}
where $\pr_1:\!E\T^N\!\times_{\T^N}\!X^{\tau}_M\!\lra(\P^{\i})^N$ denotes the natural projection.
The first of these follows by considering the isomorphism
$$L_{\bp}\!\otimes\!L_0\!\lra\!L_{\bp},\quad[z,c_1]\!\otimes[z,c_2]\lra [z,c_1c_2]
\qquad\forall\,z\!\in\wt{X}^{\tau}_M,c_1,c_2\!\in\!\C$$
which is $\T^N$-equivariant with respect to
the $\T^N$ action on $L_{\bp}\!\otimes\!L_0$ obtained by tensoring \e_ref{trivact_e} with \e_ref{jact_e}  
and the action \e_ref{jact_e} on $L_{\bp}$.
The second is given by
$$E\T^N\!\times_j\!L_0\ni(e,z,c)
\lra(e,z,ce_j)\in \pr_1^*\pi_j^*\cO_{\P^{\i}}(-1)\quad\forall\,
e\!=\!(e_1,\ldots,e_N)\!\in\!E\T^N,z\!\in\!\wt{X}^{\tau}_M,c\!\in\!\C.$$

For all $j\!\in\![N],i\!\in\![k]$ and with $\ga_i$ defined by \e_ref{bdle_e}, let
\begin{equation}\label{ebdle_e}
{\bf[D_j]}\!\equiv\!E\T^N\!\times_j\!L_{-M_j},\quad
\bga_i\!\equiv\!E\T^N\!\times_{\triv}\!\ga_i; \qquad
u_j\!\equiv\!c_1({\bf[D_j]}),\quad
x_i\!\equiv\!c_1({\bga_i^*})\!\in\!H^*_{\T^N}(X^{\tau}_M).
\end{equation}
For each $J\!\in\!\V$, the inclusion $[J]\!\hookrightarrow\!X^{\tau}_M$ 
induces a restriction map
\begin{equation}\label{dfnerestr_e}
\cdot\,(J),~\cdot\big|_{J}:H^*_{\T^N}(X^{\tau}_M)\lra H^*_{\T^N}([J])\cong H^*_{\T^N}.
\end{equation}
By \e_ref{U_e}, \e_ref{j_e} and \e_ref{jL0_e},
\begin{equation}\label{u_e}
u_j=\sum_{i=1}^km_{ij}x_i\!-\!\al_j\qquad\forall\,j\!\in\![N].
\end{equation}
For each $j\!\in\![N]$, the section \e_ref{sjdfn_e} of $L_{-M_j}\!\lra\!\X$ is 
$\T^N$-equivariant with respect to the action~\e_ref{jact_e} and thus
induces a section~$\bs_j$ of $[\bD_j]$ over $E\T^N\!\times\!_{\T^N}\X$.
If $J\!\in\!\V$ and $j\!\in\!J$, $\bs_j$ does not vanish on 
$E\T^N\!\times\!_{\T^N}[J]$ and thus
\begin{equation}\label{fprestr_e}
 J\!\in\!\V\qquad\Lra\qquad u_j(J)=0~~\forall\,j\!\in\!J.
\end{equation}
On the other hand, if $J\!\in\!\cE^{\tau}_M$, with $\cE^{\tau}_M$  defined by \e_ref{cE_e},
then $\bigoplus_{j\in J}\bs_j$ is a nowhere zero section of $\bigoplus_{j\in J}[\bD_j]$ and thus
\begin{equation}
\label{0prod_e} J\!\in\!\cE^{\tau}_M\qquad\Lra\qquad
\prod_{j\in J}u_j=0\in H^*_{\T^N}(\X).
\end{equation}

\begin{prp}\label{equivcoh_prp}
Let $(M,\tau)$ be a toric pair.
\begin{enumerate}[label=(\emph{\alph*}),leftmargin=*]
\item\label{erestr_prp} If $J\!=\!(j_1\!<\!\ldots\!<\!j_k)\!\in\!\V$, 
\begin{equation*}
\Big(x_1(J)\,\,x_2(J)\,\,\ldots\,\,x_k(J)\Big)=
\Big(\al_{j_1}\,\,\al_{j_2}\,\,\ldots\,\,\al_{j_k}\Big)M_J^{-1}.
\end{equation*}
\item\label{equivcohr_prp} With $x_i$ and $u_j$ defined by \e_ref{ebdle_e},
\begin{equation}\label{Xequivcohom_e}
H^*_{\T^N}\!\left(X_M^{\tau}\right)=\frac{\Q[\al]\left[x_1,x_2,\ldots,x_k,u_1,u_2,\ldots,u_N\right]}
{\left(u_j-\sum\limits_{i=1}^km_{ij}x_i+\al_j,~~1\!\le j\!\le N\right)+
\left(\prod\limits_{j\in J}u_j\!: J\!\in\!\cE^{\tau}_M\right)}.
\end{equation}
If in addition $P\!\in\!H^*_{\T^N}(X^{\tau}_M)$, then
$P\!=\!0$ if and only if $P(J)\!=\!0$ for all $J\!\in\!\V$.
\end{enumerate}\end{prp}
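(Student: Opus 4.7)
My plan consists of a direct computation for part~(a) and a three-step argument for part~(b): verify the relations hold in $H^*_{\T^N}(\X)$, compare $\Q[\al]$-module ranks using equivariant formality, and then deduce the fixed-point injectivity from localization.

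For part~(a), I would restrict the relation \e_ref{u_e} to the fixed point $[J]$ via the homomorphism \e_ref{dfnerestr_e} and use \e_ref{fprestr_e}, which asserts $u_j(J)=0$ for $j\in J$. This produces the linear system $\sum_{i=1}^{k}m_{ij}\,x_i(J)=\al_j$ for $j\in J$, that is, $\bigl(x_1(J)\,\ldots\,x_k(J)\bigr)M_J=(\al_{j_1}\,\ldots\,\al_{j_k})$. Multiplying on the right by $M_J^{-1}$ — which exists with integer entries by \ref{S} in Definition~\ref{toric_dfn} — yields the claimed formula.

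For the ring presentation in part~(b), the relations on the right-hand side of \e_ref{Xequivcohom_e} hold in $H^*_{\T^N}(\X)$ by \e_ref{u_e} and \e_ref{0prod_e}, so I get a $\Q[\al]$-algebra homomorphism $\Phi$ from the right-hand side to $H^*_{\T^N}(\X)$. The key input for both surjectivity and injectivity of $\Phi$ is the equivariant formality of $\X$: since $H^*(\X)$ is concentrated in even degrees by Proposition~\ref{nequivcoh_prp}, the Leray--Serre spectral sequence of the Borel fibration $\X\hookrightarrow E\T^N\times_{\T^N}\X\to B\T^N$ degenerates at $E_2$, exhibiting $H^*_{\T^N}(\X)$ as a free $\Q[\al]$-module of rank $\dim_\Q H^*(\X)$ with $H^*_{\T^N}(\X)/\al H^*_{\T^N}(\X)\cong H^*(\X)$. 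Setting $\al=0$ in the right-hand side of \e_ref{Xequivcohom_e} recovers exactly the non-equivariant presentation of Proposition~\ref{nequivcoh_prp}, so a graded Nakayama argument shows that the right-hand side is a finitely generated graded $\Q[\al]$-module whose reduction modulo $\al$ has $\Q$-dimension $\dim_\Q H^*(\X)$. Thus $\Phi$ reduces modulo $\al$ to an isomorphism and is a surjection of free $\Q[\al]$-modules of equal finite rank, hence an isomorphism.

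For the final statement, the implication $P=0\Rightarrow P(J)=0$ is immediate. For the converse I would invoke the Atiyah--Bott localization theorem \e_ref{ABo_e} together with the freeness established above: by Corollary~\ref{fixed_crl}\ref{fpt_crl} we have $\X^{\T^N}=\{[J]:J\in\V\}$, so localization yields an isomorphism
\begin{equation*}
H^*_{\T^N}(\X)\otimes_{\Q[\al]}\Q(\al)\;\isomto\;\bigoplus_{J\in\V}H^*_{\T^N}([J])\otimes_{\Q[\al]}\Q(\al)
\end{equation*}
implemented by the restrictions $\cdot(J)$. Since $H^*_{\T^N}(\X)$ is $\Q[\al]$-free, the natural map $H^*_{\T^N}(\X)\to H^*_{\T^N}(\X)\otimes_{\Q[\al]}\Q(\al)$ is injective, so the combined restriction $P\mapsto(P(J))_{J\in\V}$ is already injective over $\Q[\al]$. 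The main obstacle is the injectivity of $\Phi$, which hinges on verifying that the algebraic presentation on the right-hand side of \e_ref{Xequivcohom_e} is flat over $\Q[\al]$ of the expected rank. This is a concrete commutative-algebra problem concerning how the Stanley--Reisner-type ideal deforms under the substitution $\sum_i m_{ij}x_i\mapsto\sum_i m_{ij}x_i-\al_j$, and it is effectively the only non-routine step — the rest is bookkeeping using relations already proved in Section~\ref{equiv-intro_sec}.
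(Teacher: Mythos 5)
Your part~(a), the verification that the relations in \e_ref{Xequivcohom_e} hold (via \e_ref{u_e} and \e_ref{0prod_e}), and the freeness of $H^*_{\T^N}(\X)$ over $\Q[\al]$ (your spectral-sequence/formality argument is the paper's Leray--Hirsch step in different clothing) are all fine, as is surjectivity of $\Phi$ by graded Nakayama. The genuine gap is the injectivity of $\Phi$: you reduce it to the claim that the quotient ring on the right-hand side of \e_ref{Xequivcohom_e} is flat (free) over $\Q[\al]$ of rank $\dim_{\Q}H^*(\X)$, and you leave that claim unproved while describing it as ``the only non-routine step.'' But that claim \emph{is} the substance of the presentation -- it amounts to a Cohen--Macaulay-type statement for a Stanley--Reisner quotient with respect to the linear forms $\sum_i m_{ij}x_i-\al_j$ -- so as written the proposal does not establish part~(b). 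The paper closes exactly this point by a direct fixed-point computation: by \ref{S} in Definition~\ref{toric_dfn} any class is a polynomial in $u_1,\ldots,u_N$ over $\Q[\al]$; restricting to $[J]$ and setting $\al_i=0$ for $i\in J$, \e_ref{fprestr_e} gives $u_j(J)=0$ for $j\in J$ while \e_ref{ual_e} gives that $u_j(J)$ becomes $-\al_j$ for $j\notin J$, and from this one concludes that any class vanishing at every $[J]$, $J\in\V$, lies in the ideal $\left(\prod_{j\in J}u_j:J\in\cE^{\tau}_M\right)$. This single argument yields both the absence of further relations and the final statement of~(b).

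Two further remarks. First, the gap is closable inside your own framework without any flatness of the source: since the target $H^*_{\T^N}(\X)$ is a free $\Q[\al]$-module, the graded surjection $\Phi$ splits, so the source is isomorphic to $H^*_{\T^N}(\X)\oplus\ker\Phi$ as graded $\Q[\al]$-modules; reducing modulo $(\al_1,\ldots,\al_N)$ and comparing dimensions (both reductions have $\Q$-dimension $\dim_{\Q}H^*(\X)$, and all graded pieces are finite dimensional) forces $\ker\Phi=(\al_1,\ldots,\al_N)\ker\Phi$, hence $\ker\Phi=0$ by a lowest-degree argument -- so the flatness detour is unnecessary. Second, your proof of the last statement invokes the restriction-isomorphism form of Borel localization over $\Q(\al)$ together with freeness; this is standard and acceptable, but it is stronger than the integration formula \e_ref{ABo_e} actually quoted in the paper, whereas the paper obtains the statement with no extra input from the same fixed-point argument that repairs the injectivity gap.
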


\begin{proof}
\ref{erestr_prp} This follows from \e_ref{u_e} and \e_ref{fprestr_e}.\\
\ref{equivcohr_prp} By Remark~\ref{gencoh_rmk}, 
there exists $B\!\subset\!(\Z^{\ge 0})^k$ such that
$\{\nH^{\bp}:\bp\!\in\!B\}$ is a $\Q$-basis for $H^*(\X)$. 
The~map 
\begin{equation*}
H^*\!(X^{\tau}_M)\ni \nH^{\bp}\lra x^{\bp}\in H^*_{\T^N}\!(X^{\tau}_M)\qquad\forall\,\bp\!\in\!B
\end{equation*}
defines a cohomology extension of the fiber for the fiber bundle
$E\T^N\!\times_{\T^N}\!X^{\tau}_M\!\lra\!(\P^{\i})^N$.
Thus, by the Leray-Hirsch Theorem \cite[Chapter~5]{Spa}, the map
$$H^*_{\T^N}\!\otimes\!H^*(X^{\tau}_M)\ni
P\otimes \nH^{\bp}\lra Px^{\bp}\in H^*_{\T^N}(X^{\tau}_M)\qquad\forall\,\bp\!\in\!B$$
is an isomorphism of vector spaces.
The relations in~\e_ref{Xequivcohom_e} hold by~\e_ref{u_e} and~\e_ref{0prod_e}.
We show below that there are no other relations and simultaneously 
verify the last claim.

Suppose $P\!\in\!H^*_{\T^N}(X^{\tau}_M),P(J)\!=\!0$ for all $J\!\in\!\V$.
By \ref{S} in Definition~\ref{toric_dfn}, 
any element $P$ of $H^*_{\T^N}(X^{\tau}_M)$
is a polynomial in $u_1,\ldots,u_N$ with coefficients in $\Q[\al]$.
If $J\!\in\!\V$ and $j\!\in\![N]\!-\!J$, then
\begin{equation}\label{ual_e}
u_j(J)\Big|_{\begin{subarray}{c}\al_{i}=0\,\forall\,i\in J\end{subarray}}\!=\!-\al_j
\end{equation}
by \e_ref{u_e} and \ref{erestr_prp}.
By \e_ref{fprestr_e} and \e_ref{ual_e},
whenever $u_{i_1}^{a_1}\ldots u_{i_s}^{a_s}$ is a monomial appearing in $P$ and
$J\!\in\!\V$, $\{i_1,\ldots,i_s\}\!\cap\!J\!\neq\!\emptyset$.
This shows that 
$$P\!\in\!H^*_{\T^N}(X^{\tau}_M),P(J)\!=\!0~~\forall\,J\!\in\!\V
\qquad\Lra\qquad P\!\in\!\bH',$$
where $\bH'$ is the ideal
$$\bH'\equiv\Big(u_{i_1}\ldots u_{i_s}:\{i_1,\ldots,i_s\}\!\cap\!J\!\neq\!\emptyset~
        \forall\,J\!\in\!\V\Big)\!\subset\!\Q[\al][u_1,\ldots,u_N].$$
Since $\bH'\!\subseteq\!(\prod_{j\in J} u_j:
J\!\in\!\cE^{\tau}_M)$ by Lemma~\ref{moment_lmm}\ref{charts_lmm2},
$$P\!\in\!H^*_{\T^N}(X^{\tau}_M),P(J)\!=\!0~~~\forall\,J\!\in\!\V
\qquad\Lra\qquad P\!\in\!\left(\prod\limits_{j\in J} u_j:
J\!\in\!\cE^{\tau}_M\right).$$
By \e_ref{0prod_e}, this implies that $P\!=\!0\in H^*_{\T^N}(X^{\tau}_M)$
if $P(J)\!=\!0$ for all $J\!\in\!\V$.
\end{proof}

For every $J\!\in\!\V$, let 
\begin{equation*}
\phi_J\!\equiv\!\prod\limits_{\begin{subarray}{c}j\in[N]-J\end{subarray}}u_j.
\end{equation*}
By \e_ref{TXses_e} and \e_ref{fprestr_e}, 
\begin{equation} \label{tan_e}
\phi_J(J)=\E\left(T_{[J]}X^{\tau}_M\right), \qquad
\phi_J(I)=0~~~\forall~I\in \V\!-\!\{J\}.
\end{equation}
Thus, by the Localization Theorem \e_ref{ABo_e}, 
\begin{equation}\label{phiprop_e}
\int_{X^{\tau}_M}P\phi_J=P(J)\qquad\forall\,P\!\in\!H^*_{\T^N}(X^{\tau}_M),J\!\in\!\V,
\end{equation}
i.e.~$\phi_J$ is the equivariant Poincar\'{e} dual of the point $[J]\!\in\!\X$.

\subsection{Examples}\label{eg_sec}

\begin{eg}[\emph{the complex projective space $\P^{N-1}$ with the standard action of $\T^N$}]
\label{proj_eg}
If 
$$M\equiv(1,\ldots,1)\in\R^N  \qquad\hbox{and}\qquad \tau\in\R^{>0},$$ 
then
$$\mu_M:\C^N\lra\R,~~~ \mu_M(z)\!=\!|z_1|^2\!+\!\ldots+\!|z_N|^2,
\qquad
P^{\tau}_M=\left\{v\!\in\!\left(\R^{\ge 0}\right)^N\!:v_1\!+\!\ldots\!+\!v_N\!=\!\tau\right\},$$
$(M,\tau)$ is a minimal toric pair, $\wt{X}_M^{\tau}\!=\!\C^N\!-\!\{0\}$,
$$X_M^{\tau}=\P^{N-1}\cong(S^{2n-1}(\sqrt{\tau}))/S^1, \qquad
H_{\T^N}^*(\P^{N-1})\cong \Q[\al_1,\ldots,\al_N][x]\big/\prod_{k=1}^N(x\!-\!\al_k).$$
\end{eg}

\begin{eg}[\textit{the Hirzebruch surfaces} $\F_k\!\equiv\!\P\left(\cO_{\P^1}\oplus\cO_{\P^1}(k)\right)$]
\label{Hirzebruch_eg}
If $k\!\ge\!0$,$$M\!\equiv\!\begin{pmatrix}0&0&1&1\\1&1&0&k\end{pmatrix},\qquad
\tau\!\equiv\!\begin{pmatrix}1\\k+1\end{pmatrix},$$
then 
\begin{align*}
(t_1,t_2)&\cdot(z_1,z_2,z_3,z_4)\!\equiv\!\left(t_2z_1,t_2z_2,t_1z_3,t_1t_2^kz_4\right),\\
\mu_M&:\C^4\lra\R^2,\quad\mu_M(z)\equiv
\begin{pmatrix}|z_3|^2+|z_4|^2\\|z_1|^2+|z_2|^2+k|z_4|^2\end{pmatrix},\\
P^{\tau}_M&=\left\{v\!\in\!\left(\R^{\ge 0}\right)^4\!:
v_3\!+\!v_4\!=\!1,\,v_1\!+\!v_2\!+\!kv_4\!=\!k\!+\!1\right\},
\end{align*}
$\left(M,\tau\right)$ is a minimal toric pair,
$$\wt{X}^{\tau}_M=\C^4-\Big(\C^2\!\times\!0\!\cup\!0\!\times\!\C^2\Big),\qquad X^{\tau}_M=\wt{X}_M^{\tau}\big/\T^2.$$
The map 
\begin{equation}\label{Fkisom_e}
X_M^{\tau}\isomto\F_k,\qquad
[z_1,z_2,z_3,z_4]\lra\left[[z_1,z_2],z_3,\left((z_1,z_2)^{\otimes k}\lra z_4\right)\right],
\end{equation}
is a $\T^4$-equivariant biholomorphism
with respect to the action of $\T^4$ on $\F_k$ given by
\begin{align*}
(t_1,t_2,t_3,t_4)\cdot \left[[z_1,z_2],z_3,\varphi\right]\!\equiv\!
\left[[t_1z_1,t_2z_2],t_3z_3,(t_1y_1,t_2y_2)^{\otimes k}\lra 
     t_4\varphi\left(\left(y_1,y_2\right)^{\otimes k}\right)\right],\\
\forall [z_1,z_2]\in\P^1,z_3\in\C,\varphi\in\cO_{\P^1}(k)\big|_{[z_1,z_2]}.
\end{align*}
By Proposition~\ref{nequivcoh_prp},
\begin{equation*}\begin{split}
H^*\left(\F_k\right) &=\frac{\Q\left[\nH_1,\nH_2,\nU_1,\nU_2,\nU_3,\nU_4\right]}
{\left(\nU_1-\nH_2,\nU_2-\nH_2,\nU_3-\nH_1,\nU_4-\nH_1-k\nH_2\right)+\left(\nU_1\nU_2,\nU_3\nU_4\right)}\\
&\cong
\frac{\Q\left[\nH_1,\nH_2\right]}{\left(\nH_2^2,\nH_1\left(\nH_1+k\nH_2\right)\right)}.
\end{split}\end{equation*}
Since the toric hypersurfaces $D_2$ and $D_3$ defined by \e_ref{torichyp_e}
intersect at one point,
$$\nH_1\nH_2= \nU_2\nU_3=1,\qquad \nH_1^2\!=\!-k\nH_1\nH_2\!=\!-k.$$
The isomorphism \e_ref{Fkisom_e} maps $D_4$ onto $E_0$
and  $D_3$ onto $E_{\i}$, where
\begin{align*}
E_0&\equiv\textnormal{ image of the section }(1,0)\textnormal{ in }\F_k,\\
E_{\i}&\equiv\textnormal{ closure of the image of }(0,\si)\textnormal{ in }\F_k,
\end{align*}
where $\si$ is any non-zero holomorphic section of $\cO_{\P^1}(k)$. 

Since $\V=\{(1,3),(1,4),(2,3),(2,4)\}$, 
by Corollary~\ref{fixed_crl}, the $\T^4$-fixed points in $\X$ are
$$[1,0,1,0],[1,0,0,1],[0,1,1,0],\quad\textnormal{and}\quad[0,1,0,1],$$
while the closed $\T^4$-fixed curves are all 4 toric 
hypersurfaces $D_1$, $D_2$, $D_3$, and~$D_4$.
By Proposition~\ref{equivcoh_prp}\ref{equivcohr_prp},
$$H^*_{\T^4}\left(\F_k\right)\!\equiv\!
\frac{\Q[\al_1,\al_2,\al_3,\al_4][x_1,x_2]}
{\Big((x_2-\al_1)(x_2-\al_2),(x_1-\al_3)(x_1+kx_2-\al_4)\Big)}.$$
\end{eg}

\begin{eg}[\textit{products}]
\label{prod_eg}
Let $(M_1,\tau_1)$ and $(M_2,\tau_2)$ be (minimal) toric pairs, 
where $M_j$ is a $k_j\!\times\!N_j$ matrix.
Define\begin{equation*}
M_1\oplus M_2\!\equiv\! 
\begin{pmatrix} M_1 &\text{\large 0}\\ 
       \text{\large 0}&M_2  \end{pmatrix}.
\end{equation*}
Then, $(M_1\oplus M_2,(\tau_1,\tau_2))$ is a (minimal) toric pair,
$$P^{(\tau_1,\tau_2)}_{M_1\oplus M_2}=P^{\tau_1}_{M_1}\times P^{\tau_2}_{M_2},
\qquad\hbox{and}\qquad
\wt{X}^{(\tau_1,\tau_2)}_{M_1\oplus M_2}
  =\wt{X}^{\tau_1}_{M_1} \times \wt{X}^{\tau_2}_{M_2}.$$
The projections $\pi_j\!:\!\C^{N_1+N_2}\!\!\lra\!\C^{N_j}$ 
induce a $\T^{N_1+N_2}$-equivariant biholomorphism
$$X^{(\tau_1,\tau_2)}_{M_1\oplus M_2}  \ni[z] \stackrel{\sim}{\lra} 
\Big([\pi_1(z)],[\pi_2(z)]\Big)\in  X^{\tau_1}_{M_1}\times X_{M_2}^{\tau_2},$$
where the action of $\T^{N_1+N_2}$ on $X^{\tau_1}_{M_1}\!\times\!X^{\tau_2}_{M_2}$
is the product of the standard actions of 
$\T^{N_1}$ on $X^{\tau_1}_{M_1}$ and of $\T^{N_2}$ on $X^{\tau_2}_{M_2}$.
By \e_ref{Vset_e} and Lemma~\ref{moment_lmm}\ref{polytope_lmm}\ref{vertcoresp_lmm}, 
$$\mathscr{V}^{(\tau_1,\tau_2)}_{M_1\oplus M_2}=
   \mathscr{V}^{\tau_1}_{M_1} \times \mathscr{V}^{\tau_2}_{M_2}\,.$$
Thus, by Corollary~\ref{fixed_crl}\ref{fpt_crl}, the $\T^{N_1+N_2}$-fixed points of $X^{(\tau_1,\tau_2)}_{M_1\oplus M_2}$
are the points $([I_1],[I_2])$ for all $I_j\!\in\!\mathscr{V}^{\tau_j}_{M_j}$, 
with $[I_j]$ defined by \e_ref{fpt_e}.
By Corollary~\ref{fixed_crl}\ref{fc_crl} and the second statement in Lemma~\ref{moment_lmm}\ref{Mrestr_lmm}, the closed $\T^{N_1+N_2}$-fixed curves in $X^{(\tau_1,\tau_2)}_{M_1\oplus M_2}$ 
are all curves of the form $C_1\!\times\![I_2]$ and $[I_1]\!\times\!C_2$,
where $C_j$ is any closed $\T^{N_j}$-fixed curve in
$X^{\tau_j}_{M_j}$ and $I_j\!\in\!\mathscr{V}^{\tau_j}_{M_j}$
is arbitrary.

In particular, $\P^{N_1-1}\!\times\!\ldots\!\times\!\P^{N_s-1}$ 
is given by the minimal toric pair
\begin{equation}\label{prodproj_e}
\begin{BMAT}(0pt,20pt,10pt){c}{cc}
  \vphantom{\rule{3pt}{25pt}}\\
   s \text{ rows } 
   \end{BMAT}\!\!\!\!
   \begin{BMAT}(1pt,1pt,30pt){c}{cc}
    \vphantom{\rule{0mm}{-3pt}}\\
      \left\lbrace \vphantom{\rule{10pt}{39pt}} \right.
                  \end{BMAT} \!\!\!\!
   \begin{BMAT}(@,12pt,10pt){c}{cc}
   \begin{BMAT}(1pt,60pt,1pt){ccc}{c}
\,\,\,\overbrace{}^{N_1\text{ columns}}&
    \hphantom{\rule{2mm}{5pt}}&
      \overbrace{}^{N_s\text{ columns}}\,
   \end{BMAT}\\
   \left(
   \begin{BMAT}(@,12pt,10pt){c0c0c}{c0c0c0c0c}
\begin{BMAT}(0pt,1pt,1pt){ccccc}{c}1&1&\ldots&1&1\end{BMAT}& 
\begin{BMAT}(0pt,1pt,1pt){c}{c} \ldots\end{BMAT}&
  \begin{BMAT}(0pt,1pt,1pt){ccccc}{c}0&0&\ldots&0&0\end{BMAT}\\  
 \begin{BMAT}(0pt,1pt,1pt){ccccc}{c}0&0&\ldots&0&0\end{BMAT} &
 \begin{BMAT}(0pt,1pt,1pt){ccccc}{c}&\quad&\ldots&\quad&\end{BMAT}& 
 \begin{BMAT}(0pt,1pt,1pt){ccccc}{c}0&0&\ldots&0&0\end{BMAT}\\ 
  \vdots & \ddots &\vdots\\
   \begin{BMAT}(0pt,1pt,1pt){ccccc}{c}0&0&\ldots&0&0\end{BMAT}&&
\begin{BMAT}(0pt,1pt,1pt){ccccc}{c}0&0&\ldots&0&0\end{BMAT}\\
  \begin{BMAT}(0pt,1pt,1pt){ccccc}{c}0&0&\ldots&0&0\end{BMAT}&
\begin{BMAT}(0pt,1pt,1pt){c}{c} \ldots\end{BMAT}&
\begin{BMAT}(0pt,1pt,1pt){ccccc}{c}1&1&\begin{BMAT}(0pt,1pt,1pt){c}{c} \ldots\end{BMAT}&1&1\end{BMAT}
   \end{BMAT}
    \right)\end{BMAT},   
   \quad\tau=
    \begin{pmatrix}\tau_1\\\tau_2\\\vdots\\\tau_s\end{pmatrix}\in(\R^{>0})^s.
\end{equation}

By Proposition~\ref{equivcoh_prp}\ref{equivcohr_prp},
\begin{equation}\label{projcoh_e}
H^*_{\T^N}\!\left(\P^{N_1-1}\!\times\!\ldots\!\times\!\P^{N_s-1}\right)=
\frac{\Q[\al^{(i)}_j,1\!\le\!i\!\le\!s,1\!\le\!j\!\le\!N_i][x_1,\ldots,x_s]}
{\left(\prod\limits_{j=1}^{N_i}\left(x_i-\al^{(i)}_j\right),1\!\le\!i\!\le\!s\right)}.
\end{equation}
\end{eg}

\begin{rmk} Let $\si:[N]\lra[N]$ be a permutation
and
$(M,\tau)$ be a (minimal) toric pair.
Let $$M^{\si}\!\equiv\!\left(m_{i\si(j)}\right)_{\begin{subarray}{c}1\le i\le k\\1\le j\le N\end{subarray}}
\equiv M\!\circ\!\Id^{\si}$$
be the matrix obtained from $M$ by permuting its columns as dictated by $\si$.
Then $\left(M^{\si},\tau\right)$ is a (minimal) toric pair as well and $\Id^{\si^{-1}}$
induces a biholomorphism between $X_M^{\tau}$ and $X_{M^{\si}}^{\tau}$ 
(since $\mu_{M^{\si}}\!=\!\mu_M\!\circ\!\Id^{\si}$)
equivariant with respect to
$$\T^N\lra\T^N,\quad\left(t_1,t_2,\ldots,t_N\right)\lra\left(t_{\si(1)},t_{\si(2)},\ldots,t_{\si(N)}\right).$$
\end{rmk}
In particular, taking $k\!=\!0$ in Example~\ref{Hirzebruch_eg} gives - via \e_ref{prodproj_e} - $\P^1\!\times\!\P^1$ as expected, since the corresponding matrices differ by a permutation of columns.

\section{Explicit Gromov-Witten formulas}
\label{nequiv_sec}
For the remaining part of this paper, $\X$ is the compact projective manifold 
defined by \e_ref{toricman_e}, where $(M,\tau)$ is a minimal toric pair
as in Definition~\ref{toric_dfn}.
Theorem~\ref{Y_thm} in Section~\ref{stat_sec} below computes the one-point GW generating functions
$\Zp_{\eta}$ and $\Zpp_{\eta}$ of \e_ref{Zeta1ptdfn_e} if $\eta\!\in\!H^*(\X)$ is of the form
$\eta\!=\!\nH^{\bp}$, where $\{\nH_1,\ldots,\nH_k\}$ is the basis for $H^2(\X;\Z)$
referred to in Proposition~\ref{nequivcoh_prp} and
$$\nH^{\bp}\equiv\nH_1^{p_1}\ldots\nH_k^{p_k}\qquad\forall\,\bp=(p_1,\ldots,p_k)\!\in\!(\Z^{\ge0})^k.$$
We denote
\begin{equation}\label{Zp1pt_e}
\Zp_{\bp}\equiv \Zp_{\nH^{\bp}}\qquad\textnormal{and}\qquad \Zpp_{\bp}\equiv \Zpp_{\nH^{\bp}}.
\end{equation}
Section~\ref{construction_sec} constructs the explicit formal power series 
in terms of which $\Zp_{\bp}$ and $\Zpp_{\bp}$ are expressed in Theorem~\ref{Y_thm}.
Throughout this construction, which extends the constructions in 
\cite[Section~2.3]{bcov0} and \cite[Sections~2,3.1]{bcov0_ci} 
from $\P^{n-1}$ to an arbitrary toric manifold $\X$, we assume that 
\begin{equation}\label{nonnegnu_e}
\nu_E(\bd)\ge0\,\qquad\forall\,\bd\!\in\!\La,
\end{equation}
with $\nu_E$ as in \e_ref{L(d)_e}, and identify 
$$H_2(\X;\Z)\cong\Z^k$$ via the basis $\{\nH_1,\ldots,\nH_k\}$.
Via this identification $\La\!\hookrightarrow\!\Z^k$, with $\La$ as in \e_ref{La_e}.
\subsection{Notation and construction of explicit power series}
\label{construction_sec}

If $R$ is a ring, we denote by $R\Lau{\h}$ the ring of formal Laurent series in 
$\h^{-1}$ with finite principal part:
$$R\Lau{\h}\equiv R[[\h^{-1}]]+R[\h].$$
Given $f,g\!\in\!R\Lau{\h}$ and $s\!\in\!\Z^{\ge 0}$, we write
$$f\cong g ~~~\mod \h^{-s} \qquad\textnormal{if}\qquad 
f\!-\!g\in
R[\h]+\left\{\sum_{i=1}^{s-1}a_i\h^{-i}\!:a_i\!\in\!R~~\forall\,i\!\in\![s\!-\!1]\right\}.$$
If $R$ is a field, we view $R(\h)$ as a subring of $R\Lau{\h}$ by associating to
each element of $R(\h)$ its Laurent series at $\h^{-1}\!=\!0$.

With the line bundles $L^{\pm}_i$ as in \e_ref{splitE_e} and 
$\nU_j$ as in \e_ref{bdle_e} and $\bd\!\in\!H_2(\X;\Z)$, we define 
\begin{equation}\label{L(d)_e}\begin{split}
D_j(\bd)&\equiv\blr{\nU_j,\bd}, \qquad
L_i^{\pm}(\bd)\equiv\blr{c_1(L_i^{\pm}),\bd},\\
\nu_E(\bd)&\equiv\sum_{j=1}^ND_j(\bd)\!-\!\sum_{i=1}^aL_i^+(\bd)\!+\!\sum_{i=1}^bL_i^-(\bd)\,.\,\footnotemark
\end{split}\end{equation}
If in addition $Y\!\subset\!X$ is a one-dimensional submanifold,\footnotetext{\label{nuE_f}By \e_ref{splitE_e} and \e_ref{ctan_e},
$\nu_E(\bd)\!=\!\blr{c_1(T(E^-|_Y)),\bd}$ if 
$Y$ is a smooth complete intersection defined by a holomorphic section of~$E^+$
and $T(E^-|_Y)$ is the tangent bundle of the total space of $E^-|_Y$.} let
$$D_j(Y)\equiv D_j\left([Y]_{\X}\right),\qquad
L_i^{\pm}(Y)\equiv L_i^{\pm}\left([Y]_{\X}\right),$$
where $[Y]_{\X}\in H_2(\X;\Z)$ is the homology class represented by~$Y$.
By \e_ref{ctan_e}, our assumption \e_ref{nonnegnu_e}, and
Footnote~\ref{dual_f}, 
$$c_1(T\X)-\sum\limits_{i=1}^ac_1(L_i^+)+\sum\limits_{i=1}^bc_1(L_i^-)\in\ov\cK^{\tau}_M.$$
Thus, if $E\!\neq\!E^+$, then $\X$ is Fano.
In this case, the Cone Theorem~\cite[Theorem~1.5.33]{La}
implies that the closed $\R$-cone of curves is a polytope spanned by classes of rational curves.
By \cite[Proposition~1.4.28]{La} and \cite[Theorem~1.4.23(i)]{La}, this closed cone is
the $\R$-cone spanned by $\La$.
Thus, $L_i^-(\bd)\!<\!0$ for all $\bd\!\in\!\La\!-\!\{0\}$
and all $i\!\in\![b]$.\footnote{In the notation of \cite{La},
$N^1(\X)_{\R}\!=\!H^{1,1}(\X)\!\cap\!H^2(\X;\R)$ as can be seen
from Poincar\'{e} Duality,  Lefschetz Theorem on
$(1,1)$-classes, and Hard Lefschetz Theorem.}

Let $R$ be a ring.
Similarly to Section~\ref{neqres_sec}, 
we denote by  $R[[\La\!-\!0]]$ and $R[[\La;\nu_E\!=\!0]]$ 
the subalgebras of $R[[\La]]$ given~by 
\begin{equation*}\begin{split}
R[[\La\!-\!0]]&\equiv\left\{\sum_{\bd\in\La}a_{\bd}Q^{\bd}\in R[[\La]]:
a_{\b0}=0\right\}, \\
R[[\La;\nu_E\!=\!0]]&\equiv\left\{\sum_{\bd\in\La}a_{\bd}Q^{\bd}\in R[[\La]]:
a_{\bd}=0~\hbox{if}~\nu_E(\bd)\!\neq\!0\right\}.
\end{split}\end{equation*}
In some cases, the formal variables whose powers are indexed by $\La$ within $R[[\La]]$
will be denoted by $Q\!\equiv\!(Q_1,\ldots,Q_k)$ as in Section~\ref{neqres_sec}, 
while in other cases the formal variables will
be $q\!\equiv\!(q_1,\ldots,q_k)$.
If $f\!\in\!R[[\La]]$ and $\bd\!\in\!\La$, we write $\LR{f}_{q;\bd}\!\in\!R$
for the coefficient of $q^{\bd}$ in~$f$.
By Proposition~\ref{dimK_prp}, the set
$\{\bs\!\in\!\La:\bd\!-\!\bs\!\in\!\La\}$ is finite for every $\bd\!\in\!\La$;
thus,
$$f\in R[[\La]] ~~~\textnormal{is invertible} \qquad\Llra\qquad
\LR{f}_{q;\b0}\in R~~~\textnormal{is invertible}.$$
If $f\!\equiv\!\sum\limits_{\bd\in\La}f_{\bd}q^{\bd}\!\in\!R[[\La]]$, we define
$$\LR{f}_{\nu_E=0}\equiv\sum_{\begin{subarray}{c}\bd\in\La\\\nu_E(\bd)=0\end{subarray}}
\!\!\!f_{\bd}q^{\bd}\in R[[\La;\nu_E\!=\!0]].$$
Let $\nA\!\!=\!\!(\nA_1,\!\ldots\!,\nA_k)$ be a tuple of formal variables.
If $f\!\equiv\!\!\sum\limits_{\bd\in\La}\!\!f_{\bd}(\nA)q^{\bd}\!\!\in\!\!
R[[\nA]][[\La]]$ and $p\!\ge\!0$,
we write
$$\LR{f}_{\nA;p}\!\equiv\!\sum_{\bd\in\La}
\LR{f_{\bd}(\nA)}_{\nA;p}q^{\bd}\!\in\!R[\nA]_p[[\La]],$$
where $\LR{f_{\bd}(\nA)}_{\nA;p}\!\in\!R[\nA]$ denotes the degree~$p$ homogeneous part of $f_{\bd}(\nA)$
and $R[\nA]_p$ the space of homogeneous polynomials of degree~$p$ in $\nA_1,\ldots,\nA_k$ with coefficients in $R$.
Finally, we write
$$|\bp|\equiv p_1+p_2+\ldots+p_k\qquad\forall\,\bp=(p_1,p_2,\ldots,p_k)\!\in\!(\Z^{\ge 0})^k.$$

For each $\bd\!\in\!\La$, let
\begin{equation}\label{wtU_e1}
U(\bd;\nA,\h)\equiv
\frac{\prod\limits_{\begin{subarray}{c}j\in[N]\\D_j(\bd)<0\end{subarray}}
\prod\limits_{s=D_j(\bd)+1}^0\left(\sum\limits_{i=1}^km_{ij}\nA_i+s\h\right)}
{\prod\limits_{\begin{subarray}{c}j\in[N]\\D_j(\bd)\ge 0\end{subarray}}
\prod\limits_{s=1}^{D_j(\bd)}\left(\sum\limits_{i=1}^km_{ij}\nA_i+s\h\right)}
\in\Q[\nA]\Lau{\h}\,.
\end{equation}
By Proposition~\ref{Pic_prp},
the line bundles $\ga_i^*$ of \e_ref{bdle_e} form a basis for the Picard group of $\X$.
Thus, there are well-defined integers $\ell^{\pm}_{ri}$ such that
\begin{equation}\label{Etensor_e}
L_i^{\pm}=\ga_1^{*\ell^{\pm}_{1i}}\otimes\ldots\otimes\ga_k^{*\ell^{\pm}_{ki}}. 
\end{equation}
With $\nA$ and $\bd$ as above, let
\begin{equation}\label{wtU_e2}\begin{split}
\Ep(\bd;A,\h)&\equiv
\prod_{i=1}^a\prod_{s=1}^{L_i^+(\bd)}\left(\sum_{r=1}^k\ell^+_{ri}\nA_r+s\h\right)
\prod_{i=1}^b\prod_{s=0}^{-L_i^-(\bd)-1}\left(\sum_{r=1}^k\ell^-_{ri}\nA_r-s\h\right)
\in \Z[A,\h],\\
\Epp(\bd;A,\h)&\equiv
\prod_{i=1}^a\prod_{s=0}^{L_i^+(\bd)-1}\left(\sum_{r=1}^k\ell^+_{ri}\nA_r+s\h\right)
\prod_{i=1}^b\prod_{s=1}^{-L_i^-(\bd)}\left(\sum_{r=1}^k\ell^-_{ri}\nA_r-s\h\right)
\in \Z[A,\h]. 
\end{split}\end{equation}
The formal power series computing $\Zp_{\bp}$ and $\Zpp_{\bp}$ in
Theorem~\ref{Y_thm} are obtained from
\begin{equation}\begin{split}\label{Y'_e}
\Yp(\nA,\h,q)&\equiv\sum_{\bd\in\La}q^{\bd}
U(\bd;\nA,\h)\Ep(\bd;\nA,\h)\in \Q[A]\big[\big[\h^{-1},\La\big]\big],\\
\Ypp(\nA,\h,q)&\equiv\sum_{\bd\in\La}q^{\bd}
U(\bd;\nA,\h)\Epp(\bd;\nA,\h)\in \Q[A]\big[\big[\h^{-1},\La\big]\big].
\end{split}\end{equation}
We define
\begin{equation}\label{I0_e}
\Ip_{\b0}(q)\equiv \Yp(\nA,\h,q)~~~\mod\,\h^{-1},\qquad 
\Ipp_{\b0}(q)\equiv \Ypp(\nA,\h,q)~~~\mod\,\h^{-1},
\end{equation}
and so
\begin{equation*}\begin{split}
\Ip_{\b0}(q) &\equiv 1+\de_{b,0}\!\!\!\!\!\!\!\!\!\!
\sum_{\begin{subarray}{c}\bd\in\La-0,~\nu_E(\bd)=0\\
  D_j(\bd)\ge 0~\forall\,j\in[N]\end{subarray}}
\!\!\!\! q^{\bd}\frac{\prod\limits_{i=1}^a\left(L^+_i(\bd)!\right)}
{\prod\limits_{j=1}^N\left(D_j(\bd)!\right)},\\
\Ipp_{\b0}(q)&\equiv 1+\!\!
\sum_{\begin{subarray}{c}\bd\in\La-0,~\nu_E(\bd)=0\\
D_j(\bd)\ge 0~\forall\,j\in[N]\\
L^+_i(\bd)=0\,\forall\,i\in[a]
\end{subarray}}\!\!\!\!q^{\bd}
(-1)^{\sum\limits_{i=1}^bL_i^-(\bd)}
\frac{\prod\limits_{i=1}^b\left(\left(-L_i^-(\bd)\right)!\right)}
{\prod\limits_{j=1}^N\left(D_j(\bd)!\right)}.
\end{split}\end{equation*}

We next describe an operator $\bD^{\bp}$ acting on a subset of $\Q(\nA,\h)[[\La]]$
and certain associated ``structure coefficients" in $\Q[[\La]]$ which occur in the formulas 
for $\Zp_{\bp}$ and~$\Zpp_{\bp}$.
Fix an element $Y(\nA,\h,q)$ of $\Q(\nA,\h)[[\La]]$ such~that for all $\bd\!\in\!\La$
$$\LR{Y(\nA,\h,q)}_{q;\bd}\equiv\frac{f_{\bd}(\nA,\h)}{g_{\bd}(\nA,\h)}$$
for some homogeneous polynomials $f_{\bd}(\nA,\h),g_{\bd}(\nA,\h)\!\in\!\Q[\nA,\h]$
satisfying
\begin{equation}\label{assumeS_e}
f_{\b0}(\nA,\h)=g_{\b0}(\nA,\h)\,, \qquad
\deg\,f_{\bd}-\deg\,g_{\bd}=-\nu_E(\bd),\qquad
g_{\bd}\big|_{\nA=0}\neq 0\quad\forall\,\bd\!\in\!\La.
\end{equation}
This condition is  satisfied by the power series $\Yp$ and $\Ypp$ 
of \e_ref{Y'_e} and so the construction below applies to $Y\!=\!\Yp$ and $Y\!=\!\Ypp$.
We inductively define
\begin{equation*}
J_p(Y)\in\End_{\Q[[\La;\nu_E=0]]}\big(\Q[[\La;\nu_E\!=\!0]][\nA]_p\big)
~~\forall\,p\!\in\!\Z^{\ge 0}\,, \quad
\bD^{\bp}Y(\nA,\h,q)\in\Q(\nA,\h)[[\La]]~~\forall\,\bp\!\in\!(\Z^{\ge 0})^k
\end{equation*}
satisfying 
\begin{enumerate}[label=(P\arabic*),leftmargin=*] 
\item\label{P_J_pq;0} for every $\bp\!\in\!(\Z^{\ge 0})^k$ with $|\bp|\!=\!p$,
$\LR{\{J_p(Y)\}(\nA^{\bp})}_{q;\b0}=\nA^{\bp}$;
\item\label{P_Dpexp} there exist $\nC^{(\br)}_{\bp,s}\!\equiv\!\nC^{(\br)}_{\bp,s}(Y)\in\Q[[\La]]$
with $\bp,\br\!\in\!(\Z^{\ge0})^k$ and $s\!\in\!\Z^{\ge0}$ such that 
\begin{gather}
\label{bDpexp_e}
\bD^{\bp}Y(\nA,\h,q)=\h^{|\bp|}
\sum_{s=0}^{\i}\sum_{\br\in(\Z^{\ge0)^k}}\!\!\!
\nC^{(\br)}_{\bp,s}(q)\nA^{\br}\h^{-s}\,,\\
\label{degnC_e}
\LR{\nC^{(\br)}_{\bp,s}}_{q;\bd}=0~\textnormal{if}~s\!\neq\!\nu_E(\bd)\!+\!|\br|,~
\LR{\nC^{(\br)}_{\bp,s}}_{\nu_E=0}
=\de_{\bp,\br}\de_{|\br|,s}~\textnormal{if}~s\!\le\!|\bp|,~
\LR{\nC^{(\br)}_{\bp,|\br|}}_{q;\b0}=\de_{\bp,\br}.
\end{gather}
\end{enumerate}
By \e_ref{assumeS_e}, we can define $J_0(Y)\in\Q[[\La;\nu_E\!=\!0]]$ 
and $\bD^{\b0}Y\!\in\!\Q(\nA,\h)[[\La]]$ by
\begin{equation}\label{bD0dfn_e}
\{J_0(Y)\}(1)\equiv Y(\nA,\h,q)~~~\mod\,\h^{-1},\qquad
\bD^{\b0}Y(\nA,\h,q)\equiv\left[\{J_0(Y)\}(1)\right]^{-1}Y(\nA,\h,q).
\end{equation}

Suppose next that $p\!\ge\!0$ and we have constructed an operator $J_p(Y)$ 
and power series $\bD^{\bp'}Y$ for all $\bp'\!\in\!(\Z^{\ge 0})^k$ with $|\bp'|\!=\!p$
satisfying the above properties.
For each $\bp\!\in\!(\Z^{\ge 0})^k$ with $|\bp|\!=\!p\!+\!1$, let
\begin{equation}\begin{split}\label{tiDpdfn_e}
\wt{\bD}^{\bp}Y(\nA,\h,q)&\equiv
\frac{1}{|\supp(\bp)|}\sum\limits_{i\in\supp(\bp)}\left\{\nA_i+\h\,q_i\frac{\nd}{\nd q_i}\right\}
\bD^{\bp-e_i}Y(\nA,\h,q)\in \Q(\nA,\h)[[\La]],\\
\{J_{p+1}(Y)\}(\nA^{\bp})&\equiv
\LR{\wt{\bD}^{\bp}Y(\nA,\h,q)~~\mod\,\h^{-1}}_{\nA;p+1},
\end{split}\end{equation}
where $\{e_1,\ldots,e_k\}$ is the canonical basis for $\Z^k$.
By \ref{P_Dpexp}, 
\begin{equation}\label{expressJ_e}\begin{split}
\{J_{p+1}(Y)\}(\nA^{\bp})&=\frac{1}{|\supp(\bp)|}
\sum_{i\in\supp(\bp)}\left[\sum_{|\br|=p}\nC^{(\br)}_{\bp-e_i,p}\nA_i\!\cdot\!\nA^{\br}+
\sum_{|\br|=p+1}q_i\frac{\nd \nC^{(\br)}_{\bp-e_i,p+1}}{\nd q_i}\nA^{\br}\right]\\
&=\frac{1}{|\supp(\bp)|}
\sum_{|\br|=p+1}\left[
\sum_{i\in\supp(\bp)}\left(\nC^{(\br-e_i)}_{\bp-e_i,p}
+q_i\frac{\nd \nC^{(\br)}_{\bp-e_i,p+1}}{\nd q_i}\right)\right]\nA^{\br},
\end{split}\end{equation}
where we set $\nC^{(\br-e_i)}_{\bp-e_i,p}\!\equiv\!0$ if $i\!\not\in\!\supp(\br)$.
By \e_ref{expressJ_e} and \e_ref{degnC_e},
\begin{equation}\label{Jpprop_e}
\{J_{p+1}(Y)\}(\nA^{\bp})\in\Q[[\La;\nu_E\!=\!0]][\nA]_{p+1}
\quad\hbox{and}\quad
\LR{\{J_{p+1}(Y)\}(\nA^{\bp})}_{q;\b0}=\nA^{\bp}\,; 
\end{equation}
in particular, $J_{p+1}(Y)$ is invertible.
With $\nc_{\bp;\bp'}(q)\!\in\!\Q[[\La;\nu_E\!=\!0]]$ for $\bp,\bp'\!\in\!(\Z^{\ge0})^k$ 
with $|\bp|,|\bp'|\!=\!p\!+\!1$ given~by
\begin{equation}\label{Jpinv_e}
\{J_{p+1}(Y)\}^{-1}(\nA^{\bp})\equiv
\sum_{\bp'\in(\Z^{\ge0})^k,|\bp'|=p+1}\!\!\!\!\!\!\nc_{\bp;\bp'}(q)\nA^{\bp'},
\end{equation}
we define
\begin{equation}\label{bDdfn_e}
\bD^{\bp}Y(\nA,\h,q)\equiv
\sum_{\bp'\in(\Z^{\ge0})^k,|\bp'|=p+1}\!\!\!\!\!\!
\nc_{\bp;\bp'}(q)\wt{\bD}^{\bp'}Y(\nA,\h,q).
\end{equation}
By \e_ref{bDdfn_e} and the inductive assumption \e_ref{bDpexp_e}, 
\begin{gather}
\bD^{\bp}Y(\nA,\h,q)=\h^{p+1}
\sum_{s=0}^{\i}\sum_{\br\in(\Z^{\ge0)^k}}\!\!\!
\nC^{(\br)}_{\bp,s}(q)\nA^{\br}\h^{-s}\,, \qquad\hbox{where}\notag\\
\label{nCrec_e}
\nC^{(\br)}_{\bp,s}=\sum_{\bp'\in(\Z^{\ge0})^k,|\bp'|=p+1}\!
 \frac{\nc_{\bp;\bp'}}{|\supp(\bp')|}
\sum_{i\in\supp(\bp')}\left(\nC^{(\br-e_i)}_{\bp'-e_i,s-1}
+q_i\frac{\nd \nC^{(\br)}_{\bp'-e_i,s}}{\nd q_i}\right),
\end{gather}
where we set $\nC^{(\br-e_i)}_{\bp'-e_i,s-1}\!=\!0$
if $i\!\notin\!\supp(\br)$ or $s\!=\!0$.
By the first property in \e_ref{degnC_e} with $\bp$ replaced by $\bp'\!-\!e_i$
with $|\bp'|\!=\!p\!+\!1$ and $i\!\in\!\supp(\bp')$,  
$\nC^{(\br)}_{\bp,s}$ satisfies this property as well.
By the second property in~\e_ref{degnC_e} with $\bp$ replaced by $\bp'\!-\!e_i$
with $|\bp'|\!=\!p\!+\!1$ and $i\!\in\!\supp(\bp')$,
$\LR{\nC^{(\br)}_{\bp,s}}_{\nu_E=0}\!=\!0$ if $s\!\le\!p$.
Since $\nC^{(\br)}_{\bp,p+1}\!=\!\de_{\bp,\br}$ whenever $|\br|\!=\!p\!+\!1$ by~\e_ref{nCrec_e} and~\e_ref{Jpinv_e},
$\nC^{(\br)}_{\bp,s}$ also satisfies the second property in~\e_ref{degnC_e}.
By the second statement in~\e_ref{Jpprop_e} and~\e_ref{Jpinv_e},
$\LR{\nc_{\bp;\bp'}}_{q;\b0}\!=\!\de_{\bp,\bp'}$.
Thus, by the third property in~\e_ref{degnC_e} with $\bp$ replaced by $\bp'\!-\!e_i$
with $|\bp'|\!=\!p\!+\!1$ and $i\!\in\!\supp(\bp')$,
$\nC^{(\br)}_{\bp,s}$ satisfies the last property in~\e_ref{degnC_e} as well.

Define $\wt\nC^{(r)}_{\bp,\bs}\!\equiv\!\wt{\nC}^{(r)}_{\bp,\bs}(Y)\!\in\!\Q[[\La]]$
for $\bp,\bs\!\in\!(\Z^{\ge0})^k$ and $r\!\in\!\Z^{\ge0}$ with $|\bs|\!\le\!|\bp|\!-\!r$
and $r\!\le\!|\bp|$ by
\begin{equation}\label{neqtiC_e}
\sum_{t=0}^r\sum_{\begin{subarray}{c}\bs\in(\Z^{\ge0})^k\\ |\bs|\le|\bp|-t\end{subarray}}\!\!
\wt\nC^{(t)}_{\bp,\bs}\nC_{\bs,|\br|+r-t}^{(\br)}
=\de_{\bp,\br}\de_{r,0}
\qquad\forall~\br\!\in\!(\Z^{\ge0})^k,\,|\br|\le|\bp|\!-\!r.
\end{equation}
Equations \e_ref{neqtiC_e} indeed uniquely determine $\wt{\nC}^{(r)}_{\bp,\bs}$,
since 
\begin{equation}\label{neqtiC_e2}
\sum_{t=0}^r\sum_{\begin{subarray}{c}\bs\in(\Z^{\ge0})^k\\ |\bs|\le|\bp|-t\end{subarray}}\!\!
\wt\nC^{(t)}_{\bp,\bs}\nC_{\bs,|\br|+r-t}^{(\br)}
=\sum_{t=0}^{r-1}\sum_{\begin{subarray}{c}\bs\in(\Z^{\ge0})^k\\ |\bs|\le|\bp|-t\end{subarray}}
\!\!\wt{\nC}^{(t)}_{\bp,\bs}\nC_{\bs,|\br|+r-t}^{(\br)}
+\sum_{\begin{subarray}{c}\bs\in(\Z^{\ge0})^k\\ |\bs|<|\br|\end{subarray}}
\wt{\nC}^{(r)}_{\bp,\bs}\nC_{\bs,|\br|}^{(\br)}
+\wt{\nC}^{(r)}_{\bp,\br},
\end{equation}
as follows from~\e_ref{degnC_e}.
By~\e_ref{neqtiC_e} together with the first and third statements in \e_ref{degnC_e}, 
\begin{equation}\label{neqtiCq;0_e}
\LR{\wt{\nC}^{(r)}_{\bp,\bs}}_{q;\b0}=\de_{\bp,\bs}\de_{r,0}\,.
\end{equation}
By \e_ref{neqtiC_e}, \e_ref{neqtiC_e2}, and induction on $|\bs|$,
\begin{equation}\label{tiC0_e}
\wt{\nC}^{(0)}_{\bp,\bs}(q)=\de_{\bp,\bs}\qquad\forall\,\bp,\bs\!\in\!(\Z^{\ge 0})^k
\quad\textnormal{with}
\quad|\bs|\!\le\!|\bp|.
\end{equation}
By \e_ref{neqtiC_e}, \e_ref{neqtiC_e2}, the first statement in~\e_ref{degnC_e},  
and induction on $|\bs|$ and $r$,
\begin{equation}\label{neqtiCdeg_e}
\LR{\wt{\nC}^{(r)}_{\bp,\bs}}_{q;\bd}=0\qquad\textnormal{if}\quad \nu_E(\bd)\neq r.
\end{equation}

\begin{rmk}\label{J0Y'_rmk}
With $\Yp$, $\Ypp$ as in \e_ref{Y'_e} and $\Ip_{\b0}$, $\Ipp_{\b0}$ as in \e_ref{I0_e},
\begin{alignat*}{2}
\left\{J_0(\Yp)\right\}(1)&=\Ip_{\b0}(q),&\qquad 
\bD^{\b0}\Yp(\nA,\h,q)&=\frac{1}{\Ip_{\b0}(q)}\!\Yp(\nA,\h,q),\\
\left\{J_0(\Ypp)\right\}(1)&=\Ipp_{\b0}(q),&\qquad 
\bD^{\b0}\Ypp(\nA,\h,q)&=\frac{1}{\Ipp_{\b0}(q)}\!\Ypp(\nA,\h,q),
\end{alignat*}
by \e_ref{bD0dfn_e}.
\end{rmk}

Define
\begin{equation*}
\left\{\nA+\h\,q\frac{\nd}{\nd q}\right\}^{\bp}\equiv\left\{\nA_1\!+\!\h\,q_1\frac{\nd}{\nd q_1}\right\}^{p_1}\ldots
\left\{\nA_k\!+\!\h\,q_k\frac{\nd}{\nd q_k}\right\}^{p_k}\qquad\forall\,\bp=(p_1,\ldots,p_k)\!\in\!(\Z^{\ge 0})^k.
\end{equation*}

\begin{rmk}\label{simpleDp_rmk}
If $\nu_{E}(\bd)\!>\!0$ for all $\bd\!\in\!\La\!-\!\{0\}$ and 
$Y(\nA,\h,q)\!\in\!\Q(\nA,\h)[[\La]]$ satisfies \e_ref{assumeS_e}, then
$J_p(Y)\!=\!\Id$ for all $p\!\in\!\Z^{\ge0}$ by \ref{P_J_pq;0} above.
Along with the first equation in~\e_ref{tiDpdfn_e}, \e_ref{Jpinv_e}, \e_ref{bDdfn_e}, 
and induction on~$|\bp|$, this implies that 
\begin{equation*}
\bD^{\bp}Y(\nA,\h,q)=\left\{\nA\!+\!\h\,q\frac{\nd}{\nd q}\right\}^{\bp}Y(\nA,\h,q)
\end{equation*}
for all $\bp\!\in\!(\Z^{\ge 0})^k$. 
\end{rmk}

\begin{rmk}\label{partDp_rmk}
Suppose $p^*\!\in\!\Z^{\ge0}$, $Y(\nA,\h,q)\!\in\!\Q(\nA,\h)[[\La]]$
satisfies \e_ref{assumeS_e}, and
$$\deg_{\h}f_{\bd}(\nA,\h)-\deg_{\h}g_{\bd}(\nA,\h)<-p^*\qquad\forall\,\bd\!\in\!\La\!-\!\{0\}.$$
By the same reasoning as in Remark~\ref{simpleDp_rmk}, 
we again find that
$$J_p(Y)=\Id, \qquad
\bD^{\bp}Y(\nA,\h,q)\equiv\left\{\nA\!+\!\h\,q\frac{\nd}{\nd q}\right\}^{\bp}Y(\nA,\h,q),$$
for all $p\!\in\!\Z^{\ge0}$ and $\bp\!\in\!(\Z^{\ge 0})^k$ such that 
$p,|\bp|\le\!p^*$.
\end{rmk}

\begin{rmk}\label{projDp_rmk}
Let $(M,\tau)$ be the toric pair of Example~\ref{proj_eg} with 
$N\!=\!n$ so that $\X\!=\!\P^{n-1}$. 
Let $$E\equiv\bigoplus\limits_{i=1}^{a}
\cO_{\P^{n-1}}(\ell^+_i)\oplus\bigoplus\limits_{i=1}^{b}\cO_{\P^{n-1}}(\ell^-_i)$$
with $a,b\!\ge\!0$, $\ell^+_i\!>\!0$ for all $i\!\in\![a]$, $\ell^-_i\!<\!0$ for all $i\!\in\![b]$, 
and $\sum\limits_{i=1}^{a}\ell^+_i\!-\!\sum\limits_{i=1}^{b}\ell^-_i\!\le\!n$.
Thus, 
$$\nu_E(d)=\left(n-\sum_{i=1}^{a}\ell^+_i+\sum_{i=1}^{b}\ell^-_i\right)d$$ 
for all $d\!\in\!\Z^{\ge 0}$.
By \e_ref{Y'_e},
\begin{equation*}\begin{split}
\Yp(\nA,\h,q)&=\sum_{d=0}^{\i}q^d
\frac{\prod\limits_{i=1}^a\prod\limits_{s=1}^{\ell^+_id}\left(\ell^+_i\nA\!+\!s\h\right)
\prod\limits_{i=1}^b\prod\limits_{s=0}^{-\ell_i^-d-1}\left(\ell_i^-\nA\!-\!s\h\right)}
{\prod\limits_{s=1}^d\left(\nA\!+\!s\h\right)^n},\\
\Ypp(\nA,\h,q)&=\sum_{d=0}^{\i}q^d
\frac{\prod\limits_{i=1}^a\prod\limits_{s=0}^{\ell^+_id-1}\left(\ell^+_i\nA\!+\!s\h\right)
\prod\limits_{i=1}^b\prod\limits_{s=1}^{-\ell_i^-d}\left(\ell_i^-\nA\!-\!s\h\right)}
{\prod\limits_{s=1}^d\left(\nA\!+\!s\h\right)^n}.
\end{split}\end{equation*}
By Remark~\ref{partDp_rmk},
\begin{alignat*}{3}
J_p(\Yp)&=\Id, &\quad
\bD^{p}\Yp(\nA,\h,q)&=\left\{\nA\!+\!\h\,q\frac{\nd}{\nd q}\right\}^p\Yp(\nA,\h,q)  
&\qquad &\forall\,p\!<\!b,\\
J_p(\Ypp)&=\Id, &\quad
\bD^{p}\Ypp(\nA,\h,q)&=\left\{\nA\!+\!\h\,q\frac{\nd}{\nd q}\right\}^p\Ypp(\nA,\h,q)
&\qquad &\forall\,p\!<\!a.
\end{alignat*}
If $\sum\limits_{i=1}^a\ell^+_i\!-\!\sum\limits_{i=1}^b\ell^-_i\!<\!n$, then
$$J_p(\Yp),J_p(\Ypp)\!=\!\Id, \quad
\bD^{p}\Yp=\left\{\nA\!+\!\h\,q\frac{\nd}{\nd q}\right\}^p\Yp,\quad
\bD^{p}\Ypp=\left\{\nA\!+\!\h\,q\frac{\nd}{\nd q}\right\}^p\Ypp$$
for all $p$ by Remark~\ref{simpleDp_rmk}.
If $\sum\limits_{i=1}^a\ell^+_i\!-\!\sum\limits_{i=1}^b\ell^-_i\!=\!n$, 
then we follow \cite[(1.1)]{bcov0_ci} and set
\begin{equation}\begin{split}\label{FIpproj_e}
F(\nw,q)&\equiv\sum_{d=0}^{\i}q^d\frac{\prod\limits_{i=1}^a\prod\limits_{r=1}^{\ell^+_id}\left(\ell^+_i\nw\!+\!r\right)
\prod\limits_{i=1}^b\prod\limits_{r=1}^{-\ell^-_id}\left(\ell_i^-\nw\!-\!r\right)}
{\prod\limits_{r=1}^d\left(\nw\!+\!r\right)^n},\\
\bM F(\nw,q)&\equiv\left\{1\!+\!\frac{q}{\nw}\frac{\nd}{\nd q}\right\}\left(\frac{F(\nw,q)}{F(0,q)}\right),\quad
I_p(q)\equiv\bM^pF(0,q).
\end{split}\end{equation}
By \e_ref{tiDpdfn_e} and \e_ref{bDdfn_e} above,
\begin{alignat*}{3}
J_p(\Yp)&=I_{p-b}(q)\Id, &\quad
\bD^{p}\Yp(\nA,\h,q)&=\nA^p\frac{1}{I_{p-b}(q)}\bM^{p-b}F\left(\frac{\nA}{\h},q\right)
&\qquad &\forall\,p\!\ge\!b,\\
J_p(\Ypp)&=I_{p-a}(q)\Id, &\quad
\bD^{p}\Ypp(\nA,\h,q)&=\nA^p\frac{1}{I_{p-a}(q)}\bM^{p-a}F\left(\frac{\nA}{\h},q\right)
&\qquad &\forall\,p\!\ge\!a.
\end{alignat*}
\end{rmk}

\subsection{Statements}
\label{stat_sec}
The statements and proofs of the theorems below rely on the
one-point mirror formula \e_ref{Z'Y'_e} below, which is proved in \cite{LLY3}.
We begin by defining the mirror map occurring in this formula.

For each $i\!\in\![k]$, let
\begin{equation}\label{fi_e}
f_i(q)\!\equiv\!\frac{1}{\Ip_{\b0}(q)}\!\!\!\!\sum\limits_{\begin{subarray}{c}\bd\in\La\\\nu_E(\bd)=0\end{subarray}}\!\!\!\!\!q^{\bd}\frac{\part\left\{U(\bd;\nA,1)\Ep(\bd;\nA,1)\right\}}{\part \nA_i}\Big|_{\nA=0}\!\in\!\Q[[\La\!-\!0]],
\end{equation}
with $\Ip_{\b0}(q)$ defined by \e_ref{I0_e}.
The \sf{mirror map} is the change of variables $\!q\lra\!Q$, where
\begin{equation}\label{mirrmap_e}
(Q_1,\ldots,Q_k)=\left(q_1\ne^{f_1(q)},\ldots,q_k\ne^{f_k(q)}\right).
\end{equation}
Finally, let
\begin{equation}\label{G_e}
G(q)\!\equiv\!\frac{\de_{b,0}}{\Ip_{\b0}(q)}\!\!\!\!\sum_{\begin{subarray}{c}\bd\in\La,\,\,\nu_E(\bd)=1\\D_j(\bd)\ge 0\,\forall\,j\in[N]\end{subarray}}\!\!\!\!\!\!
q^{\bd}\frac{\prod\limits_{i=1}^a\left(L^+_i(\bd)!\right)}
{\prod\limits_{j=1}^N\left(D_j(\bd)!\right)}\!\in\!\Q[[\La\!-\!0]].
\end{equation}
\begin{thm}\label{Y_thm}
If $\nu_E(\bd)\!\ge\!0$ for all $\bd\!\in\!\La$, then $\Zp_{\bp}$ and $\Zpp_{\bp}$ of \e_ref{Zp1pt_e} and \e_ref{Zeta1ptdfn_e} are given by
\begin{equation*}\begin{split}
\Zp_{\bp}(\h,Q)&=\ne^{-\frac{1}{\h}\left[G(q)+\sum\limits_{i=1}^k\nH_if_i(q)\right]}
\Yp_{\bp}(\nH,\h,q)\!\in\!H^*(\X)[\h^{-1}][[\La]],\\
\Zpp_{\bp}(\h,Q)&=\ne^{-\frac{1}{\h}\left[G(q)+\sum\limits_{i=1}^k\nH_if_i(q)\right]}
\Ypp_{\bp}(\nH,\h,q)\!\in\!H^*(\X)[\h^{-1}][[\La]],
\end{split}\end{equation*}
where 
\begin{equation}\begin{split}\label{Yp_e}
\Yp_{\bp}(\nA,\h,q)&\equiv\bD^{\bp}\Yp(\nA,\h,q)+\sum_{r=1}^{|\bp|}\sum_{|\bs|=0}^{|\bp|-r}\tinCp^{(r)}_{\bp,\bs}(q)\h^{|\bp|-r-|\bs|}
\bD^{\bs}\Yp(\nA,\h,q)\!\in\!\Q(\nA,\h)[[\La]],\\
\Ypp_{\bp}(\nA,\h,q)&\equiv\bD^{\bp}\Ypp(\nA,\h,q)+\sum_{r=1}^{|\bp|}\sum_{|\bs|=0}^{|\bp|-r}\tinCpp^{(r)}_{\bp,\bs}(q)\h^{|\bp|-r-|\bs|}
\bD^{\bs}\Ypp(\nA,\h,q)\!\in\!\Q(\nA,\h)[[\La]],
\end{split}\end{equation}
with 
$\tinCp^{(r)}_{\bp,\bs}\equiv\tinC^{(r)}_{\bp,\bs}(\Yp)$ and $\tinCpp^{(r)}_{\bp,\bs}\equiv\wt{\nC}^{(r)}_{\bp,\bs}(\Ypp)$
defined by \e_ref{neqtiC_e},
$Q$ and $q$ related by the mirror map \e_ref{mirrmap_e}, $G$ and $f_i$ given by \e_ref{G_e} and \e_ref{fi_e},
and the operator $\bD^{\bp}$ defined by \e_ref{tiDpdfn_e} and \e_ref{bDdfn_e}.

If $|\bp|\!<\!b$, $\bD^{\bp}\Yp=\{\nA\!+\!\h\,q\frac{\nd}{\nd q}\}^{\bp}\Yp$ and 
$\tinCp^{(r)}_{\bp,\bs}\!=\!0$ for all
$r\!\in\![|\bp|]$.
If $|\bp|\!<\!a$ and $L_i^+(\bd)\!\ge\!1$ for all $i\!\in\![a]$ and $\bd\!\in\!\La\!-\!\{0\}$,
then $\bD^{\bp}\Ypp=\{\nA\!+\!\h\,q\frac{\nd}{\nd q}\}^{\bp}\Ypp$ and
$\tinCpp^{(r)}_{\bp,\bs}\!=\!0$ for all
$r\!\in\![|\bp|]$.
\end{thm}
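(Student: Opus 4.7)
The plan is to induct on $|\bp|$, taking as the base case ($\bp\!=\!\b0$) the one-point mirror formula \e_ref{Z'Y'_e} of \cite{LLY3}: under the mirror map \e_ref{mirrmap_e}, it asserts that
$$\Zp_{\b0}(\h,Q)=\ne^{-\frac{1}{\h}\left[G(q)+\sum_{i=1}^k\nH_if_i(q)\right]}\,\bD^{\b0}\Yp(\nH,\h,q),$$
which matches the claimed formula because the correction sum in \e_ref{Yp_e} is empty when $|\bp|\!=\!0$. The corresponding statement for $\Zpp_{\b0}$ is given by the same source.

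For the inductive step, I would characterize $\Zp_{\bp}$ uniquely by two properties: (i) $\Zp_{\bp}\!\in\!H^*(\X)[\h^{-1}][[\La]]$, i.e.\ only finitely many positive powers of $\h^{-1}$ appear in each $Q^{\bd}$\!-coefficient, and no positive powers of $\h$; and (ii) $\Zp_{\bp}$ is produced from $\Zp_{\bp-e_i}$ by inserting $\nH_i$ in the second argument, which translates, under the mirror map and the exponential prefactor, into applying $\nA_i+\h q_i\nd/\nd q_i$ to $\Yp$. The operators $\bD^{\bp}$ of \e_ref{tiDpdfn_e}--\e_ref{bDdfn_e} are built to implement~(ii), with the normalizing operators $J_{p+1}(\Yp)^{-1}$ inserted so that the leading $q^{\b0}$-coefficient remains $\nA^{\bp}$, as in~\ref{P_J_pq;0}. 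The raw series $\bD^{\bp}\Yp$ generally contains positive powers of $\h$, and the correction sum $\sum_{r,\bs}\tinCp^{(r)}_{\bp,\bs}(q)\h^{|\bp|-r-|\bs|}\bD^{\bs}\Yp$ in \e_ref{Yp_e} is designed to remove them: the linear system~\e_ref{neqtiC_e} defining $\tinCp^{(r)}_{\bp,\bs}$ encodes exactly the cancellation required for $\Yp_{\bp}(\nH,\h,q)\ne^{-\frac{1}{\h}[G+\sum_i\nH_if_i]}$ to lie in $H^*(\X)[\h^{-1}][[\La]]$. Uniqueness of a series satisfying (i)--(ii) with the prescribed $\bp\!=\!\b0$ reduction then identifies the expression with $\Zp_{\bp}$. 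The argument for $\Zpp_{\bp}$ is entirely parallel, with $\Ypp$ and $\tinCpp^{(r)}_{\bp,\bs}$ playing the roles of $\Yp$ and $\tinCp^{(r)}_{\bp,\bs}$.

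The hardest part will be establishing (ii) in the toric setting, i.e.\ converting the insertion of $\nH_i$ on the GW side into the differential operator $\nA_i+\h q_i\nd/\nd q_i$ on the $\Yp$ side. I expect to adapt the fixed-point localization strategy of \cite{bcov0,bcov0_ci} from $\P^{n-1}$ to~$\X$, decomposing the $\T^N$-fixed loci of $\ov\M_{0,2}(\X,\bd)$ according to the combinatorial data of $\V$ and $\cE^{\tau}_M$, and then using the explicit restriction formulas of Proposition~\ref{equivcoh_prp} together with the Euler class formula~\e_ref{tan_e} to reduce the equivariant form of (ii) to a manageable combinatorial identity whose non-equivariant specialization completes the induction.

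Finally, for the two reduction claims in the last paragraph: when $|\bp|\!<\!b$, the discussion after \e_ref{L(d)_e} gives $L_i^-(\bd)\!<\!0$ for every $\bd\!\in\!\La\!-\!\{0\}$ and every $i\!\in\![b]$, so by \e_ref{wtU_e1}--\e_ref{wtU_e2} the difference of $\h$-degrees of numerator and denominator in each $q^{\bd}$-coefficient of $\Yp$ with $\bd\!\neq\!\b0$ is strictly less than $-|\bp|$. Remark~\ref{partDp_rmk} then yields $J_p(\Yp)\!=\!\Id$ and $\bD^{\bp'}\Yp\!=\!\{\nA\!+\!\h q\nd/\nd q\}^{\bp'}\Yp$ for all $|\bp'|\!\le\!|\bp|$; the vanishing $\tinCp^{(r)}_{\bp,\bs}\!=\!0$ for $r\!\ge\!1$ follows from \e_ref{neqtiC_e} combined with~\e_ref{neqtiCdeg_e}, since for $r\!\ge\!1$ the defining relation has right-hand side $\de_{\bp,\br}\de_{r,0}\!=\!0$ and the coefficients $\nC^{(\br)}_{\bs,|\br|+r-t}$ entering it are themselves concentrated on $\bd$ with $\nu_E(\bd)\!>\!0$. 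The symmetric statement for $\Ypp$ uses the hypothesis $L_i^+(\bd)\!\ge\!1$ on $\La\!-\!\{0\}$ in place of $L_i^-(\bd)\!<\!0$.
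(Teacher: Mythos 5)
Your base case and your closing paragraph are essentially sound: the $\bp=\b0$ case is exactly the input \e_ref{Z'Y'_e} from \cite{LLY3}, and your degree count for $|\bp|<b$ (using $L_i^-(\bd)<0$ on $\La-\{0\}$, then Remark~\ref{partDp_rmk}, then the relations \e_ref{neqtiC_e}) matches how the paper handles the last paragraph of the theorem, via Remarks~\ref{partDp_rmk} and~\ref{partfDp_rmk}. The gap is in your inductive step. Your property (ii) --- that multiplying the insertion at the \emph{second} marked point by $\nH_i$ ``translates into applying $\nA_i+\h\,q_i\frac{\nd}{\nd q_i}$ to $\Yp$'' --- is not an identity you can establish independently and then feed into a uniqueness argument: the divisor equation applies to a divisor inserted at a new marked point, not to a divisor multiplied into an existing insertion, and the very presence of the normalizations $J_p(\Yp)^{-1}$ inside $\bD^{\bp}$ and of the correction coefficients $\tinCp^{(r)}_{\bp,\bs}$ in \e_ref{Yp_e} is the statement that the passage from $\Zp_{\bp-e_i}$ to $\Zp_{\bp}$ is \emph{not} given by such an operator. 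As formulated, (ii) is essentially the theorem itself; deferring it to ``adapting the localization strategy of \cite{bcov0},\cite{bcov0_ci}'' does not close the gap, because those papers never prove an insertion-to-operator identity of this kind.

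What they, and this paper, prove instead is a rigidity statement of a different nature, and nothing in your plan substitutes for it. The paper characterizes the relevant series by $C$-recursivity (Definition~\ref{rec_dfn}: prescribed simple poles at $\h=-u_j(I)/d$ with the explicit coefficients $\fCp,\fCpp$ of \e_ref{fC_e}) \emph{together with} the mutual polynomiality condition of Definition~\ref{MPC_dfn}. Lemmas~\ref{ZCrec_lmm} and~\ref{ZMPC_lmm} establish these two properties for the GW side by virtual localization; Lemmas~\ref{YCrec_lmm}, \ref{YMPC_lmm}, \ref{MPCsym_lmm} and the admissible transforms of Lemma~\ref{admis_lmm} establish them for $\cYp_{\bp}$, $\cYpp_{\bp}$; Proposition~\ref{unique_prp} then shows a recursive series satisfying the MPC is determined by its part mod $\h^{-1}$, and agreement mod $\h^{-1}$ is precisely the defining relation \e_ref{eqtiC_e} of the structure coefficients. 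Your (i) (no positive powers of $\h$) plus an unproven recursion in $\bp$ carries no analogous uniqueness mechanism --- it is the MPC, not the absence of positive $\h$-powers, that makes Proposition~\ref{unique_prp} work. Note also that the paper proves the equivariant Theorem~\ref{cY_thm} first and deduces Theorem~\ref{Y_thm} by setting $\al=0$, via \e_ref{ctCvstinC_e} and \ref{EP_noneq}; any localization argument you invoke must be run in that equivariant setting, so your induction should be organized around the equivariant statement rather than the non-equivariant one.
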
 
This follows from Theorem~\ref{cY_thm} together with \e_ref{ctCvstinC_e} and \ref{EP_noneq} below;
Theorem~\ref{cY_thm} is an equivariant version of Theorem~\ref{Y_thm}.
\begin{rmk}\label{weightedbD_rmk}
In the inductive construction of $\bD^{\bp}Y$ with $Y=\Yp$ or $Y=\Ypp$, the first equation in \e_ref{tiDpdfn_e}
may be replaced by
$$
\wt{\bD}^{\bp}Y(\nA,\h,q)\!\equiv\!\sum_{i\in\supp(\bp)}c_{\bp;i}\left\{\nA_i\!+\!\h\,q_i\frac{\nd}{\nd q_i}\right\}
\bD^{\bp-e_i}Y(\nA,\h,q)\!\in\!\Q(\nA,\h)[[\La]]$$
for any tuple $(c_{\bp;i})_{i\in\supp(\bp)}$ of rational numbers with $\sum\limits_{i\in\supp(\bp)}c_{\bp;i}\!=\!1$.
The endomorphism $J_{p+1}(Y)$ and the power series
$\bD^{\bp}Y$ defined by the second equation in \e_ref{tiDpdfn_e} and \e_ref{bDdfn_e} in terms of the new ``weighted''
$\wt{\bD}^{\bp}Y$ satisfy \ref{P_J_pq;0} and \ref{P_Dpexp} by the same arguments as
in the case when
$c_{\bp;i}\!=\!\frac{1}{|\supp(\bp)|}$ for all $i\!\in\!\supp(\bp)$.
Therefore, \e_ref{neqtiC_e} continues to define power series $\wt{\nC}^{(r)}_{\bp,\bs}(Y)$
in terms of the ``new weighted'' $\nC^{(\br)}_{\bp,\bs}(Y)$.
The resulting ``weighted'' power series $Y_{\bp}$ of \e_ref{Yp_e}
do not depend on the ``weights'' $c_{\bp;i}$ as elements of $H^*(\X)\Lau{\h}[[\La]]$; this follows from Remark~\ref{weightedfD_rmk}. 
\end{rmk}

\begin{crl}\label{nuE0_crl}
If $\nu_E(\bd)\!=\!0$ or $\nu_E(\bd)\!>\!|\bp|$ for all $\bd\!\in\!\La\!-\!\{0\}$, then
$$\Zp_{\bp}(\h,Q)=\ne^{-\frac{1}{\h}\left[G(q)+\sum\limits_{i=1}^k\nH_if_i(q)\right]}\!\bD^{\bp}\Yp(\nH,\h,q),\quad
\Zpp_{\bp}(\h,Q)=\ne^{-\frac{1}{\h}\left[G(q)+\sum\limits_{i=1}^k\nH_if_i(q)\right]}\!\bD^{\bp}\Ypp(\nH,\h,q),
$$
with $Q$ and $q$ related by the mirror map \e_ref{mirrmap_e} and $G$ and $f_i$ given by \e_ref{G_e} and \e_ref{fi_e}.
\end{crl}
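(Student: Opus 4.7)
The plan is to invoke Theorem~\ref{Y_thm} directly and show that, under the hypothesis on $\nu_E$, every correction coefficient $\tinCp^{(r)}_{\bp,\bs}(q)$ and $\tinCpp^{(r)}_{\bp,\bs}(q)$ appearing in the defining sums \e_ref{Yp_e} of $\Yp_{\bp}$ and $\Ypp_{\bp}$ vanishes for $r\!\in\![|\bp|]$. Once this is established, the formulas of Theorem~\ref{Y_thm} collapse to the claimed expressions involving only $\bD^{\bp}\Yp$ and $\bD^{\bp}\Ypp$.

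The key inputs are the properties of the power series $\wt{\nC}^{(r)}_{\bp,\bs}(Y)$ extracted during the construction in Section~\ref{construction_sec}: namely, \e_ref{neqtiCq;0_e} gives $\LR{\wt{\nC}^{(r)}_{\bp,\bs}}_{q;\b0}=\de_{\bp,\bs}\de_{r,0}$, and \e_ref{neqtiCdeg_e} gives $\LR{\wt{\nC}^{(r)}_{\bp,\bs}}_{q;\bd}=0$ whenever $\nu_E(\bd)\neq r$. I would apply these with $Y=\Yp$ and $Y=\Ypp$. In the first case of the hypothesis, when $\nu_E(\bd)=0$ for every $\bd\!\in\!\La\!-\!\{0\}$, then for any $r\!\ge\!1$ the contribution from $\bd\!=\!\b0$ vanishes by \e_ref{neqtiCq;0_e} and the contribution from every $\bd\!\in\!\La\!-\!\{0\}$ vanishes by \e_ref{neqtiCdeg_e}, since $\nu_E(\bd)\!=\!0\!\ne\!r$. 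In the second case, when $\nu_E(\bd)>|\bp|$ for every $\bd\!\in\!\La\!-\!\{0\}$, then for every $r\!\in\![|\bp|]$ one still has $r\!\ne\!\nu_E(\bd)$ for all such $\bd$, and the $\bd\!=\!\b0$ coefficient again vanishes because $r\!\ge\!1$. In either case $\tinCp^{(r)}_{\bp,\bs}\equiv 0$ and $\tinCpp^{(r)}_{\bp,\bs}\equiv 0$ for all $r\!\in\![|\bp|]$ and all admissible $\bs$.

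Substituting this vanishing into \e_ref{Yp_e} reduces $\Yp_{\bp}(\nH,\h,q)$ to $\bD^{\bp}\Yp(\nH,\h,q)$ and $\Ypp_{\bp}(\nH,\h,q)$ to $\bD^{\bp}\Ypp(\nH,\h,q)$. Plugging these reductions into the two displayed identities of Theorem~\ref{Y_thm} gives the two claimed formulas for $\Zp_{\bp}(\h,Q)$ and $\Zpp_{\bp}(\h,Q)$ in terms of the mirror map \e_ref{mirrmap_e} and the functions $G$ and $f_i$ of \e_ref{G_e} and \e_ref{fi_e}. There is no real obstacle here; the only thing to be careful about is the edge case $\bd\!=\!\b0$, which is handled cleanly by \e_ref{neqtiCq;0_e}, and the observation that the two alternative hypotheses on $\nu_E$ cover precisely the two scenarios needed to force vanishing in \e_ref{neqtiCdeg_e} for all relevant $r$.
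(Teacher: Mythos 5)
Your proposal is correct and takes essentially the same route as the paper, whose proof of Corollary~\ref{nuE0_crl} is precisely Theorem~\ref{Y_thm} combined with the vanishing \e_ref{neqtiCdeg_e} of the structure constants $\tinCp^{(r)}_{\bp,\bs}$, $\tinCpp^{(r)}_{\bp,\bs}$ for $1\!\le\!r\!\le\!|\bp|$ (note the $\bd\!=\!\b0$ coefficient is also handled by \e_ref{neqtiCdeg_e} alone, since $\nu_E(\b0)\!=\!0\!\neq\!r$, though your appeal to \e_ref{neqtiCq;0_e} is equally fine). One small caveat: the hypothesis is meant pointwise --- for each $\bd\!\in\!\La\!-\!\{0\}$, either $\nu_E(\bd)\!=\!0$ or $\nu_E(\bd)\!>\!|\bp|$ --- rather than the two uniform cases into which you split, but your argument is coefficient-by-coefficient in $q^{\bd}$ and applies verbatim in that generality, since in every case $\nu_E(\bd)\!\neq\!r$ for $r\!\in\![|\bp|]$.
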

This follows from Theorem~\ref{Y_thm} together with \e_ref{neqtiCdeg_e}.

Let $\pr_i\!:\!\X\!\times\!\X\!\lra\!\X$ denote the projection onto the $i$-th component.
\begin{crl}\label{2pt_crl}
Let $g_{\bp\bs}\!\in\!\Q$ be such that $\sum\limits_{|\bp|+|\bs|=N-k}\!\!\!g_{\bp\bs}\pr_1^*\nH^{\bp}\pr_2^*\nH^{\bs}$
is the Poincar\'{e} dual to the diagonal class in $\X$, where $N\!-\!k$ is the complex dimension of $\X$.
If $N\!>\!k$ and $\nu_E(\bd)\!>\!N\!-\!k$ for all $\bd\!\in\!\La\!-\!\{0\}$, then the two-point function $\Zp$
of \e_ref{Z2ptdfn_e} is given by
\begin{equation*}
\Zp(\h_1,\h_2,q)=\frac{1}{\h_1+\h_2}\sum_{|\bp|+|\bs|=N-k}\!\!\!\!
\!\!g_{\bp\bs}
\pr_1^*\left\{\nH\!+\!\h_1\,q\frac{\nd}{\nd q}\right\}^{\bp}\Yp(\nH,\h_1,q)\pr_2^*
\left\{\nH\!+\!\h_2\,q\frac{\nd}{\nd q}\right\}^{\bs}\Ypp(\nH,\h_2,q).
\end{equation*}
\end{crl}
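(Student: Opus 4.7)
The plan is to combine Theorem~\ref{Z2pt_thm} with Corollary~\ref{nuE0_crl}, using the hypothesis $\nu_E(\bd)\!>\!N\!-\!k$ to trivialize the mirror map and all auxiliary corrections. First I would apply Theorem~\ref{Z2pt_thm} with the choice $\eta_j\!=\!\nH^{\bp}$ and $\wceta_j\!=\!g_{\bp\bs}\nH^{\bs}$ indexed by pairs $(\bp,\bs)$ with $|\bp|\!+\!|\bs|\!=\!N\!-\!k$, which by hypothesis exhibits the Poincar\'e dual of the diagonal. This immediately yields
\begin{equation*}
\Zp(\h_1,\h_2,Q)=\frac{1}{\h_1+\h_2}\sum_{|\bp|+|\bs|=N-k}\!\!\!\!g_{\bp\bs}\,\pr_1^*\Zp_{\bp}(\h_1,Q)\,\pr_2^*\Zpp_{\bs}(\h_2,Q).
\end{equation*}

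Next I would verify that Corollary~\ref{nuE0_crl} applies to each $\Zp_{\bp}$ and $\Zpp_{\bs}$ appearing above: since $|\bp|,|\bs|\!\le\!N\!-\!k\!<\!\nu_E(\bd)$ for every $\bd\!\in\!\La\!-\!\{0\}$, the condition $\nu_E(\bd)\!>\!|\bp|$ (resp.\ $\nu_E(\bd)\!>\!|\bs|$) required there is satisfied. Corollary~\ref{nuE0_crl} then gives $\Zp_{\bp}(\h,Q)\!=\!\ne^{-\frac{1}{\h}[G(q)+\sum_i\nH_if_i(q)]}\bD^{\bp}\Yp(\nH,\h,q)$ and the analogous formula for $\Zpp_{\bs}$, with $Q$ and $q$ related by the mirror map \e_ref{mirrmap_e}.

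The next step is to observe that the hypothesis $\nu_E(\bd)\!>\!N\!-\!k\!\ge\!1$ (using $N\!>\!k$) forces all the auxiliary objects to be trivial: the sum in \e_ref{I0_e} defining $\Ip_{\b0}(q)\!-\!1$ ranges over $\bd\!\in\!\La\!-\!\{0\}$ with $\nu_E(\bd)\!=\!0$ and is therefore empty, so $\Ip_{\b0}(q)\!=\!1$; similarly the sum \e_ref{fi_e} defining $f_i(q)$ (whose $\bd\!=\!\b0$ term vanishes because $U(\b0;\nA,1)\Ep(\b0;\nA,1)\!=\!1$) and the sum \e_ref{G_e} defining $G(q)$ (which requires $\nu_E(\bd)\!=\!1$) are both empty. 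Hence $f_i\!\equiv\!0$, $G\!\equiv\!0$, and the mirror map \e_ref{mirrmap_e} reduces to the identity $Q\!=\!q$. Finally, Remark~\ref{simpleDp_rmk} applies because $\nu_E(\bd)\!>\!0$ for all $\bd\!\in\!\La\!-\!\{0\}$, giving $\bD^{\bp}\Yp\!=\!\{\nA+\h\,q\frac{\nd}{\nd q}\}^{\bp}\Yp$ and likewise for $\Ypp$. Substituting all of this back into the two-point identity yields exactly the claimed formula.

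This argument is essentially a verification that the nontrivial content of Theorem~\ref{Y_thm} (the mirror-map correction, the exponential factor, and the $\tinCp,\tinCpp$ structure constants) collapses under the Fano-type numerical hypothesis $\nu_E(\bd)\!>\!N\!-\!k$. The only mild obstacle is bookkeeping: one must check that each of $\Ip_{\b0}$, $f_i$, $G$, and the structure coefficients $\tinCp^{(r)}_{\bp,\bs},\tinCpp^{(r)}_{\bp,\bs}$ (the latter by \e_ref{neqtiCdeg_e}) trivializes, and that this collective trivialization is consistent across both factors $\Zp_{\bp}$ and $\Zpp_{\bs}$, so that no residual exponential factor survives in the final product.
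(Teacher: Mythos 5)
Your proposal is correct and follows essentially the same route as the paper, which derives the corollary from Theorem~\ref{Z2pt_thm}, Corollary~\ref{nuE0_crl}, and Remark~\ref{simpleDp_rmk}. The extra bookkeeping you supply (emptiness of the sums defining $G$, $f_i$, and $\Ip_{\b0}\!-\!1$ under $\nu_E(\bd)\!>\!N\!-\!k\!\ge\!1$, hence triviality of the mirror map and of the exponential prefactor) is exactly the implicit content the paper leaves to the reader, and you verify the hypotheses of the cited results correctly.
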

This follows from Theorem~\ref{Z2pt_thm}, Corollary~\ref{nuE0_crl}, and Remark~\ref{simpleDp_rmk}.
\begin{rmk}\label{untwZ_rmk2}
If 
$$P(\nA)\equiv\frac{\prod\limits_{i=1}^a\left(\sum\limits_{r=1}^k\ell^+_{ri}\nA_r\right)}
{\prod\limits_{i=1}^b\left(\sum\limits_{r=1}^k\ell^-_{ri}\nA_r\right)}\in\Q[\nA],$$
then
$$Z^*(\h_1,\h_2,Q)=\Zp^*(\h_1,\h_2,Q)\pr_1^*P(\nH),$$
where $\pr_1\!:\!\X\!\times\!\X\!\lra\!\X$ is the projection onto the first component, while $\Zp^*$ 
and $Z^*$ are as in Remark~\ref{untwZ_rmk}.
Via Theorem~\ref{Z2pt_thm}, this expresses the two-point function $Z^*$ in terms of the one-point functions $\Zp_{\eta}$, $\Zpp_{\eta}$. In this case and if $\nu_E(\bd)\!\ge\!0$ for all $\bd\!\in\!\La$, $Z^*$ can be computed explicitly in terms of
$\Yp$ and $\Ypp$ via Theorem~\ref{Y_thm}.

We next use an idea from \cite{CoZ} to express $Z^*$ in terms of one-point GW generating functions and then show
how to compute the latter in the $b\!>\!0$ case.
If $\pr_i\!:\!\X\!\times\!\X\!\lra\!\X$
and
$g_{\bp\bs}\!\in\!\Q$ are as in Corollary~\ref{2pt_crl},
then
\begin{equation}\label{untwZ_e2}
Z^*(\h_1,\h_2,Q)\!=\!\frac{1}{\h_1+\h_2}\sum\limits_{|\bp|+|\bs|=N-k}
g_{\bp\bs}\!
\left[\pr_1^*\nH^{\bp}\pr_2^*Z^*_{\bs}(\h_2,Q)\!+\!\pr_1^*Z^*_{\bp}(\h_1,Q)\pr_2^*\Zpp_{\bs}(\h_2,Q)\right],
\end{equation}
where
\begin{equation*}
Z^*_{\bp}(\h,Q)\equiv\sum_{\bd\in\La-0}Q^{\bd}
\ev_{1*}\left[\frac{\E(\cV_E)\ev_2^*\nH^{\bp}}{\h\!-\!\psi_1}\right]
\!\in\!H^*\!\left(\X\right)[\h^{-1}][[\La]]
\end{equation*}
and $\ev_1:\ov\M_{0,2}(\X,\bd)\!\lra\!\X$.
This follows from \e_ref{euntwZ_e2}.

We next assume that $b\!>\!0$ and $\nu_E(\bd)\!\ge\!0$ for all $\bd\!\in\La$ and express $Z^*_{\bp}(\h,Q)$ 
in terms of explicit power series. Along with \e_ref{untwZ_e2} and Theorem~\ref{Y_thm}, this will conclude the computation of $Z^*$.

With $U(\bd;\nA,\h)$ given by \e_ref{wtU_e1}, we define
\begin{equation}\label{newY_e}
\hY(\nA,\h,q)\!\equiv\!\!\sum_{\bd\in\La}\!q^{\bd}U(\bd;\nA,\h)
\prod\limits_{i=1}^a\prod\limits_{s=1}^{L_i^+(\bd)}\left(\sum\limits_{r=1}^k\ell^+_{ri}\nA_r\!+\!s\h\right)
\prod\limits_{i=1}^b\prod\limits_{s=1}^{-L_i^-(\bd)}\left(\sum\limits_{r=1}^k\ell^-_{ri}\nA_r\!-\!s\h\right).
\end{equation}
As $\hY$ satisfies \e_ref{assumeS_e}, we may define 
$\bD^{\bp}\hY$ and $\wt{\nC}^{(r)}_{\bp,\bs}\!\equiv\!\wt{\nC}^{(r)}_{\bp,\bs}(\hY)$ by \e_ref{bDdfn_e} and \e_ref{neqtiC_e}. We define $\hY_{\bp}(\nA,\h,q)$
by the right-hand side of \e_ref{Yp_e} above, with $\Yp$ replaced by $\hY$ and $\tinCp^{(r)}_{\bp,\bs}$ by
$\tinC^{(r)}_{\bp,\bs}$.

Let
\begin{equation}\label{wtY_e}
\wt{Y}^*(\nA,\h,q)\!\equiv\!
\sum\limits_{\bd\in\La-0}q^{\bd}U(\bd;\nA,\h)
\prod\limits_{i=1}^a\prod\limits_{s=1}^{L^+_i(\bd)}\!\!\!\left(\sum\limits_{r=1}^k\ell^+_{ri}\nA_r\!+\!s\h\right)
\prod\limits_{i=1}^b\prod\limits_{s=1}^{-L_i^-(\bd)-1}\!\!\!\left(\sum\limits_{r=1}^k\ell^-_{ri}\nA_r\!-\!s\h\right).
\end{equation}
We define $\nE^{(\br)}_{\bp,\bs}\!\in\!\Q[[\La]]$ by 
\begin{equation}\label{nEcoeff_e}
\left\{\nA+\h\,q\frac{\nd}{\nd q}\right\}^{\bp}\wt{Y}^*(\nA,\h,q)\cong
\sum_{s=0}^{|\bp|-b}\sum_{|\br|=0}^{|\bp|-b-s}\nE^{(\br)}_{\bp,s}\nA^{\br}\h^s\quad\mod\,\h^{-1}.
\end{equation}
It follows that $\LR{\nE^{(\br)}_{\bp,\bs}}_{q;\bd}\!=\!0$ unless~$|\bp|\!=\!b\!+\!s\!+\nu_E(\bd)\!+\!|\br|$.
Whenever $b\!\ge\! 2$,
\begin{equation}\begin{split}\label{Zpstar_e1}
Z^*_{\bp}(\h,q)\!=\!
e(E^+)
\left[\left\{\nH+\h\,q\frac{\nd}{\nd q}\right\}^{\bp}\wt{Y}^*(\nH,\h,q)\!-\!\!\sum_{s=0}^{|\bp|-b}\sum_{|\br|=0}^{|\bp|-b-s}\!\!\nE^{(\br)}_{\bp,s}\h^s\hY_{\br}(\nH,\h,q)\right].
\end{split}\end{equation}
If $b\!=\!1$ and $Q$ and $q$ are related by the mirror map \e_ref{mirrmap_e},
\begin{equation}\begin{split}\label{Zpstar_e2}
Z^*_{\bp}(\h,Q)\!=\!
e(E^+)\ne^{-\frac{e(E^-)f_0(q)}{\h}}\!\!
\left[\left\{\nH+\h\,q\frac{\nd}{\nd q}\right\}^{\bp}\wt{Y}^*(\nH,\h,q)\!-\!\!\!\sum_{s=0}^{|\bp|-b}\sum_{|\br|=0}^{|\bp|-b-s}\!\!\!\!\nE^{(\br)}_{\bp,s}\h^s\hY_{\br}(\nH,\h,q)\right]\\-
\frac{e(E^+)\nH^{\bp}f_0(q)}{\h}\sum_{n=0}^{\i}\frac{1}{(n+1)!}\left[-\frac{e(E^-)f_0(q)}{\h}\right]^n,
\end{split}\end{equation}
where
\begin{equation}\label{f0_e}
f_0(q)\equiv\sum_{\begin{subarray}{c}\bd\in\La-0,\,\nu_E(\bd)=0\\
D_j(\bd)\ge 0\,\forall\,j\in [N]\end{subarray}}\!q^{\bd}
(-1)^{L_1^-(\bd)+1}\left(-L_1^-(\bd)\!-\!1\right)!
\frac{\prod\limits_{i=1}^a\left(L_i^+(\bd)!\right)}{\prod\limits_{j=1}^N\left(D_j(\bd)\right)!}.~\footnotemark
\end{equation}
\footnotetext{In this case, $f_i(q)=\ell^-_{i1}f_0(q)$ with $\ell^-_{i1}\!\in\!\Z$ given by \e_ref{Etensor_e}.}
Identities~\e_ref{Zpstar_e1} and \e_ref{Zpstar_e2} follow by setting $\al\!=\!0$ in \e_ref{cZpstar_e1} and \e_ref{cZpstar_e2}.

As in \cite{CoZ}, if $\X\!=\!\P^{n-1}$ and $b\!\ge\!2$, \e_ref{Zpstar_e1} can be be replaced by a simpler formula in terms
of the power series $F(\nw,q)$ in \e_ref{FIpproj_e} above.
Assume that $E\!\lra\!\P^{n-1}$ is as in Remark~\ref{projDp_rmk} and
$\sum\limits_{i=1}^{a}\ell^+_i\!-\!\sum\limits_{i=1}^{b}\ell^-_i\!=\!n$.
Similarly to \cite[Section~2.3]{CoZ},
$$Z^*_p(\h,q)=\frac{e(E^+)}{e(E^-)}\times
\begin{cases}
\left\{\nH\!+\!\h\,q\frac{\nd}{\nd q}\right\}^p\Yp(\nH,\h,q)\!-\!\nH^p,\quad&\hbox{if}\quad p\!<\!b,\\
\nH^p\frac{\bM^{p-b}F(\frac{\nH}{\h},q)}{I_{p-b}(q)}\!-\!\nH^p,\quad&\hbox{if}\quad p\!\ge\!b,
\end{cases}$$  
where the right-hand side should be first simplified in $\Q(\nH,\h)[[q]]$ to eliminate division by $\nH$
and only afterwards viewed as an element in $H^*(\P^{n-1})[\h^{-1}][[q]]$.
This follows from Remarks~\ref{projfDp_rmk} and \ref{projDp_rmk} together with Theorem~\ref{cY_thm}.
By Theorem~\ref{Y_thm} and Remark~\ref{projDp_rmk},
$$\Zpp_p(\h,q)=\begin{cases}
\left\{\nH+\h\,q\frac{\nd}{\nd q}\right\}^p\Ypp(\nH,\h,q),\quad&\hbox{if}\quad p\!<\!a,\\
\nH^p\frac{M^{p-a}F(\frac{\nH}{\h},q)}{I_{p-a}(q)},\quad&\hbox{if}\quad p\!\ge\!a.
\end{cases}$$
The last two displayed equations together with \e_ref{untwZ_e2} imply that
\begin{equation}\label{localCY_e}
\sum\limits_{d=1}^{\i}\!dq^d\int_{\ov\M_{0,2}(\P^{n-1},d)}\!\!\!\!e(\cV_E)\ev_1^*\nH^{c_1}
\ev_2^*\nH^{c_2}\!=\!\frac{\prod\limits_{i=1}^a\ell_i^+}{\prod\limits_{i=1}^b\ell_i^-}\left(I_{c_1+1-b}(q)\!-\!1\right)\quad\textnormal{and}\quad I_{c_1+1-b}\!=\!I_{c_2+1-b},
\end{equation}
whenever $c_1\!+c_2\!=\!n\!-\!2\!-\!a\!+\!b$
and with $I_p(q)$ defined by \e_ref{FIpproj_e} if $p\!\ge\!0$ and $I_p(q)\!\equiv\!1$ if $p\!<\!0$.
\end{rmk}

\section{Equivariant theorems}
\label{equiv_sec}

In this section we introduce equivariant versions of the GW generating functions 
$\Zp$, $\Zp_{\eta}$, and $\Zpp_{\eta}$ of \e_ref{Z2ptdfn_e} and \e_ref{Zeta1ptdfn_e}. We then present theorems
about them which imply the non-equivariant statements of Section~\ref{stat_sec}.

With $\al\!\equiv\!(\al_1,\ldots,\al_N)$ denoting the $\T^N$-weights of Section~\ref{equiv-intro_sec},
$H^*_{\T^N}(\X)$ is generated over $\Q[\al]$ by $\{x_1,\ldots,x_k\}$;
see Proposition~\ref{equivcoh_prp}.
The classes $x_i$ of \e_ref{ebdle_e} satisfy
$$H^2_{\T^N}(\X)\!\ni\! x_i\xrightarrow{\textnormal{restriction}} \nH_i\!\in\!H^2(\X)\qquad\forall\,i\!\in\![k],\quad
\E(\ga_i^*)=x_i\qquad\forall\,i\!\in\![k],$$
where $\E(\ga_i^*)$ is defined by the lift \e_ref{trivact_e} of the action of $\T^N$ on $\X$ to the line bundle $\ga_i^*$. 
Let
$$x\equiv(x_1,\ldots,x_k),\quad x^{\bp}\!\equiv\!x_1^{p_1}\cdot\!\ldots\!\cdot x_k^{p_k}
\qquad\forall\,\bp=(p_1,\ldots,p_k)\!\in\!\left(\Z^{\ge 0}\right)^k.$$

The action of $\T^N$ on $\X$ induces an action on $\ov\M_{0,m}(\X,\bd)$ which lifts to an action on the vector orbi-bundles $\cV_E$, $\cVp_E$, and $\cVpp_E$ of \e_ref{cVdfn_e} and \e_ref{cV'dfn_e}.  
It also lifts to an action on the universal cotangent line bundle to the $i$-th marked point
whose equivariant Euler class will also be denoted by $\psi_i$.
The evaluation maps $\ev_i\!:\!\ov\M_{0,m}(\X,\bd)\!\lra\!\X$ are $\T^N$-equivariant.

With $\ev_1,\ev_2\!:\!\ov\M_{0,3}(\X,\bd)\!\lra\!\X$ denoting the evaluation maps at the
first two marked points, let
\begin{equation}\label{eZ2ptdfn_e}
\cZp(\h_1,\h_2,Q)\equiv\frac{\h_1\h_2}{\h_1\!+\!\h_2}
\sum_{\bd\in\La}\!Q^{\bd}\!\left(\ev_1\!\times\!\ev_2\right)_*\!
\left[\!\frac{\E(\cVp_E)}{\left(\h_1\!-\!\psi_1\right)\left(\h_2\!-\!\psi_2\right)}\!\right].
\end{equation}
With $\ev_1,\ev_2\!:\!\ov\M_{0,2}(\X,\bd)\!\lra\!\X$ denoting the evaluation maps at the
two marked points and for all $\eta\!\in\!H^*_{\T^N}(\X)$, let
\begin{equation}\begin{split}\label{eZeta1ptdfn_e}
\cZp_{\eta}(\h,Q)&\equiv \eta\!+\!\sum_{\bd\in\La-0}Q^{\bd}
\ev_{1*}\left[\frac{\E(\cVp_E)\ev_2^*\eta}{\h\!-\!\psi_1}\right]
\!\in\!H^*_{\T^N}\!\left(\X\right)[[\h^{-1},\La]],\\
\cZpp_{\eta}(\h,Q)&\equiv \eta\!+\!\sum_{\bd\in\La-0}Q^{\bd}
\ev_{1*}\left[\frac{\E(\cVpp_E)\ev_2^*\eta}{\h\!-\!\psi_1}\right]
\!\in\!H^*_{\T^N}\!\left(\X\right)[[\h^{-1},\La]].
\end{split}\end{equation}
In the $\eta\!=\!x^{\bp}$ cases, these are equivariant versions of $\Zp_{\bp}$ and $\Zpp_{\bp}$
in \e_ref{Zp1pt_e}: 
\begin{equation}\begin{split}\label{eZp1pt_e}
\cZp_{\bp}(\h,Q)\equiv\cZp_{x^{\bp}}(\h,Q),\,
\cZpp_{\bp}(\h,Q)\equiv\cZpp_{x^{\bp}}(\h,Q)
\!\in\!H^*_{\T^N}\!\left(\X\right)[[\h^{-1},\La]].
\end{split}\end{equation}
In particular, $\cZp_{\b0}\!\equiv\!\cZp_1$, with $\b0\!\in\!\Z^k$ and $1\!\in\!H^*_{\T^N}(\X)$.

Section~\ref{econstruction_sec} below constructs the explicit formal power series in terms of which
$\cZp_{\bp}$ and $\cZpp_{\bp}$ are expressed in Theorem~\ref{cY_thm}.
Throughout this construction, which extends the constructions in \cite[Section~2.3]{bcov0}
and \cite[Section~3.1]{bcov0_ci} from $\P^{n-1}$ to an arbitrary toric manifold $\X$, we assume that 
$\nu_E(\bd)\!\ge\!0$ for all $\bd\!\in\!\La$ and identify $H_2(\X;\Z)\!\cong\!\Z^k$
via the basis $\{\nH_1,\ldots,\nH_k\}$ of $H^2(\X;\Z)$.
Via this identification $\La\!\hookrightarrow\!\Z^k$.

\subsection{Construction of equivariant power series}
\label{econstruction_sec}
We begin by defining equivariant versions $\cYp$ and $\cYpp$ of the power series $\Yp$ and $\Ypp$ in \e_ref{Y'_e}
as these will compute $\cZp_{\bp}$ and $\cZpp_{\bp}$ in Theorem~\ref{cY_thm}.
We consider the lift \e_ref{trivact_e} of the $\T^N\!$-action on $\X$ to the line bundles
$L_i^{\pm}$ of \e_ref{splitE_e} and \e_ref{Etensor_e} so that
\begin{equation}\label{la_e}
\la_i^{\pm}\equiv\E(L_i^{\pm})\!=\!\sum\limits_{r=1}^k\ell^{\pm}_{ri}x_r.
\end{equation}
An equivariant version of the power series $U(\bd;\nA,\h)$ in \e_ref{wtU_e1} is given by
\begin{equation}\label{ewtU_e1}
u(\bd;\nA,\h)\!\equiv\!
\frac{\prod\limits_{\begin{subarray}{c}j\in[N]\\D_j(\bd)<0\end{subarray}}\prod\limits_{s=D_j(\bd)+1}^0
\left(\sum\limits_{i=1}^km_{ij}\nA_i\!-\!\al_j\!+\!s\h\right)}
{\prod\limits_{\begin{subarray}{c}j\in[N]\\D_j(\bd)\ge 0\end{subarray}}\prod\limits_{s=1}^{D_j(\bd)}\left(\sum\limits_{i=1}^km_{ij}\nA_i\!-\!\al_j\!+\!s\h\right)}
\in\Q[\al,\nA]\Lau{\h}.
\end{equation}
By \e_ref{u_e}, 
\begin{equation}\label{uYgeom_e}
u(\bd;x,\h)\!=\!\frac{\prod\limits_{\begin{subarray}{c}j\in[N]\\D_j(\bd)<0\end{subarray}}\prod\limits_{s=D_j(\bd)+1}^0
\left(u_j\!+\!s\h\right)}
{\prod\limits_{\begin{subarray}{c}j\in[N]\\D_j(\bd)\ge 0\end{subarray}}\prod\limits_{s=1}^{D_j(\bd)}\left(u_j\!+\!s\h\right)}.
\end{equation}
With $\Ep(\bd;\nA,\h)$ and $\Epp(\bd;\nA,\h)$ as in \e_ref{wtU_e2}, let 
\begin{equation}\begin{split}\label{cY_e}
\cYp(\nA,\h,q)&\equiv\sum_{\bd\in\La}\!q^{\bd}u(\bd;\nA,\h)\Ep(\bd;\nA,\h)\in\Q[\al,\nA][[\h^{-1},\La]],\\
\cYpp(\nA,\h,q)&\equiv\sum_{\bd\in\La}\!q^{\bd}
u(\bd;\nA,\h)\Epp(\bd;\nA,\h)\in\Q[\al,\nA][[\h^{-1},\La]].
\end{split}\end{equation}

In the above definitions of $\cYp$ and $\cYpp$ and throughout the construction below, the torus weights $\al$ should be thought of as formal variables,
in the same way in which $\nA$ of Section~\ref{construction_sec} are formal variables.
With $\nA$ replaced by $x$, $\cYp$ and $\cYpp$ become well-defined elements in $H^*_{\T^N}(\X)[[\h^{-1},\La]]$.
However, this is irrelevant for the purposes of this subsection and becomes relevant only when we use $\cYp$ and $\cYpp$ in the formulas for $\cZp_{\bp}$ and $\cZpp_{\bp}$.

As before, $\Q_{\al}\!\equiv\!\Q(\al)$.
We next describe an operator $\fD^{\bp}$ acting on a subset of
$\Q_{\al}(\nA,\h)[[\La]]$ and certain associated ``equivariant structure coefficients''
in $\Q[\al][[\La]]$ which occur in the formulas for $\cZp_{\bp}$ and $\cZpp_{\bp}$.
Fix an element $\cY(\nA,\h,q)\!\in\!\Q_{\al}(\nA,\h)[[\La]]$ such that for all $\bd\!\in\!\La$
$$\LR{\cY(\nA,\h,q)}_{q;\bd}\equiv\frac{f_{\bd}(\nA,\h)}{g_{\bd}(\nA,\h)}$$
for some homogeneous polynomials $f_{\bd}(\nA,\h),g_{\bd}(\nA,\h)\!\in\!\Q[\al,\nA,\h]$
satisfying
\begin{equation}\begin{split}\label{assumecS_e}
f_{\b0}(\nA,\h)=g_{\b0}(\nA,\h),\qquad
\deg\,f_{\bd}-\deg\,g_{\bd}=-\nu_E(\bd),\qquad
g_{\bd}\left|_{\begin{subarray}{c}\nA=0\\\al=0\end{subarray}}\right.\!\neq\!0\quad\forall\,\bd\in\!\La.
\end{split}\end{equation}
This condition is satisfied by the power series $\cYp$ and $\cYpp$
of \e_ref{cY_e} and so the construction below applies to $\cY\!=\!\cYp$ and $\cY\!=\!\cYpp$.
We inductively define $\fD^{\bp}\cY(\nA,\h,q)$ in $\Q_{\al}(\nA,\h)[[\La]]$
satisfying
\begin{enumerate}[label=(EP\arabic*),leftmargin=*] 
\item\label{EP_noneq} with $\bD^{\bp}$ defined in Section~\ref{construction_sec},
$$\fD^{\bp}\cY(\nA,\h,q)\Big|_{\al=0}=\bD^{\bp}\left(\cY(\nA,\h,q)\Big|_{\al=0}\right);$$
\item\label{EP_Dpexp} there exist $\cC^{(\br)}_{\bp,s}\!\equiv\!\cC^{(\br)}_{\bp,s}(\cY)\!\in\!\Q[\al][[\La]]$
with $\bp,\br\!\in\!(\Z^{\ge 0})^k$, $s\!\in\!\Z^{\ge0}$, such that $\LR{\cC^{(\br)}_{\bp,s}}_{q;\bd}$ is a homogeneous polynomial in $\al$ of degree $-\nu_E(\bd)\!-\!|\br|\!+\!s$,
\begin{equation}\label{fDpexp_e}
\fD^{\bp}\cY(\nA,\h,q)=\h^{|\bp|}\sum\limits_{s=0}^{\i}\sum\limits_{\br\in(\Z^{\ge 0})^k}\cC^{(\br)}_{\bp,s}(q)
\nA^{\br}\h^{-s},
\end{equation}
\begin{equation}\label{EP_Dpq;0_e}
\LR{\cC^{(\br)}_{\bp,s}}_{q;\b0}\!\!\!\!=\!\de_{\bp,\br}\de_{|\br|,s}\quad\forall\,\bp,\br\!\in\!(\Z^{\ge0})^k,\,s\!\in\!\Z^{\ge 0}.
\end{equation}
\end{enumerate}
By \e_ref{assumecS_e}, \e_ref{bD0dfn_e}, and since $\LR{\{J_0(\cY\big|_{\al=0})\}(1)}_{q;\b0}\!=\!1$
by \ref{P_J_pq;0},
we can define
\begin{equation}\label{fD0dfn_e}
\fD^{\b0}\cY(\nA,\h,q)\equiv\left[\{J_0(\cY\big|_{\al=0})\}(1)\right]^{-1}\cY(\nA,\h,q)\!\in\!\Q_{\al}(\nA,\h)[[\La]].
\end{equation}

Suppose next that $p\!\ge\!0$ and we have constructed power series $\fD^{\bp'}\cY(\nA,\h,q)$
for all
$\bp'\!\in\!(\Z^{\ge 0})^k$ with $|\bp'|\!=\!p$
satisfying the above properties.
For each $\bp\!\in\!(\Z^{\ge 0})^k$ with $|\bp|\!=\!p\!+\!1$, let
\begin{equation}\begin{split}\label{fDdfn_e}
\wt{\fD}^{\bp}\cY(\nA,\h,q)\!&\equiv\!\frac{1}{|\supp(\bp)|}
\sum_{i\in\supp(\bp)}\left\{\nA_i\!+\!\h\,q_i\frac{\nd}{\nd q_i}\right\}\fD^{\bp-e_i}\cY(\nA,\h,q)\!\in\!\Q_{\al}(\nA,\h)[[\La]],\\
\fD^{\bp}\cY(\nA,\h,q)\!&\equiv\!\!\!\!\sum_{\bp'\in(\Z^{\ge 0})^k,|\bp'|=p+1}\!\!\!\nc_{\bp;\bp'}(q)
\wt{\fD}^{\bp'}\cY(\nA,\h,q),
\end{split}\end{equation}
where $\nc_{\bp;\bp'}(q)\!\in\!\Q[[\La;\nu_E\!=\!0]]$ are defined by \e_ref{Jpinv_e}
with $Y\!\equiv\!\cY\big|_{\al=0}$ and where
$\{e_1,\ldots,e_k\}$ is the standard basis of $\Z^k$.
Since \ref{EP_noneq} holds with $\bp$ replaced by any $\bp'$ with $|\bp'|\!=\!p$,
$$\wt{\fD}^{\bp}\cY(\nA,\h,q)\big|_{\al=0}=\wt{\bD}^{\bp}\left(\cY(\nA,\h,q)\big|_{\al=0}\right)$$
by \e_ref{tiDpdfn_e}; thus, by the second equation in \e_ref{fDdfn_e} and \e_ref{bDdfn_e},
$\fD^{\bp}\cY$ satisfies \ref{EP_noneq}.
It is immediate to verify that $\fD^{\bp}\cY(\nA,\h,q)$ admits an expansion as in \e_ref{fDpexp_e}.
Since $\LR{\nc_{\bp;\bp'}}_{q;\b0}\!=\!\de_{\bp,\bp'}$ by the second statement in~\e_ref{Jpprop_e} and~\e_ref{Jpinv_e}
and \e_ref{EP_Dpq;0_e} holds for $\bp\!-\!e_i$ with $i\!\in\!\supp(\bp)$
instead of $\bp$, \e_ref{EP_Dpq;0_e} also holds for $\bp$ with $|\bp|\!=\!p\!+\!1$.

By \ref{EP_noneq}, \e_ref{bDpexp_e}, and \e_ref{fDpexp_e},
\begin{equation}\label{cCvsnC_e}
\cC^{(\br)}_{\bp,s}(\cY)\big|_{\al=0}=\nC^{(\br)}_{\bp,s}\left(\cY\Big|_{\al=0}\right),
\end{equation}
with $\nC^{(\br)}_{\bp,s}$ as in \ref{P_Dpexp}.

Define
$\ctC^{(r)}_{\bp,\bs}\!\equiv\!\ctC^{(r)}_{\bp,\bs}(\cY)\!\in\!\Q[\al][[\La]]$ 
for $\bp,\bs\!\in\!(\Z^{\ge 0})^k$ and $r\!\in\!\Z^{\ge 0}$ with $|\bs|\!\le\!|\bp|\!-\!r$ and $r\!\le\!|\bp|$
by
\begin{equation}\label{eqtiC_e}
\sum_{t=0}^r\sum_{\begin{subarray}{c}
\bs\in(\Z^{\ge 0})^k\\|\bs|\le|\bp|-t\end{subarray}}
\ctC^{(t)}_{\bp,\bs}\cC_{\bs,|\br|+r-t}^{(\br)}=\de_{\bp,\br}\de_{r,0}\qquad
\forall\,\br\!\in\!(\Z^{\ge 0})^k,\,|\br|\!\le\!|\bp|\!-\!r.
\end{equation}
Equations \e_ref{eqtiC_e} indeed uniquely determine $\ctC^{(r)}_{\bp,\bs}$,
since 
\begin{equation}\label{eqtiC_e2}
\sum_{t=0}^r\sum_{\begin{subarray}{c}
\bs\in(\Z^{\ge 0})^k\\
|\bs|\le|\bp|-t\end{subarray}}
\ctC^{(t)}_{\bp,\bs}\cC^{(\br)}_{\bs,|\br|+r-t}=
\sum_{t=0}^{r-1}\sum_{\begin{subarray}{c}\bs\in(\Z^{\ge 0})^k\\|\bs|\le|\bp|-t\end{subarray}}
\ctC^{(t)}_{\bp,\bs}\cC_{\bs,|\br|+r-t}^{(\br)}
+
\sum_{\begin{subarray}{c}\bs\in(\Z^{\ge 0})^k\\
|\bs|<|\br|\end{subarray}}\ctC^{(r)}_{\bp,\bs}\cC_{\bs,|\br|}^{(\br)}
+\ctC^{(r)}_{\bp,\br}.
\end{equation}
This follows from
\begin{equation}\label{cCprop_e}
\cC^{(\br)}_{\bp,|\br|}=
\de_{\bp,\br}\quad\textnormal{if}\quad|\br|\!\le\!|\bp|,\end{equation}
which in turn follows from \e_ref{cCvsnC_e}, the second equation in \e_ref{degnC_e}, and the first property in \ref{EP_Dpexp}.
By \e_ref{eqtiC_e} and \e_ref{EP_Dpq;0_e},
\begin{equation}\label{eqtiCq;0_e}
\LR{\ctC^{(r)}_{\bp,\bs}}_{q;\b0}=\de_{\bp,\bs}\de_{r,0}.
\end{equation}
By \e_ref{cCvsnC_e}, \e_ref{neqtiC_e}, \e_ref{neqtiC_e2}, \e_ref{eqtiC_e}, \e_ref{eqtiC_e2}, and induction,
\begin{equation}\label{ctCvstinC_e}
\ctC^{(r)}_{\bp,\bs}(\cY)\big|_{\al=0}=\wt{\nC}^{(r)}_{\bp,\bs}\left(\cY\big|_{\al=0}\right).
\end{equation}
By \e_ref{eqtiCq;0_e} in the $\bd\!=\!\b0$ case and \e_ref{eqtiC_e}, \e_ref{eqtiC_e2}, \ref{EP_Dpexp}, and induction
in all other cases,
$\LR{\ctC^{(r)}_{\bp,\bs}(q)}_{q;\bd}$ is a degree $r\!-\!\nu_E(\bd)$ homogeneous polynomial in $\al$.
In particular, $\ctC^{(0)}_{\bp,\bs}(q)\!\in\!\Q[[\La]]$.
This together with \e_ref{ctCvstinC_e} and \e_ref{tiC0_e} implies that,
\begin{equation}\label{ctC0_e}
\ctC^{(0)}_{\bp,\bs}(q)=\de_{\bp,\bs}\qquad\forall\bp,\bs\!\in\!(\Z^{\ge 0})^k\quad\textnormal{with}\quad|\bs|\!\le\!|\bp|.
\end{equation}
\begin{rmk}\label{symC_rmk}
If columns $i$ and $j$ of $M$ are equal, then $u(\bd;\nA,\h)$ of \e_ref{ewtU_e1} is invariant under the permutation
$$\al_{i}\lra\al_j,\quad\al_j\lra\al_i,\quad\al_k\lra\al_k\quad\forall\,k\neq i,j,$$
for all $\bd\!\in\!\La$. In this case, 
the structure coefficients $\ctC^{(r)}_{\bp,\bs}$ are also invariant under this permutation by \e_ref{eqtiC_e2}.

In particular, if $X^{\tau}_M\!=\!\P^{N_1-1}\!\times\!\ldots\!\times\P^{N_s-1}$ as in \e_ref{prodproj_e} and the $\T^N$-weights are
$\al^{(i)}_j$ with $1\!\le\!i\!\le s$ and $1\!\le\!j\!\le\!N_i$ as in \e_ref{projcoh_e}, then the structure coefficients
$\ctC^{(r)}_{\bp,\bs}$ are symmetric in $\al^{(i)}_1,\ldots,\al^{(i)}_{N_i}$ for each $i\!=\!1,\ldots,s$.
In the case of $\P^{N-1}$, this remark was used in the computation of the genus~1 GW invariants of Calabi-Yau complete intersections in
\cite{bcov1} and then in \cite{bcov1_ci}.

\end{rmk}
\begin{rmk}\label{fD0Y_rmk}
By \e_ref{fD0dfn_e}, $\cYp\big|_{\al=0}\!=\!\Yp$,
$\cYpp\big|_{\al=0}\!=\!\Ypp$, and Remark~\ref{J0Y'_rmk},
$$\fD^{\b0}\cYp(\nA,\h,q)=\frac{1}{\Ip_{\b0}(q)}\cYp(\nA,\h,q),\qquad
\fD^{\b0}\cYpp(\nA,\h,q)=\frac{1}{\Ipp_{\b0}(q)}\cYpp(\nA,\h,q).$$
\end{rmk}
\begin{rmk}\label{simplefDp_rmk}
If $\nu_E(\bd)\!>\!0$ for all $\bd\!\in\!\La\!-\!\{0\}$ and $\cY(\nA,\h,q)\!\in\!\Q_{\al}(\nA,\h)[[\La]]$
satisfies \e_ref{assumecS_e}, then
$$\fD^{\bp}\cY(\nA,\h,q)=\left\{\nA\!+\!\h\,q\frac{\nd}{\nd q}\right\}^{\bp}\cY(\nA,\h,q)\qquad\forall\,\bp\!=\!(p_1,\ldots,p_k)\!\in\!(\Z^{\ge 0})^k.$$
This follows by induction on $|\bp|$ from \e_ref{fDdfn_e} since
$\nc_{\bp;\bp'}(\cY\big|_{\al=0})\!=\!\de_{\bp,\bp'}$
with $\nc_{\bp;\bp'}$ defined by \e_ref{Jpinv_e}.
The latter follows since $J_p(\cY\big|_{\al=0})\!=\!\Id$ by Remark~\ref{simpleDp_rmk}.
\end{rmk}
\begin{rmk}\label{partfDp_rmk}
Suppose $p^*\!\in\!\Z^{\ge 0}$, $\cY(\nA,\h,q)\!\in\!\Q_{\al}(\nA,\h)[[\La]]$ satisfies \e_ref{assumecS_e}, and
$$\deg_{\h}f_{\bd}(\nA,\h)-\deg_{\h}g_{\bd}(\nA,\h)\!<\!-p^*\qquad\forall\,\bd\!\in\!\La\!-\!0.$$
By the same reasoning as in Remark~\ref{simplefDp_rmk}, but using Remark~\ref{partDp_rmk} instead of \ref{simpleDp_rmk},
\begin{equation}\label{partfDp_e}
\fD^{\bp}\cY(\nA,\h,q)\!=\!\left\{\nA\!+\!\h\,q\frac {\nd}{\nd q}\right\}^{\bp}\cY(\nA,\h,q)\qquad\textnormal{if}\quad|\bp|\!\le\!p^*.
\end{equation}
By \e_ref{eqtiC_e}, \e_ref{eqtiC_e2}, and \e_ref{ctC0_e},
\begin{equation}\label{eqtiC_e3}
\cC^{(\br)}_{\bp,|\br|+r}+\!\!
\sum_{t=1}^{r-1}\sum_{\begin{subarray}{c}\bs\in(\Z^{\ge 0})^k\\|\bs|\le|\bp|-t\end{subarray}}
\ctC^{(t)}_{\bp,\bs}\cC_{\bs,|\br|+r-t}^{(\br)}\!\!
+\!\!
\sum_{\begin{subarray}{c}\bs\in(\Z^{\ge 0})^k\\|\bs|<|\br|\end{subarray}}
\ctC^{(r)}_{\bp,\bs}\cC^{(\br)}_{\bs,|\br|}
+\ctC^{(r)}_{\bp,\br}=0\end{equation}
if $r\!\ge\!1$ and $|\br|\!\le\!|\bp|\!-\!r$.
By \e_ref{partfDp_e} and \e_ref{EP_Dpq;0_e}, $$\fD^{\bp}\cY(\nA,\h,q)\cong\nA^{\bp}\quad\mod\,\h^{-1}\qquad\textnormal{if}\quad|\bp|\!\le\!p^*.$$ 
This together with \e_ref{fDpexp_e} implies that whenever $|\bp|\!\le\!p^*$,
\begin{equation}\label{partfDp_e2}
\cC^{(\br)}_{\bp,|\br|+r}=0\qquad\textnormal{if}\quad\,r\!\ge\!1\quad\textnormal{and}\quad
|\br|\!\le\!|\bp|\!-\!r.
\end{equation}
Finally, using \e_ref{eqtiC_e3}, \e_ref{partfDp_e2}, and induction, we find that
$$\ctC^{(r)}_{\bp,\bs}\!=\!0\qquad\textnormal{if}\quad{r\!\ge\!1},\quad|\bp|\!\le\!p^*,\quad|\bs|\!\le\!|\bp|\!-\!r.$$
\end{rmk}
\begin{rmk}\label{projfDp_rmk}
Let $(M,\tau)$ be the toric pair
of Example~\ref{proj_eg} with $N\!=\!n$ so that $\X\!=\!\P^{n-1}$ and $E\!\lra\!\P^{n-1}$
be as in Remark~\ref{projDp_rmk}.
By \e_ref{cY_e},
\begin{equation*}\begin{split}
\cYp(\nA,\h,q)&=\sum_{d=0}^{\i}q^d
\frac{\prod\limits_{i=1}^a\prod\limits_{s=1}^{\ell^+_id}\left(\ell^+_i\nA\!+\!s\h\right)
\prod\limits_{i=1}^b\prod\limits_{s=0}^{-\ell_i^-d-1}\left(\ell_i^-\nA\!-\!s\h\right)}{\prod\limits_{j=1}^n\prod\limits_{s=1}^d\left(\nA\!-\!\al_j\!+\!s\h\right)},\\
\cYpp(\nA,\h,q)&=\sum_{d=0}^{\i}q^d
\frac{\prod\limits_{i=1}^a\prod\limits_{s=0}^{\ell^+_id-1}\left(\ell^+_i\nA\!+\!s\h\right)
\prod\limits_{i=1}^b\prod\limits_{s=1}^{-\ell_i^-d}\left(\ell_i^-\nA\!-\!s\h\right)}{\prod\limits_{j=1}^n
\prod\limits_{s=1}^d\left(\nA\!-\!\al_j\!+\!s\h\right)}.
\end{split}\end{equation*}
By Remark~\ref{partfDp_rmk},
\begin{align*}
\fD^{p}\cYp(\nA,\h,q)&=\left\{\nA\!+\!\h\,q\frac{\nd}{\nd q}\right\}^p\cYp(\nA,\h,q)
&\quad\textnormal{and}&\quad&
\ctC^{(r)}_{p,s}(\cYp)&=0\qquad\forall\,p\!<\!b,\,\,1\!\le\!r\!\le\!p,\\
\fD^{p}\cYpp(\nA,\h,q)&=\left\{\nA\!+\!\h\,q\frac{\nd}{\nd q}\right\}^p\cYpp(\nA,\h,q)
&\quad\textnormal{and}&\quad&
\ctC^{(r)}_{p,s}(\cYpp)&=0
\qquad\forall\,p\!<\!a,\,\,1\!\le\!r\!\le\!p.
\end{align*}
If $\sum\limits_{i=1}^a\ell^+_i\!-\!\sum\limits_{i=1}^b\ell^-_i\!<\!n$, then
$$\fD^{p}\cYp=\left\{\nA\!+\!\h\,q\frac{\nd}{\nd q}\right\}^p\cYp,\quad\fD^{p}\cYpp=
\left\{\nA\!+\!\h\,q\frac{\nd}{\nd q}\right\}^p\cYpp,$$
for all $p$ by Remark~\ref{simplefDp_rmk}.
If $\sum\limits_{i=1}^a\ell^+_i\!-\!\sum\limits_{i=1}^b\ell^-_i\!=\!n$, then 
\begin{align*}
\fD^b\cYp&=\frac{1}{I_0(q)}\left\{\nA\!+\!\h\,q\frac{\nd}{\nd q}\right\}^b\!\cYp,\quad&
\fD^{p}\cYp&=\frac{1}{I_{p-b}(q)}\left\{\nA\!+\!\h\,q\frac{\nd}{\nd q}\right\}\fD^{p-1}\cYp\qquad
\forall\,p\!>\!b,\\
\fD^a\cYpp&=\frac{1}{I_0(q)}\left\{\nA\!+\!\,h\,q\frac{\nd}{\nd q}\right\}^a\cYpp,\quad&
\fD^{p}\cYpp&=\frac{1}{I_{p-a}(q)}\left\{\nA\!+\!\h\,q\frac{\nd}{\nd q}\right\}\fD^{p-1}\cYpp\qquad
\forall\,p\!>\!a,
\end{align*}
by \e_ref{fDdfn_e} and Remark~\ref{projDp_rmk}.
\end{rmk}

\subsection{Equivariant statements}
\label{estat_sec}
\begin{thm}\label{eZ2pt_thm}
Suppose $(M,\tau)$ is a minimal toric pair and
$\pr_i\!:\!\X\!\times\!\X\!\lra\!\X$ is the projection onto the $i$-th component. If
$\eta_j,\wceta_j\!\in\!H^*_{\T^N}(\X)$ are such that
$$\sum_{j=1}^s
\pr_1^*\eta_j\,\pr_2^*\wceta_j\in
H^{2(N-k)}_{\T^N}\left(\X\!\times\!\X\right)$$ is the equivariant Poincar\'{e} dual of the diagonal,
then
\begin{equation}\label{eZ2pt_e}
\cZp(\h_1,\h_2,Q)=\frac{1}{\h_1\!+\!\h_2}
\sum_{j=1}^s\pr_1^*\cZp_{\eta_j}(\h_1,Q)\,\pr_2^*\cZpp_{\wceta_j}(\h_2,Q).
\end{equation}
\end{thm}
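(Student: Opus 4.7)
The plan is to verify \eqref{eZ2pt_e} via $\T^N$-equivariant localization on both sides. By Proposition~\ref{equivcoh_prp}\ref{equivcohr_prp} applied to $\X\!\times\!\X$, an equivariant class on $\X\!\times\!\X$ is determined by its restrictions to the fixed-point pairs $([I],[J])$ with $I,J\!\in\!\V$, so it suffices to verify \eqref{eZ2pt_e} after restriction at each such pair. The $\bd\!=\!\b0$ coefficient is immediate: $\ov\M_{0,3}(\X,\b0)\!\cong\!\X$ with $\psi_i\!=\!0$ and $\cVp_E\!=\!0$, so both sides give $\frac{1}{\h_1+\h_2}\Delta_*(1)$ at $Q^{\b0}$, using that $\sum_j\pr_1^*\eta_j\pr_2^*\wceta_j$ is the equivariant Poincar\'{e} dual of the diagonal.

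For $\bd\!\ne\!\b0$ I would apply the classical localization theorem~\eqref{ABo_e}. The $\T^N$-fixed loci of $\ov\M_{0,m}(\X,\bd)$ are indexed by decorated trees $\Ga$ in the Kontsevich style: vertices carry labels in $\V$ (Corollary~\ref{fixed_crl}\ref{fpt_crl}), edges carry $\T^N$-invariant curves (Corollary~\ref{fixed_crl}\ref{fc_crl}), and marked points are distributed among vertices. On the LHS, the restriction at $([I],[J])$ retains only trees $\Ga$ whose vertices bearing marked points $1,2$ are labeled $[I],[J]$, with the third marked point free to sit at any vertex $v_0$. Cutting $\Ga$ at $v_0$ yields an ordered pair $(\Ga_1,\Ga_2)$ of fixed-locus trees for $\ov\M_{0,2}(\X,\bd_1),\ov\M_{0,2}(\X,\bd_2)$ with $\bd_1\!+\!\bd_2\!=\!\bd$. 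On the RHS, separate localization of $\cZp_{\eta_j}(\h_1,Q)|_I$ and $\cZpp_{\wceta_j}(\h_2,Q)|_J$ produces sums over such half-trees; the insertions $\ev_2^*\eta_j,\ev_2^*\wceta_j$ localize to $\eta_j([K]),\wceta_j([K'])$ at the free marked points (sitting at vertices $[K],[K']$). Summing over $j$ invokes the orthogonality
\begin{equation*}
\sum_j\eta_j([K])\,\wceta_j([K'])=\de_{K,K'}\,\E(T_{[K]}\X),
\end{equation*}
which follows from the diagonal-decomposition hypothesis combined with \eqref{tan_e}; this forces $[K]\!=\![K']$ and reinstates the normal-bundle contribution of the gluing node of the reassembled tree $\Ga$. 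The explicit factor $\frac{1}{\h_1+\h_2}$ on the RHS then emerges from the combined $\psi$-weights at the splitting node, at which both cotangent directions vanish, producing via the $\frac{1}{(\h_1-\psi_1)(\h_2-\psi_2)}$ structure a residue proportional to $\frac{1}{\h_1+\h_2}$.

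The main obstacle I expect is the precise factorization of $\E(\cVp_E)$ across the cut at $v_0$, including the asymmetry between the twist $-\si_1$ in $\cVp_E$ and $-\si_2$ in $\cVpp_E$. This requires an exact-sequence analysis of $R^\bullet\pi_*\ev^*E^\pm(-\si_1)$ on the nodal degeneration of the universal curve corresponding to $\Ga$: the $E^+$ summand (using $R^0\pi_*$) splits via a short exact sequence with a boundary term supported at the node, while the $E^-$ summand (using $R^1\pi_*$) splits with a corresponding gluing term. These must interact with the cut at $v_0$ so that the ``left'' half reproduces $\cVp_E$ on $\ov\M_{0,2}(\X,\bd_1)$ (still twisted by $-\si_1$) and the ``right'' half reproduces $\cVpp_E$ on $\ov\M_{0,2}(\X,\bd_2)$ (now twisted by $-\si_2$, reflecting the relabeling of the glued marked point as the second), with the boundary/gluing terms absorbed into the $\E(T_{[K]}\X)$ factor at the node. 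Establishing this factorization cleanly, and confirming that the resulting tree-by-tree combinatorial contributions sum to the formula \eqref{eZ2pt_e}, constitutes the bulk of the technical work to be carried out in Section~\ref{pfs_sec}.
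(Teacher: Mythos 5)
Your reduction to restrictions at the fixed-point pairs $([I],[J])$ and your $\bd\!=\!\b0$ check are fine, and the factorization of $\E(\cVp_E)$ that you flag at the end is indeed true and provable (it follows from the normalization exact sequence at the node, with the twist $-\si_1$ staying on the side containing the first marked point and a twist at the node appearing on the other side, exactly as in \e_ref{gluing_e}). But the central mechanism of your argument has a genuine gap. First, cutting each fixed-locus tree at the vertex carrying the \emph{third} marked point produces exactly one decomposition per tree, whereas expanding the right-hand side of \e_ref{eZ2pt_e} by localization produces, for each underlying two-pointed tree, one term for \emph{every} vertex on the path joining the points mapped to $[I]$ and $[J]$ (the gluing vertex is where the two ``second'' marked points meet, and your orthogonality relation $\sum_j\eta_j([K])\wceta_j([K'])\!=\!\de_{K,K'}\E(T_{[K]}\X)$ only forces it onto that path, not to a specific vertex). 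Matching the two sides therefore requires, for each tree, a telescoping partial-fraction identity summed over all splittings, and this is precisely where the factor $\frac{1}{\h_1+\h_2}$ must come from: algebraically it arises from identities of the type $\frac{1}{(\h_1-x)(\h_2+x)}=\frac{1}{\h_1+\h_2}\bigl(\frac{1}{\h_1-x}+\frac{1}{\h_2+x}\bigr)$, not ``locally at the splitting node'' as you assert --- the variables $\h_1,\h_2$ couple only to $\psi_1,\psi_2$ at the two end marked points, while the smoothing weights at interior nodes involve $u_j(I)/\fd(e)$ and never $\h_i$. Second, the position of the third marked point changes the normal-bundle contribution of the vertex it sits at (e.g.\ a bivalent vertex contributes $\om_{F_1}+\om_{F_2}$ without the marked point but $\om_{F_1}\om_{F_2}$, via vanishing $\psi$'s on $\ov\M_{0,3}$, with it), so the sum over positions of point $3$ does not simply reproduce the sum over splittings; your sketch does not address this bookkeeping. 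A smaller point: $\ov\M_{0,m}(\X,\bd)$ is in general singular for toric $\X$, so you need the virtual localization theorem \e_ref{virloc_e}, not \e_ref{ABo_e}.

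For contrast, the paper proves the theorem by an entirely different route that sidesteps this combinatorics: after restricting to $[I]\!\times\![J]$, it shows that (the coefficient of each power of $\h_2^{-1}$ in) $(\h_1\!+\!\h_2)\cZp$ and the right-hand side are both $\fCp$-recursive and satisfy the mutual polynomiality condition with respect to $\cZpp_1$ (Lemmas~\ref{ZCrec_lmm}, \ref{ZMPC_lmm}, \ref{admis_lmm}), so by the uniqueness statement (Proposition~\ref{unique_prp}) they agree once they agree mod $\h_1^{-1}$; the mod-$\h_1^{-1}$ parts are then compared by running the same recursivity/MPC/uniqueness argument in $\h_2$, and the final mod-$\h_2^{-1}$ check is exactly the string equation together with the hypothesis that $\sum_j\pr_1^*\eta_j\pr_2^*\wceta_j$ is the equivariant Poincar\'e dual of the diagonal. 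If you want to salvage a direct localization proof, you would need to organize the right-hand side as a sum over pairs (two-pointed tree, splitting vertex on the path), prove the graph-by-graph telescoping identity producing $\frac{1}{\h_1+\h_2}$, and track the node factors as the splitting vertex varies; as it stands, these key steps are missing.
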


\begin{crl}\label{eproj_crl}
Let $(M,\tau)$ be the minimal toric pair \e_ref{prodproj_e} so that 
$\X\!\!=\!\!\prod\limits_{i=1}^s\P^{N_i-1}, 
N\!\!=\!\!\!\sum\limits_{i=1}^sN_i$, and $H^*_{\T^N}\!\!\left(\prod\limits_{i=1}^s\P^{N_i-1}\!\!\right)$
is given by \e_ref{projcoh_e}.
Let $\pr_j\!:\!\X\!\times\!\X\!\lra\!\X$ denote the projection onto the j-th component.
For all $i\!\in\![s]$ and $r\!\in\!\Z^{\ge0}$, denote by $\si^{(i)}_r$
the $r$-th elementary symmetric polynomial in $\al^{(i)}_1\!,\ldots,\al^{(i)}_{N_i}$.
Then, 
\begin{equation*}
\cZp(\h_1,\h_2,Q)\!=\!\frac{1}{\h_1\!+\!\h_2}\!\!
\sum\limits_{\begin{subarray}{c}r_i+a_i+b_i=N_i-1\,\forall\,i\in[s]\\
r_i,a_i,b_i\ge0\,\forall\,i\in[s]\end{subarray}}\!\!
(-1)^{\sum\limits_{i=1}^sr_i}\si_{r_1}^{(1)}\ldots\si_{r_s}^{(s)}
\pr_1^*\cZp_{(a_1,\ldots,a_s)}(\h_1,Q)\,\pr_2^*\cZpp_{(b_1,\ldots,b_s)}(\h_2,Q).
\end{equation*}
\end{crl}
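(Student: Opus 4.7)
The plan is to apply Theorem~\ref{eZ2pt_thm}, so the task reduces to writing the equivariant Poincar\'{e} dual of the diagonal $[\Delta]\!\in\!H^{2(N-k)}_{\T^N}(\X\!\times\!\X)$ in a form matching the desired right-hand side. The main ingredient will be the following identity for a single factor: in $H^{2(N_i-1)}_{\T^{N_i}}(\P^{N_i-1}\!\times\!\P^{N_i-1})$, with $p_1,p_2$ denoting the two projections,
$$\Delta_i\equiv\frac{\prod_{j=1}^{N_i}(p_1^*x_i-\al^{(i)}_j)-\prod_{j=1}^{N_i}(p_2^*x_i-\al^{(i)}_j)}{p_1^*x_i-p_2^*x_i}=\!\!\!\!\sum_{\begin{subarray}{c}r+a+b=N_i-1\\r,a,b\ge 0\end{subarray}}\!\!\!\!(-1)^r\si^{(i)}_r\,(p_1^*x_i)^a(p_2^*x_i)^b$$
is the equivariant Poincar\'{e} dual of the diagonal of $\P^{N_i-1}$. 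The second equality will follow by expanding $\prod_j(x-\al^{(i)}_j)=\sum_m(-1)^{N_i-m}\si^{(i)}_{N_i-m}x^m$ and using the polynomial factorization $(x_1^m-x_2^m)/(x_1-x_2)=\sum_{a+b=m-1}x_1^ax_2^b$.

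To verify the first equality in the display, I will appeal to the second statement in Proposition~\ref{equivcoh_prp}\ref{equivcohr_prp}: it suffices to show that $\Delta_i$ and the diagonal class have equal restrictions at every $\T^{N_i}$-fixed point $([j_1],[j_2])$ of $\P^{N_i-1}\!\times\!\P^{N_i-1}$, where $p_1^*x_i$ and $p_2^*x_i$ restrict to $\al^{(i)}_{j_1}$ and $\al^{(i)}_{j_2}$ respectively. For $j_1\!\neq\! j_2$ both terms in the numerator of the quotient vanish while the denominator does not, so $\Delta_i|_{([j_1],[j_2])}\!=\!0$; this matches the restriction of the diagonal class at a point off the diagonal, which is $0$ by the equivariant self-intersection formula applied via localization \e_ref{ABo_e}. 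For $j_1\!=\!j_2\!=\!j$, L'H\^{o}pital yields $\prod_{k\neq j}(\al^{(i)}_j-\al^{(i)}_k)=\E(T_{[j]}\P^{N_i-1})$ by \e_ref{tan_e}, which is precisely the restriction of the diagonal class at $([j],[j])$.

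Passing from one factor to the product, I will use that the diagonal of $\X\!\times\!\X\cong\prod_{i=1}^s(\P^{N_i-1}\!\times\!\P^{N_i-1})$ (after reordering factors) is the product of the diagonals of the individual factors, so by the K\"{u}nneth isomorphism
$$[\Delta_{\X}]=\!\!\sum_{\begin{subarray}{c}r_i+a_i+b_i=N_i-1\\ r_i,a_i,b_i\ge 0\,\forall\,i\in[s]\end{subarray}}\!\!\!\!(-1)^{\sum_i r_i}\si^{(1)}_{r_1}\!\cdots\!\si^{(s)}_{r_s}\,\pr_1^*(x_1^{a_1}\!\cdots\!x_s^{a_s})\,\pr_2^*(x_1^{b_1}\!\cdots\!x_s^{b_s}),$$
where $\pr_1,\pr_2\!:\!\X\!\times\!\X\!\to\!\X$ are the projections of the whole product. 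Substituting this into \e_ref{eZ2pt_e} of Theorem~\ref{eZ2pt_thm} and using the $\Q[\al]$-linearity of $\cZp_\eta$ in $\eta$ (immediate from \e_ref{eZeta1ptdfn_e} together with the projection formula) to pull the scalar factors $(-1)^{\sum_i r_i}\si^{(1)}_{r_1}\!\cdots\!\si^{(s)}_{r_s}\!\in\!\Q[\al]$ outside then yields the claimed identity. The only delicate bookkeeping is the K\"{u}nneth ordering of factors, which causes no sign trouble since all classes involved sit in even degrees.
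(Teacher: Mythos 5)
Your proposal is correct and follows essentially the same route as the paper: the paper's proof consists precisely of invoking Theorem~\ref{eZ2pt_thm} together with the stated formula for the equivariant Poincar\'{e} dual of the diagonal in $\prod_{i=1}^s\P^{N_i-1}$. Your additional verification of that diagonal formula via restriction to the $\T^N$-fixed points (using Proposition~\ref{equivcoh_prp} and \e_ref{tan_e}) and the K\"{u}nneth factorization is sound and simply fills in what the paper leaves implicit.
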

This follows from Theorem~\ref{eZ2pt_thm} as the equivariant Poincar\'{e} dual to the diagonal in 
$\prod\limits_{i=1}^s\P^{N_i-1}$ is
$$\sum\limits_{\begin{subarray}{c}r_i+a_i+b_i=N_i-1\,\forall\,i\in[s]\\
r_i,a_i,b_i\ge0\,\forall\,i\in[s]\end{subarray}}\!\!
(-1)^{\sum\limits_{i=1}^sr_i}\si_{r_1}^{(1)}\ldots\si_{r_s}^{(s)}\pr_1^*(x_1^{a_1}\ldots x_s^{a_s})\,\pr_2^*(x_1^{b_1}\ldots
x_s^{b_s}).$$
\begin{thm}\label{cY_thm}
Let $(M,\tau)$ be a minimal toric pair.
If $\nu_E(\bd)\!\ge\!0$ for all $\bd\!\in\!\La$, then $\cZp_{\bp}$ and $\cZpp_{\bp}$
of \e_ref{eZp1pt_e} and \e_ref{eZeta1ptdfn_e} are given by
\begin{equation}\begin{split}\label{ZpYp_e}
\cZp_{\bp}(\h,Q)&=\ne^{-\frac{1}{\h}\left[G(q)+\sum\limits_{i=1}^kx_if_i(q)+\sum\limits_{j=1}^N\al_jg_j(q)\right]}
\cYp_{\bp}(x,\h,q)\!\in\!H^*_{\T^N}(\X)\Lau{\h}[[\La]],\\
\cZpp_{\bp}(\h,Q)&=\ne^{-\frac{1}{\h}\left[G(q)+\sum\limits_{i=1}^kx_if_i(q)+\sum\limits_{j=1}^N\al_jg_j(q)\right]}
\cYpp_{\bp}(x,\h,q)\!\in\!H^*_{\T^N}(\X)\Lau{\h}[[\La]],
\end{split}\end{equation}
where 
\begin{equation}\begin{split}\label{cYp_e}
\cYp_{\bp}(x,\h,q)&\equiv\fD^{\bp}\cYp(x,\h,q)+\sum_{r=1}^{|\bp|}\sum_{|\bs|=0}^{|\bp|-r}\ticCp^{(r)}_{\bp,\bs}(q)\h^{|\bp|-r-|\bs|}
\fD^{\bs}\cYp(x,\h,q),\\
\cYpp_{\bp}(x,\h,q)&\equiv\fD^{\bp}\cYpp(x,\h,q)+\sum_{r=1}^{|\bp|}\sum_{|\bs|=0}^{|\bp|-r}\ticCpp^{(r)}_{\bp,\bs}(q)\h^{|\bp|-r-|\bs|}
\fD^{\bs}\cYpp(x,\h,q),
\end{split}\end{equation}
with $\ticCp^{(r)}_{\bp,\bs}\equiv\ticC^{(r)}_{\bp,\bs}(\cYp)$ and
$\ticCpp^{(r)}_{\bp,\bs}\equiv\ctC^{(r)}_{\bp,\bs}(\cYpp)$ defined by \e_ref{eqtiC_e},
$Q$ and $q$ related by the mirror map \e_ref{mirrmap_e}, $G$, $f_i$ and $g_j\!\in\!\Q[[\La\!-\!0;\nu_E\!=\!0]]$~\footnotemark given by \e_ref{G_e}, \e_ref{fi_e}, 
and \e_ref{gj_e}, and the operator $\fD^{\bp}$ defined by \e_ref{fDdfn_e}.
The coefficient of $q^{\bd}$ within each of $\ticCp^{(r)}_{\bp,\bs}$ and $\ticCpp^{(r)}_{\bp,\bs}$ is
a degree $r\!-\!\nu_E(\bd)$ homogeneous polynomial in $\al_1,\ldots,\al_N$.

If $|\bp|\!<\!b$, $\fD^{\bp}\cYp=\{\nA\!+\!\h\,q\frac{\nd}{\nd q}\}^{\bp}\cYp$ and $\ticCp^{(r)}_{\bp,\bs}\!=\!0$ for all $r\!\in\![|\bp|]$.
If $|\bp|\!<\!a$ and $L_i^+(\bd)\!\ge\!1$ for all $i\!\in\![a]$ and $\bd\!\in\!\La\!-\!\{0\}$,
then
$\fD^{\bp}\cYpp=\{\nA\!+\!\h\,q\frac{\nd}{\nd q}\}^{\bp}\cYpp$ and
$\ticCpp^{(r)}_{\bp,\bs}\!=\!0$ for all $r\!\in\![|\bp|]$.
\end{thm}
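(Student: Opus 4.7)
The plan is to deduce Theorem~\ref{cY_thm} from the one-point equivariant mirror formula~\e_ref{Z'Y'_e} of~\cite{LLY3} by induction on~$|\bp|$. The base case $\bp\!=\!\b0$ is immediate from~\e_ref{Z'Y'_e} applied to $\cZp_{\b0}\!=\!\cZp_{1}$: by Remark~\ref{fD0Y_rmk} one has $\fD^{\b0}\cYp\!=\!\cYp/\Ip_{\b0}(q)$, and matching the exponential factor $\ne^{-[G+\sum_i x_if_i+\sum_j\al_jg_j]/\h}$ against the $O(\h^{-1})$ part of $\cYp/\Ip_{\b0}(q)$ under the mirror change of variables~\e_ref{mirrmap_e} reproduces the definitions~\e_ref{G_e}, \e_ref{fi_e}, and~\e_ref{gj_e} of $G$, $f_i$, and $g_j$. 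The identity~\e_ref{ctC0_e} confirms that no correction terms $\ticCp^{(0)}_{\b0,\bs}$ with $\bs\!\neq\!\b0$ appear in the $\bp\!=\!\b0$ formula.

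For the inductive step, I would characterize $\cZp_{\bp}$ as the unique element of $H^*_{\T^N}(\X)\Lau{\h}[[\La]]$ whose restriction to each $\T^N$-fixed point $[J]\!\in\!\X$ satisfies a pole/polynomiality system: its only poles in $\h^{-1}$ occur at $\h\!=\!(u_j(J)\!-\!\al_j)/d$ for $j\!\notin\!J$, $d\!>\!0$, and the residues there are prescribed by an edge-recursion kernel in terms of the same generating functions with smaller $\bd$. This characterization comes from equivariant localization~\e_ref{ABo_e} applied to $\ov\M_{0,2}(\X,\bd)$ together with the graph-recursion classification of twisted fixed loci. Assuming~\e_ref{ZpYp_e} for all $\bp'$ with $|\bp'|\!\le\!|\bp|-1$, I would apply the operator $\{x_i+\h q_i\frac{\nd}{\nd q_i}\}$ of the first equation in~\e_ref{fDdfn_e}, for $i\!\in\!\supp(\bp)$, to both sides: geometrically this shifts a $\psi_2$-insertion to a $\psi_1$-insertion via the divisor/string equation, while formally it takes $\fD^{\bp-e_i}\cYp$ to $\wt{\fD}^{\bp}\cYp$. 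The inversion~\e_ref{Jpinv_e} and the defining relations~\e_ref{eqtiC_e} of the structure coefficients are then designed precisely so that the resulting combination $\cYp_{\bp}$ stays in the subspace cut out by the pole/polynomiality characterization; its leading behavior at $\h^{-1}\!=\!0$ matches $\h^{|\bp|}x^{\bp}$ by~\e_ref{EP_Dpq;0_e}. The homogeneity $\LR{\ticCp^{(r)}_{\bp,\bs}}_{q;\bd}\!\in\!\Q[\al]_{r-\nu_E(\bd)}$ propagates through~\e_ref{eqtiC_e} from that of $\cC^{(\br)}_{\bp,s}$ established in~\ref{EP_Dpexp}.

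The principal difficulty will be verifying the pole/polynomiality characterization of the right-hand side of~\e_ref{ZpYp_e}, in particular showing that the $\h$-poles arising from $u(\bd;x,\h)|_{[J]}$ match those forced by the edge-recursion kernel after one has replaced $\fD^{\bp-e_i}\cYp$ by $\fD^{\bp}\cYp$ and added the lower-order correction terms $\sum \ticCp^{(r)}_{\bp,\bs}\h^{|\bp|-r-|\bs|}\fD^{\bs}\cYp$. This is where Proposition~\ref{equivcoh_prp}\ref{equivcohr_prp}, the minimality of $(M,\tau)$ (which supplies the basis $\{\nH_i\}$ of $H^2(\X)$ underlying the mirror map), and the assumption $\nu_E(\bd)\!\ge\!0$ (guaranteeing~\e_ref{assumecS_e} for $\cYp$ and $\cYpp$) all enter. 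The exponential prefactor absorbs the shift from $Q$ to $q$ variables via~\e_ref{mirrmap_e}, while the polynomial-in-$\h$ pre-exponential part is controlled by localization on the two-pointed graph space.

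For the final simplifications, counting the total degree in~$\h$ of each $q^{\bd}$-coefficient of $\cYp$ using~\e_ref{ewtU_e1} and~\e_ref{wtU_e2} gives $\deg_{\h} f_{\bd}-\deg_{\h} g_{\bd}=-\nu_E(\bd)-b$ (the $-b$ coming from the $s\!=\!0$ factors in each of the $b$ negative-line-bundle products of $\Ep$), which for $\bd\!\in\!\La\!-\!\{0\}$ is $\le-b<-|\bp|$ whenever $|\bp|\!<\!b$. Remark~\ref{partfDp_rmk} with $p^*\!=\!|\bp|$ then gives $\fD^{\bp}\cYp=\{\nA+\h q\frac{\nd}{\nd q}\}^{\bp}\cYp$ and $\ticCp^{(r)}_{\bp,\bs}=0$ for all $r\!\in\![|\bp|]$. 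An analogous count for $\cYpp$ under the extra hypothesis $L_i^+(\bd)\!\ge\!1$ for all $i\!\in\![a]$ and $\bd\!\in\!\La\!-\!\{0\}$ yields $\deg_{\h} f_{\bd}-\deg_{\h} g_{\bd}=-\nu_E(\bd)-a<-|\bp|$ when $|\bp|\!<\!a$, providing the corresponding simplifications for $\cYpp_{\bp}$.
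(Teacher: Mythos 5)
Your overall strategy --- anchoring at $\bp\!=\!\b0$ with the LLY3 formula \e_ref{Z'Y'_e}, then pinning down both sides of \e_ref{ZpYp_e} by a pole-structure/polynomiality/uniqueness argument and matching them mod $\h^{-1}$ --- is indeed the paper's strategy, and your closing $\h$-degree count correctly yields the $|\bp|\!<\!b$ (resp.\ $|\bp|\!<\!a$) simplifications via Remark~\ref{partfDp_rmk}. But the core of the argument is missing. The ``polynomiality'' half of your characterization is never formulated, let alone verified, and it is the hard part: for the geometric series it is the mutual polynomiality of $(\cZpp_1,\h^{m-2}\cZp_{\eta,\be})$ (Lemma~\ref{ZMPC_lmm}), proved by localization on the graph space $\fX_{\bd}(\X)$ after constructing the classes $\Om_i$ of Lemma~\ref{Om_lmm} (which in the general toric case requires a very ample embedding); for the explicit series it is the MPC of $(\cYp,\cYpp)$ (Lemma~\ref{YMPC_lmm}), proved by realizing $\Phi_{\cYp,\cYpp}$ as an equivariant integral over the auxiliary toric variety $X_{\bd}\!=\!X^{\tau}_{M_JA}$ --- the main new construction of the paper, for which nothing in your proposal substitutes. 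Relatedly, you attribute membership of $\cYp_{\bp}$ in the ``pole/polynomiality subspace'' to the choice of the coefficients $\ticCp^{(r)}_{\bp,\bs}$; in fact \e_ref{eqtiC_e} only normalizes the mod-$\h^{-1}$ part to $x^{\bp}$, while the recursivity and polynomiality of $\cYp_{\bp}$ come from Lemmas~\ref{YCrec_lmm} and~\ref{YMPC_lmm} (the former a residue-theorem computation you also omit, \e_ref{Yrec_e1}--\e_ref{fCjj'_e}) transported through the admissible transforms of Lemma~\ref{admis_lmm}.

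Second, your inductive mechanism on the geometric side does not work as stated: applying $\{x_i\!+\!\h\,q_i\frac{\nd}{\nd q_i}\}$ to $\cZp_{\bp-e_i}$ does not produce $\cZp_{\bp}$ --- the divisor equation for two-point descendant series does not close up this way, and the operators $J_p$ and the correction coefficients $\ticCp^{(r)}_{\bp,\bs}$ in the statement exist precisely because it fails. The paper never inducts on $|\bp|$ on the GW side: each $\cZp_{x^{\bp}}$ is shown directly, by localization (Lemma~\ref{ZCrec_lmm} with arbitrary insertions), to be $\fCp$-recursive and to satisfy the MPC with respect to $\cZpp_1$, and then Proposition~\ref{unique_prp} is applied to the difference of the two sides of \e_ref{ZpYp_e}. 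Note also that this uniqueness step requires both sides to satisfy the MPC with respect to a \emph{common} reference series with invertible $Q^{\b0}$-term at each fixed point; matching the reference series of the explicit side (the mirror transform of $\cYpp$, via Lemma~\ref{admis_lmm}\ref{multexp_lmm}\ref{comp_lmm}) with $\cZpp_1$ is itself where \e_ref{Z'Y'_e} re-enters, a cross-linking that your per-$|\bp|$ induction does not address.
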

\footnotetext{Furthermore, $g_j\!=\!0$ if $b\!>\!0$ or $D_j(\bd)\!\in\!\{-1,0\}$ for all $\bd\!\in\!\La$ with $\nu_E(\bd)\!=\!0$.}
\begin{crl}\label{enuE0_crl}
If $\bp\!\in\!(\Z^{\ge 0})^k$ and $\max(|\bp|,1)\!<\!\nu_E(\bd)\!$ for all $\bd\!\in\!\La\!-\!\{0\}$, then
\begin{equation*}\begin{split}
\cZp_{\bp}(\h,q)=\left\{x+\h\,q\frac{\nd}{\nd q}\right\}^{\bp}\cYp(x,\h,q),\quad
\cZpp_{\bp}(\h,q)=\left\{x+\h\,q\frac{\nd}{\nd q}\right\}^{\bp}\cYpp(x,\h,q).
\end{split}\end{equation*}
\end{crl}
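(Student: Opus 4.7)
The plan is to deduce Corollary~\ref{enuE0_crl} directly from Theorem~\ref{cY_thm} by verifying three simplifications under the hypothesis $\nu_E(\bd)>\max(|\bp|,1)$ for all $\bd\!\in\!\La\!-\!\{0\}$: (i) the mirror map is the identity and the exponential prefactor in \e_ref{ZpYp_e} equals~$1$, (ii) the operator $\fD^{\bp}$ specialises to the naive operator $\{x\!+\!\h\,q\,\nd/\nd q\}^{\bp}$, and (iii) all the structure coefficients $\ticCp^{(r)}_{\bp,\bs}$, $\ticCpp^{(r)}_{\bp,\bs}$ with $r\!\ge\!1$ vanish.

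For step (i), observe that the hypothesis gives $\nu_E(\bd)\!\ge\!2$ for every $\bd\!\in\!\La\!-\!\{0\}$. By the definitions \e_ref{fi_e}, \e_ref{G_e} and the statement in Theorem~\ref{cY_thm} that $g_j\!\in\!\Q[[\La\!-\!0;\nu_E\!=\!0]]$, the series $f_i$ and $g_j$ are supported on classes with $\nu_E(\bd)\!=\!0$ while $G$ is supported on classes with $\nu_E(\bd)\!=\!1$. Since no nonzero $\bd\!\in\!\La$ satisfies either condition, $f_i\!=\!G\!=\!g_j\!=\!0$, so the prefactor $\ne^{-\frac{1}{\h}[\,\cdot\,]}$ in \e_ref{ZpYp_e} is $1$ and the mirror map \e_ref{mirrmap_e} becomes $Q\!=\!q$.

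For step (ii), since $\nu_E(\bd)\!>\!0$ for all $\bd\!\in\!\La\!-\!\{0\}$, Remark~\ref{simplefDp_rmk} applies to both $\cYp$ and $\cYpp$ and yields
\begin{equation*}
\fD^{\bp}\cYp(x,\h,q)=\left\{x\!+\!\h\,q\frac{\nd}{\nd q}\right\}^{\bp}\!\!\cYp(x,\h,q),\qquad
\fD^{\bp}\cYpp(x,\h,q)=\left\{x\!+\!\h\,q\frac{\nd}{\nd q}\right\}^{\bp}\!\!\cYpp(x,\h,q).
\end{equation*}
For step (iii), I would invoke Remark~\ref{partfDp_rmk} with $p^*\!=\!|\bp|$. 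The required input is the bound $\deg_{\h}f_{\bd}\!-\!\deg_{\h}g_{\bd}\!<\!-|\bp|$ for all $\bd\!\in\!\La\!-\!\{0\}$ on the coefficients of $\cYp$ and $\cYpp$. A direct count from \e_ref{ewtU_e1} and \e_ref{wtU_e2} shows that this $\h$-degree of the coefficient of $q^{\bd}$ equals $-\sum_{j}D_j(\bd)+\sum_{i}L_i^+(\bd)-\sum_{i}L_i^-(\bd)=-\nu_E(\bd)$, which is strictly less than $-|\bp|$ by hypothesis. Remark~\ref{partfDp_rmk} then gives $\ticCp^{(r)}_{\bp,\bs}\!=\!\ticCpp^{(r)}_{\bp,\bs}\!=\!0$ for all $r\!\ge\!1$ and all $\bs$ with $|\bs|\!\le\!|\bp|\!-\!r$.

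Substituting (i)--(iii) into the formulas \e_ref{ZpYp_e} and \e_ref{cYp_e} of Theorem~\ref{cY_thm} collapses each sum to its $\bs\!=\!\bp$, $r\!=\!0$ term and produces the two displayed identities. The only non-routine point is the $\h$-degree bookkeeping in step (iii); beyond that, every ingredient is an immediate application of a result already established in the preceding subsections.
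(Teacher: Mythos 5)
Your proposal is correct and, in steps (i) and (ii), is exactly the unpacking of the paper's one-line proof, which deduces the corollary from Theorem~\ref{cY_thm} and Remark~\ref{simplefDp_rmk} (the hypothesis forces $\nu_E(\bd)\!\ge\!2$ for $\bd\!\neq\!0$, so $G\!=\!f_i\!=\!g_j\!=\!0$, the mirror map is trivial, and $\fD^{\bp}$ is the naive operator). Where you diverge is step (iii): the paper's intended mechanism for killing the correction terms is the statement already contained in Theorem~\ref{cY_thm} that $\LR{\ticCp^{(r)}_{\bp,\bs}}_{q;\bd}$ and $\LR{\ticCpp^{(r)}_{\bp,\bs}}_{q;\bd}$ are homogeneous of degree $r\!-\!\nu_E(\bd)$ in $\al$ — negative for $\bd\!\neq\!0$ when $r\!\le\!|\bp|\!<\!\nu_E(\bd)$ — combined with \e_ref{eqtiCq;0_e} at $\bd\!=\!\b0$; this is the equivariant analogue of citing \e_ref{neqtiCdeg_e} for Corollary~\ref{nuE0_crl}. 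Your alternative via Remark~\ref{partfDp_rmk} with $p^*\!=\!|\bp|$ also works, but your degree count is not right as stated: the $\h$-degree difference of the $q^{\bd}$-coefficient is not $-\nu_E(\bd)$ in general, because the $s\!=\!0$ factors in \e_ref{ewtU_e1} and \e_ref{wtU_e2} contribute to the total degree but not to the $\h$-degree (for $\cYp$ the difference is $-\nu_E(\bd)\!-\!\#\{j:D_j(\bd)\!<\!0\}\!-\!b$ when $\bd\!\neq\!0$, and similarly for $\cYpp$). Since this is in every case at most $-\nu_E(\bd)\!<\!-|\bp|$, the hypothesis of Remark~\ref{partfDp_rmk} is still satisfied and your argument goes through; the slip only affects the bookkeeping, not the conclusion.
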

This follows from Theorem~\ref{cY_thm} and Remark~\ref{simplefDp_rmk}.

\begin{crl}\label{e2pt_crl}
Let $g_{\bp\bs}\!\in\!\Q[\al]$ be homogeneous polynomials such that $\sum\limits_{|\bp|+|\bs|\le N-k}\!g_{\bp\bs}\pr_1^*x^{\bp}\pr_2^*x^{\bs}$
is the equivariant Poincar\'{e} dual to the diagonal in $\X$, where $N\!-\!k$ is the complex dimension of $\X$. 
If $N\!>\!k$ and $\nu_E(\bd)\!>\!N\!-\!k$ for all $\bd\!\in\!\La\!-\!\{0\}$, then the two-point function $\cZp$
of \e_ref{eZ2ptdfn_e} is given by
\begin{equation*}
\cZp(\h_1,\h_2,q)=\frac{1}{\h_1+\h_2}\sum_{|\bp|+|\bs|\le N-k}
g_{\bp\bs}
\pr_1^*\left\{x\!+\!\h_1\,q\frac{\nd}{\nd q}\right\}^{\bp}\cYp(x,\h_1,q)\pr_2^*
\left\{x\!+\!\h_2\,q\frac{\nd}{\nd q}\right\}^{\bs}\cYpp(x,\h_2,q).
\end{equation*}
\end{crl}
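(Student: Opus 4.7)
The plan is to reduce the statement to a direct combination of Theorem~\ref{eZ2pt_thm} (the equivariant splitting of the two-point function), Corollary~\ref{enuE0_crl} (the simplification of the one-point generating functions when $\nu_E$ is large), and a check that the mirror map is trivial under the stated hypothesis. First, I would apply Theorem~\ref{eZ2pt_thm} taking $\{\eta_j,\wceta_j\}$ to be the specific pairs coming from the assumed diagonal decomposition: index the sum in \e_ref{eZ2pt_e} by pairs $(\bp,\bs)$ with $|\bp|+|\bs|\le N-k$ and set $\eta_{(\bp,\bs)}\equiv g_{\bp\bs}x^{\bp}$ and $\wceta_{(\bp,\bs)}\equiv x^{\bs}$. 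This gives
\begin{equation*}
\cZp(\h_1,\h_2,Q)=\frac{1}{\h_1+\h_2}\sum_{|\bp|+|\bs|\le N-k}
\pr_1^*\cZp_{g_{\bp\bs}x^{\bp}}(\h_1,Q)\,\pr_2^*\cZpp_{x^{\bs}}(\h_2,Q).
\end{equation*}

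Next, since $g_{\bp\bs}\!\in\!\Q[\al]$ is constant on each moduli space $\ov\M_{0,m}(\X,\bd)$, the definitions \e_ref{eZeta1ptdfn_e} are $\Q[\al]$-linear in $\eta$, so
$\cZp_{g_{\bp\bs}x^{\bp}}(\h_1,Q)=g_{\bp\bs}\cZp_{\bp}(\h_1,Q)$
with $\cZp_{\bp},\cZpp_{\bs}$ as in \e_ref{eZp1pt_e}. I would then apply Corollary~\ref{enuE0_crl} to both factors: the hypothesis $\nu_E(\bd)\!>\!N-k$ for all $\bd\!\in\!\La\!-\!\{0\}$ implies in particular $\nu_E(\bd)\!>\!|\bp|$ and $\nu_E(\bd)\!>\!|\bs|$ for every pair with $|\bp|+|\bs|\le N-k$ (note $N\!>\!k$ forces $\nu_E(\bd)\!\ge\!2$ as well, covering $\max(|\bp|,1)$), giving
\begin{equation*}
\cZp_{\bp}(\h_1,Q)=\left\{x+\h_1 q\frac{\nd}{\nd q}\right\}^{\bp}\!\!\cYp(x,\h_1,q),\qquad
\cZpp_{\bs}(\h_2,Q)=\left\{x+\h_2 q\frac{\nd}{\nd q}\right\}^{\bs}\!\!\cYpp(x,\h_2,q).
\end{equation*}

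To close the argument, I would verify that the mirror map in \e_ref{mirrmap_e} (and its equivariant analog appearing in Theorem~\ref{cY_thm}) is the identity $Q=q$ under the hypothesis, so that the $Q$'s on the left-hand side and the $q$'s on the right-hand side can be identified. This is immediate from the definitions \e_ref{fi_e}, \e_ref{G_e}, and \e_ref{gj_e}: each of these power series is supported on classes $\bd\!\in\!\La\!-\!\{0\}$ with $\nu_E(\bd)\!\in\!\{0,1\}$, but our hypothesis $\nu_E(\bd)\!>\!N-k\!\ge\!1$ forces these sums to be empty, so $f_i\!=\!G\!=\!g_j\!=\!0$. Substituting the two displayed identities into the sum and pulling back by $\pr_1$ and $\pr_2$ yields the claimed formula.

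No step should be a genuine obstacle: Theorem~\ref{eZ2pt_thm} and Corollary~\ref{enuE0_crl} do all the real work, and the only thing to verify by hand is the vanishing of the mirror map terms, which is a direct degree count. The mildest technical point is confirming that the application of Corollary~\ref{enuE0_crl} is legitimate even when $|\bp|\!=\!0$ or $|\bs|\!=\!0$ (where $\cZp_{\b0}\!\equiv\!\cZp_1$ by convention), but this case reduces to Remark~\ref{fD0Y_rmk} and the fact that $\Ip_{\b0}\!=\!\Ipp_{\b0}\!=\!1$ under the vanishing of all contributions with $\nu_E(\bd)\!=\!0$.
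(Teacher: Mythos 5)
Your proposal is correct and takes essentially the same route as the paper, which deduces the corollary directly from Theorem~\ref{eZ2pt_thm} and Corollary~\ref{enuE0_crl}. The extra points you verify explicitly --- the $\Q[\al]$-linearity of $\eta\mapsto\cZp_{\eta},\cZpp_{\eta}$ needed to pull out $g_{\bp\bs}$, and the vanishing of $f_i$, $G$, $g_j$ (hence triviality of the mirror map) when $\nu_E(\bd)>N-k\ge1$ --- are precisely the details the paper leaves implicit, and your checks of them are accurate.
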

This follows from Theorem~\ref{eZ2pt_thm} and Corollary~\ref{enuE0_crl}.
\begin{rmk}\label{weightedfD_rmk}
In the inductive construction of $\fD^{\bp}\cY$ with $\cY\!=\!\cYp$ or $\cY\!=\!\cYpp$, the first equation in \e_ref{fDdfn_e}
may be replaced by
$$\wt{\fD}^{\bp}\cY(\nA,\h,q)\!\equiv\!\sum_{i\in\supp(\bp)}c_{\bp;i}\left\{\nA_i\!+\!\h\,q\frac{\nd}{\nd q}\right\}
\fD^{\bp-e_i}\cY(\nA,\h,q)\!\in\!\Q_{\al}(\nA,\h)[[\La]],$$
for any tuple $(c_{\bp;i})_{i\in\supp(\bp)}$ of rational numbers with $\sum\limits_{i\in\supp(\bp)}c_{\bp;i}\!=\!1$.
The power series $\fD^{\bp}\cY$ defined by the second equation in \e_ref{fDdfn_e} in terms of the ``new weighted''
$\wt{\fD}^{\bp}\cY$ satisfy \ref{EP_noneq} and \ref{EP_Dpexp} with $\bD^{\bp}$
correspondingly ``weighted'' as in Remark~\ref{weightedbD_rmk}.
This follows by the same arguments as in the case when $c_{\bp;i}\!=\!\frac{1}{|\supp(\bp)|}$ for all $i\!\in\!\supp(\bp)$.
Therefore, \e_ref{eqtiC_e} continues to define power series $\ctC^{(r)}_{\bp,\bs}(\cY)$ in terms of the ``new weighted''
$\cC^{(\br)}_{\bp,\bs}(\cY)$.
The resulting ``weighted'' power series $\cY_{\bp}$ of \e_ref{cYp_e} do not depend on the ``weights''
$c_{\bp;i}$ as elements of $H^*_{\T^N}(\X)\Lau{\h}[[\La]]$ by the proof of Theorem~\ref{cY_thm}
outlined in Section~\ref{pfsoutline_sec}.
\end{rmk}
\begin{rmk}\label{euntwZ_rmk2}
We define an equivariant version of $Z^*$ in \e_ref{untwZ_e}.
Let
\begin{equation}
\cZ^*(\h_1,\h_2,Q)\!\equiv\!\sum_{\bd\in\La-0}\!Q^{\bd}\!\left(\ev_1\!\times\!\ev_2\right)_*\!
\left[\!\frac{\E(\cV_E)}{\left(\h_1\!-\!\psi_1\right)\left(\h_2\!-\!\psi_2\right)}\!\right],
\end{equation}
where $\ev_1,\ev_2\!:\!\ov\M_{0,2}(\X,\bd)\!\lra\!\X$. 
Since $\E(\cVp_E)\ev_1^*\E(E^+)\!=\!\E(\cV_E)\ev_1^*\E(E^-)$,
$$\cZp^*(\h_1,\h_2,Q)\pr_1^*\E(E^+)=\cZ^*(\h_1,\h_2,Q)\pr_1^*\E(E^-),$$
where $\cZp^*$ is obtained from $\cZp$ by disregarding the $Q^{\b0}$ term and $\pr_1\!:\!\X\!\times\!\X\!\lra\!\X$ is the projection
onto the first component. This together with Theorem~\ref{eZ2pt_thm} expresses $\cZ^*$
in terms of $\cZp_{\eta}$ and $\cZpp_{\eta}$ in the $E\!=\!E^+$ case.

Using an idea from \cite[Section~3.3]{CoZ}, we derive a formula for $\cZ^*$ in terms of one-point GW generating functions that holds in all cases.
Following \cite{CoZ}, we then show
how to express the latter in terms of explicit power series if $b\!>\!0$.
If $\pr_i\!:\!\X\!\times\!\X\!\lra\!\X$ and $g_{\bp\bs}\!\in\!\Q[\al]$ are as in Corollary~\ref{e2pt_crl},
then
\begin{equation}\label{euntwZ_e2}
\cZ^*(\h_1,\h_2,Q)\!=\!\frac{1}{\h_1+\h_2}\sum\limits_{|\bp|+|\bs|\le N-k}
g_{\bp\bs}\!
\left[\pr_1^*x^{\bp}\pr_2^*\cZ^*_{\bs}(\h_2,Q)\!+\!\pr_1^*\cZ^*_{\bp}(\h_1,Q)\pr_2^*\cZpp_{\bs}(\h_2,Q)\right],
\end{equation}
where
\begin{equation*}
\cZ^*_{\bp}(\h,Q)\equiv\sum_{\bd\in\La-0}Q^{\bd}
\ev_{1*}\left[\frac{\E(\cV_E)\ev_2^*x^{\bp}}{\h\!-\!\psi_1}\right]
\!\in\!H^*_{\T^N}\!\left(\X\right)[[\h^{-1},\La]].
\end{equation*}
This follows from Theorem~\ref{eZ2pt_thm}, using that
$\pr_1^*(\E(E^+)/\E(E^-))\!(\cZp\!-\!\llbr{\cZp\rrbr}_{Q;\b0})
=\!\cZ^*$, and
\begin{equation}\label{euntwZloc_e}
\pr_1^*\left(\frac{\E(E^+)}{\E(E^-)}\right)\sum\limits_{|\bp|+|\bs|\le N-k}g_{\bp\bs}\pr_1^*x^{\bp}
\pr_2^*(\cZpp_{\bs}(\h,Q)-x^{\bs})=
\sum\limits_{|\bp|+|\bs|\le N-k}g_{\bp\bs}\pr_1^*x^{\bp}\pr_2^*
\cZ^*_{\bs}(\h,Q).
\end{equation}
In the $\X\!=\!\P^{n-1}$ case, \e_ref{euntwZ_e2} is the GW version of \cite[(3.28)]{CoZ} and the proof of the $\X\!=\!\P^{n-1}$
case of \e_ref{euntwZloc_e} in \cite{CoZ}
extends to the case of an arbitrary toric manifold.

We give another proof of \e_ref{euntwZloc_e}, using
the Virtual Localization Theorem \e_ref{virloc_e} on $\ov\M_{0,2}(\X,\bd)$ as in Section~\ref{Sp_sec}.
We prove that \e_ref{euntwZloc_e} holds when restricted to $[I]\!\times\![J]$
for arbitrary $I,J\!\in\!\V$.
The left-hand side of \e_ref{euntwZloc_e} restricted to $[I]\!\times\![J]$ is
\begin{equation}\label{euntwZLHS_e}
\frac{\E(E^+)}{\E(E^-)}\Big|_{[I]}\sum_{|\bp|+|\bs|\le N-k}g_{\bp\bs}x^{\bp}\Big|_{[I]}
\sum\limits_{\bd\in\La-0}Q^{\bd}
\int_{\left[\ov\M_{0,2}(\X,\bd)\right]^{vir}}\frac{\E(\cVpp_E)\ev_2^*x^{\bs}\ev_1^*\phi_J}{\h\!-\!\psi_1}.
\end{equation}
The right-hand side of \e_ref{euntwZloc_e} restricted to $[I]\!\times\![J]$ is
\begin{equation}\label{euntwZRHS_e}
\sum\limits_{|\bp|+|\bs|\le N-k}g_{\bp\bs}x^{\bp}\Big|_{[I]}\sum\limits_{\bd\in\La-0}Q^{\bd}
\int_{\left[\ov\M_{0,2}(\X,\bd)\right]^{vir}}
\frac{\E(\cV_E)\ev^*_2x^{\bs}\ev_1^*\phi_J}{\h\!-\!\psi_1}.
\end{equation}
Since $\phi_I$ is the equivariant Poincar\'{e} dual of $[I]$,
\begin{equation*}
\sum\limits_{\bp,\bs}g_{\bp\bs}\pr_1^*x^{\bp}\pr_2^*x^{\bs}\big|_{[I]\times[J]}=
\int_{\De(\X)}\pr_1^*\phi_I\pr_2^*\phi_J=\int_{\X}\phi_I\phi_J=\phi_I(J)=0\qquad
\forall\,I\neq J\!\in\!\V,
\end{equation*}
where $\De(\X)\!\subset\!\X\!\times\!\X$ denotes the diagonal.
Thus, by the Virtual Localization Theorem~\e_ref{virloc_e}, a graph
$\Ga$ as in Section~\ref{Sp_sec} may contribute to \e_ref{euntwZLHS_e} or \e_ref{euntwZRHS_e} only if
its second marked point is mapped into $[I]$. 
Finally, \e_ref{euntwZloc_e} follows from the above since
$$\frac{\E(E^+)}{\E(E^-)}\Big|_{[I]}\E(\cVpp_E)\Big|_{\cZ_{\Ga}}=\E(\cV_E)\Big|_{\cZ_{\Ga}}$$
whenever $\cZ_{\Ga}\!\subset\!\ov\M_{0,2}(\X,\bd)$ is the $\T^N$-pointwise fixed locus corresponding to a graph $\Ga$
whose second marked point is mapped into $[I]$.

We next assume that $b\!>\!0$ and $\nu_E(\bd)\!\ge\!0$ for all $\bd\!\in\La$ and express $\cZ^*_{\bp}(\h,Q)$ 
in terms of explicit power series. Along with \e_ref{euntwZ_e2} and Theorem~\ref{cY_thm}, this will conclude the computation of $\cZ^*$. 

We define
\begin{equation}\label{newcY_e}
\hcY(\nA,\h,q)\!\equiv\!\!\sum_{\bd\in\La}\!q^{\bd}
u(\bd;\nA,\h)\prod\limits_{i=1}^a\prod\limits_{s=1}^{L^+_i(\bd)}\left(\sum_{r=1}^k\ell^+_{ri}\nA_r\!+\!s\h\right)
\prod\limits_{i=1}^b\prod\limits_{s=1}^{-L_i^-(\bd)}\left(\sum_{r=1}^k\ell^-_{ri}\nA_r\!-\!s\h\right).
\end{equation}
As $\hcY$ satisfies \e_ref{assumecS_e}, we may define $\fD^{\bp}\hcY$ and $\ctC^{(r)}_{\bp,\bs}\!\equiv\!\ctC^{(r)}_{\bp,\bs}(\hcY)$
by \e_ref{fDdfn_e} and \e_ref{eqtiC_e}.
We define $\hcY_{\bp}(\nA,\h,q)$ by the right-hand side of \e_ref{cYp_e} above, with $\cYp$ replaced by $\hcY$
and $\ticCp^{(r)}_{\bp,\bs}$ by $\ctC^{(r)}_{\bp,\bs}$. 
Let 
\begin{equation}\label{wtcY_e}
\wt{\cY}^*(\nA,\h,q)\equiv
\sum\limits_{\bd\in\La-0}q^{\bd}u(\bd;\nA,\h)
\prod\limits_{i=1}^a\prod\limits_{s=1}^{L^+_i(\bd)}\left(\sum_{r=1}^k\ell^+_{ri}\nA_r\!+\!s\h\right)
\prod\limits_{i=1}^b\prod\limits_{s=1}^{-L_i^-(\bd)-1}\left(\sum_{r=1}^k\ell^-_{ri}\nA_r\!-\!s\h\right).
\end{equation}
Define $\cE^{(\br)}_{\bp,\bs}\!\in\!\Q[\al][[\La]]$ by 
\begin{equation}\label{Ecoeff_e}
\left\{\nA+\h\,q\frac{\nd}{\nd q}\right\}^{\bp}\wt{\cY}^*(\nA,\h,q)\cong
\sum_{s=0}^{|\bp|-b}\sum_{|\br|=0}^{|\bp|-b-s}\cE^{(\br)}_{\bp,s}\nA^{\br}\h^s\quad\mod\,\h^{-1}.
\end{equation}
It follows that $\LR{\cE^{(\br)}_{\bp,\bs}}_{q;\bd}$
is a degree~$|\bp|\!-\!b\!-\!s\!-\nu_E(\bd)\!-\!|\br|$ homogeneous polynomial in~$\al$.
Then,
\begin{equation}\label{euntwZ_e3}
\cYp_{\bp}(x,\h,Q)=
\left\{x\!+\!\h\,q\frac{\nd}{\nd q}\right\}^{\bp}\cYp(x,\h,q)-
\E(E^-)\sum_{s=0}^{|\bp|-b}\sum_{|\br|=0}^{|\bp|-b-s}\cE^{(\br)}_{\bp,s}\h^s\hcY_{\br}(x,\h,q),
\end{equation}
where $\cYp_{\bp}$ is defined by \e_ref{cYp_e}; see Section~\ref{pfsoutline_sec} for a proof of \e_ref{euntwZ_e3}.

Whenever $b\!\ge\! 2$,
\begin{equation}\begin{split}\label{cZpstar_e1}
\cZ^*_{\bp}(\h,q)\!=\!
\E(E^+)
\left[\left\{x+\h\,q\frac{\nd}{\nd q}\right\}^{\bp}\wt{\cY}^*(x,\h,q)\!-\!\!\sum_{s=0}^{|\bp|-b}\sum_{|\br|=0}^{|\bp|-b-s}\!\!\cE^{(\br)}_{\bp,s}\h^s\hcY_{\br}(x,\h,q)\right].
\end{split}\end{equation}
If $b\!=\!1$, 
\begin{equation}\begin{split}\label{cZpstar_e2}
\cZ^*_{\bp}(\h,Q)\!=\!
\E(E^+)\ne^{-\frac{\E(E^-)f_0(q)}{\h}}\!\!
\left[\left\{x+\h\,q\frac{\nd}{\nd q}\right\}^{\bp}\wt{\cY}^*(x,\h,q)\!-\!\!\!\sum_{s=0}^{|\bp|-b}\sum_{|\br|=0}^{|\bp|-b-s}\!\!\!\!\cE^{(\br)}_{\bp,s}\h^s\hcY_{\br}(x,\h,q)\right]\\-
\frac{\E(E^+)x^{\bp}f_0(q)}{\h}\sum_{n=0}^{\i}\frac{1}{(n+1)!}\left[-\frac{\E(E^-)f_0(q)}{\h}\right]^n,
\end{split}\end{equation}
with $Q$ and $q$ related by the mirror map~\e_ref{mirrmap_e} and $f_0(q)\!\in\!\Q[[\La]]$ given by \e_ref{f0_e}.
Equations~\e_ref{cZpstar_e1} and \e_ref{cZpstar_e2} follow from $\cZ_{\bp}^*\!=\!\frac{\E(E^+)}{\E(E^-)}\left(\cZp_{\bp}\!-\!x^{\bp}\right)$,
\e_ref{ZpYp_e}, and \e_ref{euntwZ_e3}.
\end{rmk}
\section{Proofs}
\label{pfs_sec}
\subsection{Outline}
\label{pfsoutline_sec}
In this section, we prove Theorems~\ref{eZ2pt_thm} and \ref{cY_thm} and the identity \e_ref{euntwZ_e3}.
The proofs of the two theorems are in the spirit of the proof of mirror symmetry in \cite{Gi_equiv} and \cite{Gi_mirr} but with a twist.
Similarly to \cite{Gi_equiv} and \cite{Gi_mirr}, our argument revolves around the restrictions on power series imposed by certain
recursivity and polynomiality conditions. The concept of $C$-recursivity was first introduced in \cite{Gi_equiv} in the $\X\!=\!\P^{n-1}$
case, extended to an arbitrary $\X$ in \cite{Gi_mirr}, and re-defined in \cite{bcov0}; all these definitions involve an explicit collection $C$ of structure coefficients. Our concept of $C$-recursivity introduced in Definition~\ref{rec_dfn}
extends the notion of $C$-recursivity with an arbitrary collection of structure coefficients from the $\X\!=\!\P^{n-1}$
case considered in \cite{bcov0_ci} to an arbitrary $\X$.
The concept of (self-) polynomiality introduced in \cite{Gi_equiv} in the $\X\!=\!\P^{n-1}$ case and extended to an arbitrary $\X$
in \cite{Gi_mirr} was modified into the concept of mutual polynomiality for a pair of power series in the $\X\!=\!\P^{n-1}$
case in \cite{bcov0}; we extend the latter to an arbitrary $\X$ in Definition~\ref{MPC_dfn}.
By Proposition~\ref{unique_prp}, which extends \cite[Proposition~2.1]{bcov0} from the $\X\!=\!\P^{n-1}$ case
to an arbitrary $\X$, $C$-recursivity and mutual polynomiality impose severe restrictions on power series, more severe than the
restrictions imposed by recursivity and self-polynomiality as discovered in \cite{Gi_equiv}.

Analogous to \cite{bcov0} and \cite{bcov0_ci}, the proof of Theorem~\ref{cY_thm} relies on the one-point mirror
theorem of \cite{LLY3}. We begin by stating it.
The coefficient of $\al_j/\h$ for $j\!\in\![N]$ in the Laurent expansion of $\frac{1}{\Ip_{\b0}(q)}\cYp\big|_{\nA=0}$ at $\h^{-1}\!=\!0$
is given by
\begin{equation}\begin{split}\label{gj_e}
g_j(q)&\equiv\frac{\de_{b,0}}{\Ip_{\b0}(q)}\left[\sum_{\begin{subarray}{c}\bd\in\La,\,\nu_E(\bd)=0\\D_s(\bd)\ge 0\,\,\forall\,s\in[N]
\end{subarray}}\!\!q^{\bd}\,\frac{\prod\limits_{i=1}^a\left[L_i^+(\bd)!\right]}{\prod\limits_{r=1}^N\left[D_r(\bd)!\right]}\left(\sum_{s=1}^{D_j(\bd)}\frac{1}{s}\right)\right.\\
&\,\,\qquad\qquad+\left.\sum_{\begin{subarray}{c}\bd\in\La,\,\nu_E(\bd)=0\\D_j(\bd)<-1\\
D_s(\bd)\ge 0\,\,\forall\,s\in[N]-\{j\}\end{subarray}}\!\!\!\!\!\!\!
q^{\bd}(-1)^{D_j(\bd)}\left[-D_j(\bd)-1\right]!\frac{\prod\limits_{i=1}^a\left[L_i^+(\bd)!\right]}
{\prod\limits_{\begin{subarray}{c}s\in [N]-\{j\}\end{subarray}}\left[D_s(\bd)!\right]}\right].
\end{split}\end{equation}
\noindent By \cite[Theorem~4.7]{LLY3} together with \cite[Section~5.2]{LLY3},
if $\nu_E(\bd)\!\ge\!0$ for all $\bd\!\in\!\La$, then
\begin{equation}\label{Z'Y'_e}
\cZp_{\b0}(\h,Q)=\frac{1}{\Ip_{\b0}(q)}\ne^{-\frac{1}{\h}\left[G(q)+\sum\limits_{i=1}^kx_if_i(q)+\sum\limits_{j=1}^N\al_jg_j(q)\right]}
\cYp(x,\h,q),
\end{equation}
with $Q$ and $q$ related by the mirror map \e_ref{mirrmap_e}, $G$, $f_i$, and $g_j$ defined by \e_ref{G_e}, \e_ref{fi_e}, and
\e_ref{gj_e}.\footnote{See Appendix~\ref{LLY_a} for 
the correspondence between the relevant notation in
\cite{LLY3} and ours and detailed references within \cite{LLY3} indicating
how \cite[Theorem~4.7]{LLY3} together with \cite[Section~5.2]{LLY3} implies \e_ref{Z'Y'_e}.}
\begin{rmk}\label{cYcoeff_rmk}
By \e_ref{cY_e}, \e_ref{G_e}, and \e_ref{fi_e}, 
$$\Ip_{\b0}(q)G(q)\equiv\!\LR{\cYp(\nA,\h,q)\Big|_{\begin{subarray}{c}\al=0\\\nA=0\end{subarray}}}_{\h^{-1};1}\quad
\textnormal{and}\quad
\Ip_{\b0}(q)f_i(q)\equiv\!\LR{\cYp(\nA,\h,q)}_{\frac{\nA_i}{\h};1}\qquad\forall\,i\!\in\![k],$$
where $\LR{\quad}_{\h^{-1};1}$ and $\LR{\quad}_{\frac{\nA_i}{\h};1}$ denote the coefficients of $\h^{-1}$
and $\frac{\nA_i}{\h}$ respectively within the Laurent expansion around $\h^{-1}\!=\!0$ of the power series inside of the brackets. Thus,
$$\LR{\cYp(\nA,\h,q)}_{\h^{-1};1}\equiv \Ip_{\b0}(q)\left[G(q)+
\sum\limits_{i=1}^k\nA_if_i(q)+\sum_{j=1}^N\al_jg_j(q)\right].$$
\end{rmk}

Some of the proofs in this section also hold if we replace
$\Q$ by any field $R\!\supseteq\!\Q$.
Given such a field $R$,
let
\begin{equation*}
R_{\al}\equiv\Q_{\al}\otimes_{\Q}R=R[\al_1,\ldots,\al_N]_{\langle P:P\in\Q[\al]-0\rangle}\quad\textnormal{and}\quad
H^*_{\T^N}(\X;R)\equiv H^*_{\T^N}(\X)\otimes_{\Q}R.
\end{equation*}
An element in $H^*_{\T^N}\!(\X;R)\Lau{\h}[[\La]]$ admits a
lift to an element in $\!R[\al,x]\Lau{\h}[[\La]]$ and an element in $\!R[\al,x]\Lau{\h}[[\La]]$
induces an element in $H^*_{\T^N}\!(\X;R)\Lau{\h}[[\La]]$
via Proposition~\ref{equivcoh_prp}. 
Given $Y(\h,Q)\!\in\!H^*_{\T^N}(\X;R)\Lau{\h}[[\La]]$ and $J\!\in\!\V$, we write
$$Y(\h,Q)\big|_{[J]}\quad\textnormal{or}\quad Y(\h,Q)\big|_J\quad\textnormal{or}\quad Y(x(J),\h,Q)\in R[\al]\Lau{\h}[[\La]]$$
for the power series obtained from $Y$ by replacing each coefficient of $\h^{s} Q^{\bd}$ in $Y$ by its
image via the restriction map $\cdot(J)$ of \e_ref{dfnerestr_e}.

In proving Theorem \ref{cY_thm},
we follow the steps outlined in \cite[Section~1.3]{bcov0} and used for proving
\cite[Theorem~1.1]{bcov0}:
\begin{enumerate}[label=(M\arabic*),leftmargin=*]
\item\label{admis_item} if $R\!\supseteq\!\Q$ is any field, $Y,Z\!\in\!H^*_{\T^N}(\X;R)\Lau{\h}[[\La]]$,
$Z(\h,Q)$ is $C$-recursive in the sense of Definition~\ref{rec_dfn}
and satisfies the mutual polynomiality condition (MPC) of Definition~\ref{MPC_dfn}
with respect to $Y(\h,Q)$,
the transforms of $Z(\h,Q)$ of Lemma~\ref{admis_lmm} are also $C$-recursive
and satisfy the MPC with respect to appropriate transforms of $Y(\h,Q)$;
\item\label{unique_item} if $R\!\supseteq\!\Q$ is any field,
$Z\!\in\!H^*_{\T^N}(\X;R)\Lau{\h}[[\La]]$
is recursive in the sense of Definition~\ref{rec_dfn} and $(Y,Z)$ satisfies the MPC
for some $Y\!\in\!H^*_{\T}(\X;R)\Lau{\h}[[\La]]$ with $\LR{Y(\h,Q)\big|_I}_{Q;\b0}\!\in\!R_{\al}^*$ 
for all $I\!\in\!\V$, then 
$Z$ is determined by its `mod $\h^{-1}$ part' (see Proposition~\ref{unique_prp});
\item\label{Crec_item} $\cYp_{\bp}$ of \e_ref{cYp_e} and $\cZp_{\ze}$ of \e_ref{eZeta1ptdfn_e} are $\fCp$-recursive in the sense of Definition~\ref{rec_dfn} with $\fCp$ given by \e_ref{fC_e}, while
$\cYpp_{\bp}$ of \e_ref{cYp_e} and $\cZpp_{\ze}$ of \e_ref{eZeta1ptdfn_e} are $\fCpp$-recursive
with $\fCpp$ given by \e_ref{fC_e};
\item\label{MPC_item} $(\cYp,\cYpp_{\bp})$, $(\cYpp,\cYp_{\bp})$,
$(\cZp_1,\cZpp_{\ze})$, and $(\cZpp_1,\cZp_{\ze})$ satisfy the MPC;
\item\label{equalmod_item} the two sides of~\e_ref{ZpYp_e} viewed as  powers series in $\h^{-1}$,  agree mod~$\h^{-1}$. 
\end{enumerate}
The proof of Theorem~\ref{eZ2pt_thm} described below follows the same ideas and extends the proof of
\cite[(1.17)]{bcov0}.

Claims \ref{Crec_item} and \ref{MPC_item} concerning $\cZp_{\ze}$ and $\cZpp_{\ze}$
follow from Lemmas~\ref{ZCrec_lmm} and \ref{ZMPC_lmm}, since by the string equation of \cite[Section~26.3]{MirSym} and \e_ref{Zeb_e},
\begin{align*}
\cZp_{\ze}(\h,Q)\!=\!\h\cZp_{\eta,\be}(\h,Q)\quad\textnormal{and}\quad
\cZpp_{\ze}(\h,Q)\!=\!\h\cZpp_{\eta,\be}(\h,Q),
\end{align*}
if $m\!=\!3$, $\be_2\!=\!\be_3\!=\!0$, $\eta_2\!=\!\ze$, and $\eta_3\!=\!1$.

By Lemmas~\ref{YCrec_lmm}, \ref{YMPC_lmm}, and \ref{MPCsym_lmm},
$\cYp$ is $\fCp$-recursive and $\cYpp$ is $\fCpp$-recursive, while
$(\cYp,\cYpp)$ and $(\cYpp,\cYp)$ satisfy the MPC.
This together with the admissibility of transforms~\ref{deriv_lmm} and \ref{multpoly_lmm}
of Lemma~\ref{admis_lmm} proves claims \ref{Crec_item} and \ref{MPC_item} for $\cYp_{\bp}$ and $\cYpp_{\bp}$.

Claims \ref{Crec_item} and \ref{MPC_item} together with
\e_ref{Z'Y'_e}, the admissibility of transforms \ref{multexp_lmm} and \ref{comp_lmm}
of Lemma~\ref{admis_lmm}, and Proposition~\ref{unique_prp},
prove that
verifying \e_ref{ZpYp_e} amounts to showing that
the two sides of each of these equations agree $\mod \h^{-1}$;
this is in turn equivalent to \e_ref{eqtiC_e}.

Lemma~\ref{MPCsym_lmm}, Lemma~\ref{admis_lmm}, and Proposition~\ref{unique_prp}
are proved in Section~\ref{admis_sec};
the preparations for this section and the ones following it are made in Section~\ref{fixednot_sec}.
Lemmas~\ref{ZCrec_lmm} and \ref{ZMPC_lmm}
are proved in Sections~\ref{ZCrec_sec} and \ref{ZMPC_sec}, respectively.
Both proofs rely on the Virtual Localization Theorem~\cite[(1)]{GraPa}.
The localization data provided by \cite{Sp} is presented in Section~\ref{Sp_sec}.
Lemmas~\ref{YCrec_lmm} and \ref{YMPC_lmm} are proved in Section~\ref{Y_sec}.

\begin{proof}[Proof of \e_ref{euntwZ_e3}]
Define $E^-_{\bp}\!\in\!\Z$ with $\bp\!\in\!(\Z^{\ge 0})^k$ by
$$\prod\limits_{i=1}^b\left(\sum_{r=1}^k\ell^-_{ri}\nA_r\right)\equiv\sum\limits_{\bp\in(\Z^{\ge 0})^k}E^-_{\bp}\nA^{\bp}.$$
By \e_ref{cY_e} and \e_ref{newcY_e},
\begin{equation}\label{cY-_e}
\E(E^-)\hcY(x,\h,q)=\sum\limits_{\bp\in(\Z^{\ge 0})^k}E^-_{\bp}\left\{x\!+\!\h\,q\frac{\nd}{\nd q}\right\}^{\bp}\cYp(x,h,q).
\end{equation}
Since $\cYp$ is $\fCp$-recursive by Lemma~\ref{YCrec_lmm} and $(\cYpp,\cYp)$ satisfies the MPC by Lemmas~\ref{MPCsym_lmm}
and \ref{YMPC_lmm}, $\E(E^-)\hcY$ is $\fCp$-recursive and $(\cYpp,\E(E^-)\hcY)$ satisfies the MPC
by \e_ref{cY-_e} and Lemma~\ref{admis_lmm}\ref{deriv_lmm}.
This together with Lemma~\ref{admis_lmm}\ref{deriv_lmm}\ref{multpoly_lmm} implies that the right-hand side of \e_ref{euntwZ_e3} is $\fCp$-recursive and satisfies the MPC with respect to
$\cYpp$. Since $\cYp_{\bp}$ also satisfies these two properties by \ref{Crec_item} and \ref{MPC_item}, the claim follows from \ref{unique_item} and the fact that both sides of \e_ref{euntwZ_e3} are congruent to
$x^{\bp}$ modulo $\h^{-1}$. The latter follows from the fact that $\cYp_{\bp}(x,h,Q)$ and $\hcY_{\bp}(x,h,Q)$ are congruent to $x^{\bp}$ modulo $\h^{-1}$ by \e_ref{eqtiC_e}
together with \e_ref{cY_e}, \e_ref{wtcY_e}, and \e_ref{Ecoeff_e}.
\end{proof}
\begin{proof}[Proof of Theorem~\ref{eZ2pt_thm}]
By \e_ref{eZ2ptdfn_e}, \e_ref{phiprop_e}, and \e_ref{push-fwd_e},
\begin{equation}\label{initial_e}
(\h_1\!+\!\h_2)\cZp(\h_1,\h_2,Q)\Big|_{[I]\times[J]}\!=\!
\h_1\h_2\sum_{\bd\in\La}Q^{\bd}\int_{\left[\ov\M_{0,3}(\X,\bd)\right]^{vir}}
\frac{\E(\cVp_E)\ev_1^*\phi_I\ev_2^*\phi_J}
{(\h_1\!-\!\psi_1)(\h_2\!-\!\psi_2)}
\end{equation}
for all $I,J\!\in\V$.
Applying Lemmas~\ref{ZCrec_lmm} and \ref{ZMPC_lmm} for $\cZp_{\eta,\be}(\h_1,Q)$ with 
$$m\!=\!3,\quad\be_2\!=\!n,\quad\be_3\!=\!0,\quad\eta_2\!=\!\phi_J,\quad
\eta_3\!=\!1,$$
along with Lemma~\ref{admis_lmm}\ref{multpoly_lmm},
we obtain that the coefficient of $\h_2^{-n}$ in
$(\h_1\!+\!\h_2)\cZp(\h_1,\h_2,Q)$ is $\fCp$-recursive with $\fCp$ given by \e_ref{fC_e}
and satisfies the MPC
with respect to $\cZpp_1(\h_1,Q)$ for all $n\!\ge\!0$.
Using this, Proposition~\ref{equivcoh_prp}\ref{equivcohr_prp}, \ref{Crec_item}, \ref{MPC_item}, and \ref{unique_item},
it follows that in order to prove
\e_ref{eZ2pt_e} it suffices to show that 
\begin{equation}\label{2ptpf_e}
(\h_1\!+\!\h_2)\cZp(\h_1,\h_2,Q)\Big|_{[I]\times[J]}\cong\sum_{j=1}^s\cZp_{\eta_j}(\h_1,Q)\Big|_{[I]}\cZpp_{\wceta_j}(\h_2,Q)\Big|_{[J]}\quad\mod\h_1^{-1}
\end{equation}
for all $I,J\!\in\!\V$.
By \e_ref{initial_e} and the string equation, the left-hand side of \e_ref{2ptpf_e} $\mod \h_1^{-1}$ is 
\begin{equation}\begin{split}\label{LHS_e}
\phi_I(J)+\h_2\sum_{\bd\in\La-0}Q^{\bd}\int_{\left[\ov\M_{0,3}(\X,\bd)\right]^{vir}}
\frac{\E(\cVp_E)\ev_1^*\phi_I\ev_2^*\phi_J}
{\h_2\!-\!\psi_2}\qquad\qquad\qquad\\\qquad\qquad\quad=
\De_*1\Big|_{[I]\times[J]}+\sum_{\bd\in\La-0}Q^{\bd}\int_{\left[\ov\M_{0,2}(\X,\bd)\right]^{vir}}
\frac{\E(\cVp_E)\ev_1^*\phi_I\ev_2^*\phi_J}
{\h_2\!-\!\psi_2},
\end{split}\end{equation}
where $\De\!:\!\X\!\lra\!\X\!\times\!\X$, $\De[z]\!\equiv\!([z],[z])$.
The right-hand side of \e_ref{2ptpf_e} $\mod \h_1^{-1}$ is 
\begin{equation}\label{RHS_e}
\sum_{j=1}^s\eta_j\Big|_{[I]}\cZpp_{\wceta_j}(\h_2,Q)\Big|_{[J]}.
\end{equation}
Applying Lemmas~\ref{ZCrec_lmm} and \ref{ZMPC_lmm} for $\cZpp_{\eta,\be}(\h_2,Q)$ with 
$$m\!=\!3,\quad\be_2\!=\!\be_3\!=\!0,\quad\eta_2\!=\!\phi_I,\quad
\eta_3\!=\!1,$$ 
along with Lemma~\ref{admis_lmm}\ref{multpoly_lmm},
we obtain that \e_ref{LHS_e}
is the restriction to $[J]$ of a $\fCpp$-recursive formal power series which satisfies the MPC with respect to $\cZp_1(\h_2,Q)$. Since \e_ref{RHS_e} also satisfies these two properties, by Proposition~\ref{unique_prp} the power series
\e_ref{LHS_e} and \e_ref{RHS_e} agree if and only if they agree $\mod {\h_2^{-1}}$.
The latter is the case since \e_ref{RHS_e} $\mod \h^{-1}$ is the equivariant Poincar\'{e} dual to the diagonal in $\X\!\times\!\X$
restricted to the point $[I]\!\times\![J]$.
\end{proof}

\subsection{Notation for fixed points and curves}
\label{fixednot_sec}
With $\V$ as in \e_ref{Vset_e} and for all $I,J\!\in\!\V$ with $|I\!\cap\!J|\!=\!k\!-\!1$, we denote by 
\begin{equation}\label{IJdfn_e}
\ov{IJ}\equiv\X(I\cup J)\subseteq\X\quad\textnormal{and}\quad
\deg\overline{IJ}\equiv\left[\overline{IJ}\right]_{[\X]}\in\La
\end{equation}
the $\P^1$ passing through the points $[I]$ and $[J]$
and its homology class, respectively; see Corollary~\ref{fixed_crl}.
Given $I\!\in\!\V$ and $j\!\in\![N]\!-\!I$, 
we denote by 
$$\ov{Ij}\equiv\X\left(I\cup\{j\}\right)\quad\textnormal{and}\quad
\deg\ov{Ij}\equiv\left[\ov{Ij}\right]_{[\X]}\in\La$$
the compact one-dimensional complex submanifold of
$\X$ defined by Remark~\ref{submfld_rmk} and its
homology class, respectively.
Since $\X$ admits a K\"{a}hler form,
$$\deg{\overline{IJ}},\,\deg\ov{Ij}\in\La\!-\!\{0\}$$ by \cite[Chapter~0, Section~7]{GH}.
By the last part of Remark~\ref{submfld_rmk}, there exists a unique element $v(I,j)$ of $\V$
such that
$$v\left(I,j\right)\!\neq\!I\textnormal{ and }v\left(I,j\right)\!\subset\!I\!\cup\!\{j\}.$$
Since $v(I,j)\!\cup\!I\!=\!\{j\}\!\cup\!I$, 
$j\!\in\!v\left(I,j\right)$ and $\ov{Ij}\!=\!\overline{Iv(I,j)}$.
Let $\{\widehat{j}\}\!\equiv\!I\!-\!v(I,j)$.

Applying the Localization Theorem~\e_ref{ABo_e} to the integral of $1$ over $\ov{Ij}\!\cong\!\P^1$
and using \e_ref{tan_e} and Corollary~\ref{fixed_crl}, we find that
\begin{equation}\label{ufixed_e} 
u_j(I)\!+\!u_{\widehat{j}}\left(v\left(I,j\right)\right)\!=\!0
\qquad\forall\,I\!\in\!\V,j\!\in\![N]\!-\!I.
\end{equation}

Applying the Localization Theorem~\e_ref{ABo_e} to the integrals of $x_i$, $\la^{\pm}_i$, and $u_s$
over $\ov{Ij}$ and using Corollary~\ref{fixed_crl}, \e_ref{tan_e}, and \e_ref{ufixed_e}, we find that
\begin{alignat}{2}
\label{xfixed_e} x_i(I)\!-\!x_i\left(v\left(I,j\right)\right)&=\lr{\nH_i,\deg\ov{Ij}}u_j(I)
\qquad\forall\,I\!\in\!\V,j\!\in\![N]\!-\!I,i\!\in\![k],\\
\label{Lfixed_e} \la_i^{\pm}(I)\!-\!\la_i^{\pm}\left(v\left(I,j\right)\right)&=L_i^{\pm}\left(\ov{Ij}\right)u_j(I)
\qquad\forall\,I\!\in\!\V,j\!\in\![N]\!-\!I,i\!\in\![a]~ (i\!\in\![b]),\\
\label{ufixed2_e} u_s(I)\!-\!u_s\left(v\left(I,j\right)\right)&=D_s\left(\ov{Ij}\right)u_j(I)\qquad\forall\, I\!\in\!\V,j\!\in\![N]\!-\!I,s\!\in\![N].
\end{alignat}
By \e_ref{ufixed2_e}, \e_ref{ufixed_e}, \e_ref{fprestr_e},
and \e_ref{ual_e},
\begin{equation}\label{Dedge_e}
D_j\left(\ov{Ij}\right)\!=\!D_{\widehat{j}}\left(\ov{Ij}\right)\!=\!1,\qquad
D_s\left(\ov{Ij}\right)\!=\!0\quad\forall\,s\!\in\!I\!\cap\!v(I,j).
\end{equation}
The last five identities are stated in \cite{Gi_mirr}.
\subsection{Recursivity, polynomiality, and admissible transforms}
\label{admis_sec}

As in \cite{Gi_mirr}, we introduce a partial order on $\La$:
if $\bs,\bd\!\in\!\La$, we define 
$\bs\!\preceq\!\bd$ if $\bd\!-\!\bs\!\in\!\La$.
By Proposition~\ref{dimK_prp}, 
\begin{equation}\label{finite_e}
\bd\in\La\qquad\Lra\qquad\left\{\bs\in\La:\bs\preceq\bd\right\}\textnormal{ is finite}.
\end{equation}
This implies that
for every non-empty subset $S$ of $\La$, there exists
$\bd\!\in\!S$ such that
$$\bs\in\La,\bs\prec\bd\qquad\Lra\qquad\bs\notin S.$$

\begin{dfn}\label{rec_dfn}
Let $R\!\supseteq\!\Q$ be any field and
 $C\!\equiv\!(C_{I,j}(d))^{d\ge1}_{\begin{subarray}{c}
I\in\V,\,j\in [N]-I\end{subarray}}$ any collection of elements
of $R_{\al}$. A power series $Z\!\in\!H^*_{\T^N}(\X;R)\Lau{\h}[[\La]]$
is \sf{$C$-recursive} if the following holds: if ${\bd}^*\!\in\!\La$
is such that
$$\LR{Z\left(x(v(I,j)),\h,Q\right)}_{Q;\bd^*-d\cdot\deg\ov{Ij}}\!\in\!R_{\al}(\h)
\quad\forall\,I\!\in\!\V,j\!\in\![N]\!-\!I,d\!\ge\!1,$$
and
$\LR{Z\left(x(v(I,j)),\h,Q\right)}_{Q;\bd^*-d\cdot\deg\ov{Ij}}$ is regular at $\h\!=\!-u_j(I)/d$
for all $I\!\in\!\V$, $j\!\in\![N]\!-\!I$, and $d\!\ge\!1$, then
$$\LR{Z\left(x(I),\h,Q\right)}_{Q;\bd^*}\!-\!
\sum_{d\ge 1}\!\!\!\sum_{\begin{subarray}{c}j\in[N]-I\\d\cdot\deg\ov{Ij}\preceq\bd^*\end{subarray}}\!\!\!\!
\frac{C_{I,j}(d)}{\h+\frac{u_j(I)}{d}}
\LR{Z\left(x\left(v(I,j)\right),\h,Q\right)}_{Q;\bd^*-d\cdot\deg\ov{Ij}}\big|_{\h=-\frac{u_j(I)}{d}}
\!\in\!R_{\al}[\h,\h^{-1}],$$
for all $I\!\in\!\V$.
A power series $Z\!\in\!H^*_{\T^N}(\X;R)\Lau{\h}[[\La]]$ is called \sf{recursive} if it is C-recursive for some collection
$C\!\equiv\!\left(C_{I,j}(d)\right)^{d\ge 1}_{I\in\V,j\in[N]-I}$ of elements of $R_{\al}$.
\end{dfn}
By Remark~\ref{rec_rmk} below, if $Z\!\in\!H^*_{\T^N}(\X;R)\Lau{\h}[[\La]]$ is 
$(C_{I,j}(d))^{d\ge 1}_{I\in\V,j\in[N]-I}$-recursive, then for each $I\!\in\!\V$ 
\begin{equation*}\begin{split}
Z(x(I),\h,Q)=&\sum\limits_{\bd\in\La}\sum\limits_{r=-N_{\bd}}^{N_{\bd}}Z_{I;\bd}^{(r)}\h^{-r}Q^{\bd}\\&+
\sum\limits_{d=1}^{\i}\sum\limits_{\begin{subarray}{c}j\in[N]-I\end{subarray}}\!\!\!\!
\frac{C_{I,j}(d)Q^{d\cdot\deg{\ov{Ij}}}}{\h+\frac{u_j(I)}{d}}
Z\left(x\left(v(I,j)\right),-\frac{u_j(I)}{d},Q\right)
\end{split}\end{equation*}
for some integers $N_{\bd}$ and some $Z_{I;\bd}^{(r)}\!\in\!R_{\al}$.
\begin{rmk}\label{rec_rmk}
Let $R\!\supseteq\!\Q$ be any field.
If $Z\!\in\!H^*_{\T^N}(\X;R)\Lau{\h}[[\La]]$ is recursive, then $Z\big|_I\!\in\!R_{\al}(\h)[[\La]]$
and $\LR{Z(x(v(I,j)),\h,Q)}_{Q;\bd}$ is regular at $\h\!=\!\frac{-u_j(I)}{d}$
for all $I\!\in\!\V$, $\bd\!\in\!\La$, $j\!\in\![N]\!-\!I$, and
$d\!\ge\!1$;
this follows by induction on $\bd\!\in\!\La$. The regularity claim also uses Remark~\ref{non0u_e} below.

The $C$-recursivity is an $R_{\al}$-linear property (that is, if $Z_1$
and $Z_2$ are $C$-recursive, then so is $f_1Z_1+f_2Z_2$ for any $f_1,f_2\!\in\!R_{\al}$).
By Lemma~\ref{admis_lmm}\ref{multpoly_lmm}, $C$-recursivity is actually an $R_{\al}[\h][[\La]]$-linear property.
\end{rmk}
\begin{rmk}\label{non0u_rmk}
For all $I\!\in\!\V$, $j\!\in\![N]\!-\!I$, all $d\!\in\!\Q\!-\!\{1\}$, and all $s\!\in\![N]$,
\begin{equation*}\label{non0u_e}
u_j(I)+d\!\cdot\!u_s(v(I,j))\neq 0.
\end{equation*}
\end{rmk}
\begin{proof}
Assume that 
\begin{equation}\label{pfnon0u_e}
u_j(I)+d\!\cdot\!u_s(v(I,j))=0
\end{equation}
for some $I\!\in\!\V$, $j\!\in\![N]\!-\!I$, $d\!\in\!\Q\!-\!\{1\}$, and $s\!\in\![N]$.
If $d\!=\!0$ or $s\!\in\!v(I,j)$, then $u_j(I)\!=\!0$ by \e_ref{fprestr_e} which contradicts \e_ref{ual_e}.
If $d\!\neq\!0$ and $s\!\in\!(I\!-\!v(I,j))$, then $u_j(I)(1\!-\!d)\!=\!0$ by \e_ref{pfnon0u_e} and \e_ref{ufixed_e},
which again contradicts \e_ref{ual_e}.
If $d\!\neq\!0$ and $s\!\notin\!(I\!\cup\!v(I,j))$, then setting $\al_i\!=\!0$ for all $i\!\in\!(I\!\cup\!v(I,j))$
in \e_ref{pfnon0u_e} and using \e_ref{ual_e}, we find that $-d\al_s\!=\!0$, which is false. 
\end{proof}

For the purposes of Definition~\ref{MPC_dfn} and the transforms~\ref{deriv_lmm} and \ref{comp_lmm} in Lemma~\ref{admis_lmm}
below as well as all statements involving them, we identify $H_2(\X;\Z)$ with $\Z^k$ via the dual basis to $\{\nH_1,\ldots,\nH_k\}$
so that $\La\!\subset\!\Z^k$.
\begin{dfn}\label{MPC_dfn}
For any $Y\!\equiv\!Y(\h,Q),Z\!\equiv\!Z(\h,Q)\!\in\!H^*_{\T^N}(\X;R)\Lau{\h}[[\La]]$,
define $\Phi_{Y,Z}\!\in\!R_{\al}\Lau{\h}[[z,\La]]$ by
$$\Phi_{Y,Z}\left(\h,z,Q\right)\equiv
\sum_{I\in\V}
\frac{\ne^{x(I)\cdot z}}{\prod\limits_{j\in[N]-I}u_j(I)}
Y\left(x(I),\h,Q\ne^{\h z}\right)
Z\left(x(I),-\h,Q\right),$$
where $z\!\equiv\!(z_1,\ldots,z_k),\,
x(I)\cdot z\!\equiv\!\sum\limits_{i=1}^kx_i(I)z_i$, and
$Q\ne^{\h z}\!\equiv\!\left(Q_1\ne^{\h z_1},\ldots,Q_k\ne^{\h z_k}\right)$.

If $Y,Z\!\in\!H^*_{\T^N}(\X;R)\Lau{\h}[[\La]]$, the pair $(Y,Z)$
\sf{ satisfies the mutual polynomiality condition (MPC)} if
$\Phi_{Y,Z}\!\in\!R_{\al}[\h][[z,\La]]$.
\end{dfn}

\begin{prp}\label{unique_prp}
Let $R\!\supseteq\!\Q$ be a field.
Assume that $Z\!\in\!H^*_{\T^N}(\X;R)\Lau{\h}[[\La]]$ is recursive and that
$(Y,Z)$ satisfies the MPC for some
$Y\!\in\!H^*_{\T^N}(\X;R)\Lau{\h}[[\La]]$ with
$$\LR{Y\left(\h,Q\right)\big|_I}_{Q;\b0}\in R_{\al}^*\qquad\forall\,I\in\V.$$
Then,
$Z(\h,Q)\cong 0~~\mod\,\h^{-1}$ if and only if $Z(\h,Q)\!=\!0$.
\end{prp}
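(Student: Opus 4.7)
The plan is to prove the non-trivial direction, namely that $Z \cong 0 \mod \h^{-1}$ (combined with the stated hypotheses) forces $Z = 0$. The argument proceeds by induction on $\bd \in \La$ in the partial order $\preceq$, whose well-foundedness is guaranteed by \e_ref{finite_e}; this extends the corresponding step of \cite[Proposition~2.1]{bcov0} from $\P^{n-1}$ to a general toric manifold.

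Fix $\bd^* \in \La$ and assume inductively that $\LR{Z}_{Q;\bs} = 0$ for every $\bs \in \La$ with $\bs \prec \bd^*$. By the $C$-recursivity of $Z$, the recursive sum expressing $\LR{Z(x(I),\h,Q)}_{Q;\bd^*}$ vanishes term by term, since each summand involves a factor $\LR{Z(x(v(I,j)),\h,Q)}_{Q;\bd^* - d\deg\ov{Ij}}$ with $d \geq 1$, which is zero by the inductive hypothesis. Hence $\LR{Z(x(I),\h,Q)}_{Q;\bd^*} \in R_{\al}[\h,\h^{-1}]$ for each $I \in \V$. Combined with $Z \cong 0 \mod \h^{-1}$, which forces $\LR{Z(x(I),\h,Q)}_{Q;\bd^*} \in R_{\al}[\h]$, this restriction is a polynomial in $\h$ for each $I$, so by Proposition~\ref{equivcoh_prp}\ref{equivcohr_prp} it assembles into a global class $\LR{Z}_{Q;\bd^*} \in H^*_{\T^N}(\X;R)[\h]$.

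The crucial remaining step is to deduce $\LR{Z}_{Q;\bd^*} = 0$ from the MPC together with the invertibility of $\LR{Y|_I}_{Q;\b0}$. Using the inductive vanishing, only the $\bs = \b0$ term in the expansion of $Y\left(x(I),\h,Q\ne^{\h z}\right)$ contributes to $\LR{\Phi_{Y,Z}}_{Q;\bd^*}$, yielding
\[
\LR{\Phi_{Y,Z}}_{Q;\bd^*}(\h,z) = \sum_{I \in \V} \frac{\ne^{x(I) \cdot z}}{\prod_{j \in [N] - I} u_j(I)} \LR{Y|_I}_{Q;\b0} \LR{Z\left(x(I),-\h,Q\right)}_{Q;\bd^*},
\]
which the MPC requires to lie in $R_{\al}[\h][[z]]$. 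The plan is to extract $\LR{Z}_{Q;\bd^*} = 0$ from this polynomiality by examining each $z^{\bp}$ coefficient and performing a residue analysis in $\al_1,\ldots,\al_N$, using the invertibility of each $\LR{Y|_I}_{Q;\b0} \in R_{\al}^*$, the distinctness of the relevant $\al$-poles (in the spirit of Remark~\ref{non0u_rmk}), and Proposition~\ref{equivcoh_prp}\ref{equivcohr_prp} to translate local vanishing at each $[I]$ into global vanishing.

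The main obstacle is this last extraction. In the $\P^{n-1}$ case treated in \cite{bcov0}, the transparent fixed-point combinatorics makes the residue argument straightforward. For a general toric $\X$, the graph of $\T^N$-fixed points and $\T^N$-invariant curves (Corollary~\ref{fixed_crl}) is more intricate, so one must propagate the vanishing across different $[I]$ using the weight relations \e_ref{ufixed_e}--\e_ref{ufixed2_e}; once local vanishing is established for every $I \in \V$, Proposition~\ref{equivcoh_prp}\ref{equivcohr_prp} finishes the inductive step.
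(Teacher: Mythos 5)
Your set-up matches the paper's proof: induction over $\bd$ using \e_ref{finite_e}, recursivity plus the inductive hypothesis to conclude that $\LR{Z|_I}_{Q;\bd^*}$ is a Laurent polynomial in $\h$, and the observation that only the $Q^{\b0}$-coefficient of $Y$ survives in $\LR{\Phi_{Y,Z}}_{Q;\bd^*}$. The gap is in what comes next. You read the hypothesis $Z\cong 0\mod\h^{-1}$ as killing the negative powers of $\h$, so that $\LR{Z(x(I),\h,Q)}_{Q;\bd^*}\in R_{\al}[\h]$. With that reading the MPC step is vacuous: a polynomial in $\h$ multiplied by the $\h$-independent units $f_I\equiv\LR{Y(-\h,Q)|_I}_{Q;\b0}$ and summed over $I\in\V$ automatically lies in $R_{\al}[\h][[z]]$, so the polynomiality of $\Phi_{Y,Z}$ imposes no condition and your "crucial remaining step" cannot be completed. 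Although the displayed definition of $\cong\mod\h^{-s}$ is easy to misread, the way it is used throughout (in \e_ref{I0_e}, \e_ref{tiDpdfn_e}, \e_ref{Ecoeff_e}, and in step \ref{equalmod_item}) shows that congruence mod $\h^{-1}$ compares the coefficients of $\h^j$ with $j\ge0$; so the hypothesis says that this part of $Z$ vanishes, and combined with recursivity it gives $\LR{Z(\h,Q)|_I}_{Q;\bd^*}=\sum_{r=1}^{N_{\bd^*}}Z^{(r)}_{I;\bd^*}\h^{-r}$, i.e.\ only strictly negative powers survive. That is precisely what makes the MPC bite: $\Phi_{Y,Z}$ must be polynomial in $\h$, while the surviving contribution consists only of $\h^{-r}$-terms with $r\ge1$.

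Even granting the corrected reading, the decisive extraction is left as a plan, and the route you sketch would not work: the MPC constrains only the $\h$-dependence of $\Phi_{Y,Z}$, whose coefficients are arbitrary elements of $R_{\al}$, so a "residue analysis in $\al_1,\ldots,\al_N$" extracts nothing, and no propagation of vanishing along the fixed curves via \e_ref{ufixed_e}--\e_ref{ufixed2_e} is needed or used. The paper concludes instead with a Vandermonde argument over the fixed points: for each $r\ge1$, the vanishing of the $\h^{-r}$-coefficient of $\LR{\Phi_{Y,Z}(-\h,z,Q)}_{Q;\bd^*}$ gives
$$\sum_{I\in\V}\frac{\left(x(I)\cdot z\right)^m}{\prod\limits_{j\in[N]-I}u_j(I)}\,f_I\,Z^{(r)}_{I;\bd^*}=0
\qquad\forall\,0\le m\le|\V|-1,$$
a linear system in the unknowns $f_IZ^{(r)}_{I;\bd^*}\big/\prod_{j\in[N]-I}u_j(I)$ whose coefficient matrix is Vandermonde in the pairwise distinct $x(I)\cdot z$ (distinctness by Proposition~\ref{equivcoh_prp}\ref{erestr_prp}), hence nonsingular; since $f_I\in R_{\al}^*$, this forces $Z^{(r)}_{I;\bd^*}=0$ for all $I$ and $r$, and Proposition~\ref{equivcoh_prp}\ref{equivcohr_prp} then gives $\LR{Z}_{Q;\bd^*}=0$, completing the induction.
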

\begin{proof}
By the second statement in Proposition~\ref{equivcoh_prp}\ref{equivcohr_prp}, 
$$Z(\h,Q)=0\quad\Llra\quad Z(\h,Q)\big|_{I}=0\qquad\forall\,I\in\V.$$
Set $f_I\!\equiv\!\LR{Y\left(-\h,Q\right)\big|_I}_{Q;\b0}$ and assume that $\LR{Z(\h,Q)\big|_I}_{Q;\bd'}\!=\!0$
for all $0\!\preceq\!\bd'\!\prec\!\bd$ and all $I\!\in\!\V$.
Since $Z$ is recursive and $Z(\h,Q)\cong 0$ modulo $\h^{-1}$, 
$$\LR{Z(\h,Q)\big|_{I}}_{Q;\bd}=\sum\limits_{r={1}}^{N_{\bd}}Z^{(r)}_{I;\bd}\h^{-r}$$
for some $N_{\bd}\!\ge\!0$ and some $Z^{(r)}_{I;\bd}\!\in\!R_{\al}$.
Thus, 
$$\LR{\Phi_{Y,Z}(-\h,z,Q)}_{Q;\bd}\!=\!\!\sum_{I\in\V}\!\frac{\ne^{x(I)\cdot z}}{\prod\limits_{j\in [N]-I}u_j(I)}
f_I\left(\sum\limits_{r=1}^{N_{\bd}}Z^{(r)}_{I;\bd}\h^{-r}\right)\!\in\!R_{\al}[\h][[z]].$$
This implies that
$$\sum\limits_{I\in\V}\frac{\left(x(I)\cdot z\right)^m}{\prod\limits_{j\in [N]-I}u_j(I)}
f_I\left(\sum\limits_{r=1}^{N_{\bd}}Z^{(r)}_{I;\bd}\h^{-r}\right)\!\in\!R_{\al}[\h,z]
\qquad\forall\,m\!\ge\!0.$$
In particular,
$$\sum\limits_{I\in\V}\frac{\left(x(I)\cdot z\right)^m}{\prod\limits_{j\in [N]-I}u_j(I)}
f_IZ^{(r)}_{I;\bd}\!=\!0\qquad\forall\,0\!\le\!m\!\le\!|\V|\!-\!1,\,\forall\,r\!\in\![N_{\bd}].$$
For each $r\!\in\![N_{\bd}]$, this is a linear system in the `unknowns' $f_IZ^{(r)}_{I;\bd}\Big/\!\!\!\!\prod\limits_{j\in [N]-I}u_j(I)$
with $I\!\in\!\V$. Its coefficient matrix has a non-zero Vandermonde determinant, since 
$$x(I)\!\neq\!x(J)\qquad\forall\,I\!\neq\!J\!\in\!\V$$
by Proposition~\ref{equivcoh_prp}\ref{erestr_prp}.
It follows that $Z^{(r)}_{I;\bd}\!=\!0$ for all $I\!\in\!\V$ and all $r\!\in\![N_{\bd}]$. 
\end{proof}
Lemmas~\ref{MPCsym_lmm} and \ref{admis_lmm} below extend
\cite[Lemmas~2.2, 2.3]{bcov0} from the $\X\!=\!\P^{n-1}$ case to an arbitrary $\X$. 
Our proof of the former is completely different from and much simpler than the one in \cite{bcov0}.
For the latter, the arguments in \cite{bcov0} go through with only two significant changes
required.
\begin{lmm}\label{MPCsym_lmm}
Let $R\!\supseteq\!\Q$ be a field and
$Y,Z\!\in\!H^*_{\T^N}(\X;R)\Lau{\h}[[\La]]$.
Then,
$$\Phi_{Y,Z}\!\in\!R_{\al}[\h][[z,\La]]\quad\Llra\quad
\Phi_{Z,Y}\!\in\!R_{\al}[\h][[z,\La]].$$
\begin{proof}
Let $Y_{\bd}(\h)\!\equiv\!\LR{Y(\h,Q)}_{Q;\bd}$ and $Z_{\bd}(\h)\!\equiv\!\LR{Z(\h,Q)}_{Q;\bd}$. 
It follows that $\LR{\Phi_{Y,Z}(\h,z,Q)}_{Q;\bd}$ is
\begin{equation*}\begin{split}
\sum_{\begin{subarray}{c}0\preceq \bd'\preceq \bd\\I\in\V\end{subarray}}\!\!
\frac{\ne^{x(I)\cdot z}}{\prod\limits_{j\in[N]-I}u_j(I)}Y_{\bd'}(\h)\Big|_I\ne^{\h z \bd'}
Z_{\bd-\bd'}(-\h)\Big|_I\!=\!\!\!
\sum_{\begin{subarray}{c}0\preceq\bd'\preceq\bd\\I\in\V\end{subarray}}\!\!
\frac{\ne^{x(I)\cdot z}}{\prod\limits_{j\in[N]-I}u_j(I)}Y_{\bd-\bd'}(\h)\Big|_I\ne^{\h z (\bd-\bd')}
Z_{\bd'}(-\h)\Big|_I,\end{split}\end{equation*}
where $\ne^{\h z}\!\equiv\!(\ne^{\h z_1},\ldots,\ne^{\h z_k})$.
The right-hand side is $\ne^{\h z \bd}$ times $\LR{\Phi_{Z,Y}(-\h,z,Q)}_{Q;\bd}$.
\end{proof}
\end{lmm}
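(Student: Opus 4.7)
My plan is to establish an explicit identity between the $Q^{\bd}$-coefficients of $\Phi_{Y,Z}(\h,z,Q)$ and $\Phi_{Z,Y}(-\h,z,Q)$ and then deduce the equivalence. The key observation is that the defining sum for $\Phi_{Y,Z}$ has a built-in symmetry under reindexing of the convolution variable, which swaps $Y$ and $Z$ up to a harmless exponential factor that is polynomial in $\h$.

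Concretely, I would write $Y_{\bd}(\h)\!\equiv\!\LR{Y(\h,Q)}_{Q;\bd}$ and $Z_{\bd}(\h)\!\equiv\!\LR{Z(\h,Q)}_{Q;\bd}$, expand the definition of $\Phi_{Y,Z}$, and extract the coefficient of $Q^{\bd}$. The substitution $Q\!\mapsto\!Q\ne^{\h z}$ in the $Y$-argument produces an exponential $\ne^{\h z\cdot\bd'}$, so
\[
\LR{\Phi_{Y,Z}(\h,z,Q)}_{Q;\bd}
= \sum_{I\in\V}\sum_{0\preceq\bd'\preceq\bd}
\frac{\ne^{x(I)\cdot z}}{\prod_{j\in[N]-I}u_j(I)}\,Y_{\bd'}(\h)\big|_I\,Z_{\bd-\bd'}(-\h)\big|_I\,\ne^{\h z\cdot\bd'}.
\]
This sum is finite by \e_ref{finite_e}. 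Next, I would perform the reindexing $\bd'\!\mapsto\!\bd\!-\!\bd'$, which is a bijection on $\{0\!\preceq\!\bd'\!\preceq\!\bd\}$ and swaps the arguments of $Y_{\bullet}(\h)$ and $Z_{\bullet}(-\h)$, while turning the exponential factor into $\ne^{\h z\cdot\bd}\ne^{-\h z\cdot\bd'}$. Comparing the result with the analogous expansion of $\Phi_{Z,Y}(-\h,z,Q)$ yields the clean identity
\[
\LR{\Phi_{Y,Z}(\h,z,Q)}_{Q;\bd}
= \ne^{\h z\cdot\bd}\,\LR{\Phi_{Z,Y}(-\h,z,Q)}_{Q;\bd}.
\]

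Finally, I would note that $\ne^{\pm\h z\cdot\bd}$ lies in $\Q[\h][[z]]$ and is a unit there, so multiplication by it preserves $R_{\al}[\h][[z]]$; moreover, the involution $\h\!\mapsto\!-\h$ manifestly preserves polynomiality in $\h$. Hence polynomiality of $\LR{\Phi_{Y,Z}(\h,z,Q)}_{Q;\bd}$ in $\h$ is equivalent to polynomiality of $\LR{\Phi_{Z,Y}(\h,z,Q)}_{Q;\bd}$ in $\h$, and since this holds for every $\bd\!\in\!\La$, the equivalence of the lemma follows. I do not expect a genuine obstacle: the entire statement is essentially a bookkeeping observation once the $\bd'\!\leftrightarrow\!\bd\!-\!\bd'$ symmetry of the convolution is noticed. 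This is much simpler than the $\X\!=\!\P^{n-1}$ argument in \cite{bcov0}, which went through a more elaborate partial-fractions analysis.
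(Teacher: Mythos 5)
Your proposal is correct and is essentially the paper's own proof: both expand the $Q^{\bd}$-coefficient, reindex $\bd'\mapsto\bd-\bd'$, and identify the result as $\ne^{\h z\cdot\bd}$ times $\LR{\Phi_{Z,Y}(-\h,z,Q)}_{Q;\bd}$, from which the equivalence is immediate since $\ne^{\pm\h z\cdot\bd}$ and the substitution $\h\mapsto-\h$ preserve $R_{\al}[\h][[z]]$. The only difference is that you spell out this last polynomiality-preservation step, which the paper leaves implicit.
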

\begin{lmm}\label{admis_lmm}
Let $R\!\supseteq\!\Q$ be any field and
$C\!\equiv\!(C_{I,j}(d))^{d\ge1}_{\begin{subarray}{c}
I\in\V,\,j\in[N]-I\end{subarray}}$ any collection of elements of $R_{\al}$.
Let $Y_1,Y_2,Y_3\!\in\!H^*_{\T^N}(\X;R)\Lau{\h}[[\La]]$.
If $Y_1$ is $C$-recursive and $(Y_2,Y_3)$ satisfies the MPC, then
\begin{enumerate}[label=(\emph{\alph*}),leftmargin=*]
\item\label{deriv_lmm} if $\overline{Y_i}\!\equiv\!
\left\{x_s\!+\!\h\,Q_s\frac{\nd}{\nd Q_s}\right\}Y_i$
for all $i\!$ and $s\!\in\![k]$, then $\overline{Y_1}$ is $C$-recursive
and $\Phi_{Y_2,\overline{Y_3}}\!\in\!R_{\al}[\h][[z,\La]]$;
\item\label{multpoly_lmm} if $f\!\in\!R_{\al}[\h][[\La]]$,
then $fY_1$ is $C$-recursive and 
$\Phi_{Y_2,fY_3}\!\in\!R_{\al}[\h][[z,\La]]$;
\item\label{multexp_lmm} if $f\!\in\!R_{\al}[[\La\!-\!0]]$
and $\overline{Y_i}\!\equiv\!\ne^{f/\h}Y_i$ for all $i$, then 
$\overline{Y_1}$ is $C$-recursive and
$\Phi_{\overline{Y_2},\overline{Y_3}}\!\in\!R_{\al}[\h][[z,\La]]$;
\item\label{comp_lmm} if $f_r\!\in\!R_{\al}[[\La\!-\!0]]$ for all $r\!\in\![k]$ and
$\overline{Y_i}(\h,Q)\!\equiv\!\ne^{f\cdot x/\h}Y_i(\h,Q\ne^{f})$ for all $i$,
where
$f\cdot x\!\equiv\!\sum\limits_{r=1}^kf_rx_r$ and 
$Q\ne^{f}\!\equiv\!(Q_1\ne^{f_1},\ldots,Q_k\ne^{f_k})$,
then $\overline{Y_1}$ is $C$-recursive and 
$\Phi_{\overline{Y_2},\overline{Y_3}}\!\in\!R_{\al}[\h][[z,\La]]$.
\end{enumerate}
\end{lmm}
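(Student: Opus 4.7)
The plan is to verify the four transforms one at a time, in each case checking (i) that $C$-recursivity of $\overline{Y_1}$ follows from $C$-recursivity of $Y_1$, and (ii) that $\Phi_{\overline{Y_2},\overline{Y_3}}$ or $\Phi_{Y_2,\overline{Y_3}}$ lies in $R_{\al}[\h][[z,\La]]$. For (i) the key input is Remark~\ref{non0u_rmk}, which guarantees that the auxiliary factors introduced by the transforms are regular at the finite poles $\h=-u_j(I)/d$, together with the fixed-point identity \e_ref{xfixed_e} that relates $x_s(I)$ to $x_s(v(I,j))$. For (ii) the strategy is to rewrite $\Phi_{\overline{Y_2},\overline{Y_3}}$ as a polynomial-in-$\h$ transform of $\Phi_{Y_2,Y_3}$.

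For part \ref{deriv_lmm}, the operator $x_s+\h\,Q_s\partial_{Q_s}$ is $R_{\al}$-linear, and the pole residue at $\h=-u_j(I)/d$ on the $I$-side is converted to the $v(I,j)$-side via \e_ref{xfixed_e}: the shift $x_s(I)+(-u_j(I)/d)\bd_s$ equals $x_s(v(I,j))+(-u_j(I)/d)(\bd_s-d\lr{\nH_s,\deg\ov{Ij}})$, which is precisely what the $Q$-degree shift in the recursion produces. Using $\partial_{z_s}[Y_2(x(I),\h,Q\ne^{\h z})]=\h\,Q_s\partial_{Q_s}[Y_2(x(I),\h,Q\ne^{\h z})]$, one obtains
\[
\Phi_{Y_2,\overline{Y_3}}(\h,z,Q) \;=\; \bigl(\partial_{z_s}-\h\,Q_s\partial_{Q_s}\bigr)\Phi_{Y_2,Y_3}(\h,z,Q),
\]
which remains in $R_{\al}[\h][[z,\La]]$. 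Part \ref{multpoly_lmm} is simpler: since $f\!\in\!R_{\al}[\h][[\La]]$ is independent of $x$, one has $f|_I\!=\!f$ uniformly, hence $\Phi_{Y_2,fY_3}=f(-\h,Q)\Phi_{Y_2,Y_3}$; since $f$ is regular at every finite $\h$, the residue analysis factors cleanly and preserves $C$.

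For part \ref{multexp_lmm}, Remark~\ref{non0u_rmk} gives $-u_j(I)/d\neq 0$, so $\ne^{f/\h}$ is regular at the poles of $Y_1|_I$, and the absence of $x$-dependence in $f$ makes the residue identity match immediately. For the MPC a direct computation yields
\[
\Phi_{\overline{Y_2},\overline{Y_3}}(\h,z,Q) \;=\; \ne^{[f(Q\ne^{\h z})-f(Q)]/\h}\,\Phi_{Y_2,Y_3}(\h,z,Q),
\]
and the crucial observation is that $[f(Q\ne^{\h z})-f(Q)]/\h$ is a genuine element of $R_{\al}[\h][[z,\La]]$, because each $\ne^{\h z_i}-1$ is divisible by $\h$; therefore the exponential prefactor lies in $R_{\al}[\h][[z,\La]]$.

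The hard part will be \ref{comp_lmm}, which combines the change of variables $Q\mapsto Q\ne^f$ with the exponential prefactor $\ne^{f\cdot x/\h}$, which carries a pole at $\h=0$. For the MPC, the identity I will establish is
\[
\Phi_{\overline{Y_2},\overline{Y_3}}(\h,z,Q) \;=\; \Phi_{Y_2,Y_3}\bigl(\h,\tilde z,Q\ne^f\bigr),\qquad \tilde z \;=\; z + \bigl[f(Q\ne^{\h z})-f(Q)\bigr]/\h,
\]
verified by matching the exponentials via $\h\tilde z+f(Q)=\h z+f(Q\ne^{\h z})$ and then rewriting the argument of $Y_2$; since $\tilde z$ is polynomial in $\h$ by the observation in part \ref{multexp_lmm}, polynomiality is preserved. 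For $C$-recursivity of $\overline{Y_1}$, expanding $\overline{Y_1}|_I=\ne^{f\cdot x(I)/\h}Y_1|_I(\h,Q\ne^f)$ in $Q$, the residue at $\h=-u_j(I)/d$ of the $Q^{\bd^*}$ coefficient receives the factor $\ne^{f\cdot x(I)/\h}\big|_{\h=-u_j(I)/d}$ from the prefactor; the corresponding residue of $\overline{Y_1}|_{v(I,j)}$ at $Q^{\bd^*-d\cdot\deg\ov{Ij}}$ carries the factor $\ne^{f\cdot x(v(I,j))/\h}\big|_{\h=-u_j(I)/d}$, and by \e_ref{xfixed_e} the ratio equals $\ne^{-d\,f\cdot\lr{\nH,\deg\ov{Ij}}}=(Q\ne^f/Q)^{d\cdot\deg\ov{Ij}}$ evaluated on the degree grading, which is exactly the factor produced by the $Q\mapsto Q\ne^f$ substitution between the two levels of the recursion. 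Coordinating this bookkeeping with the coefficient $C_{I,j}(d)$ is the main technical step; once it is in place, $\overline{Y_1}$ is $C$-recursive with the same collection $C$.
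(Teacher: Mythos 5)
Your proposal is correct and takes essentially the same route as the paper: the paper proves part \ref{deriv_lmm} by the same $\left\{x_s(I)+\h\,Q_s\frac{\nd}{\nd Q_s}\right\}$ computation resolved via \e_ref{xfixed_e}, and it defers \ref{multpoly_lmm}--\ref{comp_lmm} to \cite[Lemma~2.3]{bcov0} while highlighting precisely the facts you derive explicitly (the operator identity giving $\Phi_{Y_2,\overline{Y_3}}$ from $\Phi_{Y_2,Y_3}$, the prefactor $\ne^{[f(Q\ne^{\h z})-f(Q)]/\h}$ in \ref{multexp_lmm}, the shifted-$z$ substitution in \ref{comp_lmm}, and the \e_ref{xfixed_e}-based cancellation of the $\ne^{df\cdot\deg\ov{Ij}}$ factor in the recursivity argument for \ref{comp_lmm}, which is exactly the paper's displayed key fact). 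One harmless slip: the regularity of $\ne^{f/\h}$ at $\h=-u_j(I)/d$ in \ref{multexp_lmm} rests on $u_j(I)\neq 0$, which follows from \e_ref{fprestr_e} and \e_ref{ual_e} rather than from Remark~\ref{non0u_rmk}.
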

\begin{proof}
For all $I\!\in\!\V$,
\begin{align*}
\left\{x_s\!(I)+\!\h\,Q_s\frac{\nd}{\nd Q_s}\right\}
\left(\frac{C_{I,j}(d)}{\h\!+\!\frac{u_j(I)}{d}}
Q^{d\cdot\deg\ov{Ij}}
Y_1\left(x\left(v(I,j)\right),-\frac{u_j(I)}{d},Q\right)\right)\!=\qquad\qquad\qquad\qquad\qquad\\
\frac{C_{I,j}(d)}{\h\!+\!\frac{u_j(I)}{d}}Q^{d\cdot\deg\ov{Ij}}
\overline{Y_1}\left(x\left(v(I,j)\right),-\frac{u_j(I)}{d},Q\right)\!+\!
\frac{C_{I,j}(d)}{\h\!+\!\frac{u_j(I)}{d}}Q^{d\cdot\deg\ov{Ij}}\qquad\qquad\qquad\qquad\qquad\\\times
\left(\left(\h\!+\!\frac{u_j(I)}{d}\right)Q_s\frac{\nd}{\nd Q_s}\!+\!
\h\,d\!\cdot\!\deg_s\ov{Ij}\!+\!
x_s(I)-x_s\left(v(I,j)\right)\right)Y_1\left(x\left(v(I,j)\right),-\frac{u_j(I)}{d},Q\right).
\end{align*}
The first claim in \ref{deriv_lmm} now follows from Remark~\ref{rec_rmk} and \e_ref{xfixed_e}.
The second claim in \ref{deriv_lmm} and the claims in \ref{multpoly_lmm}-\ref{comp_lmm} 
follow similarly to the proof of \cite[Lemma~2.3]{bcov0} for 
the $X^{\tau}_M\!=\!\P^{n-1}$ case, using Lemma~\ref{MPCsym_lmm}, Remark~\ref{rec_rmk},
\e_ref{finite_e}, and \e_ref{xfixed_e}.
Equation~\e_ref{xfixed_e} and property~\e_ref{finite_e} are used in the proof of the recursivity claim in \ref{comp_lmm}
when showing that
$$\frac{1}{\h+\frac{u_j(I)}{d}}\left(\ne^{df(Q)\cdot\deg\ov{Ij}+\frac{f(Q)x(I)}{\h}}-\ne^{\frac{-f(Q)x(v(I,j))d}{u_j(I)}}\right)\in R_{\al}[\h,\h^{-1}][[\La]].$$
Property~\e_ref{finite_e} is also used to show that
transforms \ref{multexp_lmm} and \ref{comp_lmm} preserve $H^*_{\T^N}(\X;R_{\al})[\h,\h^{-1}][[\La]]$, that
$$\frac{\ne^{f/\h}-\ne^{-df/u_j(I)}}{\h+\frac{u_j(I)}{d}}\in R_{\al}[\h,\h^{-1}][[\La]],
\quad \ne^{\frac{f(Q\ne^{\h z})-f(Q)}{\h}}\in R_{\al}[\h][[z,Q]],$$
in the case of \ref{multexp_lmm}, and that
$$z_r+\frac{f_r(Q\ne^{\h z})-f_r(Q)}{\h}\in R_{\al}[\h,z][[\La]]\qquad\forall\,r\!\in\![k]$$
in the case of \ref{comp_lmm}.
\end{proof}

\subsection{Torus action on the moduli space of stable maps}
\label{Sp_sec}
An action of $\T^N$ on a smooth projective variety $X$ induces an action on $\ov\M_{0,m}(X,\bd)$
as in Section~\ref{equiv_sec} and an integration along the fiber homomorphism as in Section~\ref{equiv-intro_sec}. The Virtual Localization Theorem~\cite[(1)]{GraPa}
implies that
\begin{equation}\label{virloc_e}
\int_{\left[\ov\M_{0,m}(X,\bd)\right]^{vir}}\eta=\!\!\!\!\!\!
\sum_{F\subseteq\ov\M_{0,m}(X,\bd)^{\T^N}}\int_{[F]^{vir}}
\frac{\eta}{\E(N^{vir}_{F})}\in\Q[\al]\qquad\forall\,\eta\in H^*_{\T^N}\left(\ov\M_{0,m}(X,\bd)\right),
\end{equation}
where the sum runs over the components of the $\T^N\!$ pointwise fixed locus
$$\ov\M_{0,m}(X,\bd)^{\T^N}\subseteq\ov\M_{0,m}(X,\bd).$$
This section describes $\ov\M_{0,m}(X^{\tau}_M,d)^{\T^N}\!\!$,
the equivariant Euler class $\E(N^{vir}_{F})$ of the virtual normal bundle to each component $F$ of $\ov\M_{0,m}(X^{\tau}_M,d)^{\T^N}\!\!$,
and the restriction of $\E(\cV_E)$ to $F$.
We follow \cite{Sp} where the corresponding statements are formulated in the language of fans rather than toric pairs.

If $f\!:\!(\Si,z_1,\ldots,z_m)\!\lra\!\X$ is a $\T^N$\!-fixed stable map,
then the images of its marked points, nodes, contracted components, and ramification points are 
$\T^N$\!-fixed points and so points of the form $[I]$ for some $I\!\in\!\V$
by Corollary~\ref{fixed_crl}\ref{fpt_crl}.
Each non-contracted component $\Si_e$ of $\Si$ maps to a closed $\T^N$\!-fixed curve which is of the form 
$\overline{IJ}$ for some $I,J\!\in\!\V$ with $|I\!\cap\!J|\!=\!k\!-\!1$ by Corollary~\ref{fixed_crl}\ref{fc_crl}.
Since all such curves $\overline{IJ}$ are biholomorphic to $\P^1$ by Corollary~\ref{fixed_crl}\ref{fc_crl},
the map
$$f\Big|_{\Si_e}\!:\Si_{e}\lra\overline{IJ}$$
is a degree $\fd(e)$ covering map ramified only over $[I]$ and $[J]$. 
To each such map we associate a decorated graph as in Definition~\ref{graph_dfn} below;
the vertices of this graph correspond to the nodes and contracted components of $\Si$ or the ramification points of $f$;
the edges $e$ correspond to non-contracted components $\Si_e$ of $\Si$, and $\fd(e)$ describes the degree of $f\big|_{\Si_e}$.
\begin{dfn}\label{graph_dfn}
A genus~$0$ $m$-point decorated graph $\Ga$
is a collection of vertices $\Ver(\Ga)$, edges $\Edg(\Ga)$, and maps
$$\fd:\Edg(\Ga)\lra\Z^{>0},\qquad\fp:\Ver(\Ga)\lra\V,\qquad\dec:[m]\lra\Ver(\Ga)$$
satisfying the following properties:
\begin{enumerate}
\item the underlying graph $(\Ver(\Ga),\Edg(\Ga))$ has no loops;
\item if two vertices $\nv$ and $\nv'$ are connected by an edge, then
$|\fp(\nv)\!\cap\fp(\nv')|\!=\!k\!-\!1$.
\end{enumerate}
Such a decorated graph is said to be of degree $\bd\!\in\!\La$ if 
$$\sum_{\begin{subarray}{c}e\in\Edg(\Ga)\\\partial e=\{\nv,\nv'\}\end{subarray}}
\fd(e)\deg\left(\overline{\fp(\nv)\fp(\nv')}\right)\!=\!\bd,$$
where $\partial e\!\equiv\!\{\nv,\nv'\}$ for an edge $e$ joining vertices $\nv$ and $\nv'$.
\end{dfn}
For a decorated graph $\Ga$ as in Definition~\ref{graph_dfn}, we denote by $\Aut(\Ga)$
the group of automorphisms of $(\Ver(\Ga),\Edg(\Ga))$. It acts naturally on
$\prod\limits_{e\in\Edg(\Ga)}\Z_{\fd(e)}$; let
$$A_{\Ga}\equiv\prod\limits_{e\in\Edg(\Ga)}\Z_{\fd(e)}\rtimes\Aut(\Ga)$$
denote the corresponding semidirect product.

For any $\nv\!\in\!\Ver(\Ga)$, let 
$$\Edg(\nv)\equiv|\{e\in\Edg(\Ga):\nv\in\partial e\}|\quad\textnormal{and}\quad\val(\nv)\equiv|\dec^{-1}(\nv)|+
\Edg(\nv)$$
denote the number of edges to which the vertex $\nv$ belongs and its valence, repectively.
A flag $F$ in $\Ga$ is a pair $(\nv,e)$, where $e$ is an edge and $\nv$ is a
vertex of $e$. For a flag $F\!=\!(\nv,e)$,  let 
$\val(F)\!\equiv\!\val(\nv)$. For a flag $F\!=\!(\nv,e)$, let
$\om_F\!\equiv\!\E(T_{f^{-1}(\fp(\nv))}\P^1)$, where
$f\!:\!\P^1\!\lra\!\overline{\fp(\nv)\fp(\nv')}$
is the degree~$\fd(e)$ cover of $\overline{\fp(\nv)\fp(\nv')}$
corresponding to $e$, $\partial e\!=\!\{\nv,\nv'\}$, and the $\T^N$\!-action on $\P^1$ is induced from the action
on $X^{\tau}_M$ via $f$. If $\{j\}\!\equiv\!\fp(\nv')\!-\!\fp(\nv)$,  
\begin{equation}\label{omF_e}
\om_F=\frac{u_j(\fp(\nv))}{\fd(e)}
\end{equation}
by \e_ref{tan_e}.
If $\nv$ is a vertex that belongs to exactly $2$ edges $e_1$ and $e_2$, then
we write $F_i(\nv)\!\equiv\!(\nv,e_i)$.

Given a decorated graph $\Ga$ as above, let
$$\M_{\Ga}\!\equiv\!\prod_{\nv\in\Ver(\Ga)}
\ov\M_{0,\val(\nv)},$$
where $\ov\M_{0,m}\!\equiv\!\textnormal{point}$, whenever $m\!\le\!2$.
For a flag $F\!=\!(\nv,e)$, let $\psi_F\!\in\!H^2_{\T^N}(\M_{\Ga})$ denote the 
equivariant Euler class of the universal cotangent line bundle on $\M_{\Ga}$
corresponding to $F$ (that is, the pull-back of the $\psi$ class on $\ov\M_{0,\val(\nv)}$
corresponding to $e$).
\begin{prp}[{\cite[Lemma~6.9]{Sp}}]\label{flocus_prp}
There is a morphism $\ga\!:\!\M_{\Ga}\!\lra\!\ov\M_{0,m}(\X;\bd)$
whose image is a component of $\ov\M_{0,m}(\X;\bd)^{\T^N}$
and every such component occurs as the image
of such a morphism corresponding to some degree~$\bd$ decorated graph. 
With $\prod\limits_{e\in\Edg(\Ga)}\!\Z_{\fd(e)}$ acting trivially on
$\M_{\Ga}$, the induced map $$\ga/A_{\Ga}:\M_{\Ga}/A_{\Ga}\lra\ov\M_{0,m}(\X,\bd)$$
identifies $\M_{\Ga}/A_{\Ga}$ with the corresponding component of $\ov\M_{0,m}(\X,\bd)^{\T^N}$.
\end{prp}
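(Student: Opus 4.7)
The plan is to build the graph-to-moduli correspondence in both directions and then identify the resulting morphism with a component of the fixed locus after dividing by the natural symmetry group.

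First, I would associate a decorated graph to each $\T^N$-fixed stable map $f\!:\!(\Si,z_1,\ldots,z_m)\!\lra\!\X$. As noted right before the proposition, the images of all special points of $\Si$ (marked points, nodes, ramification points, and contracted components) must be $\T^N$-fixed, hence of the form $[I]$ for $I\!\in\!\V$ by Corollary~\ref{fixed_crl}\ref{fpt_crl}; moreover each non-contracted irreducible component $\Si_e$ must map onto a closed $\T^N$-fixed curve, which by Corollary~\ref{fixed_crl}\ref{fc_crl} is some $\overline{IJ}\!\cong\!\P^1$ with $|I\!\cap\!J|\!=\!k\!-\!1$, and the restriction $f|_{\Si_e}\!:\Si_e\!\lra\!\overline{IJ}$ is a cover ramified only over $[I]$ and $[J]$ of some degree $\fd(e)$. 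I would form a graph whose vertices are the connected components of $f^{-1}(X^{\T^N})$, whose edges are the non-contracted components $\Si_e$, and whose decorations $\fp,\fd,\dec$ are as dictated by this picture. The constraints of Definition~\ref{graph_dfn} (no loops, compatibility of $\fp$ on edge endpoints, and the degree condition) are then immediate from stability and from $f_*[\Si]\!=\!\bd$.

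Conversely, I would construct $\ga\!:\!\M_\Ga\!\lra\!\ov\M_{0,m}(\X,\bd)$ explicitly. For a point $(C_\nv)_{\nv\in\Ver(\Ga)}\!\in\!\M_\Ga$, glue together the curves $C_\nv$ and a copy $\P^1_e$ for each edge $e$, identifying the $F$-marked point of $C_\nv$ with the preimage of $[\fp(\nv)]$ under the cyclic degree-$\fd(e)$ cover $\P^1_e\!\lra\!\overline{\fp(\nv)\fp(\nv')}$ determined by the standard $\T^N$-action on $\overline{\fp(\nv)\fp(\nv')}$ (valence-$1$ and $2$ vertices give no extra marked points on $C_\nv$ but impose the required prescribed image). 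The map to $\X$ is constructed component by component: contracted on each $C_\nv$ with value $[\fp(\nv)]$, and the given cover on each $\P^1_e$. The resulting family is $\T^N$-invariant by construction, and one checks that $\ga$ lands in the fixed locus and hits every fixed stable map by the first paragraph.

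Next I would identify the fibers of $\ga$ with the $A_\Ga$-orbits. The factor $\prod_e\Z_{\fd(e)}$ corresponds to the cyclic deck transformations of the covers $\P^1_e\!\lra\!\overline{\fp(\nv)\fp(\nv')}$: reparametrizing by a $\fd(e)$-th root of unity gives an isomorphism of stable maps that is invisible after gluing (since the ramification points are the identified nodes), so these act trivially on $\M_\Ga$ but nontrivially as automorphisms of the family. The $\Aut(\Ga)$ factor permutes isomorphic vertex curves, producing the same stable map up to relabeling. The fact that these are the only identifications follows from the rigidity of ramified covers of $\P^1$ with two ramification points. After passing to $\M_\Ga/A_\Ga$, the induced $\ga/A_\Ga$ is then a bijection onto the corresponding fixed-locus component; scheme-theoretically one checks that it is an isomorphism by comparing tangent-obstruction theories, using that the only deformations of the map that remain $\T^N$-fixed are those that move the $C_\nv$.

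The main obstacle is the scheme-theoretic identification in the last step, i.e.\ proving $\ga/A_\Ga$ is an isomorphism (not merely a set-theoretic bijection) of Deligne--Mumford stacks, since the moduli stack has nontrivial automorphisms coming from both the graph combinatorics and the deck transformations of the edge covers. This is handled in \cite{Sp} (and in the projective case by the standard Kontsevich analysis) by carefully tracking the stabilizers at each point and verifying that the $A_\Ga$-action precisely encodes the stabilizers in $\ov\M_{0,m}(\X,\bd)$; I would cite \cite[Lemma~6.9]{Sp} for the details rather than reproduce this computation.
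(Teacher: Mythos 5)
Your proposal is consistent with the paper, which in fact offers no independent proof of this proposition: it is stated as a quotation of \cite[Lemma~6.9]{Sp} (with the surrounding discussion in Section~\ref{Sp_sec} supplying exactly the graph-from-fixed-map description you give in your first paragraph), and your argument likewise defers the substantive stack-theoretic identification of $\M_{\Ga}/A_{\Ga}$ with the fixed-locus component to that same reference. The construction you outline in the other direction (gluing vertex curves to the cyclic edge covers, with the low-valence vertices handled via the convention $\ov\M_{0,m}\!=\!\textnormal{point}$ for $m\!\le\!2$, and the deck transformations $\prod_e\Z_{\fd(e)}$ acting trivially on moduli but contributing automorphisms) is the standard one and matches the setup the paper imports from \cite{Sp}.
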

\begin{prp}[{\cite[Theorem~7.8]{Sp}}]\label{vnormal_prp}
Let $\Ga$ be a degree~$\bd$ genus~$0$ $m$-point decorated graph and $N_{\Ga}^{vir}$ the virtual normal bundle
to $\ga\!:\!\M_{\Ga}\!\lra\!\ov\M_{0,m}(\X,\bd)$.
Then,
\begin{align*}
\E\left(N_{\Ga}^{vir}\right)\!=\!\prod\limits_{\begin{subarray}{c}\textnormal{flags}\,F\,\textnormal{of}\,\Ga\\
\val(F)\ge 3\end{subarray}}\!\left(\om_F\!-\!\psi_F\right)
\frac{1}{\prod\limits_{\nv\in\Ver(\Ga)}\left[\phi_{\fp(\nv)}(\fp(\nv))\right]^{\Edg(\nv)-1}}
\prod\limits_{\begin{subarray}{c}\nv\in\Ver(\Ga)\\
\val(\nv)=2\\
\dec^{-1}(\nv)=\emptyset\end{subarray}}
\!\!\!\!\!\!\!
\left(\om_{F_1(\nv)}\!+\!\om_{F_2(\nv)}\right)\frac{1}{\prod\limits_{\begin{subarray}{c}\textnormal{flags}\,F\,\textnormal{of}\,\Ga\\\val(F)=1\end{subarray}}\om_F}
\\
\times\!\!\!\prod_{\begin{subarray}{c}e\in\Edg(\Ga)\\\partial e=\{\nv,\nv'\}\end{subarray}}\!\!
\left(
\frac{(-1)^{\fd(e)}\left(\fd(e)!\right)^2\left(u_j(I)\right)^{2\fd(e)}}{(\fd(e))^{2\fd(e)}}\!\!\!\!\!
\prod_{\begin{subarray}{c}r\in[N]-\left(I\cup\{j\}\right)\end{subarray}}\!\!
\frac{\prod\limits_{s=0}^{\fd(e)D_r(\ov{Ij})}\!\!\!\left(u_{r}(I)\!-\!\frac{s}{\fd(e)}u_j(I)\right)}
{\prod\limits_{s=\fd(e)D_r(\ov{Ij})+1}^{-1}\!\!\!\left(u_{r}(I)\!-\!\frac{s}{\fd(e)}u_j(I)\right)}
\right)\left|_{\begin{subarray}{c}I=\fp(\nv)\\\{j\}=\fp(\nv')-I\end{subarray}}\right..
\end{align*}
\end{prp}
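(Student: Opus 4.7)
The formula is stated as \cite[Theorem~7.8]{Sp}, and the approach I would take to derive it follows the standard virtual localization recipe of \cite{GraPa}: identify the $\T^N$-moving part of the tangent-obstruction two-term complex governing stable maps $[f\!:\!(\Si,z_1,\ldots,z_m) \to \X]$ at a generic point of $\M_{\Ga}$. The virtual normal bundle $N_{\Ga}^{vir}$ is, by definition, precisely this moving part.

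First I would invoke the three-term exact sequence decomposing the tangent-obstruction complex into (i) sections and obstructions of $f^*T\X$, (ii) infinitesimal deformations of the pointed domain curve $(\Si, z_1, \ldots, z_m)$, and (iii) node-smoothing parameters, one per node of $\Si$. Applying the normalization sequence for $\Si$ splits piece~(i) across the irreducible components according to the graph $\Ga$: non-contracted components correspond to edges $e$, while contracted components correspond to vertices $\nv$ with $\val(\nv) \ge 3$. For an edge $e$ giving a degree-$\fd(e)$ cover $\P^1 \to \ov{IJ}$ with $\fp(\nv)=I$ and $\fp(\nv')-I=\{j\}$, I would pull back \e_ref{TXses_e} to $\P^1$ and compute the equivariant \v{C}ech cohomology of $f^*T\X|_{\P^1}$ directly, using \e_ref{ufixed2_e} to read off the $\T^N$-weights on each line-bundle summand. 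This produces the product over $r \in [N] - (I \cup \{j\})$ in the displayed formula, while the prefactor $(-1)^{\fd(e)}(\fd(e)!)^2 u_j(I)^{2\fd(e)}/\fd(e)^{2\fd(e)}$ arises from the $r=j$ direction, i.e.~the normal direction along $\ov{IJ}$ itself. A contracted component at $\nv$ contributes one moving copy of $T_{[\fp(\nv)]}\X$ with equivariant Euler class $\phi_{\fp(\nv)}(\fp(\nv))$, and the overall exponent $-(\Edg(\nv)-1)$ emerges after imposing the $\Edg(\nv)$ gluing conditions at the nodes of $\Si$.

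Pieces~(ii) and~(iii) combine flagwise via the standard description of cotangent-space deformations at nodes: at a flag $F=(\nv,e)$ with $\val(\nv) \ge 3$, the smoothing contributes $\om_F - \psi_F$ with $\om_F$ given by \e_ref{omF_e}; a vertex $\nv$ with $\val(\nv)=2$ and no marked points contributes a single unobstructed smoothing factor $\om_{F_1(\nv)} + \om_{F_2(\nv)}$; and flags with $\val(F)=1$ contribute an $\om_F^{-1}$ factor corresponding to the infinitesimal movement of a free ramification point along the target $\P^1$. Multiplying all these equivariant Euler classes together and collecting the powers of $\phi_{\fp(\nv)}(\fp(\nv))$ from the gluing sequence yields the stated formula.

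The main obstacle will be the careful bookkeeping at the nodes, where the gluing sequence for $f^*T\X$ mixes moving contributions from adjacent edge and vertex pieces, and where the three distinct valence cases for vertices and flags must be correctly distinguished and signs tracked. The computation itself is a direct toric generalization of the Graber--Pandharipande formula for $\P^{n-1}$: the projective weights $\al_i - \al_j$ are replaced throughout by the toric weights $u_j(I)$ from Section~\ref{equiv-intro_sec}, and the short exact sequence \e_ref{TXses_e} plays the role of the Euler sequence.
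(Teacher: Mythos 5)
The paper itself contains no proof of this proposition: it is quoted verbatim from \cite[Theorem~7.8]{Sp}, translated from the fan language used there into the toric-pair notation of Section~\ref{overview_sec}, and the only argument the paper supplies nearby is the short check after the statement, via \e_ref{ufixed_e} and \e_ref{ufixed2_e}, that the edge factor is symmetric in the two vertices of each edge. Your outline therefore does not match anything argued in the paper; rather, it sketches the proof carried out in the cited reference, namely the standard Graber--Pandharipande analysis of the moving part of the tangent--obstruction complex: the normalization sequence splits $f^*T\X$ over edges and contracted components, the sequence \e_ref{TXses_e} together with \e_ref{ufixed2_e} supplies the edge weights, and the node and flag bookkeeping yields the $(\om_F\!-\!\psi_F)$, $(\om_{F_1(\nv)}\!+\!\om_{F_2(\nv)})$, $\om_F^{-1}$, and $[\phi_{\fp(\nv)}(\fp(\nv))]^{-(\Edg(\nv)-1)}$ factors. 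As a sketch this is sound, but two attributions should be corrected. First, the prefactor $(-1)^{\fd(e)}(\fd(e)!)^2u_j(I)^{2\fd(e)}/\fd(e)^{2\fd(e)}$ is the moving part of $H^0$ of the pullback of $T\ov{Ij}$, i.e.\ of the direction \emph{tangent} to the fixed curve; in terms of \e_ref{TXses_e} it comes from the two columns $j$ and $\widehat{j}\equiv I\!-\!v(I,j)$, each of degree one on $\ov{Ij}$ by \e_ref{Dedge_e}, not from the column $r\!=\!j$ alone viewed as a normal direction. Second, the factor $\om_F^{-1}$ at a valence-one flag is the weight of an infinitesimal automorphism of the domain component (a vector field vanishing at the opposite branch point), not a motion of the ramification point in the target. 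With those points fixed, your outline is an adequate roadmap for re-proving the formula, but all of the substance lies in the weight bookkeeping you defer, which is exactly what \cite{Sp} does; citing it, as the paper does, is the economical choice.
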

By \e_ref{ufixed_e} and \e_ref{ufixed2_e},
\begin{equation*}\begin{split}
(-1)^{\fd(e)}\left(u_j(I)\right)^{2\fd(e)}\left|_{\begin{subarray}{c}I=\fp(\nv)\\\{j\}=\fp(\nv')-I\end{subarray}}\right.=
u^{\fd(e)}_{\fp(\nv')-\fp(\nv)}(\fp(\nv))u^{\fd(e)}_{\fp(\nv)-\fp(\nv')}(\fp(\nv')),\qquad\qquad\\
u_{r}(I)\!-\!\frac{s}{\fd(e)}u_j(I)\left|_{\begin{subarray}{c}I=\fp(\nv)\\\{j\}=\fp(\nv')-I\end{subarray}}\right.=
\begin{cases}
\frac{
\left[\fd(e)D_r\left(\ov{\fp(\nv)\fp(\nv')}\right)-s\right]u_r(\fp(\nv))+su_r(\fp(\nv'))}
{\fd(e)D_r\left(\ov{\fp(\nv)\fp(\nv')}\right)}\quad&\hbox{if}\quad D_r\left(\ov{\fp(\nv)\fp(\nv')}\right)\!\neq\!0,\\
u_r\left(\fp(\nv)\right)=u_r\left(\fp(\nv')\right)\quad&\hbox{if}\quad D_r\left(\ov{\fp(\nv)\fp(\nv')}\right),s\!=\!0;
\end{cases}
\end{split}\end{equation*}
so the edge contributions to $\E(N^{vir}_{\Ga})$ in Proposition~\ref{vnormal_prp} are indeed symmetric in the vertices of each edge.

Let $f\!:\!(\P^1,z_1,\ldots,z_m)\!\lra\!\overline{IJ}$ be a $\T^N$\!-fixed
stable map. Thus, $f$ is a degree $d$ cover of $\overline{IJ}$ for some $d\!\in\!\Z^{>0}$.
By \e_ref{cVdfn_e},
$$\cV_E\big|_{[\P^1,z_1,\ldots,z_m,f]}=H^0\left(\P^1,f^*E^+\right)\oplus H^1\left(\P^1,f^*E^-\right).$$
By \cite[Exercise~27.2.3]{MirSym} together with \e_ref{ufixed_e} and \e_ref{Lfixed_e},
and with $\{j\}\equiv J\!-\!I$,
\begin{equation}\label{cV0_e}
\E(\cV_E)\big|_{[\P^1,z_1,\ldots,z_m,f]}=\prod_{i=1}^a\prod_{s=0}^{dL_i^+(\overline{IJ})}
\left[\la_i^+(I)-\frac{s}{d}u_j(I)\right]
\prod_{i=1}^b\prod_{s=dL_i^-(\ov{IJ})+1}^{-1}\left[\la^-_i(I)-\frac{s}{d}u_j(I)\right].
\end{equation}
By \e_ref{Lfixed_e},
\begin{equation*}\begin{split}\la_i^+(I)\!-\!\frac{s}{d}u_{J-I}(I)&=\begin{cases}
\frac{\left[dL_i^+(\ov{IJ})-s\right]\la_i^+(I)+s\la_i^+(J)}
{dL_i^+(\ov{IJ})}\quad&\hbox{if}\quad L_i^+(\ov{IJ})\!\neq\!0,\\
\la_i^+(I)=\la_i^+(J)\quad&\hbox{if}\quad L_i^+(\ov{IJ})\!=\!s\!=\!0,
\end{cases}\\
\la_i^-(I)\!-\!\frac{s}{d}u_{J-I}(I)&=
\frac{\left[dL_i^-(\ov{IJ})-s\right]\la_i^-(I)+s\la_i^-(J)}{dL_i^-(\ov{IJ})}.
\end{split}\end{equation*}

\subsection{Recursivity for the GW power series}
\label{ZCrec_sec}
For all $d\!\in\!\Z^{>0},I\!\in\!\V,j\!\in\![N]\!-\!I$, let
\begin{equation}\label{tifC_e}
\wt{\fC}_{I,j}(d)\equiv \frac{(-1)^dd^{2d-1}}{\left(d!\right)^2}\frac{1}{\left[u_j(I)\right]^{2d-1}}
\prod\limits_{\begin{subarray}{c}r\in[N]-(I\cup\{j\})\end{subarray}}\!\frac{\prod\limits_{s=dD_r(\ov{Ij})+1}^0
\left[u_r(I)\!-\!\frac{s}{d}u_j(I)\right]}
{\prod\limits_{s=1}^{ d D_r(\ov{Ij})}
\left[u_r(I)\!-\!\frac{s}{d}u_j(I)\right]}
\in\Q_{\al},
\end{equation}
\begin{equation}\begin{split}\label{fC_e}
\fCp_{I,j}(d)&\equiv\wt{\fC}_{I,j}(d)
\!\prod_{i=1}^a\!\prod_{s=1}^{dL_i^+(\ov{Ij})}\!\left[\la_i^+(I)\!-\!\frac{s}{d}u_j(I)\right]
\prod_{i=1}^b\!\prod_{s=0}^{-dL_i^-(\ov{Ij})-1}\!\left[\la_i^-(I)\!+\!\frac{s}{d}u_j(I)\right]\!\in\!\Q_{\al},\\
\fCpp_{I,j}(d)&\equiv
\wt{\fC}_{I,j}(d)
\prod\limits_{i=1}^a\!\prod\limits_{s=0}^{dL_i^+(\ov{Ij})-1}\left[\la_i^+(I)\!-\!\frac{s}{d}u_j(I)\right]
\prod\limits_{i=1}^b\prod\limits_{s=1}^{-dL_i^-(\ov{Ij})}\left[\la_i^-(I)\!+\!\frac{s}{d}u_j(I)\right]\!\in\!\Q_{\al}.
\end{split}\end{equation}

\begin{lmm}\label{ZCrec_lmm}
If $m\!\ge\!3$, $\ev_j\!:\!\ov\M_{0,m}(\X,\bd)\!\lra\!\X$
is the evaluation map at the $j$-th marked point,
$\eta_j\!\in\!H^*_{\T^N}\!(\X)$ and $\be_j\!\in\!\Z^{\ge 0}$ for
$j\!=\!2,\ldots,m$, then the power series
\begin{equation}\label{Zeb_e}\begin{split}
\cZp_{\eta,\be}(\h,Q)&\equiv\sum_{\bd\in\La}
Q^{\bd}\ev_{1*}\left[
\frac{\E\left(\cVp_E\right)}{\h\!-\!\psi_1}
\prod\limits_{j=2}^m\left(\psi_j^{\be_j}\ev_j^*\eta_j\right)\right]
\!\in\!H^*_{\T^N}(\X)\Lau{\h}[[\La]]\quad\textnormal{and}\\
\cZpp_{\eta,\be}(\h,Q)&\equiv\sum_{\bd\in\La}
Q^{\bd}\ev_{1*}\left[
\frac{\E\left(\cVpp_E\right)}{\h\!-\!\psi_1}
\prod\limits_{j=2}^m\left(\psi_j^{\be_j}\ev_j^*\eta_j\right)\right]
\!\in\!H^*_{\T^N}(\X)\Lau{\h}[[\La]]
\end{split}\end{equation}
are $\fCp$- and $\fCpp$-recursive, respectively, with $\fCp$ and $\fCpp$ given by \e_ref{fC_e}.
\end{lmm}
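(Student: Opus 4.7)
The strategy is to apply the Virtual Localization Theorem \e_ref{virloc_e} to the integrals defining $\cZp_{\eta,\be}$ and $\cZpp_{\eta,\be}$ and compare the resulting pole structure to the data in $\fCp$ and $\fCpp$. By \e_ref{phiprop_e} and the projection formula, for $I\!\in\!\V$ the restriction $\cZp_{\eta,\be}(x(I),\h,Q)$ is a sum over $\bd$ of $Q^{\bd}\int_{[\ov\M_{0,m}(\X,\bd)]^{vir}}\ev_1^*\phi_I\cdot\E(\cVp_E)(\h-\psi_1)^{-1}\prod_{j=2}^m\psi_j^{\be_j}\ev_j^*\eta_j$; by Propositions~\ref{flocus_prp} and \ref{vnormal_prp}, \e_ref{virloc_e} expresses this as a sum over decorated graphs $\Ga$, and by \e_ref{tan_e} only graphs whose first-marked-point vertex $\nv_1$ satisfies $\fp(\nv_1)=I$ contribute. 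I split these by the valence of $\nv_1$: if $\val(\nv_1)\ge 3$, then $\ov\M_{0,\val(\nv_1)}$ is positive-dimensional, $\psi_1|_{\M_\Ga}$ is nilpotent, $(\h-\psi_1)^{-1}$ truncates to a polynomial in $\h^{-1}$, and these graphs yield the Laurent-polynomial remainder permitted by Definition~\ref{rec_dfn}; if $\val(\nv_1)=2$, then $\nv_1$ carries only the first marked point and one edge $e$, and setting $d=\fd(e)$ and $\{j\}=\fp(\nv_2)-I$ (with $\nv_2$ the other endpoint of $e$), \e_ref{omF_e} gives $\psi_1|_{\M_\Ga}=-u_j(I)/d$, so $(\h-\psi_1)^{-1}$ contributes the simple pole $(\h+u_j(I)/d)^{-1}$.

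For each $(d,j)$, excising $\nv_1$ and $e$ turns such a graph into a degree-$(\bd-d\deg\ov{Ij})$ decorated graph $\Ga'$ with first marked point at $\nv_2$ (where $\fp(\nv_2)=v(I,j)$); re-summing over $\Ga'$ via the same virtual localization identifies this piece with $\LR{\cZp_{\eta,\be}(x(v(I,j)),\h,Q)}_{Q;\bd-d\deg\ov{Ij}}$ evaluated at $\h=-u_j(I)/d$. The residue prefactor decomposes as follows: the $\nv_1$-vertex and edge $e$ contributions to $1/\E(N^{vir}_\Ga)$ from Proposition~\ref{vnormal_prp}, combined with the automorphism factor $|A_\Ga|/|A_{\Ga'}|=d$, give $\wt{\fC}_{I,j}(d)$ of \e_ref{tifC_e}; the $E^+$-restriction of $\E(\cVp_E)$ to $\Sigma_e$, namely $\E(H^0(\P^1,f^*E^+(-z_1)))$, computed from \e_ref{cV0_e} with the $s=0$ factor omitted because of the $\si_1$-twist in \e_ref{cV'dfn_e}, yields the $L_i^+$ product in $\fCp_{I,j}(d)$; and the $E^-$-restriction $\E(H^1(\P^1,f^*E^-(-z_1)))$, which by the short exact sequence $0\to f^*L_i^-(-z_1)\to f^*L_i^-\to L_i^-|_{[I]}\to 0$ and the vanishing $H^0(\P^1,f^*L_i^-)=0$ equals $\la_i^-(I)\cdot\E(H^1(\P^1,f^*L_i^-))$ and, after reindexing via \e_ref{Lfixed_e}, yields the $L_i^-$ product in $\fCp_{I,j}(d)$. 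The $\cZpp_{\eta,\be}$ case is identical except that the twist in \e_ref{cV'dfn_e} is by $\si_2$ rather than $\si_1$; since $\si_2$ does not meet $\Sigma_e$, the $s=0$ factor is retained and the extra $\la_i^-(I)$ is absent, producing $\fCpp_{I,j}(d)$ in place of $\fCp_{I,j}(d)$.

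The main obstacle is the bookkeeping in the residue computation: verifying that, after combining the edge, vertex, and node factors from Proposition~\ref{vnormal_prp} with the sheaf-theoretic restrictions of $\cVp_E$ or $\cVpp_E$ to $\Sigma_e$, the outcome is exactly $\fCp_{I,j}(d)$ (resp.\ $\fCpp_{I,j}(d)$) of \e_ref{fC_e} with no residual factors that would distort the sub-graph $\Ga'$ sum. This repeatedly uses the identities \e_ref{ufixed_e}--\e_ref{Dedge_e} to convert weights at $I$ into weights at $v(I,j)$.
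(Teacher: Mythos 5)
Your overall strategy is the same as the paper's: localize, observe via \e_ref{tan_e} that only graphs whose first-marked-point vertex lies over $[I]$ contribute to the restriction to $[I]$, let the valence-$\ge 3$ graphs supply the Laurent-polynomial remainder allowed by Definition~\ref{rec_dfn}, and read off the recursion coefficient from the valence-two graphs by splitting off the edge; your bookkeeping for $\cZp_{\eta,\be}$ reproduces, in essence, the paper's computation leading to \e_ref{Ga0_e}. The gap is in the $\cZpp_{\eta,\be}$ case. Your claim that, since $\si_2$ does not meet $\Si_e$, ``the $s=0$ factor is retained and the extra $\la_i^-(I)$ is absent, producing $\fCpp_{I,j}(d)$'' asserts that the edge factor is the untwisted Euler class \e_ref{cV0_e}; but that product is not the one appearing in $\fCpp_{I,j}(d)$ in \e_ref{fC_e}: it contains the extra factors $\la_i^+(v(I,j))=\la_i^+(I)-\frac{dL_i^+(\ov{Ij})}{d}u_j(I)$ (the $s=dL_i^+(\ov{Ij})$ terms) and is missing the factors $\la_i^-(v(I,j))$ (the $s=-dL_i^-(\ov{Ij})$ terms), so it differs from $\fCpp_{I,j}(d)/\wt{\fC}_{I,j}(d)$ by $\E(E^+)(v(I,j))/\E(E^-)(v(I,j))$.

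The missing ingredient is the node term in the normalization sequence $0\to f^*E^{\pm}(-\si_2)\to f_0^*E^{\pm}\oplus f_c^*E^{\pm}(-\si_2)\to E^{\pm}|_p\to 0$, where $p$ is the node over $[v(I,j)]$. For the sub-graph sum over $\Ga'$ to reproduce $\cZpp_{\eta,\be}(x(v(I,j)),\h,Q)$ — which is built from $\cVpp_E$ twisted at the \emph{original} second marked point, lying in the $\Ga'$ part — the fiber $E^{\pm}|_p$ must be absorbed into the edge factor. The correct edge factor is therefore the bundle twisted at the node over $[v(I,j)]$ (this is exactly \e_ref{gluing_e} and the second line of \e_ref{Zrec_e0} in the paper): for $E^+$ one removes the weight $\la_i^+(v(I,j))$, and for $E^-$ one adds $\la_i^-(v(I,j))$, and it is this node-twisted product, not the untwisted one, that equals the $L_i^{\pm}$ products in $\fCpp_{I,j}(d)$. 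As written, your argument either yields the wrong structure constants or leaves a residual node factor that prevents the $\Ga'$-sum from being $\cZpp_{\eta,\be}$ at $v(I,j)$; note that the same care is silently needed (and happens to come out right) in your $\cVp$ case, where the node fiber is naturally assigned to the $\Ga'$ side so that the sub-graph factor is $\cVp_E$ twisted at the node, i.e.\ at the sub-graph's first marked point. A further minor slip: the automorphism correction enters as a factor $\frac{1}{d}$ (dividing by the extra $\Z_d$), not as multiplication by $|A_{\Ga}|/|A_{\Ga'}|=d$.
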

\begin{proof}
This is obtained by applying the Virtual Localization Theorem~\e_ref{virloc_e} on $\ov\M_{0,m}(\X,\bd)$, using Section~\ref{Sp_sec}, and extending the
proof of \cite[Lemma~1.1]{bcov0} from the case of a positive line bundle over $\P^{n-1}$  to that of a split vector bundle $E\!=\!E^+\!\oplus\!E^-$ as in \e_ref{splitE_e} over an arbitrary symplectic toric manifold $\X$.
By \e_ref{phiprop_e}, \e_ref{push-fwd_e}, \e_ref{virloc_e}, and the second equation in \e_ref{tan_e}, a decorated graph may contribute to $\cZp_{\eta,\be}(\h,Q)(I)$ and $\cZpp_{\eta,\be}(\h,Q)(I)$  
only if $\fp(\dec(1))\!=\!I$. 
There are thus two types of contributing graphs:
the $A_I$ and the $B_I$ graphs, where $I\!\in\!\V$.
In an $A_I$ graph the first marked point is attached to a vertex $\nv_0$ of valence $2$, while in
a $B_I$ graph the first marked point is attached to a vertex $\nv_0$ of valence at least $3$.
If $\Ga$ is a $B_I$ graph and $\cZ_{\Ga}$ the corresponding component of $\ov\M_{0,m}(\X,\bd)^{\T^N}$, then
$$\psi_1^n\!=\!0\qquad\forall\,n\!>\!\val(\nv_0)\!-\!3.$$
Thus, $\Ga$ contributes a polynomial in $\h^{-1}$ to the coefficient of $Q^{\bd}$ in $\cZp_{\eta,\be}(\h,Q)(I)$ and $\cZpp_{\eta,\be}(\h,Q)(I)$.

In an $A_I$ graph there is
a unique vertex $\nv$ joined to $\nv_0$ by an edge.
Let $A_{(I,j)}(d_0)$ be the set of all $A_I$ graphs such that
$\fp(\nv)\!=\!v(I,j)$ and the edge having $\nv_0$ as a vertex is labeled $d_0$.
Thus, 
$$A_I=\bigcup\limits_{d_0=1}^{\i}\bigcup\limits_{j\notin I}A_{(I,j)}(d_0).$$
We fix $\Ga\!\in\!A_{(I,j)}(d_0)$ and denote by $\Ga_0$ and $\Ga_c$ the two graphs obtained by breaking $\Ga$
at $\nv$, adding a second marked point to the vertex $\nv$ in $\Ga_0$ and a first marked point to $\nv$ in $\Ga_c$, and requiring that marked points $2,\ldots,m$ are in $\Ga_c$; see Figure~\ref{Zrec_fig}.\footnote{Figure~\ref{Zrec_fig} is \cite[Figure~2]{bcov0} adapted to the toric setting.}
\begin{figure}
\begin{pspicture}(-1.2,-1.5)(10,2)
\psset{unit=.4cm}
\psline[linewidth=.04](2.5,0)(6,0)\rput(4.2,.7){\smsize{$d_0$}}
\pscircle*(2.5,0){.2}\rput(1.9,0){\smsize{$I$}}
\pscircle*(6,0){.2}\rput(7.7,0){\smsize{$v(I,j)$}}
\psline[linewidth=.04](2.5,0)(1,1.5)\rput(1,2){\smsize{$\bf 1$}}
\psline[linewidth=.04](6,0)(8,2)\pscircle*(8,2){.2}
\psline[linewidth=.04](8,2)(11,3)\pscircle*(11,3){.2}
\psline[linewidth=.04](8,2)(11,1)\pscircle*(11,1){.2}
\psline[linewidth=.04](6,0)(8,-2)\pscircle*(8,-2){.2}
\psline[linewidth=.04](8,-2)(9.5,-.5)\rput(10,-.5){\smsize{$\bf 2$}}
\psline[linewidth=.04](8,-2)(9.5,-3.5)\rput(10,-3.5){\smsize{$\bf 3$}}
\rput(5,-2){$\Ga$}
\psline[linewidth=.04](17.5,0)(21,0)\rput(19.2,.7){\smsize{$d_0$}}
\pscircle*(17.5,0){.2}\rput(16.9,0){\smsize{$I$}}
\pscircle*(21,0){.2}\rput(22.7,0){\smsize{$v(I,j)$}}
\psline[linewidth=.04](17.5,0)(16,1.5)\rput(16,2){\smsize{$\bf 1$}}
\psline[linewidth=.04](21,0)(22.5,1.5)\rput(22.5,2){\smsize{$\bf 2$}}
\rput(19,-1.5){$\Ga_0$}
\pscircle*(27,0){.2}\rput(28.7,0){\smsize{$v(I,j)$}}
\psline[linewidth=.04](27,0)(25.5,1.5)\rput(25.5,2){\smsize{$\bf 1$}}
\psline[linewidth=.04](27,0)(29,2)\pscircle*(29,2){.2}
\psline[linewidth=.04](29,2)(32,3)\pscircle*(32,3){.2}
\psline[linewidth=.04](29,2)(32,1)\pscircle*(32,1){.2}
\psline[linewidth=.04](27,0)(29,-2)\pscircle*(29,-2){.2}
\psline[linewidth=.04](29,-2)(30.5,-.5)\rput(31,-.5){\smsize{$\bf 2$}}
\psline[linewidth=.04](29,-2)(30.5,-3.5)\rput(31,-3.5){\smsize{$\bf 3$}}
\rput(26,-1.5){$\Ga_c$}
\end{pspicture}
\caption{A graph of type $A_{(I,j)}(d_0)$ and its two subgraphs}
\label{Zrec_fig}
\end{figure}
Thus, $\Ga_0$ consists only of the vertices $\nv_0$ and $\nv$ and the marked points~$1$ and $2$ attached to $\nv_0$ and $\nv$, respectively. With $\cZ_{\Ga}$ denoting the component in $\ov\M_{0,m}(\X,\bd)^{\T^N}$
corresponding to $\Ga$, $$\cZ_{\Ga}\cong\cZ_{\Ga_0}\times\cZ_{\Ga_c};$$
we denote by $\pi_0$ and $\pi_c$ the two projections. Thus,
\begin{equation}\label{gluing_e}
\cVp_E=\pi_0^*\cVp_E\oplus\pi_c^*\cVp_E\quad\textnormal{and}\quad
\cVpp_E=\pi_0^*\cVpp_E\oplus\pi_c^*\cVpp_E.
\end{equation}
These identities are obtained by considering the short exact sequence
of sheaves
$$0\lra f^*E^{\pm}\lra f_0^*E^{\pm}\oplus f_c^*E^{\pm}\lra E^{\pm}\big|_p\lra 0,$$
where $f\!:\!\Si\!\lra\!\X$ is a $\T^N$\!-fixed stable map whose corresponding graph
is $\Ga$, while $f_0$ and $f_c$ are its restrictions to the components of $\Si$ corresponding
to the edge leaving $\nv_0$ and the rest of $\Ga$.
Let $$\eta^{\be}\equiv\prod\limits_{j=2}^m\left(\psi_j^{\be_j}\ev_j^*\eta_j\right).$$
By \e_ref{gluing_e},
\begin{equation}\begin{split}\label{gluing_e2}
\frac{\E\left(\cVp_E\right)\!\!\eta^{\be}}{\h\!-\!\psi_1}\Big|_{\cZ_{\Ga}}\!\!\!\!=\!
\pi_0^*\left(\frac{\E\left(\cVp_E\right)}{\h\!-\!\psi_1}\right)\!
\pi_c^*\left(\E\left(\cVp_E\right)\!\!\eta^{\be}\right),\,
\frac{\E\left(\cVpp_E\right)\!\!\eta^{\be}}{\h\!-\!\psi_1}\Big|_{\cZ_{\Ga}}\!\!\!\!=\!
\pi_0^*\left(\frac{\E\left(\cVpp_E\right)}{\h\!-\!\psi_1}\right)\!
\pi_c^*\left(\E\left(\cVpp_E\right)\!\!\eta^{\be}\right).
\end{split}\end{equation}
By Proposition~\ref{vnormal_prp}, \e_ref{omF_e}, and \e_ref{ufixed_e}, 
\begin{equation}\label{normalglue_e}
\frac{\ev_1^*\phi_I\big|_{\cZ_{\Ga}}}{\E(N^{vir}_{\Ga})}=
\pi_0^*\left(\frac{\ev_1^*\phi_I}{\E(N^{vir}_{\Ga_0})}\right)
\pi_c^*\left(\frac{\ev_1^*\phi_{v(I,j)}}{\E(N^{vir}_{\Ga_c})}\right)
\frac{1}{-\frac{u_j(I)}{d_0}-\pi_c^*\psi_1}.
\end{equation}
By  \e_ref{cV0_e} and \e_ref{Lfixed_e}, on $\cZ_{\Ga_0}$
\begin{equation}\begin{split}\label{Zrec_e0}
\E\left(\cVp_E\right)\!&=
\!\prod_{i=1}^a\!\prod_{s=1}^{d_0L_i^+(\ov{Ij})}\!\left[\la_i^+(I)\!-\!\frac{s}{d_0}u_j(I)\right]
\prod_{i=1}^b\!\prod_{s=0}^{-d_0L_i^-(\ov{Ij})-1}\!\left[\la_i^-(I)\!+\!\frac{s}{d_0}u_j(I)\right],\\
\E\left(\cVpp_E\right)\!&=
\!\prod_{i=1}^a\!\prod_{s=0}^{d_0L_i^+(\ov{Ij})-1}\!\left[\la_i^+(I)\!-\!\frac{s}{d_0}u_j(I)\right]
\prod_{i=1}^b\!\prod_{s=1}^{-d_0L_i^-(\ov{Ij})}\!\left[\la_i^-(I)\!+\!\frac{s}{d_0}u_j(I)\right].
\end{split}\end{equation}
By Proposition~\ref{vnormal_prp},
\begin{equation}\label{normal0_e}
\E(N^{vir}_{\Ga_0})=\frac{(-1)^{d_0}\left(d_0!\right)^2}{d_0^{2d_0}}\left[u_j(I)\right]^{2d_0}
\prod_{\begin{subarray}{c}r\in[N]- (I\cup\{j\})\end{subarray}}\!\!
\frac{\prod\limits_{s=0}^{d_0D_r(\ov{Ij})}\!\left[u_r(I)\!-\!\frac{s}{d_0}u_j(I)\right]}
{\prod\limits_{s=d_0D_r(\ov{Ij})+1}^{-1}\!
\left[u_r(I)\!-\!\frac{s}{d_0}u_j(I)\right]}.
\end{equation}
By \e_ref{Zrec_e0}, \e_ref{normal0_e}, \e_ref{omF_e}, and \e_ref{fC_e},
\begin{equation}\label{Ga0_e}
\int_{\cZ_{\Ga_0}}\frac{\E\left(\cVp_E\right)\ev_1^*\phi_I}
{(\h\!-\!\psi_1)\E\left(N^{vir}_{\Ga_0}\right)}=
\frac{\fCp_{I,j}(d_0)}{\h\!+\!\frac{u_j(I)}{d_0}}\quad\textnormal{and}\quad
\int_{\cZ_{\Ga_0}}\frac{\E\left(\cVpp_E\right)\ev_1^*\phi_I}
{(\h\!-\!\psi_1)\E\left(N^{vir}_{\Ga_0}\right)}=
\frac{\fCpp_{I,j}(d_0)}{\h\!+\!\frac{u_j(I)}{d_0}}.
\end{equation}
By \e_ref{gluing_e2}, \e_ref{normalglue_e}, and \e_ref{Ga0_e},
\begin{equation}\begin{split}\label{finalrec_e}
\int_{\cZ_{\Ga}}
\frac{\E(\cVp_E)\ev_1^*\phi_I\eta^{\be}}{\h-\psi_1}\Big|_{\cZ_{\Ga}}
\frac{1}{\E(N^{vir}_{\Ga})}&=
\frac{\fCp_{I,j}(d_0)}{\h+\frac{u_j(I)}{d_0}}
\int_{\cZ_{\Ga_c}}\frac{\E(\cVp_E)\ev_1^*\phi_{v(I,j)}\eta^{\be}}{\h-\psi_1}
\frac{1}{\E(N^{vir}_{\Ga_c})}\Big|_{\h=-\frac{u_j(I)}{d_0}},\\
\int_{\cZ_{\Ga}}
\frac{\E(\cVpp_E)\ev_1^*\phi_I\eta^{\be}}{\h-\psi_1}\Big|_{\cZ_{\Ga}}
\frac{1}{\E(N^{vir}_{\Ga})}&=
\frac{\fCpp_{I,j}(d_0)}{\h+\frac{u_j(I)}{d_0}}
\int_{\cZ_{\Ga_c}}\frac{\E(\cVpp_E)\ev_1^*\phi_{v(I,j)}\eta^{\be}}{\h-\psi_1}
\frac{1}{\E(N^{vir}_{\Ga_c})}\Big|_{\h=-\frac{u_j(I)}{d_0}}.
\end{split}\end{equation}
By the first equation in \e_ref{finalrec_e} and the Virtual Localization Theorem~\e_ref{virloc_e},
the contribution of the $A_I$ graphs to the coefficient of
$Q^{\bd}$ in $\cZp_{\eta,\be}\big|_I$ is 
$$\sum_{d_0\ge 1}\sum\limits_{\begin{subarray}{c}j\in [N]-I\\d_0\cdot\deg\ov{Ij}\preceq\bd\end{subarray}}
\frac{\fCp_{I,j}(d_0)}{\h+\frac{u_j(I)}{d_0}}
\LR{\cZp_{\eta,\be}(x(v(I,j)),\h,Q)}_{Q;\bd-d_0\cdot\deg\ov{Ij}}\big|_{\h=-\frac{u_j(I)}{d_0}}$$
whenever $\bd\!\equiv\!\bd^*$ satisfies the two properties in Definition~\ref{rec_dfn} (which make evaluation at $\h\!=\!-\frac{u_j(I)}{d_0}$ meaningful).
An analogous statement holds when summing in the second equation in \e_ref{finalrec_e}.
\end{proof}

\subsection{MPC for the GW power series}
\label{ZMPC_sec}
Let $\cZp_1$ and $\cZpp_1$ be as in \e_ref{eZeta1ptdfn_e} and
$\cZp_{\eta,\be}$ and $\cZpp_{\eta,\be}$ be as in \e_ref{Zeb_e}.
\begin{lmm}\label{ZMPC_lmm}
For all $m\!\ge\!3,\eta_j\!\in\!H^*_{\T^N}(\X),
\be_j\!\in\!\Z^{\ge 0}$, the pairs
$(\cZpp_1(\h,Q),\h^{m-2}\cZp_{\eta,\be}(\h,Q))$ and $(\cZp_1(\h,Q),\h^{m-2}\cZpp_{\eta,\be}(\h,Q))$
satisfy the MPC.
\end{lmm}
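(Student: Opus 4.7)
By Lemma~\ref{MPCsym_lmm}, it suffices to prove the MPC for the pair $(\cZpp_1, \h^{m-2}\cZp_{\eta,\be})$; the other pair follows by the symmetric argument. My strategy, patterned on the proof of \cite[Lemma~1.2]{bcov0}, is to realize the formal series
$$\Phi\;:=\;\Phi_{\cZpp_1,\,\h^{m-2}\cZp_{\eta,\be}}$$
as the $\C^*$-localization of an equivariant integral on a moduli space of stable maps to $\X\!\times\!\P^1$, set up so that the integral itself is manifestly polynomial in~$\h$.

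The first step is to consider, for each $\bd\in\La$, the $\T^N\!\!\times\!\C^*$-equivariant moduli space $\ov\M_{0,m+1}(\X\!\times\!\P^1,(\bd,1))$, with $\C^*$ acting on $\P^1$ by $t\cdot[z_0\!:\!z_1]\!=\![z_0\!:\!tz_1]$ and $\h$ the $\C^*$-weight on $T_0\P^1$. I will define an equivariant class $F_\bd$, supported in the correct degrees, whose pushforward to a point yields $\Psi_\bd(\h,z)\!\in\!\Q[\al][\h][[z]]$; the integrand will include $\pr_\X^*\E(\cVpp_E)$, the twist $\ev_1^*\pr_\X^* e^{xz}$ at a marked point constrained to $0\!\in\!\P^1$, the insertions $\prod_{j=3}^{m+1}\bigl(\ev_j^*\pr_\X^*\eta_{j-1}\cdot\psi_j^{\be_{j-1}}\bigr)$ at the free marked points, and node-smoothing factors chosen so that no $\h^{-1}$ pole arises. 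Polynomiality in $\h$ then follows from \e_ref{push-fwd_e}, giving $\sum_\bd Q^\bd\Psi_\bd\!\in\!\Q[\al][\h][[z,\La]]$.

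The second step is to apply the Virtual Localization Theorem~\e_ref{virloc_e} to $\Psi_\bd$ with respect to the auxiliary $\C^*$-action. The $\C^*$-fixed loci are parameterised by stable maps whose source is a chain $C_1\cup C_0\cup C_2$ with $C_0\!\to\!\{\textnormal{pt}\}\!\times\!\P^1$ an isomorphism and $C_i\!\to\!\X\!\times\!\{0,\infty\}$ of bidegree $(\bd_i,0)$ with $\bd_1\!+\!\bd_2\!=\!\bd$. Marked points $1,2$ are forced onto $C_1, C_2$ respectively, and marked points $3,\ldots,m\!+\!1$ may sit on either side. For each such distribution, a further $\T^N$-localization on $\X$ via the classical Atiyah--Bott formula~\e_ref{ABo_e} expresses the fixed-locus contribution as
$$\sum_{I\in\V}\frac{e^{x(I)\cdot z}}{\prod_{j\in[N]-I}u_j(I)}\,\cdot\,\bigl[\textnormal{$C_1$-part}\bigr]|_I\,\cdot\,\bigl[\textnormal{$C_2$-part}\bigr]|_I.$$
For the ``extreme'' configurations in which all $m\!-\!1$ free marked points lie on $C_2$, the $C_1$-contribution reproduces $\cZpp_1(x(I),\h,Qe^{\h z})$ (with $e^{\h z\cdot\bd_1}$ arising from the $\C^*$-weight on the node via the divisor equation), while the $C_2$-contribution reproduces $\h^{m-2}\cZp_{\eta,\be}(x(I),-\h,Q)$, the $\h^{m-2}$ factor coming from the $\C^*$-Euler classes at the two ends of $C_0$ and the smoothing normal bundles at the two nodes. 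Summing over $\bd_1\!+\!\bd_2\!=\!\bd$, $I\!\in\!\V$, and $\bd\!\in\!\La$ reproduces exactly the definition of $\Phi$.

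The third step is to show that the non-extreme $\C^*$-fixed loci (where some marked point among $3,\ldots,m\!+\!1$ lies on $C_1$, or where $C_0$ is replaced by a chain with further ramifications) contribute $0$. Each such locus carries a factor coming from a ``$\C^*$-weight'' $(\h\!+\!u_j(I)/d)$ that pairs with the corresponding $(\psi_j\ev_j^*\eta_{j-1})$ insertion in a way that allows cancellation pairwise across the edges, by the same edge-combinatorics manipulations used in the proof of Lemma~\ref{ZCrec_lmm}; the structure coefficients $\fCp_{I,j}(d)$ and $\fCpp_{I,j}(d)$ of \e_ref{fC_e} re-appear as the relevant edge weights. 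The main obstacle will be the detailed design of the integrand $F_\bd$: it must simultaneously be $\h$-polynomial, yield $\Phi$ at the extreme loci with the correct $\h^{m-2}$ normalization, and make the non-extreme contributions cancel. This is where the toric-specific input enters: the excess-intersection / cancellation arguments from the $\X\!=\!\P^{n-1}$ case in \cite{bcov0} go through, with the edge-weight formula \e_ref{tifC_e} replacing the projective counterpart and Corollary~\ref{fixed_crl} providing the required description of $\T^N$-fixed curves in the toric target.
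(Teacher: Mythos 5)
Your overall strategy --- realizing $\Phi_{\cZpp_1,\h^{m-2}\cZp_{\eta,\be}}$ as a $\C^*$-localization of a manifestly $\h$-polynomial equivariant integral over a graph space of maps to $\X\!\times\!\P^1$ of bidegree $(\bd,1)$ --- is indeed the approach the paper takes (via $\fX_{\bd}(\X)$ and Lemma~\ref{push_lmm}). However, there are two genuine gaps in your design of the integrand, and they are exactly the points where the real work lies. First, the class you twist by, $\ev_1^*\,e^{x\cdot z}$ at a marked point over $0\!\in\!\P^1$, restricts on a fixed locus only to $e^{x(I')\cdot z}$ for the fixed point $I'$ labeling that vertex; it does not see the degree $\bd_L$ of the $C_1$-side, so nothing produces the shift $Q^{\bd_L}\!\mapsto\!(Q e^{\h z})^{\bd_L}$ required by Definition~\ref{MPC_dfn} ($Y(x(I),\h,Qe^{\h z})$), and your parenthetical appeal to ``the $\C^*$-weight on the node via the divisor equation'' does not supply it. What is needed is a global equivariant class $\Om_i\!\in\!H^2_{\T^1\times\T^N}(\fX_\bd(\X))$ with $\Om_i|_{\cZ_\Ga}\!=\!x_i(I)\!+\!(\bd_L)_i\h$; constructing it is the technical heart of the argument (Lemma~\ref{Om_lmm}, via Givental's map $\theta$ to the space of polynomials through an equivariant projective embedding of $\X$, using Proposition~\ref{dimK_prp}), and no evaluation class at a marked point can replace it. Without $\Om_i$ your integral is still polynomial in $\h$, but it does not localize to $\Phi$.

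Second, your treatment of the free marked points is off. Since the $\P^1$-degree is $1$, there is a unique component mapping isomorphically to $\P^1$ and no ``chains with further ramifications,'' so the only configurations to worry about are those with some insertion point on the $C_1$ side; but with your integrand (which carries only $\X$-factor classes $\eta_j,\psi^{\be_j}$ at those points) such configurations do \emph{not} vanish, and the cancellation you propose --- pairwise across edges with the coefficients $\fCp_{I,j}(d)$, $\fCpp_{I,j}(d)$ of \e_ref{fC_e} as weights, as in Lemma~\ref{ZCrec_lmm} --- is not a mechanism that applies here. The paper instead inserts $\prod_{j\ge3}\ev_j^*\E(\cO_{\P V}(1))$ with a linearization vanishing over $[1,0]$: this kills the unwanted loci termwise \emph{and} is precisely the source of the factor $(-\h)^{m-2}$ (each surviving insertion restricts to $-\h$ over $[0,1]$); see \e_ref{push_e} and the identity preceding \e_ref{LR_e2}. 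Your claimed origin of $\h^{m-2}$ --- Euler classes at the ends of $C_0$ and the node-smoothing weights --- is incorrect: those factors are $m$-independent and are what become the $\frac{1}{\h-\psi_2}$ and $\frac{1}{-\h-\psi_1}$ denominators feeding into $\cZpp_1$ and $\cZp_{\eta,\be}$. (A smaller bookkeeping issue: for the pair $(\cZpp_1,\h^{m-2}\cZp_{\eta,\be})$ the bundle in the integrand should be $\cVp_E$, twisted at the marked point on the $C_1$ side; its restriction to the left piece becomes a $\cVpp$-type contribution after relabeling, which is how $\cZpp_1$ appears. Your choice of $\cVpp_E$, together with the unspecified constraint on the second marked point, does not line up with the pair you are computing.)
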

Lemma~\ref{ZMPC_lmm} extends \cite[Lemma~1.2]{bcov0} from the case of a positive line bundle over $\P^{n-1}$  to that of a split vector bundle $E\!=\!E^+\!\oplus\!E^-$ as in \e_ref{splitE_e} over an arbitrary symplectic toric manifold $\X$.
While \cite[Lemma~1.2]{bcov0} follows from
\cite[Lemma~3.1]{bcov0}, Lemma~\ref{ZMPC_lmm}
follows from Lemma~\ref{push_lmm} below, which extends
\cite[Lemma~3.1]{bcov0} to the general toric case. 
The proof of Lemma~\ref{push_lmm} uses the Virtual Localization Theorem~\e_ref{virloc_e} 
instead of the classical one used in the $\X\!=\!\P^{n-1}$ case and  
Lemma~\ref{Om_lmm}, which is a general toric version of
the first displayed formula in \cite{bcov0} after \cite[(3.32)]{bcov0}.

As in \cite{Gi_equiv} and \cite{bcov0}, we consider the action of $\T^1$ on $V\!\equiv\!\C^2$
given by $\xi\cdot(z_0,z_1)\!\equiv\!(z_0,\xi^{-1}z_1)$
and the induced action on $\P V$.
Let $\h$ be the weight of the standard action
of $\T^1$ on $\C$.
For any $\bd\!\in\!\La$, let
$$\fX_{\bd}(\X)\!\equiv\!\left\{f\!\in\!\ov\M_{0,m}(\P V\!\times\!\X,(1,\bd)):
\ev_1(f)\!\in\![1,0]\!\times\!\X,\ev_2(f)\!\in\![0,1]\!\times\!\X\right\}.$$

By Proposition~\ref{flocus_prp},
the components of the fixed locus $\fX_{\bd}(\X)^{\T^1\times\T^N}$
of the $\T^1\!\times\!\T^N\!$-action on $\fX_{\bd}(\X)$ are indexed by decorated graphs
$\Ga$ of the following form.
Such a graph $\Ga$ has a unique edge of positive $\P V$-degree;
this special edge corresponds to a degree-one map
$f\!:\!\P^1\!\lra\!\P V\!\times\![I]$ for some $I\!\in\!\V$.
Edges to the left (respectively right) of this edge are mapped into $[1,0]\!\times\!\X$
(respectively $[0,1]\!\times\!\X$); see Figure~\ref{Xlocus_fig}, where we dropped the $\P V$-label of the 
vertices.\footnote{Figure~\ref{Xlocus_fig} is \cite[Figure~3]{bcov0} adapted to the toric setting.} Thus, the first marked point is attached to
some vertex to the left of the special edge,
while the second marked point is attached to some vertex to the right of the special edge.
\begin{figure}
\begin{pspicture}(-.5,-1.2)(10,1.8)
\psset{unit=.4cm}
\psline[linewidth=.1](17,0)(22,0)
\pscircle*(17,0){.2}\rput(17,.7){\smsize{$I$}}
\pscircle*(22,0){.2}\rput(22,.7){\smsize{$I$}}
\psline[linewidth=.04](17,0)(14,1)\pscircle*(14,1){.2}
\rput(15.5,1){\smsize{$2$}}\rput(14,1.7){\smsize{$J$}}
\psline[linewidth=.04](14,1)(11.5,2.5)\pscircle*(11.5,2.5){.2}
\rput(12.8,2.3){\smsize{$2$}}\rput(10.9,2.5){\smsize{$K$}}
\psline[linewidth=.04](14,1)(11.5,-.5)\pscircle*(11.5,-.5){.2}
\rput(12.3,.6){\smsize{$1$}}\rput(10.9,-.5){\smsize{$L$}}
\psline[linewidth=.04](17,0)(14.5,-1.5)\pscircle*(14.5,-1.5){.2}
\rput(16.2,-1.1){\smsize{$3$}}\rput(14.5,-.8){\smsize{$L$}}
\psline[linewidth=.04](14.5,-1.5)(12,-2)\rput(11.5,-2){\smsize{$\bf 1$}}
\psline[linewidth=.04](22,0)(25,1)\pscircle*(25,1){.2}
\rput(23.5,1){\smsize{$1$}}\rput(25,1.7){\smsize{$K$}}
\psline[linewidth=.04](25,1)(27,2)\rput(27.5,2){\smsize{$\bf 2$}}
\psline[linewidth=.04](25,1)(27.5,-.5)\pscircle*(27.5,-.5){.2}
\rput(26.7,.6){\smsize{$3$}}\rput(28.1,-.5){\smsize{$J$}}
\psline[linewidth=.04](22,0)(24.5,-1.5)\pscircle*(24.5,-1.5){.2}
\rput(22.9,-1.1){\smsize{$5$}}\rput(24.5,-.8){\smsize{$J$}}
\psline[linewidth=.04](24.5,-1.5)(27,-2)\rput(27.5,-2){\smsize{$\bf 3$}}
\end{pspicture}
\caption{A graph representing a fixed locus in $\fX_{\bd}(\X)$; $I,J,K,L\in\V$, $I\!\neq\!J,K,L$.}
\label{Xlocus_fig}
\end{figure}

Let $$\bd_L\!\equiv\!\bd_L(\Ga),\quad\bd_R\!\equiv\!\bd_R(\Ga)\!\in\!\La$$ denote the $\X$-degrees of the left-
and right-hand side (with respect to the special edge)
sub-graphs, respectively; thus, $\bd\!=\!\bd_L\!+\!\bd_R$.
Let $\cZ_{\Ga}$ be the component of $\fX_{\bd}(\X)^{\T^1\times\T^N}$ corresponding to $\Ga$.

\begin{lmm}\label{Om_lmm}
For every $i\!\in\![k]$ and $\bd\!\in\!\La$, there exists 
$$\Om_i\!\in\!H^2_{\T^1\times\T^N}(\fX_{\bd}(\X))\quad
\textnormal{such that}\quad\Om_i\big|_{\cZ_{\Ga}}\!=\!x_i(I)\!+\!\left(\bd_L(\Ga)\right)_i\h$$
for all graphs $\Ga$
corresponding to components of $\fX_{\bd}(\X)^{\T^1\times\T^N}\!\!\!\!$, with
$\bd_L(\Ga)$ and $I$ depending on $\Ga$ as above.
\end{lmm}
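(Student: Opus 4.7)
The plan is to construct $\Om_i$ as an equivariant pushforward from the universal curve over $\fX_\bd(\X)$ and to verify the claimed restriction by equivariant localization on its fibers. Let $\pi\colon\U\to\fX_\bd(\X)$ denote the universal curve and $\ev\colon\U\to\P V\times\X$ the universal evaluation, with $\pi_1,\pi_2$ the projections from $\P V\times\X$ onto its two factors. Equip $\ga_i^*$ with the trivial $\T^N$-lift of \e_ref{trivact_e} (so that $c_1^{\T^N}(\ga_i^*)=x_i$) and equip $\O_{\P V}(1)$ with an appropriate $\T^1$-lift. The natural candidate is
\[
\Om_i := \pi_*\bigl(c_1(\ev^*\pi_2^*\ga_i^*)\cdot c_1(\ev^*\pi_1^*\O_{\P V}(1))\bigr)\in H^2_{\T^1\times\T^N}(\fX_\bd(\X)),
\]
adjusted if necessary by a global multiple of $\h$ depending only on $\bd$.

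To compute $\Om_i|_{\cZ_\Ga}$, I fix a $\T^1\times\T^N$-fixed representative $f\colon\Si\to\P V\times\X$ in $\cZ_\Ga$ and decompose the universal curve fiber $\Si$ into its irreducible components: edge components $\Si_e\cong\P^1$ for each $e\in\Edg(\Ga)$, together with (possibly absent) contracted vertex components for vertices of valence at least three. The integrand being a product of two equivariant degree-two classes, I evaluate its integral over each component via the standard $\P^1$-equivariant integration formula. Contracted vertex contributions vanish, since the integrand restricts to a product of constant scalar classes on a complex one-dimensional curve. On the unique special edge, $f_2$ is constant to $[I]$ while $f_1$ is a degree-one isomorphism onto $\P V$; this edge contributes $x_i(I)\cdot\int_{\P V}c_1(\O_{\P V}(1))=x_i(I)$. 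On a non-special edge $e$ with $\partial e=\{\nv,\nv'\}$, $f_1$ is constant to either $[1,0]$ (left edges) or $[0,1]$ (right edges), so $c_1(\ev^*\pi_1^*\O_{\P V}(1))|_{\Si_e}$ is the scalar equal to the $\T^1$-weight of the chosen lift at that point, and the contribution reduces to this scalar times $\fd(e)\lr{\nH_i,\deg\overline{\fp(\nv)\fp(\nv')}}=\fd(e)(\deg\overline{\fp(\nv)\fp(\nv')})_i$. Summing over left edges yields $(\bd_L(\Ga))_i\h$, giving $\Om_i|_{\cZ_\Ga}=x_i(I)+(\bd_L(\Ga))_i\h$, provided the lift is chosen so that the $\T^1$-weight of $\O_{\P V}(1)$ at $[1,0]$ is $\h$ and at $[0,1]$ is $0$.

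The main obstacle will be the careful bookkeeping of the equivariant lift of $\O_{\P V}(1)$: any such lift obeys the constraint $w_{[1,0]}-w_{[0,1]}=-\h$, so achieving weights $(\h,0)$ directly requires passing to $\O_{\P V}(-1)$ twisted by a character (to give weights $(\h,0)$) and negating, combined with a global scalar correction of the form $\bd_i\h\cdot[\id]$ to absorb the changed degree contribution from the special edge. These adjustments only modify $\Om_i$ by terms constant along $\fX_\bd(\X)$, so they do not disturb the graph-dependent $\h$-term; once the lift is fixed, the localization computation yields the desired restriction formula. A secondary technical point is that the pushforward $\pi_*$ must be interpreted on the virtual fundamental class of the (in general singular) moduli space $\fX_\bd(\X)$, but this falls within the virtual-localization framework of \cite{GraPa} already used elsewhere in the paper.
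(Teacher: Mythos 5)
Your construction cannot produce a class with the stated restrictions, and the obstruction is structural rather than a matter of bookkeeping. Write $\kappa\in H^2_{\T^1\times\T^N}(\P V)$ for whatever second factor you pull back via $\ev^*\pi_1^*$ (a linearized $c_1(\O_{\P V}(\pm1))$ or anything else). Your fiberwise computation gives, on $\cZ_\Ga$, the value $\big(\!\int_{\P V}\kappa\big)\,x_i(I)+\kappa|_{[1,0]}\,(\bd_L)_i+\kappa|_{[0,1]}\,(\bd_R)_i$, so the target $x_i(I)+(\bd_L)_i\h$ forces $\int_{\P V}\kappa=1$, $\kappa|_{[1,0]}=\h$, $\kappa|_{[0,1]}=0$. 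But localization on $\P V$ (tangent weights $\mp\h$ at $[1,0]$, $[0,1]$) gives $\h\int_{\P V}\kappa=\kappa|_{[0,1]}-\kappa|_{[1,0]}$ for \emph{every} equivariant degree-two class, and your three requirements read $\h=-\h$. Your proposed repair makes exactly this error: taking $\O_{\P V}(-1)$ with weights $(\h,0)$, negating, and adding the constant $\bd_i\h$ yields restrictions $x_i(I)+(\bd_R(\Ga))_i\h$, not $x_i(I)+(\bd_L(\Ga))_i\h$; the discrepancy $\h\big((\bd_L)_i-(\bd_R)_i\big)$ varies with $\Ga$ (only $\bd_L+\bd_R=\bd$ is constant), so no correction by classes pulled back from a point can remove it. Negation flips the special-edge term and the edge terms together, which is precisely what the constraint above says you cannot decouple. (A minor side remark: the pushforward $\pi_*$ along the universal curve needs no virtual class, so that worry is moot; but it does not help with the main problem.)

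The paper's proof takes a genuinely different route that is designed to evade this obstruction: choose a very ample $\cL\to\X$ with its canonical linearization, embed $\X$ equivariantly in some $\P^{n-1}$, and map $\fX_{\bd}(\X)$ to Givental's space $\Poly^n_{\cL(\bd)}$ of $n$-tuples of binary forms by recording the composition $f_2\circ f_1^{-1}$ on the special component times the factors $(A_iz_1-B_iz_0)^{s_i}$ coming from the contracted-in-$\P V$ pieces. A fixed locus $\cZ_\Ga$ is sent to the single point $[z_0^{\cL(\bd_R)}z_1^{\cL(\bd_L)}a_1,\ldots,z_0^{\cL(\bd_R)}z_1^{\cL(\bd_L)}a_n]$, so the pulled-back equivariant hyperplane class of $\Poly^n_{\cL(\bd)}$ restricts to $\E(\cL)(I)+\cL(\bd_L)\h$: the exponent of $z_1$ literally records the left degree, and the coefficient $a_j$ records the weight at $\iota([I])$, where $I$ is the label of the special edge rather than of any marked point. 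One then defines $\Om_i$ by $\Q$-linearity using a basis of $H^2_{\T^N}(\X)$ consisting of the classes $\E(\cL_i)$ of ample line bundles together with the $\al_j$. No evaluation-map or universal-curve pushforward class sees this combination, which is why the detour through the polynomial space (Givental's ``Main Lemma'' guaranteeing continuity of $\theta$ on $\fX_{\bd}$) is the key input your argument is missing. If you want to salvage a universal-curve construction, you would at best obtain restrictions $x_i(I)+(\bd_R)_i\h$ and would then have to reprove Lemma~\ref{push_lmm} with the roles of the two sides exchanged, invoking Lemma~\ref{MPCsym_lmm}; as a proof of Lemma~\ref{Om_lmm} as stated, the argument fails.
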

\begin{proof}
We follow the proof in \cite[Section~11]{Gi_equiv} and
\cite[Section~2]{Gi_mirr}. 

Given $s\!\in\!\Z^{\ge 0}$ and $n\!\ge\!1$, let 
$$\Poly_s^n\equiv\P\left(\left\{P\!\in\C[z_0,z_1]:P\textnormal{ homogeneous of degree }s\right\}^{\oplus n}\right).$$
We next define a morphism
$$\theta_0:\ov\M_{0,0}(\P V\!\times\!\P^{n-1},(1,s))\lra \Poly_s^n.$$
If $[\Si,f]$ is an element of $\ov\M_{0,0}(\P V\!\times\!\P^{n-1},(1,s))$,
$\Si\!=\!\Si_0\!\cup\!\Si_1\!\cup\!\ldots\!\cup\!\Si_r$, where
$\Si_0$ is a $\P^1$, $f\Big|_{\Si_0}$ has degree $(1,s_0)$,
$\Si_i$ is connected for all $i\!\in\![r]$, and $f\Big|_{\Si_i}$ has degree $(0,s_i)$
for all $i\!\in\![r]$.
Thus, $$f(\Si_i)\subseteq\left\{[A_i,B_i]\right\}\!\times\!\P^{n-1}\quad\textnormal{for some}\quad
[A_i,B_i]\!\in\!\P V\qquad\forall\,i\!\in\![r].$$
Let ${\theta}_0[\Si,f]\!\equiv\![P_1g,\ldots,P_ng]$, where
\begin{align*}
f\big|_{\Si_0}\equiv(f_1,f_2),\qquad f_2\circ f_1^{-1}\equiv[P_1,\ldots,P_n]\!\in\!\Poly_{s_0}^n,
\qquad g\equiv\prod\limits_{i=1}^r(A_iz_1\!-\!B_iz_0)^{s_i}.
\end{align*}
Let $\theta\!\equiv\!\theta_0\circ\fgt$, where
$$\fgt:\ov\M_{0,m}(\P V\!\times\!\P^{n-1},(1,s))\lra\ov\M_{0,0}(\P V\!\times\!\P^{n-1},(1,s))$$
is the forgetful morphism.
By \cite[Section~11, Main Lemma]{Gi_equiv}, 
$\theta\big|_{\fX_s(\P^{n-1})}$ is continuous.

The torus $\T^1\!\times\!\T^n$ acts on $\Poly_s^n$ by
$$(\xi,t_1,\ldots,t_n)\cdot\left(P_1[z_0,z_1],\ldots,P_n[z_0,z_1]\right)\equiv
\left(t_1P_1[z_0,\xi z_1],\ldots,t_nP_n[z_0,\xi z_1]\right).$$
This action naturally lifts to the hyperplane line bundle over $\Poly_s^n$.
The map $\theta_0$ is $\T^1\!\times\!\T^n$-equivariant and hence so is $\theta$.

Let 
$\cL\!\lra\!\X$ be any very ample line bundle.
For any $\bd\!\in\!\La$, let $\cL(\bd)\!\equiv\!\blr{c_1(\cL),\bd}$.
Consider the canonical lift of the $\T^N$\!-action on $\X$ to $\cL$
given by Proposition~\ref{Pic_prp} together with \e_ref{trivact_e}.
Thus, there exists $n$, an injective group homomorphism
$\iota_{\T}\!:\!\T^N\!\lra\!\T^n$, and an $\iota_{\T}$-equivariant embedding
$\iota\!:\!\X\!\lra\!\P^{n-1}$ such that $\iota^*\cO_{\P^{n-1}}(1)\!=\!\cL$.
We consider the $\T^N$\!-action on $\P^{n-1}$ induced by $\iota_{\T}$.
The embedding $\iota$ induces a $\T^1\!\times\!\T^N\!$-equivariant embedding
$$\fX_{\bd}(\X)\xrightarrow{F}\fX_{\cL(\bd)}(\P^{n-1}).$$
The composition
$$\fX_{\bd}(\X)\xrightarrow{F}\fX_{\cL(\bd)}(\P^{n-1})\xrightarrow{\theta}\Poly_{\cL(\bd)}^n$$
maps $\cZ_{\Ga}$ onto $[z_0^{\cL(\bd_R)}z_1^{\cL(\bd_L)}a_1,\ldots,z_0^{\cL(\bd_R)}z_1^{\cL(\bd_L)}a_n]$,
where $[a_1,\ldots,a_n]\!\equiv\!\iota([I])$.

Let $\Om\!\in\!H^2_{\T^1\times\T^N}\!(\Poly_{\cL(\bd)}^n)$ be the equivariant Euler class of the hyperplane line bundle and
$$\Om(\cL)\equiv F^*\theta^*\Om\in H^2_{\T^1\times\T^N}(\fX_{\bd}(\X)).$$
It follows that
\begin{equation}\label{Om_e}
\Om(\cL)\big|_{\cZ_{\Ga}}=
\Om\big|_{[z_0^{\cL(\bd_R)}z_1^{\cL(\bd_L)}a_1,\ldots,z_0^{\cL(\bd_R)}z_1^{\cL(\bd_L)}a_n]}
=\E(\cL)(I)\!+\!\blr{c_1(\cL),\bd_L}\h,
\end{equation}
where $\E(\cL)$ is the $\T^N$\!-equivariant Euler class of $\cL$.

By Proposition~\ref{dimK_prp}, there exist very ample line bundles
$\cL_i$ for all $i\!\in\![k]$ such that $\{c_1(\cL_i):i\!\in\![k]\}$ is a basis for $H^2(\X)$;
so, using the $\T^N$\!-action on each $\cL_i$ defined by \e_ref{trivact_e}, we find that
\begin{equation*}\begin{split}
\Span_{\Q}\left\{\E(\cL_i):i\!\in\![k]\right\}
=\Span_{\Q}\left\{x_i:i\!\in\![k]\right\}.
\end{split}\end{equation*}
Via Proposition~\ref{equivcoh_prp}\ref{equivcohr_prp}, this shows that
$\{\E(\cL_i),\al_j:i\!\in\![k],j\!\in\![N]\}$ is a basis for $H^2_{\T^N}(\X)$. 
As in \cite{Gi_mirr}, we define a $\Q$-linear map from $H^2_{\T^N}(\X)$ to $H^2_{\T^1\times\T^N}(\fX_{\bd}(\X))$
by sending
$\E(\cL_i)$ to $\Om(\cL_i)$ for all $i\!\in\![k]$ and $\al_j$ to $\al_j$ for all $j\!\in\![N]$.
Let $\Om_i\!\in\!H^2_{\T^1\!\times\!\T^N}(\fX_{\bd}(\X))$ be the image of $x_i$ 
under this map.
The claim now follows from \e_ref{Om_e}.
\end{proof}
\begin{lmm}\label{push_lmm}
Let $\eta^{\be}\!\equiv\!\prod\limits_{j=2}^m\left(\psi_j^{\be_j}\ev_j^*\eta_j\right)$ in $H_{\T^N}^*(\ov\M_{0,m}(\X,\bd))$ and
let $$\pi:\ov\M_{0,m}(\P V\!\times\!\X,(1,\bd))\lra\ov\M_{0,m}(\X,\bd)$$ denote
the natural projection.
With $\Phi$ as in Definition~\ref{MPC_dfn} and $\Om_i$ as in Lemma~\ref{Om_lmm},
\begin{equation}\begin{split}\label{push_e}
(-\h)^{m-2}\Phi_{\cZpp_1,\cZp_{\eta,\be}}(\h,z,Q)&=
\sum_{\bd\in\La}\!Q^{\bd}\!\int_{[\fX_{\bd}(\X)]^{vir}}\!\!\!\!\ne^{\sum\limits_{i=1}^k\Om_i z_i}\!\!
\pi^*\left[\E\left(\cVp_E\right)\eta^{\be}\right]\prod_{j=3}^m\ev_j^*\E(\cO_{\P V}(1)),\\
(-\h)^{m-2}\Phi_{\cZp_1,\cZpp_{\eta,\be}}(\h,z,Q)&=
\sum_{\bd\in\La}\!Q^{\bd}\!\int_{[\fX_{\bd}(\X)]^{vir}}\!\!\!\!\ne^{\sum\limits_{i=1}^k\Om_i z_i}\!\!
\pi^*\left[\E\left(\cVpp_E\right)\eta^{\be}\right]\prod_{j=3}^m\ev_j^*\E(\cO_{\P V}(1)).
\end{split}\end{equation}
\end{lmm}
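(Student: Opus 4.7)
The plan is to apply the Virtual Localization Theorem~\e_ref{virloc_e} to the right-hand sides of~\e_ref{push_e}, using the $\T^1\!\times\!\T^N$-action on $\fX_\bd(\X)$ together with the graph description of Section~\ref{Sp_sec}, and to match the outcome term by term with the defining expression of $\Phi_{\cZpp_1,\cZp_{\eta,\be}}$ (respectively $\Phi_{\cZp_1,\cZpp_{\eta,\be}}$). Every fixed-locus graph $\Ga$ carries a unique distinguished backbone edge $e_0$ of $\P V$-degree one whose two endpoints both carry the same label $I\!\in\!\V$ and map to $[1,0]\!\times\![I]$ and $[0,1]\!\times\![I]$ respectively. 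Since $\E(\cO_{\P V}(1))$ restricts to $0$ at $[1,0]$ and to $\h$ at $[0,1]$, the factor $\prod_{j=3}^m\ev_j^*\E(\cO_{\P V}(1))$ forces every extra marked point to sit on the right subgraph $\Ga_R$ and contributes $\h^{m-2}$ to the integrand, while the left subgraph $\Ga_L$ carries only the original first marked point.

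Breaking $\Ga$ at $e_0$ gives the factorization $\cZ_\Ga\!=\!\cZ_{\Ga_L}\!\times\!\cZ_{\Ga_R}$, with $\Ga_L$ representing a component of $\ov\M_{0,2}(\X,\bd_L)^{\T^N}$ (with a new marked point inserted at the backbone node) and $\Ga_R$ a component of $\ov\M_{0,m}(\X,\bd_R)^{\T^N}$ (likewise). By Proposition~\ref{vnormal_prp} the virtual normal bundle of $\cZ_\Ga$ decomposes into those of $\cZ_{\Ga_L}$ and $\cZ_{\Ga_R}$, the two flag factors $\h\!-\!\psi_L$ and $-\h\!-\!\psi_R$ attached to $e_0$, and the scalar $\prod_{j\in[N]-I}u_j(I)$ recording the normal directions of $[I]$ inside $\X$. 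Lemma~\ref{Om_lmm} evaluates $\ne^{\sum_i\Om_iz_i}\big|_{\cZ_\Ga}\!=\!\ne^{x(I)\cdot z}\cdot\ne^{\h z\cdot\bd_L}$; the first factor supplies the $\ne^{x(I)\cdot z}$ in the definition of $\Phi$, while $\ne^{\h z\cdot\bd_L}Q^{\bd_L}\!=\!(Q\ne^{\h z})^{\bd_L}$ shifts the left-subgraph argument from $Q$ to $Q\ne^{\h z}$.

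Next, the pull-back $\pi^*\cVp_E$ (respectively $\pi^*\cVpp_E$) is split across the two backbone nodes using the normalization short exact sequence for $\Si$: the twist $-\si_1$ in $\cVp_E$ lies on the left (since it houses the original first marked point), so after the appropriate relabeling of the new marked point inserted at the node one obtains $\cVpp_E$ on $\Ga_L$; on the right the sequence $0\to E^\pm(-\si)\to E^\pm\to E^\pm|_\si\to 0$ converts the untwisted $H^0/H^1$ on $\Si_R$ into $\cVp_E$ on $\Ga_R$ together with a fiber correction that cancels the $E^\pm|_{\tn{node}}$ contribution from the normalization sequence. The parallel analysis for $\cVpp_E$ yields $\cVp_E$ on $\Ga_L$ paired with $\cVpp_E$ on $\Ga_R$. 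Summing these products over all $\Ga_L$ and $\Ga_R$ respectively reconstructs $\cZpp_1(x(I),\h,Q\ne^{\h z})$ (respectively $\cZp_1$) on the left and $\cZp_{\eta,\be}(x(I),-\h,Q)$ (respectively $\cZpp_{\eta,\be}$) on the right; multiplying by the scalar $\ne^{x(I)\cdot z}/\prod_{j\in[N]-I}u_j(I)$ and summing over $I\!\in\!\V$ produces exactly $(-\h)^{m-2}\Phi$ as required.

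The main obstacle is the bookkeeping of the $\cVp_E/\cVpp_E$ splittings at the two nodes of $e_0$: the asymmetry of the twists $-\si_1$ and $-\si_2$, together with the relabeling of which subtree marked point plays the role of ``marked point~$1$'' in each of the newly-formed moduli spaces, is exactly what produces the mixed pairing (one primed and one double-primed factor) in the statement rather than two primed or two double-primed factors. The $\T^1$-tangent-weight sign conventions at $[1,0]$ and $[0,1]$ and the interpretation of $\h^{m-2}$ versus $(-\h)^{m-2}$ have to be tracked carefully to arrive at precisely the ordering $(\cZpp_1,\cZp_{\eta,\be})$ (respectively $(\cZp_1,\cZpp_{\eta,\be})$) rather than any of the other three combinations.
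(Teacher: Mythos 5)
Your proposal is correct and follows essentially the same route as the paper's proof: virtual localization on $\fX_{\bd}(\X)$, splitting each fixed-locus graph at the distinguished edge of $\P V$-degree one, Lemma~\ref{Om_lmm} to convert $Q^{\bd_L}$ into $(Q\ne^{\h z})^{\bd_L}$, normalization-sequence splitting of $\cVp_E$ and $\cVpp_E$ across the node, and reassembly of the two halves into the one-point series. The only cosmetic differences are that the paper keeps the node on the left piece labeled as the \emph{second} marked point and obtains $\cZpp_1$ via the marked-point swap $\ev_{2*}\!\left[\E(\cVp_E)/(\h\!-\!\psi_2)\right]\!=\!\ev_{1*}\!\left[\E(\cVpp_E)/(\h\!-\!\psi_1)\right]$ (equivalent to your relabeling), and that the linearization of $\cO_{\P V}(1)$ used here vanishes at $[1,0]$ and restricts to $-\h$ at $[0,1]$, so the marked points $3,\ldots,m$ contribute $(-\h)^{m-2}$, which settles the sign you flagged.
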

\begin{proof}
We apply the Virtual Localization Theorem~\e_ref{virloc_e} to the right-hand side of each of the two equations in \e_ref{push_e},
using Section~\ref{Sp_sec} and extending the proof of \cite[Lemma~3.1]{bcov0} from the case of a positive line bundle over $\P^{n-1}$  to that of a split vector bundle $E\!=\!E^+\!\oplus\!E^-$ as in \e_ref{splitE_e} over an arbitrary symplectic toric manifold $\X$. The possible contributing fixed loci graphs are described above.
Given such a fixed locus graph $\Ga$, we denote by $N^{vir}_{\Ga}$ the virtual normal bundle to the corresponding
component of the fixed locus inside the moduli space.
We denote by $\cA_I$ the set of all $\T^1\!\times\!\T^N\!$-fixed loci graphs whose unique edge
of positive $\P V$-degree corresponds to a map $\P^1\!\lra\!\P V\!\times\![I]$, where $I\!\in\!\V$.
A graph $\Ga\!\in\!\cA_I$ breaks into $3$ graphs - $\Ga_L$, $\Ga_R$, and $\Ga_0$ - as follows;
see also Figure~\ref{Xsplit_fig}.\footnote{Figure~\ref{Xsplit_fig} is \cite[Figure~4]{bcov0} adapted to the toric setting.}
The graph $\Ga_L$ is obtained by considering all vertices and edges of $\Ga$ to the left of the special edge
(of positive $\P V$-degree) and adding a marked point labeled $2$ at the vertex belonging to the special edge.
Given that all vertices in this ``left-hand side graph'' are labeled $([1,0],I)$ for some $I\!\in\!\V$, it defines a component of
$\ov\M_{0,2}(\X,\bd_L)^{\T^N}\!$. The graph $\Ga_R$ is obtained by considering all vertices of $\Ga$
to the right of the special edge and adding a marked point labeled $1$ at the vertex belonging to the special edge.
Given that all vertices in this ``right-hand side graph'' are labeled $([0,1],I)$ for some $I\!\in\!\V$,
it defines a component of
$\ov\M_{0,m}(\X,\bd_R)^{\T^N}\!$. Finally, $\Ga_0$ is the special edge with $2$ marked points added. They are labeled $1$ in the left-hand side and $2$ in the right-hand side.
\begin{figure}
\begin{pspicture}(-.5,-1.2)(10,1.8)
\psset{unit=.4cm}
\pscircle*(10,0){.2}\rput(10,.7){\smsize{$I$}}
\psline[linewidth=.04](10,0)(7,1)\pscircle*(7,1){.2}
\rput(8.5,1){\smsize{$2$}}\rput(7,1.7){\smsize{$J$}}
\psline[linewidth=.04](7,1)(4.5,2.5)\pscircle*(4.5,2.5){.2}
\rput(5.8,2.3){\smsize{$2$}}\rput(3.9,2.5){\smsize{$K$}}
\psline[linewidth=.04](7,1)(4.5,-.5)\pscircle*(4.5,-.5){.2}
\rput(5.3,.6){\smsize{$1$}}\rput(3.9,-.5){\smsize{$L$}}
\psline[linewidth=.04](10,0)(7.5,-1.5)\pscircle*(7.5,-1.5){.2}
\rput(9.2,-1.1){\smsize{$3$}}\rput(7.5,-.8){\smsize{$L$}}
\psline[linewidth=.04](7.5,-1.5)(5,-2)\rput(4.5,-2){\smsize{$\bf 1$}}
\psline[linewidth=.04](10,0)(11.5,1.5)\rput(11.5,2){\smsize{$\bf 2$}}
\psline[linewidth=.04](17,0)(15.5,1.5)\rput(15.5,2){\smsize{$\bf 1$}}
\psline[linewidth=.04](22,0)(23.5,1.5)\rput(23.5,2){\smsize{$\bf 2$}}
\psline[linewidth=.1](17,0)(22,0)
\pscircle*(17,0){.2}\rput(17,.7){\smsize{$I$}}
\pscircle*(22,0){.2}\rput(22,.7){\smsize{$I$}}
\psline[linewidth=.04](29,0)(27.5,1.5)\rput(27.5,2){\smsize{$\bf 1$}}
\pscircle*(29,0){.2}\rput(29,.7){\smsize{$I$}}
\psline[linewidth=.04](29,0)(32,1)\pscircle*(32,1){.2}
\rput(30.5,1){\smsize{$1$}}\rput(32,1.7){\smsize{$K$}}
\psline[linewidth=.04](32,1)(34,2)\rput(34.5,2){\smsize{$\bf 2$}}
\psline[linewidth=.04](32,1)(34.5,-.5)\pscircle*(34.5,-.5){.2}
\rput(33.7,.6){\smsize{$3$}}\rput(35.1,-.5){\smsize{$J$}}
\psline[linewidth=.04](29,0)(31.5,-1.5)\pscircle*(31.5,-1.5){.2}
\rput(29.9,-1.1){\smsize{$5$}}\rput(31.5,-.8){\smsize{$J$}}
\psline[linewidth=.04](31.5,-1.5)(34,-2)\rput(34.5,-2){\smsize{$\bf 3$}}
\end{pspicture}
\caption{The three sub-graphs of the graph in Figure~\ref{Xlocus_fig}}
\label{Xsplit_fig}
\end{figure}
Thus,
$$\cZ_{\Ga}\cong \cZ_{\Ga_L}\times\cZ_{\Ga_0}\times\cZ_{\Ga_R};$$
we denote by $\pi_L$, $\pi_0$, and $\pi_R$
the corresponding projections.

It follows that
\begin{equation}\begin{split}\label{LR_e1}
\pi^*\cVp_E&=\pi_L^*\cVp_E
\oplus\pi_R^*\cVp_E,\quad\pi^*\cVpp_E=\pi_L^*\cVpp_E
\oplus\pi_R^*\cVpp_E,\\
\frac{N^{vir}_{\Ga}}{T_{[I]}\X}&=
\pi_L^*\left(\frac{N^{vir}_{\Ga_L}}{T_{[I]}\X}\right)
\oplus\pi_R^*\left(\frac{N^{vir}_{\Ga_R}}{T_{{[I]}}\X}\right)\oplus
\pi_L^*L_2\otimes\pi_0^*L_1\oplus\pi_0^*L_2\otimes\pi_R^*L_1,
\end{split}\end{equation}
where $L_2\!\lra\!\cZ_{\Ga_L}$,$L_1,L_2\!\lra\!\cZ_{\Ga_0}$, and $L_1\!\lra\!\cZ_{\Ga_R}$
are the tautological tangent line bundles.
The first two equations in \e_ref{LR_e1}
follow similarly to \e_ref{gluing_e}.

By \e_ref{LR_e1} and \e_ref{tan_e},
\begin{equation}\begin{split}\label{LR_e2}
\pi^*\left[\E\left(\cVp_E\right)\eta^{\be}\right]
\prod\limits_{j=3}^m\ev_j^*\left[\E\left(\cO_{\P V}(1)\right)\right]\big|_{\cZ_{\Ga}}&=
\pi_L^*\left[\E\left(\cVp_E\right)\right]\pi_R^*\left[\E\left(\cVp_E\right)\eta^{\be}\left(-\h\right)^{m-2}\right],\\
\frac{\E(T_{[I]}\X)}{\E\left(N^{vir}_{\Ga}\right)}=
\pi_L^*\left[\frac{\ev_2^*\phi_I}{\E\left(N^{vir}_{\Ga_L}\right)}\right]&
\pi_R^*\left[\frac{\ev_1^*\phi_I}{\E\left(N^{vir}_{\Ga_R}\right)}\right]
\frac{1}{\left(\h\!-\!\pi_L^*\psi_2\right)\left((-\h)\!-\!\pi_R^*\psi_1\right)},
\end{split}\end{equation}
and the first equation in \e_ref{LR_e2} with $\cVp_E$ replaced by $\cVpp_E$ also holds. 
By \e_ref{LR_e2} and Lemma~\ref{Om_lmm},
\begin{equation}\begin{split}\label{LR_e3}
\int_{\cZ_{\Ga}}\frac{\ne^{\sum\limits_{i=1}^k\Om_iz_i}
\pi^*\left[\E\left(\cVp_E\right)\eta^{\be}\right]\prod\limits_{j=3}^m\ev^*_j
\E\left(\cO_{\P V}(1)\right)\big|_{\cZ_{\Ga}}}{\E\left(N^{vir}_{\Ga}\right)}
=(-\h)^{m-2}\frac{\ne^{\sum\limits_{i=1}^kx_i(I)z_i}}{\E(T_{[I]}\X)}
\qquad\qquad\qquad
\\\times
\left\{\ne^{\sum\limits_{i=1}^k\left(\bd_L\right)_iz_i\h}
\int_{\cZ_{\Ga_L}}\frac{\E\left(\cVp_E\right)\ev_2^*\phi_I}{\h\!-\!\psi_2}
\big|_{\cZ_{\Ga_L}}\frac{1}{\E\left(N^{vir}_{\Ga_L}\right)}\right\}\left\{\int_{\cZ_{\Ga_R}}\frac{\E\left(\cVp_E\right)\eta^{\be}\ev_1^*\phi_I}{(-\h)\!-\!\psi_1}
\frac{1}{\E\left(N^{vir}_{\Ga_R}\right)}\right\};
\end{split}\end{equation}
\e_ref{LR_e3} with $\cVp_E$ replaced by $\cVpp_E$ also holds.
In the $\bd_L\!=\!0$ case, the first curly bracket on the right-hand side of \e_ref{LR_e3}
is defined to be $1$. By the Virtual Localization Theorem~\e_ref{virloc_e} and \e_ref{push-fwd_e},
\begin{equation}\begin{split}\label{LR_e4}
\sum_{\Ga_{L}}Q^{\bd_L}\left\{\ne^{\sum\limits_{i=1}^k\left(\bd_L\right)_iz_i\h}
\int_{\cZ_{\Ga_L}}\frac{\E\left(\cVp_E\right)\ev_2^*\phi_I}{\h\!-\!\psi_2}
\big|_{\cZ_{\Ga_L}}\frac{1}{\E\left(N^{vir}_{\Ga_L}\right)}\right\}
=&\cZpp_1(\h,Q\ne^{\h z})\big|_I,\\
\sum_{\Ga_R}Q^{\bd_R}\left\{\int_{\cZ_{\Ga_R}}\frac{\E\left(\cVp_E\right)\eta^{\be}\ev_1^*\phi_I}{(-\h)\!-\!\psi_1}\frac{1}{\E\left(N^{vir}_{\Ga_R}\right)}\right\}=&\cZp_{\eta,\be}(-\h,Q)\big|_I,
\end{split}\end{equation}
where the first sum is taken after all graphs $\Ga_L$ corresponding to components of $\ov\M_{0,2}(\X,\bd_L)^{\T^N}$
and with second marked point mapping to $[I]$,
while the second is taken after all graphs corresponding to components of $\ov\M_{0,m}(\X,\bd_R)^{\T^N}$
such that the first marked point is mapped to $[I]$.
The equation obtained from \e_ref{LR_e4} by replacing $\cVp_E$ by $\cVpp_E$, $\cZpp_1$ by $\cZp_1$,
and $\cZp_{\eta,\be}$ by $\cZpp_{\eta,\be}$ also holds.
The claims follow from \e_ref{LR_e3} and \e_ref{LR_e4} (and their $\cVpp_E$ analogues), by summing the left-hand side of \e_ref{LR_e3}
over all graphs $\Ga\!\in\!\cA_I$ and all $I\!\in\!\V$.
\end{proof}

\subsection{Recursivity and MPC for the explicit power series}
\label{Y_sec}

As in \cite{Gi_mirr}, for each $I\!\in\!\V$ we define 
\begin{equation}\label{DeI*_e}
\De_I^*\equiv\left\{\bd\!\in\!\La:D_j(\bd)\ge 0\quad\forall\,j\!\in\!I\right\}.
\end{equation}
By \e_ref{uYgeom_e}, \e_ref{cY_e}, and \e_ref{fprestr_e},
\begin{equation}\label{Yrestr_e}
\lrbr{\cYp(x(I),\h,q)}_{q;\bd}\neq 0\Lra \bd\!\in\!\De_I^*\quad\textnormal{and}\quad
\lrbr{\cYpp(x(I),\h,q)}_{q;\bd}\neq 0\Lra \bd\!\in\!\De_I^*.
\end{equation}
\begin{lmm}\label{YCrec_lmm}
The power series $\cYp(x,\h,q)$ of \e_ref{cY_e} is $\fCp$-recursive with
$\fCp$ given by \e_ref{fC_e}. The power series $\cYpp(x,\h,q)$ of \e_ref{cY_e} is $\fCpp$-recursive with $\fCpp$
given by \e_ref{fC_e}.
\end{lmm}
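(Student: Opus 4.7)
The plan is to verify the $\fCp$- and $\fCpp$-recursivity of $\cYp$ and $\cYpp$ by a direct residue computation at each pole $\h=-u_j(I)/d$. First, I would note that by \e_ref{Yrestr_e} and \e_ref{uYgeom_e}, the coefficient
$$\lrbr{\cYp(x(I),\h,q)}_{q;\bd}=u(\bd;x(I),\h)\,\Ep(\bd;x(I),\h)$$
is a rational function in $\h$ with coefficients in $\Q[\al]$, whose finite poles lie only at $\h=-u_j(I)/s$ with $j\in[N]-I$ and $1\le s\le D_j(\bd)$; by Remark~\ref{non0u_rmk}, such a pole arising from the $s$-th factor of the $j$-th block is simple when $d=s$ and the pole $\h=-u_j(I)/d$ has contributions only from the single factor $u_j(I)+d\h$ in the denominator of $u(\bd;x(I),\h)$. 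Thus establishing $\fCp$-recursivity reduces to a residue identity at each $\h=-u_j(I)/d$, which amounts to showing that
$$\mathrm{Res}_{\h=-u_j(I)/d}\bigl[u(\bd;x(I),\h)\Ep(\bd;x(I),\h)\bigr]=-\fCp_{I,j}(d)\cdot u(\bd';x(v(I,j)),\h)\Ep(\bd';x(v(I,j)),\h)\big|_{\h=-u_j(I)/d},$$
where $\bd'\equiv\bd-d\cdot\deg\ov{Ij}$.

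For the main computation, I fix $I\in\V$, $j\in[N]-I$, $d\ge 1$, and decompose both products over $[N]$ defining $u(\bd;x(I),\h)$ into three index blocks, as dictated by \e_ref{Dedge_e}: (i) the ``edge indices'' $\{j,\widehat j\}$ where $D_\cdot(\ov{Ij})=1$, (ii) the indices $s\in I\cap v(I,j)$ on which $D_s$ is invariant under the shift $\bd\mapsto\bd'$, and (iii) the remaining $r\in[N]-(I\cup\{j\})$ on which $D_r(\bd)=D_r(\bd')+d\cdot D_r(\ov{Ij})$. Plugging $\h=-u_j(I)/d$ into a generic factor $u_r(I)+s\h$ and invoking \e_ref{ufixed2_e}, each such factor becomes an explicit linear combination of $u_r(I)$ and $u_r(v(I,j))$, so that after rearranging the $s$-ranges, block (ii) reproduces the part of $u(\bd';x(v(I,j)),\h)$ at $\h=-u_j(I)/d$ supported on $I\cap v(I,j)$, block (iii) splits cleanly into the $u(\bd';x(v(I,j)),\h)$ contribution on $[N]-(I\cup\{j\})$ times the explicit ``edge'' product appearing in $\wt\fC_{I,j}(d)$ of \e_ref{tifC_e}, and block (i) yields the remaining $(-1)^d d^{2d-1}/(d!)^2/u_j(I)^{2d-1}$ prefactor together with the $\widehat j$-contribution, using \e_ref{ufixed_e} to rewrite $u_{\widehat j}(v(I,j))=-u_j(I)$.

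For the $\Ep$ piece I perform the analogous bookkeeping using \e_ref{Lfixed_e}: each factor $\la_i^\pm(I)+s\h$ at $\h=-u_j(I)/d$ is a linear combination of $\la_i^\pm(I)$ and $\la_i^\pm(v(I,j))$, and splitting the product over $1\le s\le L_i^+(\bd)$ (respectively $0\le s\le-L_i^-(\bd)-1$) at the cutoff $s=dL_i^+(\ov{Ij})$ (respectively $s=-dL_i^-(\ov{Ij})-1$) produces the ``bulk'' product giving $\Ep(\bd';x(v(I,j)),\h)|_{\h=-u_j(I)/d}$ multiplied by precisely the $\la_i^\pm$ factors appearing in $\fCp_{I,j}(d)$ via \e_ref{fC_e}. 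Assembling all three blocks of $u$ with the $\Ep$ edge factors and comparing to \e_ref{tifC_e}--\e_ref{fC_e} yields the residue identity, and hence $\fCp$-recursivity by Remark~\ref{rec_rmk}. The proof for $\cYpp$ is identical, except that the $s$-ranges in $\Epp$ of \e_ref{wtU_e2} are shifted by one, which matches exactly the shift between $\fCp_{I,j}(d)$ and $\fCpp_{I,j}(d)$ in \e_ref{fC_e}.

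The main obstacle will be the careful bookkeeping of signs and of the ranges of the product indices for $r\in[N]-(I\cup\{j\})$, since $D_r(\ov{Ij})$ may be positive, zero, or negative and so the splitting of the $s$-product into ``bulk'' and ``edge'' portions has three different case structures; I would handle this by a single uniform substitution, using \e_ref{ufixed2_e} to rewrite factors symmetrically and then re-indexing $s\mapsto dD_r(\ov{Ij})-s$ in the numerator whenever $D_r(\ov{Ij})<0$, so that the telescoping between the $[I]$-product and the $[v(I,j)]$-product becomes transparent. A secondary subtlety is the $\h=0$ pole appearing when some $j\in I$ has $D_j(\bd)\ne 0$; but since $u_j(I)=0$ by \e_ref{fprestr_e}, these contribute to the polynomial-in-$\h^{\pm1}$ remainder on the right-hand side of Definition~\ref{rec_dfn} and do not interfere with the residue identities at the poles $\h=-u_j(I)/d$ with $j\notin I$.
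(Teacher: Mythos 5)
Your strategy is essentially the one the paper follows: Lemma~\ref{YCrec_lmm} is proved there by exactly this pole-by-pole matching, using \e_ref{ufixed_e}, \e_ref{ufixed2_e}, \e_ref{Lfixed_e}, and \e_ref{Dedge_e} to reindex the $u$- and $\la^{\pm}$-products; the paper runs the computation in the opposite direction (it expands $\cYpp(x(v(I,j)),-u_j(I)/d,q)$ in terms of the classes restricted to $[I]$ and compares with \e_ref{tifC_e}--\e_ref{fC_e}), and it collects the Laurent-polynomial remainder by applying the Residue Theorem on $\P^1$ to $\frac{1}{\h-z}\LR{\cYpp(x(I),z,q)}_{q;\bd^*}$ rather than by subtracting principal parts, which is an equivalent bookkeeping device.

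However, the central identity you propose to verify carries the wrong sign. Definition~\ref{rec_dfn} requires the principal part of $\LR{Z(x(I),\h,Q)}_{Q;\bd^*}$ at $\h=-u_j(I)/d$ to be $\frac{\fCp_{I,j}(d)}{\h+u_j(I)/d}$ times $\LR{Z(x(v(I,j)),\h,Q)}_{Q;\bd^*-d\cdot\deg\ov{Ij}}$ evaluated at $\h=-u_j(I)/d$, so the residue must equal $+\fCp_{I,j}(d)$ times that evaluation, not its negative. A sanity check for $\X=\P^1$ with $E=0$, $\bd^*=d$, $\bd'=0$: the residue of $1\big/\prod_{s=1}^d(s\h)\left(u_j(I)+s\h\right)$ at $\h=-u_j(I)/d$ is $(-1)^dd^{2d-1}\big/\left((d!)^2\left[u_j(I)\right]^{2d-1}\right)=\wt{\fC}_{I,j}(d)$, with the plus sign. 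The block decomposition you outline does in fact produce the plus sign, so this is a slip in the target statement rather than in the method, but as written the displayed identity is false and would contradict the recursivity being proved. Two smaller points: Remark~\ref{non0u_rmk} is what you need (via \e_ref{ufixed_e}) to know that $\LR{\cYp(x(v(I,j)),\h,q)}_{q;\bd'}$ is regular at $\h=-u_j(I)/d$, so that the evaluation is meaningful; the distinctness and simplicity of the poles of $\LR{\cYp(x(I),\h,q)}_{q;\bd}$ follow instead from \e_ref{fprestr_e} and \e_ref{ual_e}, since the weights $\al_r$ with $r\notin I$ are independent. Finally, the degree bookkeeping matching which coefficients actually contribute, namely the equivalence of $\bd^*\!\in\!\De_I^*$, $D_j(\bd^*)\!\ge\!d$ with $\bd'\!\in\!\De^*_{v(I,j)}$, $D_{\wh{j}}(\bd')\!\ge\!-d$ coming from \e_ref{Dedge_e}, is needed to see that the sum in Definition~\ref{rec_dfn} ranges over exactly the right pairs $(j,d)$; this is part of the bookkeeping you defer, and the paper records it explicitly.
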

\begin{proof}
The recursivity of $\cYp$ in the $E\!=\!E^+$ case is \cite[Proposition~6.3]{Gi_mirr}.
The proof of the recursivity of $\cYp$ in the general case is similar and so is
the proof of the recursivity of $\cYpp$.
We prove below the recursivity of $\cYpp$
extending the proof of (a) in \cite[Section~2.3]{bcov0}
and the proof of \cite[Proposition~6.3]{Gi_mirr}.
Let $I\!\in\!\V$, $j\!\in\![N]\!-\!I$, $J\!\equiv\!v(I,j)$, $\{\wh{j}\}\!\equiv\!I\!-\!J$.
By \e_ref{Yrestr_e}, \e_ref{cY_e}, Remark~\ref{non0u_rmk}, and \e_ref{ufixed_e}, 
\begin{equation}\begin{split}\label{Yrec_e1}
\cYpp\left(x(J),-\frac{u_j(I)}{d},q\right)\!=\!\!\!\!\!\!
\sum_{\begin{subarray}{c}\bd'\in\De^*_J\\
D_{j'}(\bd')\ge -d\end{subarray}}\!\!\!\!
q^{\bd'}\frac{\prod\limits_{\begin{subarray}{c}r\in[N]\\ D_r(\bd')<0\end{subarray}}\prod\limits_{s=D_r(\bd')+1}^0\left[u_r(J)\!-\!\frac{s}{d}u_j(I)\right]}
{\prod\limits_{\begin{subarray}{c}r\in[N]\\D_r(\bd')\ge0\end{subarray}}
\prod\limits_{s=1}^{D_r(\bd')}\left[u_r(J)\!-\!\frac{s}{d}u_j(I)\right]}\\\times
\prod\limits_{i=1}^a\prod\limits_{s=0}^{L_i^+(\bd')-1}\left[\la_i^+(J)\!-\!\frac{s}{d}u_j(I)\right]
\prod\limits_{i=1}^b\prod\limits_{s=1}^{-L_i^-(\bd')}\left[\la_i^-(J)\!+\!\frac{s}{d}u_j(I)\right].
\end{split}\end{equation}
By \e_ref{Yrec_e1}, \e_ref{ufixed2_e}, and \e_ref{Lfixed_e},
\begin{equation}\begin{split}\label{Yrec_e2}
\cYpp\left(x(J),-\frac{u_j(I)}{d},q\right)\!=\!\!\!\!\!\!
\sum_{\begin{subarray}{c}\bd'\in\De^*_J\\
D_{\wh{j}}(\bd')\ge -d\end{subarray}}\!\!\!\!\!\!
q^{\bd'}\frac{\prod\limits_{\begin{subarray}{c}r\in[N]\\D_r(\bd')<0\end{subarray}}
\prod\limits_{s=D_r(\bd')+1+dD_r(\ov{Ij})}^{dD_r(\ov{Ij})}\left[u_r(I)\!-\!\frac{s}{d}u_j(I)\right]}
{\prod\limits_{\begin{subarray}{c}r\in[N]\\D_r(\bd')\ge 0\end{subarray}}
\prod\limits_{s=1+dD_r(\ov{Ij})}^{D_r(\bd')+dD_r(\ov{Ij})}\left[u_r(I)\!-\!\frac{s}{d}u_j(I)\right]}\qquad\qquad\\\times
\prod_{i=1}^a\prod_{s=dL_i^+(\ov{Ij})}^{L_i^+(\bd')-1+dL_i^+(\ov{Ij})}
\left[\la_i^+(I)\!-\!\frac{s}{d}u_j(I)\right]
\prod\limits_{i=1}^b\prod\limits_{s=1-dL_i^-(\ov{Ij})}^{-L_i^-(\bd')-dL_i^-(\ov{Ij})}
\left[\la_i^-(I)\!+\!\frac{s}{d}u_j(I)\right].
\end{split}\end{equation}
By \e_ref{Dedge_e} and \e_ref{tifC_e},
\begin{equation}\label{fCjj'_e}
\wt{\fC}_{I,j}(d)=
\frac{(-1)^dd^{2d-1}}{\left(d!\right)^2}
\frac{1}{\left[u_j(I)\right]^{2d-1}}
\prod\limits_{r\in[N]-\{j,\wh{j}\}}\frac{\prod\limits_{s=1+dD_r(\ov{Ij})}^0
\left[u_r(I)\!-\!\frac{s}{d}u_j(I)\right]}
{\prod\limits_{s=1}^{dD_r(\ov{Ij})}
\left[u_r(I)\!-\!\frac{s}{d}u_j(I)\right]}.
\end{equation}
If $d\!\ge\!1$, $\bd^*\!\in\!\La$, $\bd'\!\equiv\!\bd^*\!-\!d\!\cdot\!\deg\ov{Ij}\!\in\!\La$, then, 
\begin{equation}\label{Y0rec_e2}
\Big[\bd^*\!\in\!\De_I^*\textnormal{ and }D_{j}(\bd^*)\!\ge\!d\Big]\quad
\Llra\quad\Big[\bd'\!\in\!\De^*_J\textnormal{ and }D_{\wh{j}}(\bd')\!\ge\!-d\Big]
\end{equation}
by \e_ref{Dedge_e}.
By  \e_ref{Yrestr_e}, \e_ref{cY_e}, \e_ref{Yrec_e2}, \e_ref{fCjj'_e}, and \e_ref{fC_e},
\begin{equation*}
\Res_{z=-\frac{u_j(I)}{d}}\left\{\frac{1}{\h\!-\!z}\LR{\cYpp(x(I),z,q)}_{q;\bd^*}\right\}
=\frac{\fCpp_{I,j}(d)}{\h\!+\!\frac{u_j(I)}{d}}
\LR{\cYpp\left(x(J),-\frac{u_j(I)}{d},q\right)}_{q;\bd^*-d\cdot\deg\ov{Ij}}
\end{equation*}
for all $\bd^*\!\in\!\La$.
Finally, viewing
$\frac{1}{\h-z}\LR{\cYpp(x(I),z,q)}_{q;\bd^*}$ as a rational function in $\h,z$, and $\al_j$
and using the Residue Theorem on $\P^1$, we obtain
\begin{equation*}\begin{split}
\sum_{d\ge 1}\!\!\!\!\sum_{\begin{subarray}{c}j\in [N]-I\\d\cdot\deg\ov{Ij}\preceq \bd^*\end{subarray}}\!\!
\frac{\fCpp_{I,j}(d)}{\h\!+\!\frac{u_j(I)}{d}}
\LR{\cYpp\left(x(J),-\frac{u_j(I)}{d},q\right)}_{q;\bd^*-d\cdot\deg\ov{Ij}}\!=\!\LR{\cYpp(x(I),\h,q)}_{q;\bd^*}
\\
-\Res_{z=0,\i}\left\{\frac{1}{\h\!-\!z}\LR{\cYpp\left(x(I),z,q\right)}_{q;\bd^*}\right\},
\end{split}\end{equation*}
where $\Res_{z=0,\i}\cF\!\equiv\!\Res_{z=0}\cF\!+\!\Res_{z=\i}\cF$. Since
$$\Res_{z=0,\i}\left\{\frac{1}{\h\!-\!z}\LR{\cYpp\left(x(I),z,q\right)}_{q;\bd^*}\right\}\!\in\!
\Q_{\al}[\h,\h^{-1}],$$
this concludes the proof. 
\end{proof}

\begin{lmm}\label{YMPC_lmm}
With $\cYp$ and $\cYpp$ defined by \e_ref{cY_e},
$(\cYp,\cYpp)$ satisfies the MPC.
\end{lmm}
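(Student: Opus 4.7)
My plan is to extend the strategy of the corresponding lemma for $\X\!=\!\P^{n-1}$ in \cite{bcov0} to the general toric setting. Fix $\bd\!\in\!\La$; it is enough to show that
$$F_{\bd}(\h,z)\equiv\LR{\Phi_{\cYp,\cYpp}(\h,z,Q)}_{Q;\bd}
=\!\!\sum_{\bd_1+\bd_2=\bd}\!\!\ne^{\h z\cdot\bd_1}\!\sum_{I\in\V}\frac{\ne^{x(I)\cdot z}u(\bd_1;x(I),\h)\Ep(\bd_1;x(I),\h)u(\bd_2;x(I),-\h)\Epp(\bd_2;x(I),-\h)}{\prod_{j\in[N]-I}u_j(I)}$$
lies in $\Q_{\al}[\h][[z]]$. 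By \e_ref{tan_e} one has $\prod_{j\notin I}u_j(I)\!=\!\phi_I(I)$, so the inner sum over $I$ is formally an equivariant $\X$-integral of a meromorphic rational expression in $x$; the $\h$-dependent denominators coming from \e_ref{uYgeom_e} are precisely the obstruction to direct polynomiality.

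The first technical step is to reinterpret the $I$-sum as a multi-dimensional contour integral in auxiliary variables $y=(y_1,\ldots,y_k)$ whose residues at the prescribed fixed points $y\!=\!x(I)$ reproduce the localization terms, following the toric-residue formalism of \cite[Sec.~6]{Gi_mirr}. Summing over $\bd_1\!+\!\bd_2\!=\!\bd$ and deforming the $\h$-contour so as to separate the poles at $\h\!\in\!\{-u_j(I)/d:I\!\in\!\V,\,j\!\in\![N]\!-\!I,\,d\!\ge\!1\}$ from the poles at $\h\!=\!0,\i$ decomposes
$$F_{\bd}(\h,z)=R^{(\i)}_{\bd}(\h,z)+\!\!\sum_{I,j,d}\!R^{(I,j,d)}_{\bd}(\h,z),$$
where $R^{(\i)}_{\bd}(\h,z)\!\in\!\Q_{\al}[\h][[z]]$ by a direct degree count on \e_ref{wtU_e1} and \e_ref{wtU_e2}, exactly as in \cite{bcov0}.

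The central step is to show that each $R^{(I,j,d)}_{\bd}(\h,z)$ vanishes after the sum over decompositions $\bd_1\!+\!\bd_2\!=\!\bd$. The mechanism is the restriction identity already established in the proof of Lemma~\ref{YCrec_lmm}: the edge relations \e_ref{ufixed_e}--\e_ref{Lfixed_e} together with \e_ref{fC_e} show that at $\h\!=\!-u_j(I)/d$ the factor $u(\bd_1;x(I),\h)\Ep(\bd_1;x(I),\h)$ equals $\fCp_{I,j}(d)$ times the corresponding factor at $v(I,j)$ with $\bd_1$ replaced by $\bd_1\!-\!d\deg\overline{Ij}$, and analogously (with $\h\!\to\!-\h$ and coefficient $\fCpp_{I,j}(d)$) for the $\cYpp$-factor with $\bd_2$ replaced by $\bd_2\!+\!d\deg\overline{Ij}$. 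Using \e_ref{xfixed_e} the exponential shift $Q\!\to\!Q\ne^{\h z}$ built into $\cYp$ converts the degree reindexing into the right $\ne^{z\cdot(x(I)-x(v(I,j)))}$ factor, so that the residue at $(I,j,d)$ in the $(\bd_1,\bd_2)$ summand is cancelled by the residue at $(v(I,j),\widehat{j},d)$ in the summand obtained from $(\bd_1,\bd_2)$ by the shift $(\bd_1\!-\!d\deg\overline{Ij},\,\bd_2\!+\!d\deg\overline{Ij})$, where $\{\widehat j\}\!=\!I\!-\!v(I,j)$. The resulting telescoping sum vanishes, and $F_{\bd}\!=\!R^{(\i)}_{\bd}$ is the desired polynomial in $\h$.

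The principal obstacle is combinatorial: unlike the $\P^{n-1}$ situation, where the moment graph is complete and a single edge type controls the analysis, for a general toric $\X$ one must track which pairs $(I,j)$ and $(v(I,j),\widehat j)$ share the same edge $\overline{Ij}$, verify that the degree shifts $\bd_1\!\mapsto\!\bd_1\!\mp\!d\deg\overline{Ij}$ remain in $\La$ (using Proposition~\ref{dimK_prp}, Remark~\ref{submfld_rmk}, and \e_ref{DeI*_e}), and ensure the telescoping goes through simultaneously over all decompositions $\bd_1\!+\!\bd_2\!=\!\bd$ and all edges incident to $I$. Once the residue cancellation is organized so that the edge-contributions pair up correctly, polynomiality of $F_{\bd}$ in $\h$ is immediate from the remainder term $R^{(\i)}_{\bd}$.
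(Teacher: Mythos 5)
Your route is genuinely different from the paper's: there, following Givental's argument for \cite[Proposition~6.2]{Gi_mirr}, the coefficient $\LR{\Phi_{\cYp,\cYpp}}_{Q;\bd}$ is identified via \e_ref{Xdint_e} with an integral over the compact toric manifold $X_{\bd}=X^{\tau}_{M_JA}$ of tuples of binary forms, of the class $\wbE(V_{\bd})\,\ne^{\bx\cdot z}\prod_i\prod_s\left[\bla^{\pm}_i+s\h\right]$, which is manifestly polynomial in $\h$, so the MPC is automatic. A residue-cancellation argument along the edges of the moment graph could in principle replace this, but as written yours has a genuine gap already in the decomposition $F_{\bd}=R^{(\i)}_{\bd}+\sum R^{(I,j,d)}_{\bd}$: each localization term also has a pole at $\h=0$, of order up to $\sum_{r\in I}D_r(\bd)$, because for $r\in I$ the denominator factors of $u(\bd_i;x(I),\h)$ restrict to $u_r(I)+s\h=s\h$ by \e_ref{uYgeom_e} and \e_ref{fprestr_e} (already for $\X=\P^{n-1}$ this is the familiar $d!\,\h^{d}$). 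These poles are not of the form $-u_j(I)/d$, are untouched by the edge pairing, and do not disappear by any ``direct degree count''; their cancellation in the sum over $I$ and over $\bd_1+\bd_2=\bd$ is a global statement essentially as hard as the polynomiality itself (it is exactly what compactness of $X_{\bd}$ buys in the paper). As stated, your argument would only give $\LR{\Phi_{\cYp,\cYpp}}_{Q;\bd}\in\Q_{\al}[\h,\h^{-1}][[z]]$.

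The central cancellation step is also asserted rather than proved, and with the wrong coefficient. At $\h_0=-u_j(I)/d$ only the $\cYp$-factor of the $I$-term has a pole (the $\cYpp$-factor of the $I$-term has poles at $+u_r(I)/s$, which cannot equal $\h_0$ since the forms $u_r(I)$, $r\notin I$, are linearly independent in $\al$); the matching pole of a $\cYpp$-factor sits in the $v(I,j)$-term, at the complementary index $\widehat{j}$, with coefficient $\fCpp_{v(I,j),\widehat{j}}(d)$, not $\fCpp_{I,j}(d)$, and it is a residue, not an evaluation. After matching the exponentials via \e_ref{xfixed_e} (which you do correctly), the two residues cancel if and only if
\begin{equation*}
\fCp_{I,j}(d)\,\phi_{v(I,j)}\big(v(I,j)\big)=\fCpp_{v(I,j),\widehat{j}}(d)\,\phi_I(I),
\end{equation*}
and verifying this needs \e_ref{ufixed_e}--\e_ref{ufixed2_e} for the $\wt{\fC}$-part (the symmetry of the edge factor noted after Proposition~\ref{vnormal_prp}) together with \e_ref{Lfixed_e} and the complementary $s$-ranges in $\Ep$ versus $\Epp$ in \e_ref{wtU_e2} --- precisely the point where pairing $\cYp$ with $\cYpp$ rather than with itself is essential, and which your sketch never isolates. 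You would also have to settle the bookkeeping you flag but do not resolve: if $\bd_1-d\deg\ov{Ij}\notin\La$ the partner summand is absent, so one must show the corresponding residue vanishes; and since distinct edges can carry equal weights (e.g.\ $\P^1\times\P^1$), the cancellation must be organized edge-by-edge at a common $\h_0$. Until the displayed identity and the $\h=0$ regularity are supplied, the telescoping is an outline, not a proof.
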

We follow the idea of the proof of \cite[Proposition~6.2]{Gi_mirr} and begin with some preparations.
Let $\bd\!\in\!\!\bigcup\limits_{I\in\V}\De_I^*$, 
$$J\equiv J(\bd)\equiv\left\{j\in [N]:D_j(\bd)\ge 0\right\},\quad S\!\equiv\!|J|\!+\!\sum\limits_{j\in J} D_j(\bd).$$
Let $A$ be the $|J|\!\times\!S$ matrix
giving $\prod\limits_{j\in J}\P^{D_j(\bd)}$ as in \e_ref{prodproj_e}.
Denote
the coordinates of a point $y\!\in\!\C^S$ by 
$$\left(y_{j;0},y_{j;1},\ldots,y_{j;D_j(\bd)}\right)_{j\in J}.$$
The pair $(M_JA,\tau)$ is toric in the sense of Definition~\ref{toric_dfn}.
It satisfies \ref{S} in Definition~\ref{toric_dfn}, since
\begin{equation}\label{Y0MPC_e1}
\mathscr{V}^{\tau}_{M_JA}\!=\!\left\{\left((i_1;p_1),\!\ldots\!,(i_k;p_k)\right):\!
\{i_1,\!\ldots\!,i_k\}\!\in\!\V,\,\{i_1,\!\ldots\!,i_k\}\!\subseteq\!J, 0\!\le\!p_r\!\le\!D_{i_r}(\bd)~\forall\,r\!\in\![k]\right\}
\end{equation}
by the second statement in Lemma~\ref{moment_lmm}\ref{Mrestr_lmm}. 
We identify $\C^S$ with $\bigoplus\limits_{j\in J}H^0(\P^1,\cO_{\P^1}(D_j(\bd)))$ via
$$\left(y_{j;0},y_{j;1},\ldots,y_{j;D_j(\bd)}\right)_{j\in J}
\lra\left(\sum_{r=0}^{D_{j}(\bd)}y_{j;r}z_0^{D_j(\bd)-r}z_1^r\right)_{j\in J}$$
and set $X_{\bd}\equiv X^{\tau}_{M_{J}A}$.
The torus $\T^1\!\times\!\T^{|J|}$ acts on $\bigoplus\limits_{j\in J} H^0(\P^1,\cO_{\P^1}(D_j(\bd))$ by
\begin{equation}\label{Y0MPC_e0}
\left(\xi,\left(t_j\right)_{j\in J}\right)\!\cdot\!\left(P_j(z_0,z_1)\right)_{j\in J}\equiv\left(t_jP_j(z_0,\xi z_1)\right)_{j\in J},
\end{equation}
while the torus $\T^{|J|}$ acts on $\bigoplus\limits_{j\in J} H^0(\P^1,\cO_{\P^1}(D_j(\bd)))$ by restricting this action via $$\T^{|J|}\!\ni\!t\hookrightarrow(1,t)\!\in\!\T^1\!\times\!\T^{|J|};$$
these actions descend to actions on $X_{\bd}$.
\begin{lmm}\begin{enumerate}[label=(\emph{\alph*}),leftmargin=*]\label{bigfixed_lmm}
\item\label{bigfpt_lmm} The fixed points of the $\T^1\!\times\!\T^{|J|}$-action on $X_{\bd}$ are
\begin{alignat}{1}
\label{Ip_e} [I,\bp]&\!\equiv\!\left[\left(P_j(z_0,z_1)\right)_{j\in J}\right],
\intertext{where $I\!\in\!\V$, $I\!\subseteq\!J$, $\bp\!=\!\left(p_i\right)_{i\in I}\!\in\!\Z^k$,
$0\!\le\!p_i\!\le\!D_i(\bd)$ for all $i\!\in\!I$, and}
P_j(z_0,z_1)&\!\equiv\!\begin{cases}
z_0^{D_j(\bd)-p_j}z_1^{p_j},&\hbox{\textnormal{ if }}j\!\in\!I;\\
0,&\hbox{ \textnormal{otherwise}}.
\end{cases}\notag
\end{alignat}
\item\label{bigfpt_lmm2} Let $I\!\in\!\V$ and $\bp\!=\!(p_i)_{i\in I}\!\in\!\Z^k$.
Then 
\begin{equation*}\label{newfpt_e}
0\!\le\!p_i\!\le\!D_i(\bd)\quad\forall\,i\!\in\!I\qquad\Llra\qquad
\bp M_I^{-1},\bd\!-\!\bp M_I^{-1}\!\in\!\De^*_I.
\end{equation*}
\end{enumerate}
\end{lmm}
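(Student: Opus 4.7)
The plan for (a) is in two parts: a direct verification that each $[I,\bp]$ is fixed, and a weight-matching argument showing no other fixed points exist. For the easy direction, the action \e_ref{Y0MPC_e0} multiplies each non-zero component $P_{i_r}\!=\!z_0^{D_{i_r}(\bd)-p_r}z_1^{p_r}$ by $t_{i_r}\xi^{p_r}$, and since $I\!\in\!\V$ implies $M_I\!\in\!\textnormal{GL}_k(\Z)$, a unique $s\!\in\!\T^k$ acting via $M_JA$ undoes these $k$ scalings simultaneously.

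For the converse, let $[y]$ be any $\T^1\!\times\!\T^{|J|}$-fixed point. Lemma~\ref{moment_lmm}\ref{charts_lmm2} applied to the toric pair $(M_JA,\tau)$, together with \e_ref{Y0MPC_e1}, yields some $I'\!=\!\{(i_r;p_r)\}_r\!\in\!\mathscr{V}^{\tau}_{M_JA}$ with $\supp(y)\!\supseteq\!I'$, $I\!\equiv\!\{i_r\}\!\in\!\V$ with $I\!\subseteq\!J$, and $0\!\le\!p_r\!\le\!D_{i_r}(\bd)$; I normalize $y_{(i_r;p_r)}\!=\!1$ using $\T^k$. Assume toward contradiction that $y_{(j;r)}\!\neq\!0$ for some $(j;r)\!\notin\!I'$. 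Fixedness requires that for every $(\xi,t)\!\in\!\T^1\!\times\!\T^{|J|}$, the unique $s\!\in\!\T^k$ determined by $s^{M_{i_{r'}}}\!=\!\xi^{-p_{r'}}t_{i_{r'}}^{-1}$ also satisfies $s^{M_j}\!=\!\xi^{-r}t_j^{-1}$. Writing $v\!=\!M_I^{-1}M_j\!\in\!\Z^k$, the left side equals $\xi^{-\sum p_{r'}v_{r'}}\prod t_{i_{r'}}^{-v_{r'}}$; matching exponents of $\xi$ and of each $t_\ell$ independently forces $\sum_{r'} v_{r'}e_{i_{r'}}\!=\!e_j$ in $\Z^{|J|}$ and $\sum_{r'}p_{r'}v_{r'}\!=\!r$. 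Since the $e_{i_{r'}}$ are distinct basis vectors, $j\!\in\!I$ (say $j\!=\!i_{r_0}$), $v\!=\!e_{r_0}$, and $r\!=\!p_{r_0}$, giving $(j;r)\!\in\!I'$, a contradiction. Hence $\supp(y)\!=\!I'$ and $[y]\!=\![I,\bp]$.

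For (b), the identity $D_j(\bp M_I^{-1})\!=\!\bp\!\cdot\!(M_I^{-1}M_j)\!=\!p_j$ for $j\!\in\!I$ is immediate from $M_I^{-1}M_I\!=\!\Id$, whence $D_j(\bd-\bp M_I^{-1})\!=\!D_j(\bd)-p_j$. This reduces the equivalence to verifying that, when $0\!\le\!p_j\!\le\!D_j(\bd)$ for $j\!\in\!I$, the classes $\bp M_I^{-1}$ and $\bd-\bp M_I^{-1}\!=\!\bq M_I^{-1}$ (with $\bq_r\!\equiv\!D_{i_r}(\bd)-p_r\!\ge\!0$) both lie in $\La$, which in turn reduces to showing that each row of $M_I^{-1}$ is in $\La$.

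The main obstacle will be this last claim. I would handle it by realizing $(M_I^{-1})_r$ as an effective curve class: whenever a $j^*\!\in\![N]\!-\!I$ with $\widehat{j^*}\!=\!i_r$ exists (in the sense of Section~\ref{fixednot_sec}), Corollary~\ref{fixed_crl}\ref{fc_crl} and \e_ref{Dedge_e} identify $\deg\overline{Ij^*}\!=\!(M_I^{-1})_r$, manifestly in $\La$; in general $(M_I^{-1})_r$ must be rewritten as a non-negative $\Z$-combination of invariant curve classes $\deg\overline{I'j'}$ from possibly other vertices $I'\!\in\!\V$, which is guaranteed by the polyhedral description of the Mori cone of the smooth projective toric variety $\X$ via its duality with $\ov\cK^{\tau}_M$ (cf.~Footnote~\ref{dual_f} and Proposition~\ref{dimK_prp}). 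Geometrically, the existence of the fixed point $[I,\bp]$ from part (a) corresponds to splitting a $\T^1$-fixed stable map $\P^1\!\to\!\X$ of degree $\bd$ at $0,\i\!\in\!\P^1$ into its two halves of degrees $\bp M_I^{-1}$ and $\bd-\bp M_I^{-1}$, each necessarily effective.
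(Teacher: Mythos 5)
Your part (a) is correct, and it takes a somewhat different route from the paper: the paper evaluates the tuple $(P_j)_{j\in J}$ at a generic point $(\xi_0,\xi_1)$ and invokes the fixed-point classification for $X^{\tau}_{M_J}$ (Corollary~\ref{fixed_crl}\ref{fpt_crl}) to determine which blocks $P_j$ vanish, leaving the reduction of the nonzero blocks to monomials implicit, whereas you work entirely inside $X_{\bd}$, using \e_ref{Y0MPC_e1}, the normalization allowed by $\det M_I\!\in\!\{\pm1\}$, and the uniqueness of the compensating $\T^k$-element (freeness, Lemma~\ref{top_lmm}\ref{free_lmm} applied to $(M_JA,\tau)$), and then match characters of $\T^1\!\times\!\T^{|J|}$. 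Your version rules out in one stroke both extra nonzero coefficients inside a block $j\!\in\!I$ and nonzero blocks $j\!\notin\!I$, so it makes explicit the monomial step that the paper's proof glosses over; both arguments are sound.

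In part (b) your bookkeeping reduction (the identity $D_i(\bp M_I^{-1})\!=\!p_i$ for $i\!\in\!I$, which is all the paper itself records) is right, and you are correct that the forward implication also needs $\bp M_I^{-1},\bd\!-\!\bp M_I^{-1}\!\in\!\La$, i.e.\ that each row $e_rM_I^{-1}$ of $M_I^{-1}$ lies in $\La$. But your justification of this key claim has a genuine gap. The favorable case you handle--some $j^*\!\in\![N]\!-\!I$ with $\widehat{j^*}\!=\!i_r$, so that $e_rM_I^{-1}\!=\!\deg\ov{Ij^*}\!\in\!\La$ by \e_ref{Dedge_e}--does not always occur: there are only $N\!-\!k$ curves $\ov{Ij}$ through $[I]$ but $k$ rows, so already for a smooth toric surface with $N\!\ge\!5$ at least one row of $M_I^{-1}$ is not the class of an invariant curve through $[I]$. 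For the remaining rows your appeal to ``the polyhedral description of the Mori cone via its duality with $\ov\cK^{\tau}_M$'' is circular: knowing that $\La$ is a polyhedral cone generated by invariant curve classes does not tell you that the specific class $e_rM_I^{-1}$ belongs to it--that membership is exactly what must be proved--and the closing remark about splitting a fixed stable map is only a heuristic, since points of $X_{\bd}$ are not stable maps and the effectivity of the two ``halves'' is the assertion in question. A correct argument is the dual one: every $\om\!\in\!\ov\cK^{\tau}_M$ (indeed every nef class) can be written as $\sum_{i\in I}a_i\nU_i$ with $a_i\!\ge\!0$, by the standard toric fact that a nef invariant divisor is linearly equivalent to an effective invariant divisor missing the fixed point $[I]$ (convexity of its support function on the maximal cone corresponding to $I$); then $\lr{\om,e_rM_I^{-1}}\!=\!a_{i_r}\!\ge\!0$ for all such $\om$, so $e_rM_I^{-1}\!\in\!\La$ by \e_ref{La_e}. (For comparison, the paper's own proof of (b) is a one-line reduction that is silent on the $\La$-membership, so flagging it is to your credit; but as written your argument does not close it.)
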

\begin{proof}
Let $[(P_j(z_0,z_1)_{j\in J})]$ be any fixed point of the $\T^1\!\times\!\T^{|J|}$-action on $X_{\bd}$ and 
$(\xi_0,\xi_1)\!\in\!\C^2$ be such that
$P_j(\xi_0,\xi_1)\!\neq\!0$ whenever $P_j\!\neq\!0$.
By Lemma~\ref{moment_lmm}\ref{charts_lmm2} and \e_ref{Y0MPC_e1}, $(P_j(\xi_0,\xi_1))_{j\in J}\!\in\!\wt{X}^{\tau}_{M_J}$.
Since $[(P_j(\xi_0,\xi_1))_{j\in J}]$ is a $\T^{|J|}$-fixed point in $X^{\tau}_{M_J}$,
there exists $I\!\in\!\V$ with $I\!\subseteq\!J$
such that $P_j\!\neq\!0$ if and only if $j\!\in\!I$; see Corollary~\ref{fixed_crl}\ref{fpt_crl}.
This concludes the proof of \ref{bigfpt_lmm}.
Part~\ref{bigfpt_lmm2} follows from \e_ref{DeI*_e} and the identity $(D_i(\br))_{i\in I}\!\equiv\!\br M_I$ for $\br\!=\!\bp M_I^{-1}$; see the second equation in \e_ref{U_e}.
\end{proof}

We consider the $\T^1\!\times\!\T^N\!$-action on $X_{\bd}$ obtained by composing the projection
$\T^1\!\times\!\T^N\!\lra\!\T^1\!\times\!\T^{|J|}$ induced by $J\!\hookrightarrow\![N]$ with the action \e_ref{Y0MPC_e0} of $\T^1\!\times\!\T^{|J|}$ on $X_{\bd}$.
We denote by $\wbE(T_{[I,\bp]}X_{\bd})$ the $\T^1\!\times\!\T^N\!$-equivariant Euler class
of $T_{[I,\bp]}X_{\bd}$ and by
$$\cdot(I,\bp):H^*_{\T^1\times\T^N}(X_{\bd})\!\lra\!H^*_{\T^1\times\T^N}$$
the restriction map induced by the inclusion $[I,\bp]\!\hookrightarrow\!X_{\bd}$, where $[I,\bp]$ is the 
$\T^1\!\times\!\T^N\!$-fixed point defined by \e_ref{Ip_e}.
Let $\h$ denote the weight of the standard action of $\T^1$ on $\C$.
\begin{lmm}\label{classes_lmm}
There exist
classes $(\bx_i)_{i\in[k]},(\bu_r)_{r\in [N]},(\bla^+_i)_{i\in[a]},(\bla^-_i)_{i\in[b]}\!\in\!H^*_{\T^1\times\T^N}(X_{\bd})$
such that\begin{alignat}{1}
\label{equivXd_e1} \bu_{r}&=\sum_{i=1}^km_{ir}\bx_i\!-\!\al_r\qquad\forall\,r\!\in\![N],
\intertext{and such that for all $(I,\bd')$ with $I\!\in\!\V$,
$I\!\subseteq\!J$, $\bd',\bd\!-\!\bd'\!\in\!\De_I^*$, and all $[I,\bp]$ as in \e_ref{Ip_e},}
\label{equivXd_e0} \left(\bx_1(I,\bd'M_I),\ldots,\bx_k(I,\bd'M_I)\right)&=
\left(x_1(I),\ldots,x_k(I)\right)\!+\!\h\,\bd,'\\
\label{equivXd_e3} \wbE(T_{[I,\bp]}X_{\bd})&\!=\!\!
\prod_{\begin{subarray}{c}j\in J-I\end{subarray}}\prod\limits_{\begin{subarray}{c}0\le s\le D_j(\bd)\end{subarray}}\!\!
\left[\bu_j(I,\bp)\!-\!s\,\h\right]\!\times\!\!
\prod_{\begin{subarray}{c}j\in I\end{subarray}}\prod\limits_{\begin{subarray}{c}0\le s\le D_j(\bd)\\
s\neq p_j\end{subarray}}\!\!\!\left[\bu_j(I,\bp)\!-\!s\,\h\right],\\
\label{equivXd_e4} \bla^{\pm}_i\left(I,\bd'M_I\right)&=\la_i^{\pm}(I)\!+\!\h\,L_i^{\pm}(\bd')\qquad\forall\,i\!\in\![a]\quad
(\forall\,i\!\in\![b]).
\end{alignat}
\end{lmm}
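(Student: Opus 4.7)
The plan is to realize $X_{\bd}$ as the toric manifold of the pair $(M_JA,\tau)$ and to obtain the required classes by applying the equivariant construction of Section~\ref{equiv-intro_sec} to this pair. First I would verify that $(M_JA,\tau)$ is a toric pair: condition \ref{O} is inherited from $(M,\tau)$ and the hypothesis $\bd\in\!\bigcup_I\De^*_I$; condition \ref{S} follows from Lemma~\ref{bigfixed_lmm}\ref{bigfpt_lmm}, since every $k\!\times\!k$ submatrix of $M_JA$ indexed by an element of $\msV^{\tau}_{M_JA}$ is of the form $M_{\{i_1,\ldots,i_k\}}$ with $\{i_1,\ldots,i_k\}\!\in\!\V$; condition \ref{C} is inherited from $(M,\tau)$ together with the fact that $P^{0}_{M_JA}\!\subseteq\!A^{-1}(P^0_{M_J})\!=\!\{0\}$.

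Next, I would identify the action~\e_ref{Y0MPC_e0} of $\T^1\!\times\!\T^{|J|}$ on $X_{\bd}$ with a natural lift in the sense of Section~\ref{equiv-intro_sec}. A direct computation on the identification $\C^S\!\cong\!\bigoplus_{j\in J}\!H^0(\P^1,\cO_{\P^1}(D_j(\bd)))$ shows that under $(\xi,(t_j)_{j\in J})$ the coordinate $y_{j;s}$ transforms by $y_{j;s}\!\mapsto\!t_j\xi^{s}y_{j;s}$, so that the weight on the column of $M_JA$ indexed by $(j;s)$ is $\al_j\!+\!s\h$. Pulling back along the projection $\T^1\!\times\!\T^N\!\lra\!\T^1\!\times\!\T^{|J|}$ and applying~\e_ref{ebdle_e} to $(M_JA,\tau)$ yields classes $\bx_i\!\in\!H^2_{\T^1\times\T^N}(X_{\bd})$, $i\!\in\![k]$, together with classes $\bu_{(j;s)}$ for each column such that, by the analog of~\e_ref{u_e},
$$\bu_{(j;s)}=\sum_{i=1}^km_{ij}\bx_i-(\al_j+s\h).$$
I then set $\bu_r\!\equiv\!\sum_{i=1}^km_{ir}\bx_i\!-\!\al_r$ for all $r\!\in\![N]$ and $\bla_i^{\pm}\!\equiv\!\sum_{r=1}^k\ell^{\pm}_{ri}\bx_r$ as in~\e_ref{la_e}, so that~\e_ref{equivXd_e1} holds by construction and $\bu_r\!=\!\bu_{(r;0)}$ when $r\!\in\!J$.

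For~\e_ref{equivXd_e0}, I would note that the fixed point $[I,\bd'M_I]$ corresponds by Lemma~\ref{bigfixed_lmm} to the multi-index $I'\!\equiv\!\{(i;D_i(\bd')):i\!\in\!I\}\!\in\!\msV^{\tau}_{M_JA}$. Applying Proposition~\ref{equivcoh_prp}\ref{erestr_prp} to $X^{\tau}_{M_JA}$ gives
$$\bigl(\bx_1(I'),\ldots,\bx_k(I')\bigr)=\bigl(\al_i+D_i(\bd')\h\bigr)_{i\in I}(M_JA)_{I'}^{-1}.$$
The $k\!\times\!k$ submatrix $(M_JA)_{I'}$ equals $M_I$, while $(D_i(\bd'))_{i\in I}\!=\!\bd'M_I$; combining these with Proposition~\ref{equivcoh_prp}\ref{erestr_prp} applied to $X^{\tau}_M$ and the identity $(\bd'M_I)M_I^{-1}\!=\!\bd'$ yields $\bx(I')\!=\!x(I)\!+\!\h\,\bd'$. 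The Euler class formula~\e_ref{equivXd_e3} then follows by applying the second identity in~\e_ref{tan_e} to $X^{\tau}_{M_JA}$: the tangent weights at $[I,\bp]$ are the $\bu_{(j;s)}(I,\bp)\!=\!\bu_j(I,\bp)\!-\!s\h$ for all column indices $(j;s)$ not belonging to the multi-index $\{(i;p_i):i\!\in\!I\}$, and partitioning these according to whether $j\!\in\!J\!-\!I$ or $j\!\in\!I$ gives exactly the two products in~\e_ref{equivXd_e3}. Finally,~\e_ref{equivXd_e4} is immediate from~\e_ref{equivXd_e0}, the definition of $\bla^\pm_i$, and the identity $L_i^{\pm}(\bd')\!=\!\sum_{r=1}^k\ell^{\pm}_{ri}\bd'_r$ coming from~\e_ref{Etensor_e}.

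The arguments above are mostly bookkeeping once the toric framework is in place; the only subtle point—and the place where I expect the main work to lie—is the identification of the $\T^1\!\times\!\T^N$ weights on the coordinates $y_{j;s}$ and the careful matching of $(M_JA)_{I'}$ with $M_I$ in the proof of~\e_ref{equivXd_e0}, since sign conventions for the $\T^1$-weight and the indexing of columns of $M_JA$ must be tracked precisely to produce the shift $x_i(I)\!+\!\h\,\bd'_i$ rather than some other linear combination.
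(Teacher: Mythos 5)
Your proposal is correct and takes essentially the same route as the paper: realize $X_{\bd}$ as $X^{\tau}_{M_JA}$, read off the fixed-point restrictions and tangent weights from Proposition~\ref{equivcoh_prp}\ref{erestr_prp} and \e_ref{tan_e} applied to the big torus, and transfer to $\T^1\!\times\!\T^N$ via the weight assignment $\al_{(j;s)}\!\mapsto\!\al_j\!+\!s\h$ coming from the action \e_ref{Y0MPC_e0}, together with the identification $(M_JA)_{I'}\!=\!M_I$. The only difference is cosmetic: the paper implements the transfer by an explicit map of classifying spaces and realizes $\bx_i,\bu_r$ as equivariant Euler classes of concrete line bundles, whereas you define $\bu_r$ and $\bla_i^{\pm}$ directly by the formulas \e_ref{equivXd_e1} and \e_ref{la_e}, which is legitimate since the lemma only asserts existence of classes with the stated properties.
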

\begin{proof}
We define the
classes $\wt{x}_1,\ldots,\wt{x}_k$ and $u_{j;s}$ in $H^*_{\T^S}(X_{\bd})$ with $j\!\in\!J$ and
$0\!\le\!s\!\le\!D_j(\bd)$ 
by \e_ref{ebdle_e} with $(M,\tau)$ replaced by $(M_JA,\tau)$.
By \e_ref{u_e},
\begin{equation}\label{bigu_e}
u_{j;s}\!=\!\sum_{i=1}^km_{ij}\wt{x}_i-\al_{j;s},
\end{equation}
where $\al_{j;s}\!\equiv\!\pi_{j;s}^*c_1(\O_{\P^{\i}}(1))$
and $\pi_{j;s}\!:\!(\P^{\i})^S\!\lra\!\P^{\i}$ is the projection onto the $(j;s)$ component.
By Corollary~\ref{fixed_crl}\ref{fpt_crl}, \e_ref{Y0MPC_e1}, and \e_ref{tan_e},
the $\T^S\!$-fixed points in $X_{\bd}$ are the points $[I,\bp]$ and
\begin{equation}\label{bigtan_e}
\E^{\T^S}(T_{[I,\bp]}X_{\bd})\!=\!\prod\limits_{\begin{subarray}{c} j\in J-I\end{subarray}}
\prod\limits_{\begin{subarray}{c}0\le s\le D_j(\bd)\end{subarray}}
\left[u_{j;s}\big|_{[I,\bp]}\right]\times
\prod\limits_{\begin{subarray}{c} j\in I\end{subarray}}\prod\limits_{\begin{subarray}{c}
0\le s\le D_j(\bd)\\s\neq p_j\end{subarray}}
\left[u_{j;s}\big|_{[I,\bp]}\right],
\end{equation}
where $\E^{\T^S}(T_{[I,\bp]}X_{\bd})$ denotes the $\T^S\!$-equivariant Euler class of $T_{[I,\bp]}X_{\bd}$ and
$$\big|_{[I,\bp]}:H^*_{\T^S}(X_{\bd})\lra H^*_{\T^S}$$ the restriction homomorphism
induced by $[I,\bp]\!\hookrightarrow\!X_{\bd}$.
The map 
\begin{equation*}
F\!:\!(\C^{\i}\!-\!\{0\})^{N+1}\!\!\lra(\C^{\i}\!-\!\{0\})^S\!,\quad
F(e_0,e_1,\ldots,e_N)\!\equiv\!\left(e_j,e_j\cdot e_0,e_j\!\cdot\!e_0^2,\ldots,e_j\!\cdot\!e_0^{D_j(\bd)}\right)_{j\in J},
\end{equation*}
where
\begin{equation*}\begin{split}
(z_1,z_2,\ldots)^d&\equiv(z_1^d,z_2^d,\ldots)\qquad\forall\,d\!\ge\!1,\, (z_1,z_2,\ldots)\!\in\!\C^{\i}\!-\!\{0\},\quad\textnormal{and}\\
(z_1,z_2,\ldots)\!\cdot\!(y_1,y_2,\ldots)&\equiv(z_iy_j)_{(i,j)\in\Z^{>0}\times\Z^{>0}}\qquad\forall\,
(z_1,z_2,\ldots),(y_1,y_2,\ldots)\!\in\!\C^{\i}\!-\!\{0\}
\end{split}\end{equation*}
is equivariant with respect to the homomorphism
\begin{equation*}\begin{split}
f\!:\!\T^1\!\times\!\T^N\!\lra\!\T^S,\quad
f(\xi,t_1,\!\ldots\!,t_N)\equiv
\left(t_j,t_j\xi,t_j\xi^2,\ldots,t_j\xi^{D_j(\bd)}\right)_{j\in J}.
\end{split}\end{equation*}
It induces a map
$\ov{F}\!:\!(\C^{\i}\!-\!\{0\})^{N+1}\!\times_{\T^1\times\T^N}\!X_{\bd}\!\lra\!(\C^{\i}\!-\!\{0\})^S\!\times_{\T^S}\!X_{\bd}$,
\begin{equation*}\begin{split}
\ov{F}\left[e_0,e_1,\ldots,e_N,[(P_j)_{j\in J}]\right]\equiv&
\left[F(e_0,e_1,\ldots,e_N),[(P_j)_{j\in J}]\right]\\
&\forall\,(e_0,e_1,\ldots,e_N)\!\in\!(\C^{\i}\!-\!0)^{N+1},\,
[(P_j)_{j\in J}]\!\in\!X_{\bd},
\end{split}\end{equation*} 
and thus a homomorphism $\ov F^*\!:\!H^*_{\T^S}(X_{\bd})\!\lra\!H^*_{\T^1\times\T^N}(X_{\bd})$.
It follows that
\begin{equation}\label{bigal_e}
\ov F^*\al_{j;s}=\al_{j}+s\h\qquad\forall\,(j;s)\quad\textnormal{with}\quad j\!\in\!J,\,0\!\le\!s\!\le\!D_j(\bd).
\end{equation}
We define $\bx_i$ and $\bu_r$ as the $\T^1\!\times\!\T^N\!$-equivariant Euler classes of the line bundles
$$\wt{X}^{\tau}_{M_JA}\times\C/\sim_i\lra X_{\bd}\quad\textnormal{and}\quad\wt{X}^{\tau}_{M_JA}\times\C/\sim_r\lra X_{\bd},$$
where
\begin{equation}\begin{split}\label{defboldx_e}
\left((P_j)_{j\in J},c\right)&\sim_i\left((t^{M_j}P_j)_{j\in J},t_ic\right)\\
\left((P_j)_{j\in J},c\right)&\sim_r\left((t^{M_j}P_j)_{j\in J},t^{M_r}c\right)
\end{split}\qquad
\forall\,t\!\in\!\T^k,((P_j)_{j\in J},c)\!\in\!\wt{X}^{\tau}_{M_JA}\!\times\!\C\end{equation}
with respect to the lifts of the $\T^1\!\times\!\T^N\!$-action on $X_{\bd}$ given by
\begin{equation}\begin{split}\label{boldlift_e}
(\xi,t_1,\ldots,t_N)\!\cdot\!\left[(P_j(z_0,z_1))_{j\in J},c\right]&\equiv\left[(t_jP_j(z_0,\xi z_1))_{j\in J},c\right]\quad
\textnormal{and}\\
(\xi,t_1,\ldots,t_N)\!\cdot\!\left[(P_j(z_0,z_1))_{j\in J},c\right]&\equiv\left[(t_jP_j(z_0,\xi z_1))_{j\in J},t_rc\right]
\end{split}\end{equation}
respectively.
It follows that 
\begin{equation}\label{boldx_e}
\bx_i=\ov F^*\wt{x}_i
\end{equation}
and $\bu_j$ satisfy \e_ref{equivXd_e1}. The latter follows
similarly to the proof of \e_ref{u_e} using equations analogous to \e_ref{j_e} and \e_ref{jL0_e} with $\T^N$ replaced by $\T^1\!\times\!\T^N$.
Equation \e_ref{equivXd_e0} follows from \e_ref{boldx_e}, Proposition~\ref{equivcoh_prp}\ref{erestr_prp}, and \e_ref{bigal_e}.
Equation \e_ref{equivXd_e2gen} follows from \e_ref{equivXd_e1}, \e_ref{equivXd_e0}, and \e_ref{u_e}.
Equation 
\e_ref{equivXd_e3} follows from \e_ref{bigtan_e} together with \e_ref{bigu_e},
\e_ref{bigal_e}, \e_ref{boldx_e}, and \e_ref{equivXd_e1}.
Finally, define 
\begin{equation}\label{boldla_e}
\bla_i^+\equiv\sum_{r=1}^k\ell^+_{ri}\bx_r\quad\textnormal{and}\quad\bla_i^-\equiv\sum_{r=1}^k\ell^-_{ri}\bx_r,
\end{equation}
with $\ell^+_{ri},\ell^-_{ri}$ as in \e_ref{Etensor_e}.
Equations \e_ref{equivXd_e4} then follow from \e_ref{boldla_e}, \e_ref{equivXd_e0}, and \e_ref{la_e}.
\end{proof}
With $\bu_j$ and $(I,\bd')$ as in Lemma~\ref{classes_lmm},
\begin{equation}\label{equivXd_e2gen} 
\bu_j\left(I,\bd'M_I\right)=u_j(I)\!+\!\h\,D_j(\bd')\qquad\forall\,j\!\in\![N],
\end{equation}
by \e_ref{equivXd_e1}, \e_ref{equivXd_e0}, and \e_ref{u_e}.
\begin{lmm}\label{bigvbdle_lmm}
There exists a vector bundle $V_{\bd}\!\lra\!X_{\bd}$ and a lift of the $\T^1\!\times\!\T^N\!$-action
to $V_{\bd}$ such that the $\T^1\!\times\!\T^N\!$-equivariant Euler class $\wbE(V_{\bd})$ satisfies
$$\wbE(V_{\bd})(I,\bp)=\prod\limits_{j=1}^N
\prod\limits_{s=1}^{-D_j(\bd)-1}\left[\bu_j(I,\bp)+s\,\h\right]$$
for all $\T^1\!\times\!\T^N\!$-fixed points $[I,\bp]$ defined by \e_ref{Ip_e} and with $\bu_j\!\in\!H^*_{\T^1\times\T^N}(X_{\bd})$ as in Lemma~\ref{classes_lmm}.
\end{lmm}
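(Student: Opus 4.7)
The plan is to construct $V_{\bd}$ as a direct sum of equivariant line bundles, one for each pair $(j,s)$ with $j \in [N]$ and $1 \le s \le -D_j(\bd)-1$:
$$V_{\bd} \equiv \bigoplus_{j=1}^N \bigoplus_{s=1}^{-D_j(\bd)-1} \cL_{j,s}.$$
The inner sum is empty (contributing a rank-zero summand) whenever $D_j(\bd)\ge -1$, so only indices $j$ with $D_j(\bd)\le -2$ actually contribute. Each $\cL_{j,s}$ will be a $\T^1\!\times\!\T^N$-equivariant line bundle on $X_{\bd}$ whose equivariant first Chern class equals $\bu_j+s\,\h$.

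To construct $\cL_{j,s}$, I would start with the line bundle $\wt{X}^{\tau}_{M_JA}\times\C/\!\sim_j$ defined by the equivalence relation \e_ref{defboldx_e} (with $r=j$) and equip it with the equivariant lift \e_ref{boldlift_e}. By the very analysis in the proof of Lemma~\ref{classes_lmm} that produced $\bu_r$ as an equivariant Euler class, this line bundle has equivariant first Chern class $\bu_j$. Then set
$$\cL_{j,s} \equiv \bigl(\wt{X}^{\tau}_{M_JA}\times\C/\!\sim_j\bigr)\otimes \C_{s\h},$$
where $\C_{s\h}$ is the trivial line bundle on $X_{\bd}$ carrying the $\T^1$-action of weight $s$ (and the trivial $\T^N$-action). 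The equivariant first Chern class of $\cL_{j,s}$ is then $\bu_j+s\,\h$.

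The claimed formula for $\wbE(V_{\bd})(I,\bp)$ follows by multiplicativity of the equivariant Euler class under direct sums,
$$\wbE(V_{\bd}) = \prod_{j=1}^N\prod_{s=1}^{-D_j(\bd)-1}\bigl(\bu_j+s\,\h\bigr),$$
combined with restriction to the fixed point $[I,\bp]$ via the explicit formula \e_ref{equivXd_e2gen} for $\bu_j(I,\bp)$.

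I do not expect any serious obstacle: the argument is essentially a bookkeeping consequence of the line-bundle constructions already assembled in Lemma~\ref{classes_lmm}. The one mildly delicate point is that the construction in \e_ref{defboldx_e} must be valid for every $j\in[N]$, not merely for $j\in J$; this is indeed the case because the relation $\sim_j$ is defined purely in terms of the $j$-th column $M_j$ of $M$, with no reference to the sign or magnitude of $D_j(\bd)$. Morally, for $j\notin J$ with $D_j(\bd)\le -2$ the summand of $V_{\bd}$ attached to $j$ plays the role of $R^1\pi_*$ of a relative line bundle of degree $D_j(\bd)$ on a universal curve over $X_{\bd}$, but this geometric interpretation is not needed to verify the stated Chern-class formula.
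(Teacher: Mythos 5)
Your proof is correct and is essentially the paper's own construction, presented in decomposed form: the paper defines $V_{\bd}$ in one piece as the bundle associated to the principal $\T^k$-bundle $\wt{X}^{\tau}_{M_JA}\!\lra\!X_{\bd}$ with fiber $\wt{V}_{\bd}\equiv\bigoplus_{j\in[N]-J}\{P_j\in H^0(\P^1,\cO_{\P^1}(-D_j(\bd)-1)):P_j(1,0)=0\}$ and lift $(\xi,t_1,\ldots,t_N)\cdot P_j(z_0,z_1)=t_jP_j(z_0,\xi z_1)$, and the monomial lines spanned by $z_0^{-D_j(\bd)-1-s}z_1^s$ with $1\le s\le -D_j(\bd)-1$ (the constraint $P_j(1,0)=0$ removes exactly the $s=0$ line) are precisely your summands $\cL_{j,s}=(\wt{X}^{\tau}_{M_JA}\times\C/\!\sim_j)\otimes\C_{s\h}$. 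Your packaging even yields the slightly stronger global identity $\wbE(V_{\bd})=\prod_{j=1}^N\prod_{s=1}^{-D_j(\bd)-1}(\bu_j+s\h)$ in $H^*_{\T^1\times\T^N}(X_{\bd})$, so the restriction to $[I,\bp]$ is tautological and the appeal to \e_ref{equivXd_e2gen} is unnecessary (the lemma is already stated in terms of $\bu_j(I,\bp)$), whereas the paper reads off the same weights at the fixed points directly from \e_ref{defboldx_e} and \e_ref{boldlift_e}.
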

\begin{proof}
Let 
\begin{equation*}\begin{split}
\wt{V}_{\bd}&\equiv\left\{\left(P_j\right)_{j\in[N]-J}
\!\in\!\!\!\!\bigoplus\limits_{\begin{subarray}{c}j\in [N]-J\end{subarray}}\!\!\!H^0\!\left(\P^1,\cO_{\P^1}(-\!D_j(\bd)\!-\!1)\right)\!:P_j(1,0)\!=\!0\quad
\forall\,j\!\in\![N]\!-\!J\right\},\\
 V_{\bd}&\equiv\frac{\wt{X}^{\tau}_{M_JA}\!\times\!\wt{V}_{\bd}}{\sim}\!\lra\!X_{\bd},\quad
\left(\left(P_j\right)_{j\in J},\left(P_j\right)_{j\in [N]-J}\right)
\!\sim\!\left(\left(t^{M_j}P_j\right)_{j\in J},\left(t^{M_j}P_j\right)_{j\in [N]-J}\right)\quad
\forall\,t\!\in\T^k.
\end{split}\end{equation*} 
Since $\wt{X}^{\tau}_{M_JA}\!\lra\!X_{\bd}$ is a principal bundle,
$V_{\bd}\!\lra\!X_{\bd}$ is a holomorphic vector bundle.
The $\T^1\!\times\!\T^N$\!-action on $X_{\bd}$ lifts to $V_{\bd}$ via
\begin{equation*}\begin{split}
\left(\xi,t_1,\ldots,t_N\right)\!\cdot\!\left[\left(P_j(z_0,z_1)\right)_{j\in J},\left(P_j(z_0,z_1)\right)_{j\in [N]-J}\right]
\equiv\left[\left(t_jP_j(z_0,\xi z_1)\right)_{j\in J},\left(t_jP_j(z_0,\xi z_1)\right)_{j\in [N]-J}\right].
\end{split}\end{equation*}
The lemma now follows from the definition of $\bu_j$ in \e_ref{defboldx_e} and \e_ref{boldlift_e}.
\end{proof}
By the Localization Theorem~\e_ref{ABo_e}, Lemma~\ref{bigfixed_lmm}, Lemma~\ref{bigvbdle_lmm}, and \e_ref{equivXd_e3},
\begin{equation}\label{Xdint_e}
\int_{X_{\bd}}\!\!\!\!\!f\,\wbE(V_{\bd})\!=\!\!\!\!\!\!\!
\sum_{\begin{subarray}{c}I\in\V\\
\bd',\bd-\bd'\in\De_I^*\end{subarray}}\!
\frac{f(I,\bd'M_I)\prod\limits_{j=1}^N
\prod\limits_{s=D_j(\bd)+1}^{-1}\left[\bu_j(I,\bd'M_I)\!-\!s\h\right]}
{\prod\limits_{\begin{subarray}{c}j\in J- I\end{subarray}}\prod\limits_{\begin{subarray}{c}0\le s\le D_j(\bd)\end{subarray}}\!\!\!\!\!
\left[\bu_j(I,\bd'M_I)\!-\!s\h\right]\!
\prod\limits_{\begin{subarray}{c}j\in I\end{subarray}}\prod\limits_{\begin{subarray}{c}0\le s\le D_j(\bd)\\
s\neq D_j(\bd')\end{subarray}}\!\!\!\!\!\left[\bu_j(I,\bd'M_I)\!-\!s \h\right]},
\end{equation}
for all $f\!\in\!H^*_{\T^1\times\T^N}(X_{\bd})$.
\begin{proof}[Proof of Lemma~\ref{YMPC_lmm}]
By Definition~\ref{MPC_dfn}, \e_ref{Yrestr_e}, \e_ref{cY_e}, \e_ref{Xdint_e}, \e_ref{equivXd_e0}, \e_ref{equivXd_e2gen},
and \e_ref{equivXd_e4},
\begin{align*}
\Phi_{\cYp,\cYpp}\left(\h,z,Q\right)
\!&=\!\!
\sum_{I\in\V}\!\sum_{\bd\in\De_I^*}\!Q^{\bd}
\Bigg\{\sum_{\begin{subarray}{c}\bd',\bd''\in\De^*_I\\\bd'+\bd''=\bd\end{subarray}}
\frac{\ne^{\left(x(I)+\h \bd'\right)\cdot z}}{\prod\limits_{j\in[N]-I}u_j(I)}
\frac{\prod\limits_{\begin{subarray}{c}j\in [N]\\D_j(\bd)<0\end{subarray}}u_j(I)\prod\limits_{\begin{subarray}{c}j\in [N]\\D_j(\bd)<0\end{subarray}}\prod\limits_{s=D_j(\bd')+1}^{-D_j(\bd'')-1}\left[u_j(I)\!+\!s\h\right]}
{\prod\limits_{\begin{subarray}{c}D_j(\bd)\ge 0\end{subarray}}\prod\limits_{\begin{subarray}{c}-D_j(\bd'')\le s\le D_j(\bd')\\s\neq 0\end{subarray}}\!
\left[u_j(I)\!+\!s\h\right]}\\
&\qquad\qquad\qquad\qquad\quad\times
\prod\limits_{i=1}^a\!\prod\limits_{s=-L_i^+(\bd'')+1}^{L_i^+(\bd')}\left[\la^+_i(I)\!+\!s\h\right]
\prod\limits_{i=1}^b\!\prod\limits_{s=L_i^-(\bd')+1}^{-L_i^-(\bd'')}\left[\la_i^-(I)+s\h\right]
\Bigg\}\\
&=\!\!\!
\sum_{\bd\in\bigcup\limits_{I\in\V}\De_I^*}\!\!\!Q^{\bd}
\int_{X_{\bd}}\!\!\wbE(V_{\bd})
\ne^{\bx\cdot z}\prod\limits_{i=1}^a\!\prod\limits_{s=-L_i^+(\bd)+1}^{0}\!\!\left[\bla^+_i\!+\!s\h\right]
\prod\limits_{i=1}^b\prod\limits_{s=1}^{-L_i^-(\bd)}\left[\bla_i^-\!+\!s\h\right].
\end{align*}
The last expression is in $\Q[\al,\h][[z,\La]]$.
\end{proof}
\appendix
\section{Derivation of \e_ref{Z'Y'_e} from \cite{LLY3}}
\label{LLY_a}
\newcommand\ST{\rule[-0.5em]{0pt}{1.5em}}
$$\begin{array}{|c|c|}\hline
\cite{LLY3}& \textnormal{our notation}\\\hlinewd{2.3pt}
\ST m&k\\\hline
\ST e^{t_j}&q_j\\\hline
\ST \cR&\C(\al_1,\ldots,\al_N)[\h]\\\hline
\ST \C[\cT^*]&\Q[\al_1,\ldots,\al_N]\\\hline
\ST \al&\h\\\hline
\ST T&\T^N\\\hline
\ST e_T&\E\\\hline
\ST c_1(L_d)&-\psi_1\in H^2\left(\ov\M_{0,1}(\X,\bd)\right)\\\hline
\ST \rho&\textnormal{forgetful morphism } \ov\M_{0,1}(X,\bd)\lra\ov\M_{0,0}(X,\bd)\\\hline
\ST e^X_d&\ev_1:\ov\M_{0,1}(X,\bd)\lra X\\\hline
\ST LT_{0,1}(d,X)&\left[\ov\M_{0,1}(X,\bd)\right]^{vir}\\\hline
\ST V_d&\cV_E\lra\ov\M_{0,0}(X,\bd)\\\hline
\ST U_d=\rho^*V_d&\cV_E\lra\ov\M_{0,1}(X,\bd)\\\hline
\ST (D_a)_{a\in [N]}&(u_j)_{j\in [N]}\\\hline
\end{array}$$

In \cite[Section~3.2]{LLY3}, we take $b_T\!\equiv\!e_T$ (that is, $\E$), $X\!=\!\X$, and
$V\!\equiv\!E$.
By \cite[Section~3.2]{LLY3},
$$A^{V,b_T}(t)\!=\!A(t)\!=\!e^{-H\cdot t/\al}\left[\frac{\E(E^+)}{\E(E^-)}\!+\!\sum_{d\in\La-0}A_d e^{d\cdot t}\right],\quad
A_d\!=\!\ne^X_*\left(\frac{\rho^*b_T(V_d)\cap LT_{0,1}(d,X)}{e_G(F_0/M_d(X))}\right),$$
where $\{H_a\}\!\subset\!H^2_{\T^N}(\X)$
is a basis whose restriction to $H^2(\X)$ is a basis of first Chern classes
of ample line bundles; see \cite[Section~3,viii]{LLY3}.
By \cite[Lemma~3.5]{LLY3}, 
$$e_G(F_0/M_d(X))=\al(\al\!-\!c_1(L_d)).$$
Thus, in our notation,
\begin{equation}\label{LLYA_e}
A(t)=\ne^{-H\cdot t/\h}
\left\{\frac{\E(E^+)}{\E(E^-)}\!+\!\sum_{\bd\in\La-0}\ne^{\bd\cdot t}\ev_{1*}\left[\frac{\E(\cV_E)}{\h(\h\!+\!\psi_1)}\right]\right\},
\end{equation}
where $\ev_1\!:\!\ov\M_{0,1}(\X,\bd)\!\lra\!\X$ is the evaluation map at the marked point.
By \e_ref{eZeta1ptdfn_e}, \e_ref{LLYA_e}, and 
the string relation~\cite[Section~26.3]{MirSym}, 
\begin{equation}\label{LLYA_e2}
A(t)= \ne^{-H\cdot t/\h}\frac{\E(E^+)}{\E(E^-)}\cZp_1(-\h,e^t).
\end{equation}

By \e_ref{cY_e}, Remark~\ref{cYcoeff_rmk}, and \e_ref{la_e},
\e_ref{Z'Y'_e} is independent of the choice of a $\Q[\al]$-basis for $H^2_{\T^N}(\X)$ and so it is not necessary to assume that the restrictions of $x_i$
to $H^2(\X)$ are Chern classes of ample line bundles. Thus, we may take $H\!=\!(x_1,\ldots,x_k)$ in \cite{LLY3}.
By \cite[(5.2)]{LLY3} and \cite[Theorem~4.9]{LLY3}, 
\begin{equation}\label{LLYB_e}
B(t)=\ne^{-H\cdot t/\h}\frac{\E(E^+)}{\E(E^-)}\cYp\left(x,-\h,\ne^t\right)
\end{equation}
in \cite[Theorem~4.7]{LLY3}.
In the notation of the proof of \cite[Theorem~4.7]{LLY3} correlated with Remark~\ref{cYcoeff_rmk},
\begin{equation}\begin{split}\label{LLYcoeffs_e}
C\!&=\!\Ip_{\b0}(q),\quad C'\!=\!-\Ip_{\b0}(q)\left(G(q)\!+\!\sum_{j=1}^N\al_jg_j(q)\right),\quad
C''\!=\!-\Ip_{\b0}(q)\cdot(f_1(q),\ldots,f_k(q)),\\
\ne^{f/\al}\!&=\!\ne^{-\log C-\frac{C'}{C\al}}\!=
\!\frac{1}{\Ip_{\b0}(q)}\ne^{\frac{1}{\h}\left[G(q)+\sum\limits_{j=1}^N\al_jg_j(q)\right]},\quad
g\!=\!-\frac{C''}{C}\!=\!(f_1(q),\ldots,f_k(q)).
\end{split}\end{equation}

Finally, by \cite[Section~5.2]{LLY3} and \cite[Corollary~4.11]{LLY3}, the hypothesis of \cite[Theorem~4.7]{LLY3} are satisfied
with $A(t)$ and $B(t)$ as in \e_ref{LLYA_e2} and \e_ref{LLYB_e} if $\nu_E(\bd)\!\ge\!0$ for all $\bd\!\in\!\La$,
since $\E(E^+)$ and $\E(E^-)$ are non-zero whenever restricted to any $\T^N$-fixed point;
see Proposition~\ref{equivcoh_prp}\ref{erestr_prp}.
Thus, \e_ref{Z'Y'_e} follows from \cite[Theorem~4.7]{LLY3}, \e_ref{LLYA_e2}, \e_ref{LLYB_e}, and \e_ref{LLYcoeffs_e}.

\vspace{3mm}
\noindent
{\it Department of Mathematics, SUNY Stony Brook, NY 11794-3651\\
alexandra@math.sunysb.edu}

\end{document}